\newcommand{\myeq}[2]{\stackrel{\mathclap{\normalfont\mbox{#1}}}{#2}}
\newcommand{\setm} {\backslash}
\newcommand{\NN} {\mathbb{N}}
\newcommand{\RR} {\mathbb{R}}
\newcommand{\kk} {\mathrm{k}}
\newcommand{\bl} {\boldsymbol{[}}
\newcommand{\br} {\boldsymbol{]}}
\newcommand{\Lip} {\mathrm{Lip}}
\newcommand{\F} {\mathrm{F}}
\newcommand{\expal} {\mathfrak{i}}
\newcommand{\MD} {\mathcal{D}}
\newcommand{\Map} {\mathrm{Map}}
\newcommand{\lip}  {\mathrm{lip}}
\newcommand{\const}  {\mathrm{c}}
\newcommand{\uu} {\mathfrak{u}}
\newcommand{\mm} {\mathfrak{m}}
\newcommand{\oo} {\mathfrak{o}}
\newcommand{\pp} {\mathfrak{p}}
\newcommand{\qq} {\mathfrak{q}}
\newcommand{\vv} {\mathfrak{v}}
\newcommand{\ww} {\mathfrak{w}}
\newcommand{\hh} {\mathfrak{h}}
\newcommand{\zzs} {\mathfrak{s}}
\newcommand{\symb} {\centerdot}
\newcommand{\mmm} {{\symb}\mm}
\newcommand{\ppp} {{\symb}\pp}
\newcommand{\qqq} {{\symb}\qq}
\newcommand{\vvv} {{\symb}\vv}
\newcommand{\www} {{\symb}\ww}
\newcommand{\lleq} {\prec}
\newcommand{\cpp} {\comp{\pp}}
\newcommand{\SEMG} {\mathfrak{S}}
\newcommand{\SEMMM} {\mathfrak{H}}
\newcommand{\SEM} {\mathfrak{P}}
\newcommand{\SEMM} {\mathfrak{Q}}
\newcommand{\inv} {\mathrm{inv}}
\DeclareMathOperator*{\innt}{\ThisStyle{\vstretch{0.9}{\hstretch{1.5}{\rotatebox{10}{$\SavedStyle\hspace{-0.5pt}\!\int\!\hspace{-0.5pt}$}}}}}
\DeclareMathOperator*{\Der}{\ThisStyle{\hstretch{1.2}{\rotatebox{0}{$\SavedStyle\delta^r$}}}}
\newcommand{\DIDE} {\mathfrak{D}}
\newcommand{\DIDED} {\mathrm{D}}
\newcommand{\EV} {\mathrm{Evol}}
\newcommand{\evol} {{\mathrm{evol}}}
\newcommand{\MX} {\mathfrak{X}}
\newcommand{\wph} {\boldsymbol{\varphi}}
\newcommand{\B} {\mathrm{B}}
\newcommand{\OB} {\ovl{\B}}
\newcommand{\chart} {\Xi}
\newcommand{\chartinv} {\Xi^{-1}}
\newcommand{\RT} {\mathrm{R}}
\newcommand{\LT} {\mathrm{L}}
\newcommand{\Ad} {\mathrm{Ad}}
\newcommand{\CP} {\mathrm{CP}}
\newcommand{\DP} {\DIDE\mathrm{P}}
\newcommand{\mackarr}[1] {\rightharpoonup_{\mathfrak{m}.#1}}
\newcommand{\mackarrr} {\rightharpoonup_{\mathfrak{m}}}
\newcommand{\seqarr}[1] {\rightharpoonup_{\mathfrak{s}.#1}}
\newcommand{\seqarrr} {\rightharpoonup_{\mathfrak{s}}}
\newcommand{\netarr}[1] {\rightharpoonup_{\mathfrak{n}.#1}}
\newcommand{\rhon} {\boldsymbol{\rho}}
\newcommand{\dindp}  {\mathrm{p}}
\newcommand{\dind}  {\mathrm{s}}
\newcommand{\mackeyconst} {\mathfrak{c}}
\newcommand{\mackeyindex} {\mathfrak{l}}
\newcommand{\limin} {\lim^\infty_{n}}
\newcommand{\limih} {\lim^\infty_{h\rightarrow 0}}
\newcommand{\comp}[1] {\ovl{#1}}
\newcommand{\mgc}  {\ovl{\mg}}
\newcommand{\llleq} {\preceq}
\newcommand{\conj}  {\mathrm{Conj}}
\newcommand{\id} {\mathrm{id}}
\newcommand{\dermapdiff}  {\omega}
\newcommand{\dermapinvdiff}  {\upsilon}
\newcommand{\ovl}[1] {\overline{#1}}
\newcommand{\wt}[1] {\widetilde{#1}}
\newcommand{\wh}[1] {\widehat{#1}}
\newcommand{\dd} {\mathrm{d}}
\newcommand{\ddd} {\mathrm{d}}
\newcommand{\im} {\mathrm{im}}
\newcommand{\dom} {\mathrm{dom}}
\newcommand{\U}  {\mathcal{U}}
\newcommand{\V}  {\mathcal{V}}
\newcommand{\deff} {if and only if } %Used for equivalences and Theorems   
\newcommand{\defff} {if } %Used for definitions 
\newcommand{\equivdef} {:=} 
\newcommand{\equiveq} {=}
\newcommand{\mg} {\mathfrak{g}}
\newcommand{\cp} {\circ}
\newcommand{\COMP} {\mathfrak{K}}
\newcommand{\mult}  {\mathrm{m}}
\newcommand{\compact} {\mathrm{C}}
\newcommand{\compacto} {\mathrm{K}}
\newcommand{\he} {\hspace{1pt}}
\renewcommand{\theenumi}{\arabic{enumi})} 
\renewcommand{\labelenumi}{\theenumi}
\let\origenumerate\enumerate
\def\enumerate{\origenumerate\itemsep0pt}
\let\origitemize\itemize
\def\itemize{\origitemize\itemsep0pt}
\newenvironment{customthm}[1]
  {\innercustomthm}
  {\endinnercustomthm}
\newenvironment{customtpr}[1]
  {\innercustomtpr}
  {\endinnercustomtpr}
\newenvironment{customlem}[1]
  {\innercustomlem}
  {\endinnercustomlem}
\newenvironment{customco}[1]
  {\innercustomco}
  {\endinnercustomco}
\newtheorem{theorem}{Theorem}
\newtheorem{proposition}{Proposition}
\newtheorem{lemma}{Lemma}
\newtheorem{corollary}{Corollary}
\newtheorem{remark}{Remark}
\newtheorem{example}{Example}
\def\blfootnote{\gdef\@thefnmark{}\@footnotetext}
\begin{document}
\title{Differentiability of the Evolution Map\\and Mackey Continuity}
\author{
  \textbf{Maximilian Hanusch}\thanks{\texttt{mhanusch@math.upb.de}}
  \\[1cm]
  Institut f\"ur Mathematik \\
  Universit\"at Paderborn\\
  Warburger Stra\ss{e} 100 \\
  33098 Paderborn \\
  Germany
}
\date{September 6, 2019}
\maketitle

\begin{abstract}
We solve the differentiability problem for the evolution map in Milnor's infinite dimensional setting. We first show that the evolution map of each $C^k$-semiregular Lie group $G$ (for $k\in \NN\sqcup\{\lip,\infty\}$) admits a particular kind of sequentially continuity -- called Mackey k-continuity. We then prove that this continuity property is strong enough to ensure differentiability of the evolution map. In particular, this drops any continuity presumptions made in this context so far. Remarkably, Mackey k-continuity 
arises directly from the regularity problem itself, which makes it particular among the continuity conditions traditionally considered. As an application of the introduced notions, we discuss the strong Trotter property in the sequentially-, and the Mackey continuous context. We furthermore conclude that if the Lie algebra of $G$ is a Fr\'{e}chet  space, then $G$ is $C^k$-semiregular (for $k\in \NN\sqcup\{\infty\}$) \deff $G$ is $C^k$-regular. 
\end{abstract}

\tableofcontents 

\section{Introduction}
In 1983 Milnor introduced his regularity concept \cite{MIL} as a tool to extend proofs of fundamental Lie theoretical facts to infinite dimensions. Specifically, he adapted (and weakened) the regularity concept introduced in 1982 by Omori, Maeda, Yoshioka and Kobayashi for Fr\'{e}chet Lie groups \cite{OMORI} to such Lie groups that are modeled over complete Hausdorff locally convex vector spaces. Then, he used this notion to prove the integrability of Lie algebra homomorphisms to Lie group homomorphisms under certain regularity and connectedness presumptions. 
 In this paper, we work in the slightly more general setting introduced by Gl\"ockner in \cite{HG} -- specifically meaning that any completeness presumption on the modeling space is dropped.\footnote{Confer also \cite{KHN2,KHNM} for an introduction to this   area. To prevent confusion, we additionally remark that Milnor's definition of an infinite dimensional manifold $M$ involves the requirement that $M$ is a regular topological space, i.e., fulfills the separation axioms $T_2, T_3$. Deviating from that, in \cite{HG}, only the $T_2$ property of $M$ is explicitly presumed -- This, however, makes no difference in the Lie group case, because topological groups are automatically $T_3$.} 

Roughly speaking, regularity is concerned with definedness, continuity, and smoothness of the evolution map (product integral) -- a notion that naturally generalizes the concept of the Riemann integral for curves in locally convex vector spaces, to infinite dimensional Lie groups (Lie algebra valued curves are thus integrated to Lie group elements). For instance, the exponential map of a Lie group is the restriction of the evolution map to constant curves; and, given a principal fibre bundle, then holonomies are evolutions of such Lie algebra valued curves that are pairings of smooth connections with derivatives of curves in the base manifold of the bundle. Although individual arguments show that
the generic infinite dimensional Lie group is $C^\infty$-regular or stronger, only recently general regularity criteria had been found \cite{HGGG, KHN2, RGM}. Differentiability of the evolution map (hence, of the exponential map) is one of the key components of the regularity problem.  
In \cite{RGM, HGGG}, this issue had been discussed in the standard topological context -- implicitly meaning that continuity of the evolution maps w.r.t.\ to the $C^k$-topology was presumed.\footnote{The $C^k$-topology is recalled in Sect.\ \ref{jkdsodsoioidsoids}; and, the evolution maps are defined below.} 
In this paper, we solve the differentiability problem in full generality, as we drop 
any continuity presumption made in this context so far. The results obtained in particular imply that if the Lie group is modeled over  a Fr\'{e}chet space, with evolution map defined on all $C^k$-curves (the Lie group is $C^k$-semiregular),  then the evolution map is automatically smooth w.r.t.\ to the $C^k$-topology (the Lie group is $C^k$-regular). 
We furthermore generalize the results obtained in \cite{TRM, HGM} concerning the strong Trotter property by weakening the continuity presumptions made there.

More specifically, let $G$ denote an infinite dimensional Lie group as defined in \cite{HG} that is modeled over the Hausdorff locally convex vector space $E$.  
We let $\mg$ denote the Lie algebra of $G$; as well as $\dd_q\RT_g$ the differential of the right translation  $\RT_g\colon G\ni h\mapsto h\cdot g$  by $g\in G$, at the point $q\in G$. We furthermore define (right logarithmic derivative) 
\begin{align*}
	 C^0([0,1],\mg)\ni \Der(\mu):= \dd_{\mu}\RT_{\mu^{-1}}(\dot\mu)\qquad\quad\forall\: \mu\in C^1([0,1],G)
\end{align*} 
as well as $\DIDED := \{\Der(\mu) \:|\: \mu\in C^1([0,1],G)\}$ and $C_*^1([0,1],G):=\{\mu\in C^1([0,1],G)\:|\: \mu(0)=e\}$. 
The evolution maps are given by
\begin{align*}
	\hspace{40pt}\EV\colon \DIDED\ni \Der(\mu) &\mapsto   \mu\cdot \mu^{-1}(0)\in C_*^1([0,1],G)
	 \\[2pt] 
	\evol\colon \DIDED\ni \phi&\mapsto \EV(\phi)(1)\in G\\[3pt]
	\text{as } &\text{well } \text{as}\\[2pt]
	\EV_\kk:= \EV|_{\DIDED_\kk}
	\qquad&\he\text{and}\qquad 
	\evol_\kk:=\evol|_{\DIDED_\kk},
\end{align*}
with $\DIDED_\kk\equivdef \DIDED\cap C^k([0,1],\mg)$  
for each $k\in \NN\sqcup\{\lip,\infty,\const\}$.\footnote{Here, $C^\lip([0,1],\mg)$ denotes the set of Lipschitz curves, and $C^\const([0,1],\mg)$ denotes the set of constant curves.}  
We say that $G$ is $C^k$-semiregular \defff $C^k([0,1],\mg)\subseteq \DIDED$ holds; hence, \defff each $\phi \in C^k([0,1],\mg)$ admits a (necessarily unique) solution $\mu\in C^1_*([0,1],G)$ to the differential equation $\Der(\mu)=\phi$.  
It was shown in \cite{HGGG} (cf.\ Theorem E in \cite{HGGG}) that if $G$ is $C^k$-semiregular for $k\in \NN\sqcup\{\infty\}$, then $\EV_\kk$ (thus, $\evol_\kk$) is smooth 
\deff $\evol_\kk$ is of class $C^1$. 
Then, it was proven in \cite{RGM} (cf.\ Theorem 4 in \cite{RGM}) that $\evol_\kk$ is of class $C^1$ \deff it is continuous, with $\mg$ Mackey complete for $k\in \NN_{\geq 1}\sqcup \{\lip,\infty\}$ (as well as integral complete for $k\equiveq 0$). 
All these statement have been established in the standard topological context -- specifically meaning that $\evol_\kk$ (and $\EV_\kk$) was presumed to be continuous w.r.t.\ the $C^k$-topology. 
In this paper, we more generally show that, cf.\ (the more comprehensive) Theorem \ref{sasasasaasdassdsasdadds} in Sect.\ \ref{lvclkvclkvc}
\begin{customthm}{A}\label{weakdiff}
Suppose that $G$ is $C^k$-semiregular for $k\in \NN\sqcup\{\lip, \infty\}$. Then, $\evol_\kk$ is differentiable 
\begingroup
\setlength{\leftmargini}{12pt}
\begin{itemize}
\item
	for $k \equiveq 0$\hspace{46.5pt} \deff $\mg$ is integral complete.
\item
	for $k\in \NN_{\geq 1}\sqcup \{\infty\}$ \deff $\mg$ is Mackey complete.
\end{itemize}
\endgroup
\noindent
In this case, $\evol_\kk$ is differentiable, with\footnote{Notably, this formula is well known from the finite dimensional context  (cf., e.g., the proof of (1.13.4) Proposition in \cite{DUIS}), and also for regular Lie groups in the convenient setting \cite{MK}.} 
\vspace{-2pt}
\begin{align*}
	\textstyle\dd_\phi\he \evol_\kk(\psi)=\dd_e\LT_{\innt \phi}\big(\int \Ad_{[\innt_r^s \phi]^{-1}}(\psi(s))\:\dd s \big)\qquad\quad \forall\: \phi,\psi\in C^k([r,r'],\mg).
\end{align*}  
\end{customthm}
\noindent
This theorem will be derived from significantly more fundamental results established in this paper: 
Let $\chart\colon \U\rightarrow \V\subseteq E$ be a fixed chart around $e$, and $\SEM$ the system of continuous seminorms on $E$. 
A pair $(\phi,\psi)\in C^0([0,1],\mg)\times C^0([0,1],\mg)$ is said to be
\begingroup
\setlength{\leftmargini}{12pt}
\begin{itemize}
\item
	\emph{admissible} \defff\he $\phi + (-\delta,\delta)\cdot \psi\subseteq \DIDED$ holds for some $\delta>0$,
\item
	\emph{regular}\hspace{15.9pt} \defff\he it is admissible, with 
	\begin{align*}
		\textstyle\limih \chart\big(\EV(\phi)^{-1}\cdot \EV(\phi+h\cdot \psi)\big)=0.
	\end{align*}
	\vspace{-22pt}
\end{itemize}
\endgroup
\noindent
Here, the limit is understood to be uniform -- In general, we write $\limih \alpha =\beta$ for $\alpha\colon (-\delta,0)\sqcup(0,\delta)\times [0,1]\rightarrow E$ with $\delta>0$ and $\beta\colon [0,1]\rightarrow \ovl{E}$ \defff 
\begin{align*}
	\textstyle\lim_{h\rightarrow 0}\: \sup\{\ovl{\pp}(\alpha(h,t)-\beta(t)) \:|\: t\in [0,1]\}=0\qquad\quad\forall\: \pp\in \SEM
\end{align*}
holds, where $\ovl{\pp}\colon \ovl{E}\rightarrow \RR_{\geq 0}$ denotes the continuous extension of the seminorm $\pp\in \SEM$ to the completion $\ovl{E}$ of $E$.   
Then, the first result we want to mention is, cf.\ Proposition \ref{saasassansamnmsa} in Sect.\ \ref{cxpopocxpocxcx}
\begin{customtpr}{B}\label{lkdlkdlkds}
Suppose that $(\phi,\psi)$ is admissible. 
\begingroup
\setlength{\leftmargini}{16pt}{
\renewcommand{\theenumi}{{\arabic{enumi}})} 
\renewcommand{\labelenumi}{\theenumi}
\begin{enumerate}
\item
\label{saasassansamnmsa1}
The pair $(\phi,\psi)$ is regular \deff we have
\begin{align*}
	\textstyle\limih\:1/h\cdot\chart\big(\EV(\phi)^{-1}\cdot \EV(\phi+h\cdot \psi)\big)= \int_r^\bullet (\dd_e\chart\cp\Ad_{\EV(\phi)(s)^{-1}}) (\psi(s))\:\dd s\in \ovl{E}.
\end{align*} 
\item
If $(\phi,\psi)$ is regular, then $(-\delta,\delta)\ni h\mapsto \evol(\phi+h\cdot \psi)\in G$ is differentiable at $h=0$ (for $\delta>0$ suitably small) \deff 
$\int \Ad_{\EV(\phi)(s)^{-1}}(\psi(s))\:\dd s\in \mg$ holds. In this case, we have
\begin{align*}
	\textstyle\frac{\dd}{\dd h}\big|_{h=0} \he\evol(\phi+h\cdot \psi)= \dd_e\LT_{\evol(\phi)}\big(\int \Ad_{\EV(\phi)(s)^{-1}}(\psi(s))\:\dd s\he\big).
\end{align*}
\end{enumerate}}
\endgroup
\end{customtpr}
\noindent
Evidently, each $(\phi,\psi)\in C^k([0,1],\mg)\times C^k([0,1],\mg)$ is admissible \deff $G$ is $C^k$-semiregular. In Sect.\ \ref{lkfdlkfdlkfdljgljgd}, we furthermore prove that, cf.\ Theorem \ref{fdfffdfd} in Sect.\ \ref{lkfdlkfdlkfdljgljgd}
\begin{customthm}{C}\label{weakdiffdf}
If $G$ is $C^k$-semiregular for $k\in \NN\sqcup\{\lip,\infty\}$, then $G$ is Mackey {\rm k}-continuous. 
\end{customthm}
\noindent
Here, Mackey {\rm k}-continuity is a specific kind of sequentially continuity (cf.\ Sect.\ \ref{lkkldsklsdkl}) that, in particular, implies that each admissible $(\phi,\psi)\in C^k([0,1],\mg)\times C^k([0,1],\mg)$ is regular (cf.\ Lemma \ref{dsopdsopsdopsdop} in Sect.\ \ref{lkkldsklsdkl}) -- Theorem \ref{weakdiff} thus follows immediately from Proposition \ref{lkdlkdlkds} and Theorem \ref{weakdiffdf}. 
We will conclude from Theorem \ref{weakdiffdf} and Theorem 4 in \cite{RGM} that, cf.\ Corollary \ref{ofdpoopfdopfd} in Sect.\  \ref{sklkdslkdskldslkds}
\begin{customco}{D}\label{cocuddjd}
Suppose that $\mg$ is a Fr\'{e}chet space; and let $k\in \NN\sqcup\{\infty\}$ be fixed. Then, $G$ is $C^k$-regular \deff $G$ is $C^k$-semiregular. 
\end{customco}
\vspace{6pt}

\noindent
Now, Proposition \ref{lkdlkdlkds} is actually a consequence of a more fundamental differentiability result (Proposition \ref{rererererr} in Sect.\ \ref{opdfodf}) that we will also use to generalize Theorem 5 in \cite{RGM}. Specifically, we will prove that, cf.\ Theorem \ref{ofdpofdpofdpofdpofd} in Sect.\ \ref{dffdfd}
\begin{customthm}{E}\label{assasacxcxweakdiffdf}
Suppose that $G$ is Mackey {\rm k}-continuous for $k\in \NN\sqcup\{\lip,\infty,\const\}$ -- additionally abelian if $k\equiveq \const$ holds. Let $\Phi\colon I\times [0,1]\rightarrow \mg$ ($I\subseteq \RR$ open) be given with $\Phi(z,\cdot)\in \DIDED_\kk$ for each $z\in I$.  
Then, 
\begin{align*}
	\textstyle\limih 1/h\cdot\chart\big(\EV(\Phi(x,\cdot))^{-1}\cdot \EV(\Phi(x+h,\cdot))\big)= \textstyle\int_r^\bullet (\dd_e\chart\cp \Ad_{\EV(\Phi(x,\cdot))(s)})(\partial_z\Phi(x,s))\:\dd s\hspace{1pt}\in \comp{E}
\end{align*}
holds for $x\in I$, provided that
\begingroup
\setlength{\leftmargini}{15pt}{
\renewcommand{\theenumi}{{\alph{enumi}})} 
\renewcommand{\labelenumi}{\theenumi}
\begin{enumerate}
\item
\label{saasaassasasa2}
We have $(\partial_z \Phi)(x,\cdot)\in C^k([0,1],\mg)$.
\item
\label{saasaassasasa1}
For each $\pp\in \SEM$ and $\dind\llleq k$,\footnote{This means $\dind\equiveq \lip$ for $k\equiveq \lip$, $\dind\equiveq 0$ for $k\equiveq 0$, $0\leq \dind\leq k$ for $k\in \NN$, and $\dind\in \NN$ for $k\equiveq \infty$. The corresponding seminorms $\ppp_\infty^\dind$ are defined in Sect.\ \ref{dsdsdssdfdffddfdfdd}.} there exists $L_{\pp,\dind}\geq 0$, and $I_{\pp,\dind}\subseteq I$ open with $x\in I_{\pp,\dind}$, such that
\begin{align*}
	1/|h|\cdot\ppp^\dind_\infty(\Phi(x+h,\cdot)-\Phi(x,\cdot))\leq L_{\pp,\dind}\qquad\quad \forall\: h\in \RR_{\neq 0}\:\text{ with }\: x+h\in I_{\pp,\dind}.
\end{align*}
\vspace{-22pt}
\end{enumerate}}
\endgroup
\noindent
In particular, we have
\begin{align*}
	\textstyle\frac{\dd}{\dd h}\big|_{h=0} \he \evol(\Phi(x+h,\cdot))=\textstyle \dd_e\LT_{\evol(\Phi(x,\cdot))}\big(\int \Ad_{\EV(\Phi(x,\cdot))(s)}(\partial_z\Phi(x,s))\:\dd s\he\big)
\end{align*}
\deff the Riemann integral on the right side exists in $\mg$. 
\end{customthm}
\noindent
We explicitly recall at this point that, by Theorem \ref{weakdiffdf}, for $k\in \NN\sqcup\{\lip,\infty\}$, Mackey {\rm k}-continuity is automatically given if $G$ is $C^k$-semiregular. Finally, let $\exp\colon \mg\supseteq \dom[\exp]\rightarrow G$ denote the exponential map of $G$; and recall that a Lie group $G$ is said to have the \emph{strong Trotter property} \cite{HGM, TRM, MANE, N1} 
\defff for each $\mu\in C_*^1([0,1],G)$ with $\dot\mu(0)\in \dom[\exp]$, we have  
\begin{align}
\label{sddssddszz}
	\textstyle\lim_n \mu(\tau/n)^n=\exp(\tau\cdot \dot\mu(0))\qquad\quad\forall\: \tau\in [0,\ell]
\end{align} 
uniformly\footnote{Thus, for each neighbourhood $U\subseteq G$ of $e$, there exists some $n_U\in \NN$ with $\exp(-\tau\cdot \dot\mu(0))\cdot\mu(\tau/n)^n\in U$ for each $n\geq n_U$ and $\tau\in [0,\ell]$.} for each $\ell>0$. As already figured out in \cite{HGM}, the strong Trotter property implies the strong commutator property; and, also the  Trotter  and the commutator property that are relevant, e.g., in representation theory of infinite dimensional Lie groups \cite{N1}. Now, Theorem I in \cite{HGM} states that $G$ has the strong Trotter property if $G$ is $R$-regular. This was generalized in \cite{TRM} to the locally $\mu$-convex case (hence, the case where $\evol_0$ is $C^0$-continuous on its domain, cf.\ Theorem 1 in \cite{RGM}). In this paper, we go a step further, as we show (cf.\ Proposition \ref{opsdopsdopdsopsd} in Sect.\ \ref{uoudsuoiiudsds}) that $G$ has the strong Trotter property if $\EV_0$ is sequentially continuous (the precise definitions can be found in Sect.\ \ref{ljdjkjdsds}); which is much weaker than locally $\mu$-convexity  (provided that $\mg$ is not metrizable, of course). We furthermore show in Proposition \ref{opsdopsdopdsopsd} that \eqref{sddssddszz} holds for each $\mu\in C_*^{1}([0,1],G)$ with $\dot\mu(0)\in \dom[\exp]$ and $\Der(\mu)\in C^\lip([0,1],\mg)$ if $\EV_0$ is Mackey continuous, the latter condition being even less restrictive than sequentially continuity.
\vspace{6pt}

\noindent
This paper is organized as follows:
\vspace{-4pt}
\begingroup
\setlength{\leftmargini}{12pt}
\begin{itemize}
\item
In Sect.\ \ref{dsdssd}, we provide the basic definitions; and discuss the most elementary properties of the core mathematical objects of this paper.
\item
In Sect.\ \ref{mnnxcncyxciuoduioasd}, we discuss the continuity notions considered in this paper.
\item
In Sect.\ \ref{lkfdlkfdlkfdljgljgd}, we prove Theorem \ref{fdfffdfd} (i.e., Theorem \ref{weakdiffdf}). 
\item
In Sect.\ \ref{uoudsuoiiudsds}, we discuss the strong Trotter property in the sequentially\slash Mackey continuous context.
\item
In Sect.\ \ref{opdfodf}, we establish the differentiability results for the evolution map.
\item
In Sect.\ \ref{sklkdslkdskldslkds}, we prove Corollary \ref{ofdpoopfdopfd} (i.e., Corollary \ref{cocuddjd}).
\end{itemize}
\endgroup
\noindent

\section{Preliminaries}
\label{dsdssd}
In this section, we fix the notations, and discuss the properties of the product integral (evolution map) that we will need in the main text. The proofs of the facts mentioned but not verified in this section, can be found, e.g., in Sect.\ 3 and Sect.\ 4 in \cite{RGM}.

\subsection{Conventions}
In this paper, Manifolds and Lie groups are always understood to be in the sense of \cite{HG}; in particular, smooth, Hausdorff, and modeled over a Hausdorff locally convex vector space.\footnote{We explicitly refer to Definition 3.1 and Definition 3.3 in \cite{HG}. A review of the corresponding differential calculus -- including the standard differentiation rules used in this paper -- can be found, e.g., in Appendix \ref{Diffcalc} that essentially equals Sect.\ 3.3.1 in \cite{RGM}.} 
If $f\colon M\rightarrow N$ is a $C^1$-map between the manifolds $M$ and $N$, then $\dd f\colon TM \rightarrow TN$ denotes the corresponding tangent map between their tangent manifolds -- we write $\dd_xf\equiv\dd f(x,\cdot)\colon T_xM\rightarrow T_{f(x)}N$ for each $x\in M$. 
 By an interval, we understand a non-empty, non-singleton connected subset $D\subseteq \RR$. The set of all compact intervals is denoted by $\COMP=\{[r,r']\subseteq \RR\: |\: r<r'\}$. We furthermore let $\MD_\delta:=(-\delta,0)\he\sqcup\he (0,\delta)$ for each $\delta>0$. 
 A curve is a continuous map $\gamma\colon D\rightarrow M$ for a manifold $M$ and an interval $D\subseteq \RR$.  
If $D\equiv I$ is open, then $\gamma$ is said to be of class $C^k$ for $k\in \NN\sqcup \{\infty\}$ \defff it is of class $C^k$ when considered as a map between the manifolds $I$ and $M$.
If $D$ is an arbitrary interval, then $\gamma$ is said to be of class $C^k$ for $k\in \NN\sqcup \{\infty\}$ \defff $\gamma=\gamma'|_D$ holds for a $C^k$-curve $\gamma'\colon I\rightarrow M$ that is defined on an open interval $I$ containing $D$ -- we  write $\gamma\in C^k(D,M)$ in this case.  
If $\gamma\colon D\rightarrow M$ is of class $C^1$, then we denote the corresponding tangent vector at $\gamma(t)\in M$ by $\dot\gamma(t)\in T_{\gamma(t)}M$. 
The above conventions also hold if $M\equiv F$ is a Hausdorff locally convex vector space with system of continuous seminorms $\SEMM$. 
In this case, we let $\ovl{F}$ denote the completion of $F$; as well as  
$\ovl{\qq}\colon \ovl{F}\rightarrow \RR_{\geq 0}$ the continuous extension of $\qq$ to $\ovl{F}$, for each $\qq\in \SEMM$. We furthermore define
\begin{align*}
	\B_{\qq,\varepsilon}:=\{X\in F\:|\: \qq(X)<\varepsilon\}\quad\qquad\text{as well as}\qquad\quad\OB_{\qq,\varepsilon}:=\{X\in F\:|\: \qq(X)\leq\varepsilon\}
\end{align*} 
for all $\qq\in \SEMM$ and $\varepsilon>0$. 
If $X,Y$ are sets, then $\Map(X,Y)\equiv Y^X$ denotes the set of all mappings $X\rightarrow Y$.

\subsubsection{Sets of Curves}
\label{dsdsdssdfdffddfdfdd}
Let $F$ be a Hausdorff locally convex vector space with system of continuous seminorms $\SEMM$.
\begingroup
\setlength{\leftmargini}{12pt}
\begin{itemize}
\item
By $C^\lip([r,r'],F)$ we denote  
the set of all Lipschitz curves on $[r,r']\in \COMP$; i.e., all curves $\gamma\colon [r,r']\rightarrow F$, such that
\begin{align*}
	\textstyle \Lip(\qq,\gamma):=\sup\Big\{\frac{\qq(\gamma(t)-\gamma(t'))}{|t-t'|}\:\Big|\: t,t'\in [r,r'],\: t\neq t'\Big\}\in \RR_{\geq 0}
\end{align*}
exists for each $\qq\in \SEMM$ -- i.e., we have
\begin{align*} 
	\qq(\gamma(t)-\gamma(t'))\leq \Lip(\qq,\gamma)\cdot |t-t'|\qquad\quad\forall\: t,t'\in [r,r'],\:\: \qq\in \SEMM.
\end{align*}
\item 
By $C^\const([r,r'],F)$ we denote  
the set of all constant curves on $[r,r']\in \COMP$; i.e., all curves of the form 
\begin{align*}
	\gamma_X\colon [r,r']\rightarrow F,\qquad t\mapsto X
\end{align*}
for some $X\in F$. 
\end{itemize}
\endgroup
\noindent
We define  
$\const+1:=\infty$, $\infty +1:=\infty$, $\lip+1:=1$; as well as 
\begin{align*}
	\qq_\infty^\lip(\gamma)&:= \max(\qq_\infty(\gamma), \Lip\big(\qq,\gamma)\big)\qquad\quad\hspace{82.4pt}\forall\: \gamma\in C^\lip([r,r'],F)\\
\qq^\dind_\infty(\gamma)&:=\sup\big\{\qq\big(\gamma^{(m)}(t)\big)\:\big|\: 0\leq m\leq \dind,\:\:t\in [r,r']\big\}\qquad\quad\hspace{0.15pt}\forall\: \gamma\in C^k([r,r'],F)\\
\qq_\infty(\gamma)&:=\qq_\infty^0(\gamma)\qquad\quad\hspace{158.7pt}\forall\: \gamma\in C^0([r,r'],F)
\end{align*}
for each $\qq\in \SEMM$, $k\in \NN\sqcup\{\infty,\const\}$, $\dind\llleq k$, and $[r,r']\in \COMP$ -- 
Here, $s\llleq k$ means 
\begingroup
\setlength{\leftmargini}{12pt}
\begin{itemize}
\item
	$\dind\equiveq \lip$\hspace{20pt} for\: $k\equiveq \lip$,
\item
	$\NN\ni \dind\leq k$\hspace{2.3pt}\: for\: $k\in \NN$, 
\item
	$\dind\in \NN$\hspace{3pt}\:\hspace{20pt} for\: $k\equiveq\infty$,
\item
	$\dind=0$\hspace{4.16pt}\:\hspace{20pt} for\: $k\equiveq \const$.
\end{itemize}
\endgroup
\noindent
The $C^k$-topology on $C^k([r,r'],F)$ for $k\in \NN\sqcup\{\lip,\infty,\const\}$ is the Hausdorff locally convex topology that is generated by the seminorms $\qq_\infty^\dind$ for all $\qq\in \SEMM$ and $\dind\llleq k$. 
\begin{remark}
In the Lipschitz case, the above conventions deviate from the conventions used, e.g., in \cite{RGM, AEH} as there the $\pp_\infty$-seminorms, i.e., the $C^0$-topology is considered on $C^\lip([r,r'],F)$.\hspace*{\fill}$\ddagger$  
\end{remark}
\noindent
Finally, we let $\CP^0([r,r'],F)$ denote the set of piecewise $C^0$-curves on $[r,r']\in \COMP$; i.e., all $\gamma\colon [r,r']\rightarrow F$, such that there exist $r=t_0<{\dots}<t_n=r'$ and
\begin{align}
\label{opsdopsposdopsdnmyxmnyxmybnymnyxuzwuzwe}
	\gamma[p]\in C^0([t_p,t_{p+1}],F)\qquad\text{with}\qquad\gamma|_{(t_p,t_{p+1})}=\gamma[p]|_{(t_p,t_{p+1})}\qquad\text{for}\qquad p=0,\dots,n-1.
\end{align}

\subsubsection{Lie Groups}
In this paper, $G$ will always denote an infinite dimensional Lie group in the sense of \cite{HG} (cf.\ Definition 3.1 and Definition 3.3 in \cite{HG}) that is modeled over the Hausdorff locally convex vector space $E$, with corresponding system of continuous seminorms $\SEM$. We  
denote the Lie algebra of $G$ by $(\mg, \bl \cdot,\cdot\br)$, fix a chart 
\begin{align*}
	\chart\colon G\supseteq \U\rightarrow \V\subseteq E,
\end{align*}
with $\V$ convex, $e\in \U$ and $\chart(e)=0$; 
 and define
\begin{align*}
	\ppp:= \pp\cp\dd_e\chart\qquad\quad\forall\: \pp\in \SEM. 
\end{align*}  
 We let $\mult\colon G\times G\rightarrow G$ denote the Lie group multiplication, $\RT_g:=\mult(\cdot, g)$ the right translation by $g\in G$, $\inv\colon G\ni g\mapsto g^{-1}\in G$ the inversion, and $\Ad\colon G\times \mg\rightarrow \mg$ the adjoint action -- i.e., we have
\begin{align*}
	\Ad(g,X)\equiv \Ad_g(X):=\dd_e\conj_g(X)\qquad\quad\text{with}\qquad\quad \conj_g\colon G\ni h\mapsto g\cdot  h\cdot g^{-1}\in G
\end{align*}
for each $g\in G$ and $X\in  \mg$. We furthermore recall the product rule
\begin{align}
\label{LGPR}
	\dd_{(g,h)}\mult(v,w)= \dd_g\RT_h(v) + \dd_h\LT_g(w)\qquad\quad\forall\: g,h\in G,\:\: v\in T_gG,\:\: w\in T_h G.
\end{align}

\subsubsection{Uniform Limits}
Let $\mu\in \Map([r,r'],G)$, $\{\mu_n\}_{n\in \NN}\subseteq  \Map([r,r'],G)$, and $\{\mu_h\}_{h\in \MD_\delta}\subseteq  \Map([r,r'],G)$ for $\delta>0$ be given. We write
\begingroup
\setlength{\leftmargini}{12pt}
\begin{itemize}
\item
	$\limin \mu_n=\mu$\quad \hspace{5.15pt} \defff\quad for each open neighbourhood $U\subseteq G$ of $e$, there exists some $n_U\in \NN$ with 
	
	\quad\hspace{89.2pt}$\mu^{-1}\cdot \mu_n \in U$ for each $n\geq n_U$. 
\item
	$\limih \mu_h=\mu$\quad \defff\quad for each open neighbourhood $U\subseteq G$ of $e$, there exists some $0<\delta_U<\delta$ 
	
	\quad\hspace{89.2pt}with $\mu^{-1}\cdot \mu_h \in U$ for each $h\in \MD_{\delta_U}$.
\end{itemize}
\endgroup
\noindent
Evidently, then we have
\begin{lemma}
\label{aslksalksalksakasklaslksalklkalksa}
	Suppose $\delta>0$ and $\{\mu_h\}_{h\in \MD_\delta}\subseteq  C^0([r,r'],G)$ are given. If $\limin \mu_{h_n}=e$ holds for each sequence $\MD_\delta \supseteq \{h_n\}_{n\in \NN}\rightarrow 0$, then we have $\limih \mu_h=e$.
\end{lemma}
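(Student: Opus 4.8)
The plan is to argue by contraposition: I will convert a failure of the uniform $h\to 0$ limit into a concrete sequence $\{h_n\}\subseteq \MD_\delta$ tending to $0$ that witnesses this failure, and then invoke the sequential hypothesis for exactly this sequence to reach a contradiction. Before anything else I would unwind the two displayed definitions in the special case at hand, where the reference curve is the constant curve $\mu\equiv e$. Then $\mu^{-1}\cdot\mu_{h_n}$ and $\mu^{-1}\cdot\mu_h$ reduce to $\mu_{h_n}$ and $\mu_h$ themselves, so the hypothesis reads: for every sequence $\MD_\delta\supseteq\{h_n\}\to 0$ and every open neighbourhood $U\subseteq G$ of $e$ there is $n_U\in\NN$ with $\mu_{h_n}([r,r'])\subseteq U$ for all $n\geq n_U$; whereas the conclusion to be shown reads: for every open neighbourhood $U\subseteq G$ of $e$ there is $0<\delta_U<\delta$ with $\mu_h([r,r'])\subseteq U$ for all $h\in\MD_{\delta_U}$. (Here $\mu_h([r,r'])\subseteq U$ abbreviates $\mu_h(t)\in U$ for all $t\in[r,r']$, matching the uniform reading of the limit symbols.)

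Suppose the conclusion fails. Then I would fix one open neighbourhood $U$ of $e$ for which no admissible $\delta_U$ exists; that is, for every radius $0<\delta'<\delta$ there is some $h\in\MD_{\delta'}$ and some $t\in[r,r']$ with $\mu_h(t)\notin U$. Applying this with the null sequence of radii $\delta_n:=\delta/(n+2)$ (which satisfies $0<\delta_n<\delta$ regardless of the convention on $\NN$), I extract points $h_n\in\MD_{\delta_n}$ and parameters $t_n\in[r,r']$ with $\mu_{h_n}(t_n)\notin U$ for every $n$. Since $0<|h_n|<\delta_n\to 0$, the $h_n$ avoid $0$ and have moduli shrinking to $0$, so $\{h_n\}$ is a genuine sequence in $\MD_\delta$ with $h_n\to 0$.

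Finally, I would feed precisely this sequence into the hypothesis: as $\{h_n\}\subseteq\MD_\delta$ and $h_n\to 0$, we get $\limin \mu_{h_n}=e$, so for the fixed $U$ there is some $n_U$ with $\mu_{h_n}([r,r'])\subseteq U$ for all $n\geq n_U$. In particular $\mu_{h_{n_U}}(t_{n_U})\in U$, which contradicts the defining property $\mu_{h_n}(t_n)\notin U$ for every $n$. This contradiction establishes $\limih\mu_h=e$.

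I expect no analytic difficulty here; the argument is the routine passage from a sequential criterion to the corresponding filtered ($h\to 0$) criterion, which is why the statement is flagged as ``evident''. The only point needing care is the bookkeeping of the nested quantifiers --- the order ``for each $U$ there is $\delta_U$'' in the conclusion versus ``for each sequence and each $U$ there is $n_U$'' in the hypothesis --- together with checking that the extracted $h_n$ really constitute a sequence in $\MD_\delta$ converging to $0$, i.e. that they stay away from $0$ while their moduli tend to $0$.
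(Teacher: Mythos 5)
Your proof is correct and follows essentially the same route as the paper: assume the uniform limit fails, fix a bad neighbourhood $U$, extract a sequence $h_n\to 0$ in $\MD_\delta$ together with parameters $t_n\in[r,r']$ witnessing $\mu_{h_n}(t_n)\notin U$, and contradict the sequential hypothesis. Your version is merely more explicit about the quantifier bookkeeping and about keeping $|h_n|<\delta$ (the paper uses $|h_n|\leq 1/n$), but the argument is the same.
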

\begin{proof}
If the claim is wrong, then there exists a neighbourhood $U\subseteq G$ of $e$, such that the following holds: For each $n\in \NN$, there exists some $h_n\neq 0$ with $|h_n|\leq \frac{1}{n}$ as well as some $\tau_n\in [r,r']$, such that $\mu^{-1}(\tau_n)\cdot \mu_{h_n}(\tau_n)\notin U$ holds. Since we have $\{h_n\}_{n\in \NN}\rightarrow 0$, this contradicts the presumptions.
\end{proof}
\noindent
The same conventions (and Lemma \ref{aslksalksalksakasklaslksalklkalksa}) also hold if $(G,\cdot)\equiv (F,+)$ is a Hausdorff locally convex vector space (or its completion) -- In this case, we use the following convention:
\vspace{6pt}

\noindent
Let $\delta>0$ and $\alpha\colon \MD_\delta\times [r,r']\rightarrow F$ be given, with $\alpha(h,\cdot)\in \Map([r,r'],F)$ for each $h\in \MD_\delta$. 
%as well as $\alpha(h,\cdot)=0$. 
Then, for $\beta\in \Map([r,r'],F)$, we write 
\begin{align*}
	\textstyle\frac{\dd}{\dd h}\big|^\infty_{h=0}\: \alpha=\beta\qquad\quad\stackrel{\text{def.}}{\Longleftrightarrow}\qquad\quad \limih \big[1/h\cdot \alpha(h,\cdot )\big] =\beta.
\end{align*}
\begin{remark}
\label{fdfdfdfdfdfdnbcxnbcxnbcbnx}
In this paper, the above convention will mainly be used in the following form: $F=\ovl{E}$ will be the completion of a Hausdorff locally convex vector space $E$; and we will have  
$\alpha\colon \MD_\delta\times [r,r']\rightarrow E\subseteq \ovl{E}$ as well as $\beta\in \Map([r,r'],\ovl{E})$. \hspace*{\fill}$\ddagger$
\end{remark}
\subsection{The Evolution Map}
In this subsection, we provide the relevant facts and definitions concerning the right logarithmic derivative and the evolution map. 
\subsubsection{Basic Definitions}
\label{dsdsdsdsdspopopo}
We define 
\begin{align*}
		C_*^k([r,r'],G):=\{\mu\in C^k([r,r'],G)\:|\: \mu(r)=e\}\qquad\quad\forall\: [r,r']\in \COMP,\:\: k\in \NN\sqcup\{\infty\}.
\end{align*}  
The {right logarithmic derivative} is given by
\begin{align*}
	\Der\colon C^1([r,r'],G)\rightarrow C^0([r,r'],\mg),\qquad \mu\mapsto \dd_\mu\RT_{\mu^{-1}}(\dot \mu)
\end{align*}
for each $[r,r']\in \COMP$; and we define $\DIDE_{[r,r']}:=\Der(C^1([r,r'],G))$ for each $[r,r']\in \COMP$, as well as
\begin{align*}
\textstyle\DIDE_{[r,r']}^k:=\DIDE_{[r,r']}\cap C^k([r,r'],\mg)\qquad\quad\forall\: [r,r']\in \COMP,\:\: k\in \NN\sqcup\{\lip,\infty,\const\}.
\end{align*}
Then, $\Der$ restricted to $C_*^1([r,r'],G)$ is injective for each $[r,r']\in \COMP$; so that
\begin{align*}
	\textstyle\EV\colon \bigsqcup_{[r,r']\in \COMP}\DIDE_{[r,r']}\rightarrow \bigsqcup_{[r,r']\in \COMP}C_*^1([r,r'],G)
\end{align*}
is well defined by 
\begin{align*}
	\EV\colon \DIDE_{[r,r']}\rightarrow C_*^{1}([r,r'],G),\qquad\Der(\mu)\mapsto \mu\cdot \mu(r)^{-1}
\end{align*}  
for each $[r,r']\in \COMP$. Here,
\begin{align*}
	\EV|_{\DIDE_{[r,r']}^k}\colon \DIDE_{[r,r']}^k\rightarrow C^{k+1}([r,r'],G)
\end{align*}
holds for each $[r,r']\in \COMP$, and each $k\in \NN\sqcup\{\lip,\infty,\const\}$.
\subsubsection{The Product Integral}
\label{wqwqztwzuwqzuiusxy}
The product integral is given by
\begin{align*}
	\textstyle\innt_s^t\phi:= \EV\big(\phi|_{[s,t]}\big)(t)\in G\qquad\quad \forall \:[s,t]\subseteq \dom[\phi],\:\: \phi\in \bigsqcup_{[r,r']\in \COMP}\DIDE_{[r,r']};
\end{align*}
and we let $\innt\phi\equiv\innt_r^{r'}\phi$ as well as $\innt_c^c\phi:= e$ for $\phi\in \DIDE_{[r,r']}$ and $c\in [r,r']$. Moreover, we set
\begin{align*}
\evol_{[r,r']}^k&\textstyle\equiv \innt\big|_{\DIDE_{[r,r']}^k}\qquad\quad\forall\:  k\in \NN\sqcup\{\lip,\infty,\const\} ,\:\:[r,r']\in \COMP; 
\end{align*} 
and let $\evol_\kk\equiv \evol_{[0,1]}^k$ as well as $\DIDED_\kk\equiv \DIDE^k_{[0,1]}$ for each $k\in \NN\sqcup\{\lip,\infty,\const\}$.
We furthermore let
\begin{align*}
	\evol\equiv \evol_{\mathrm{0}}\colon \DIDED\equiv \DIDED_0\rightarrow G.
\end{align*}
Then, we have the following elementary identities, cf.,  \cite{HGGG,MK} or Sect.\ 3.5.2 in \cite{RGM} 
\vspace{2pt}
\begingroup
\setlength{\leftmargini}{17pt}
{
\renewcommand{\theenumi}{\emph{\alph{enumi})}} 
\renewcommand{\labelenumi}{\theenumi}
\begin{enumerate}
\item
\label{kdsasaasassaas}
For each $\phi,\psi\in \DIDE_{[r,r']}$, we have $\phi+\Ad_{\innt_r^\bullet\phi}(\psi)\in \DIDE_{[r,r']}$, with
\begin{align*}
	\textstyle\innt_r^t \phi \cdot \innt_r^t\psi=\innt_r^t \phi+\Ad_{\innt_r^\bullet\phi}(\psi).
\end{align*}
	\vspace{-18pt}
\item
\label{kdskdsdkdslkds}
For each $\phi,\psi\in \DIDE_{[r,r']}$, we have $\Ad_{[\innt_r^\bullet\phi]^{-1}}(\psi-\phi)\in \DIDE_{[r,r']}$, with
\begin{align*}
	\textstyle\big[\!\innt_r^t \phi\big]^{-1} \big[\innt_r^t\psi\big]=\innt_r^t\Ad_{[\innt_r^\bullet\phi]^{-1}}(\psi-\phi).
\end{align*}
	\vspace{-18pt}
\item
\label{pogfpogf}
\hspace{4pt}For $r=t_0<{\dots}<t_n=r'$ and $\phi\in \DIDE_{[r,r']}$, we have 
	\begin{align*}
		\textstyle\innt_r^t\phi=\innt_{t_{p}}^t\! \phi\cdot \innt_{t_{p-1}}^{t_{p}} \!\phi \cdot {\dots} \cdot \innt_{r}^{t_1}\!\phi\qquad\quad\forall\:t\in (t_p,t_{p+1}],\:\: p=0,\dots,n-1.
	\end{align*}
		\vspace{-15pt}
\item
\label{subst}
	\hspace{4pt}For $\varrho\colon [\ell,\ell']\rightarrow [r,r']$ 
of class $C^1$ and $\phi\in \DIDE_{[r,r']}$, we have $\dot\varrho\cdot (\phi\cp\varrho)\in \DIDE_{[\ell,\ell']}$, with
\begin{align*}
	 \textstyle\innt_r^{\varrho}\phi=\big[\innt_\ell^\bullet\dot\varrho\cdot (\phi\cp\varrho)\he\big]\cdot \big[\innt_r^{\varrho(\ell)}\phi\he\big].
\end{align*} 
\item
\label{homtausch}
For each homomorphism $\Psi\colon G\rightarrow H$ between Lie groups $G$ and $H$ that is of class $C^1$, we have
\begin{align*}
	\textstyle\Psi\cp \innt_r^\bullet \phi = \innt_r^\bullet \dd_e\Psi\cp\phi\qquad\quad\forall\:\phi\in \DIDE_{[r,r']}.
\end{align*}
\end{enumerate}}
\endgroup
\noindent	
We say that $G$ is {\bf $\boldsymbol{C^k}$-semiregular} for $k\in \NN\sqcup\{\lip,\infty\}$ \defff $\DIDED_\kk=C^k([0,1],\mg)$ holds; which, by \textrm{\ref{subst}}, is equivalent to that $\DIDE^k_{[r,r']}=C^k([r,r'],\mg)$ holds for each $[r,r']\in \COMP$, cf.\, e.g., Lemma 12 in \cite{RGM}.
\subsubsection{The Exponential Map}
\label{kfdlkfdlkfdscvpdfpofdofd}
The exponential map is defined by 
\begin{align*}
	\textstyle\exp\colon \dom[\exp]\equiv \expal^{-1}(\DIDED_\const)\rightarrow G,\qquad X\mapsto \innt \phi_X|_{[0,1]}\equiveq (\evol_\const\cp \expal)(X)
\end{align*}
with $\expal\colon \mg\ni X\rightarrow \phi_X|_{[0,1]}\in C^\const([0,1],\mg)$. 
\vspace{6pt}

\noindent
Then, instead of saying that $G$ is $C^\const$-semiregular, in the following we will rather say that $G$ admits an exponential map. 
We furthermore remark that \textrm{\ref{subst}} implies $\RR\cdot \dom[\exp]\subseteq \dom[\exp]$; and that $t\mapsto \exp(t\cdot X)$ is a $1$-parameter group for each $X\in \dom[\exp]$, with
\begin{align}
\label{odaidaooipidadais}
	\textstyle\exp(t\cdot X)\equiveq \innt t\cdot \phi_X|_{[0,1]}\stackrel{\textrm{\ref{subst}}}{=} \innt_0^t\phi_X|_{[0,1]}\qquad\quad\forall\:  t\geq 0, 
\end{align}
cf., e.g., Remark 2.1) in \cite{RGM}. Finally, if $G$ is abelian, then  $X+Y\in \dom[\exp]$ holds for all $X,Y\in \dom[\exp]$, because we have
\begin{align*}
	\textstyle\exp(X)\cdot \exp(Y)\stackrel{\textrm{\ref{kdsasaasassaas}}}{=}\innt \phi_X|_{[0,1]} \cdot \innt \phi_Y|_{[0,1]}  = \innt \phi_{X+Y}|_{[0,1]}.
\end{align*}

\subsubsection{Standard Topologies}
\label{jkdsodsoioidsoids}
We say that $G$ is \textbf{$\boldsymbol{C^k}$-continuous} for $k\in \NN\sqcup\{\lip,\infty,\const\}$ \defff $\evol_{\kk}$ is continuous w.r.t.\ the $C^k$-topology. We explicitly remark that under the identification $\expal\colon \mg\rightarrow \{\phi_X|_{[0,1]}\:|\: X\in \mg\}$, the $C^\const$-topology just equals the subspace topology on $\dom[\exp]$ that is inherited by the locally convex topology on $\mg$. So, instead of saying that $G$ is $C^\const$-continuous \defff $\evol_\const$ is continuous w.r.t.\ this topology, we will rather say that the exponential map is continuous.
  
\subsection{The Riemann Integral}
\label{opsdpods}
Let $F$ be a Hausdorff locally convex vector space with system of continuous seminorms $\SEMM$, and completion $\ovl{F}$. For each $\qq\in \SEM$, we let $\ovl{\qq}\colon \ovl{F}\rightarrow \RR_{\geq 0}$ denote the continuous extension of $\qq$ to $\ovl{F}$. 
The Riemann integral of $\gamma\in C^0([r,r'],F)$ (for $[r,r']\in \COMP$) is denoted by 
$\int \gamma(s) \:\dd s\in \comp{F}$; and we define 
\begin{align}
\label{fdopfdpo}
	\textstyle\int_a^b \gamma(s)\:\dd s:= \int \gamma|_{[a,b]}(s) \:\dd s,\qquad\quad\:\int_b^a \gamma(s) \:\dd s:= - \int_a^b \gamma(s) \:\dd s,\qquad\quad\:
	 \int_c^c \gamma(s)\: \dd s:=0\qquad
\end{align}
for $r\leq a<b\leq r'$ and $c\in [r,r']$. Clearly, the Riemann integral is linear, with	
\begin{align}
\label{isdsdoisdiosd0}
	\textstyle\int_a^c \gamma(s) \:\dd s&\textstyle=\int_a^b \gamma(s)\:\dd s+ \int_b^c \gamma(s)\:\dd s\qquad\quad\he \forall\: r\leq a< b< c\leq r'\\
	\label{isdsdoisdiosd}
		\textstyle\gamma-\gamma(r)\hspace{4pt}&\textstyle=\int_r^\bullet \dot\gamma(s)\:\dd s\qquad\quad\hspace{60.8pt}\forall\:  \gamma\in C^1([r,r'],F),\\
	\label{isdsdoisdiosd1}
		\qq(\gamma-\gamma(r))&\textstyle\leq \int_r^\bullet \qq(\dot\gamma(s))\: \dd s\qquad\quad\hspace{46.8pt}\forall\: \qq\in \SEMM,\:\: \gamma\in C^1([r,r'],F),
	\end{align} 
as well as
	\begin{align}
	\label{ffdlkfdlkfd}
		\textstyle\ovl{\qq}\big(\int_r^\bullet \gamma(s)\:\dd s\big)\leq \int_r^\bullet \qq(\gamma(s))\:\dd s\qquad\quad\forall\: \qq\in \SEMM,\:\: \gamma\in C^0([r,r'],F).
	\end{align}	
	We furthermore have the substitution formula 
	\begin{align}
\label{substitRI}
	\textstyle\int_r^{\varrho(t)} \gamma(s)\: \dd s=\int_\ell^t \dot\varrho(s)\cdot (\gamma\cp\varrho)(s)\:\dd s
\end{align}
for each $\gamma\in C^0([r,r'],F)$, and each $\varrho\colon \COMP\ni [\ell,\ell'] \rightarrow [r,r']$ of class $C^1$ with $\varrho(\ell)=r$ and $\varrho(\ell')=r'$.
Moreover, if $E$ is a Hausdorff locally convex vector space, and 
$\mathfrak{L}\colon F\rightarrow E$ is a continuous linear map, then we have
\begin{align}
\label{pofdpofdpofdsddsdsfd}
	\textstyle\int \gamma(s)\:\dd s\in F \quad\text{for}\quad \gamma\in C^0([r,r'],F)\qquad\:\:\Longrightarrow\qquad\:\: 
	\textstyle \mathfrak{L}(\int \gamma(s)\:\dd s)=\int \mathfrak{L}(\gamma(s)) \: \dd s.
\end{align}
Finally, for $\gamma\in \CP^0([r,r'],F)$ with $\gamma[0],\dots,\gamma[n-1]$ as in \eqref{opsdopsposdopsdnmyxmnyxmybnymnyxuzwuzwe}, we define
\begin{align}
\label{opofdpopfd}
	\textstyle\int \gamma(s)\:\dd s:=\sum_{p=0}^{n-1}\int \gamma[p](s)\:\dd s. 
\end{align} 
A standard refinement argument in combination with \eqref{isdsdoisdiosd0} then shows that this is well defined; i.e., independent of any choices we have made. We define $\int_a^b\gamma(s)\: \dd s$, $\int_b^a\gamma(s)\: \dd s$ and $\int_c^c \gamma(s)\: \dd s$ as in \eqref{fdopfdpo}; and observe that \eqref{opofdpopfd} is linear and fulfills \eqref{isdsdoisdiosd0}.

\subsection{Standard Facts and Estimates}
\label{fsdsddssddscxycyxycvvnmkjkj}
Let $F_1,\dots,F_n,E$ be Hausdorff locally convex vector spaces with corresponding system of continuous seminorms $\SEMM_1,\dots,\SEMM_n,\SEM$. 
We recall that
\begin{lemma}
\label{alalskkskaskaskas}
Let $X$ be a topological space; and let $\Phi\colon X\times F_1\times{\dots}\times F_n\rightarrow E$ be continuous with $\Phi(x,\cdot)$ 
	$n$-multilinear for each $x\in X$. Then, for each compact $\compacto\subseteq X$ and each $\pp\in \SEM$, there exist seminorms $\qq_1\in \SEMM_1,\dots,\qq_n\in \SEMM_n$ as well as $O\subseteq X$ open with $\compacto\subseteq O$, such that
	\begin{align*}
		(\pp\cp\Phi)(y,X_1,\dots,X_n) \leq \qq_1(X_1)\cdot {\dots}\cdot \qq_n(X_n)\qquad\quad\forall\: y\in O
	\end{align*} 
	holds for all $X_1\in F_1,\dots,X_n\in F_n$. 
\end{lemma}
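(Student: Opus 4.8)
The plan is to combine a pointwise continuity estimate at the ``base'' points $(x,0,\dots,0)$ with the multilinear scaling of $\Phi(x,\cdot)$, and then to upgrade the resulting local bound to a uniform one by compactness of $\compacto$. Throughout I fix $\pp\in\SEM$ and exploit that $\SEMM_1,\dots,\SEMM_n$ are the full systems of continuous seminorms (so that finite maxima of their members again lie in them).

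First I would fix $x_0\in\compacto$. Since $\Phi(x_0,\cdot)$ is $n$-multilinear it vanishes whenever one of its arguments is $0$; in particular $(\pp\cp\Phi)(x_0,0,\dots,0)=0$. Continuity of $\pp\cp\Phi$ at $(x_0,0,\dots,0)$ then furnishes a basic product neighbourhood of that point, i.e.\ an open neighbourhood $O_{x_0}\subseteq X$ of $x_0$, seminorms $\qq_i^{x_0}\in\SEMM_i$, and a common radius $\varepsilon>0$ (obtained as the minimum of the finitely many radii appearing in the neighbourhood), such that $(\pp\cp\Phi)(y,X_1,\dots,X_n)<1$ whenever $y\in O_{x_0}$ and $\qq_i^{x_0}(X_i)<\varepsilon$ for all $i$.

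Next I would upgrade this to a global bound in the $X_i$ by rescaling. Given arbitrary $X_1,\dots,X_n$ and $\delta>0$, set $\lambda_i:=\varepsilon/(2(\qq_i^{x_0}(X_i)+\delta))$, so that $\qq_i^{x_0}(\lambda_iX_i)<\varepsilon$ for each $i$. Multilinearity of $\Phi(y,\cdot)$ gives $\pp(\Phi(y,\lambda_1X_1,\dots,\lambda_nX_n))=\lambda_1\cdots\lambda_n\cdot\pp(\Phi(y,X_1,\dots,X_n))$, so the estimate of the previous step yields $(\pp\cp\Phi)(y,X_1,\dots,X_n)\le\prod_i 2(\qq_i^{x_0}(X_i)+\delta)/\varepsilon$; letting $\delta\to0$ and writing $\hat\qq_i^{x_0}:=(2/\varepsilon)\cdot\qq_i^{x_0}\in\SEMM_i$ produces the clean bound $(\pp\cp\Phi)(y,X_1,\dots,X_n)\le\prod_i\hat\qq_i^{x_0}(X_i)$ for all $y\in O_{x_0}$ and all $X_i$. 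Using $\qq_i^{x_0}(X_i)+\delta$ in place of $\qq_i^{x_0}(X_i)$ is precisely what keeps the scaling legitimate when some $\qq_i^{x_0}(X_i)=0$; this degenerate case is the only genuinely delicate point, and it is absorbed automatically into the limit $\delta\to0$.

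Finally I would invoke compactness. The family $\{O_{x_0}\}_{x_0\in\compacto}$ covers $\compacto$, so finitely many $O_{x_1},\dots,O_{x_m}$ suffice; put $O:=\bigcup_{j=1}^m O_{x_j}$ and $\qq_i:=\max(\hat\qq_i^{x_1},\dots,\hat\qq_i^{x_m})\in\SEMM_i$, the finite maximum of continuous seminorms being again a continuous seminorm. For $y\in O$ one has $y\in O_{x_j}$ for some $j$, and the previous step together with $\hat\qq_i^{x_j}\le\qq_i$ gives $(\pp\cp\Phi)(y,X_1,\dots,X_n)\le\prod_i\qq_i(X_i)$, which is the asserted estimate. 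No single step is hard; the only point requiring care is the multilinear rescaling in the presence of vanishing seminorm values, which the $\delta$-regularisation disposes of.
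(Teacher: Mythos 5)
Your proof is correct: the three steps (continuity of $\pp\cp\Phi$ at the point $(x_0,0,\dots,0)$ where multilinearity forces the value $0$, the rescaling via multilinearity with the $\delta$-regularisation correctly handling arguments with $\qq_i^{x_0}(X_i)=0$, and the passage to a finite subcover with $\qq_i:=\max_j \hat\qq_i^{x_j}$) are all valid, and this is the standard argument for such estimates. The paper itself gives no argument here but merely cites Corollary 1 of \cite{RGM}, and your self-contained proof is essentially the expected proof of that cited fact, so there is nothing to criticise.
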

\begin{proof}
Confer, e.g., Corollary 1 in \cite{RGM}.
\end{proof}
\noindent
Next, given  Hausdorff locally convex vector spaces $F_1,F_2$, and a continuous linear map $\Phi\colon F_1\rightarrow F_2$, we denote its unique continuous linear  extension by $\comp{\Phi}\colon \comp{F}_1\rightarrow \comp{F}_2$ (cf., 2.\ Theorem in Sect.\ 3.4 in \cite{JAR}).
We recall that
\begin{lemma}
\label{sdsdds}
Let $F_1,F_2$ be Hausdorff locally convex vector spaces; and let $f\colon F_1\supseteq U\rightarrow F_2$ be of class $C^{2}$. Suppose that 
$\gamma\colon D\rightarrow F_1\subseteq \ovl{F}$ is continuous at $t\in D$, such that $\lim_{h\rightarrow 0} 1/h \cdot (\gamma(t+h)-\gamma(t))=:X\in \ovl{F}_1$ exists. Then, we have
\begin{align*}
	\textstyle\lim_{h\rightarrow 0} 1/h\cdot (f(\gamma(t+h))-f(\gamma(t)))=\ovl{\dd_{\gamma(t)}f}\he(X).
\end{align*}  
\end{lemma}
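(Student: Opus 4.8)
The statement is a chain-rule type result: given a $C^2$ map $f\colon F_1 \supseteq U \rightarrow F_2$ between locally convex spaces and a curve $\gamma\colon D \rightarrow F_1$ that is continuous at $t$ and has a (possibly completion-valued) difference-quotient limit $X \in \ovl{F}_1$, we must show the difference quotients of $f\cp\gamma$ converge to $\ovl{\dd_{\gamma(t)}f}(X)$. The natural approach is to interpolate using the fundamental theorem of calculus along the segment from $\gamma(t)$ to $\gamma(t+h)$, rewriting the increment of $f$ as an integral of its derivative, and then to exploit continuity of $\gamma$ at $t$ (to control where the segment lies) together with the $C^2$ hypothesis (to control uniformity of the derivative along the shrinking segment). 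The completion appears only because the difference-quotient limit $X$ need not lie in $F_1$; the role of the $C^2$ assumption — rather than merely $C^1$ — is to give enough uniform continuity of $\dd f$ to pass to the limit cleanly, and this is the crux I expect to be delicate.

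First I would write, for $h$ small enough that the segment $[\gamma(t),\gamma(t+h)]$ stays in $U$ (possible since $\gamma$ is continuous at $t$ and $U$ is open, so $\gamma(t)\in U$ has a convex neighbourhood), the exact identity
\begin{align*}
	\textstyle f(\gamma(t+h))-f(\gamma(t))=\int_0^1 \dd_{\gamma(t)+\tau(\gamma(t+h)-\gamma(t))}f\big(\gamma(t+h)-\gamma(t)\big)\:\dd\tau,
\end{align*}
which is the $F_2$-valued Riemann integral form of the mean value theorem valid for $C^1$ maps. Dividing by $h$ and using linearity of $\dd_xf$ in its second slot, the integrand becomes $\dd_{\gamma(t)+\tau(\gamma(t+h)-\gamma(t))}f\big(1/h\cdot(\gamma(t+h)-\gamma(t))\big)$.

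Next I would treat the difference quotient $Y_h := 1/h\cdot(\gamma(t+h)-\gamma(t))$, which by hypothesis converges to $X$ in $\ovl{F}_1$, and the base point $x_{h,\tau} := \gamma(t)+\tau(\gamma(t+h)-\gamma(t))$, which converges to $\gamma(t)$ uniformly in $\tau\in[0,1]$ as $h\to 0$ (again by continuity of $\gamma$ at $t$). The goal reduces to showing
\begin{align*}
	\textstyle\lim_{h\to 0}\int_0^1 \dd_{x_{h,\tau}}f(Y_h)\:\dd\tau=\ovl{\dd_{\gamma(t)}f}(X).
\end{align*}
Here I would view $\dd f$ as the continuous map $(x,v)\mapsto \dd_xf(v)$ on $U\times F_1$, linear in $v$, and split $\dd_{x_{h,\tau}}f(Y_h)-\dd_{\gamma(t)}f(X)$ (interpreting the second term via the continuous extension when $X\in\ovl{F}_1\setminus F_1$) into a term measuring the movement of the base point and a term measuring $Y_h\to X$. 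For the base-point term I would apply Lemma \ref{alalskkskaskaskas} to the map $(x,v)\mapsto \dd_xf(v)-\dd_{\gamma(t)}f(v)$ — or more precisely use local bilinear estimates from that lemma on a compact neighbourhood of $\gamma(t)$ — together with the uniform continuity of $x\mapsto \dd_xf$ that the $C^2$ hypothesis supplies, to bound $\ovl{\pp}$ of the difference uniformly in $\tau$.

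The main obstacle is precisely this last passage to the limit in the presence of the completion: because $X\in\ovl{F}_1$ need not lie in $F_1$, the expression $\dd_{x_{h,\tau}}f(Y_h)$ mixes genuine $F_1$-vectors $Y_h$ with a limit $X$ living only in $\ovl{F}_1$, so one cannot directly substitute $X$ into $\dd_xf$. I would resolve this by fixing $\pp\in\SEM$, invoking Lemma \ref{alalskkskaskaskas} to obtain $\qq\in\SEMM_1$ and an open $O\ni\gamma(t)$ with $\pp(\dd_xf(v))\le \qq(v)$ for all $x\in O$, $v\in F_1$ (after shrinking to a compact neighbourhood), so that $\dd_xf$ extends continuously and $\ovl{\pp}(\ovl{\dd_xf}(w))\le\ovl{\qq}(w)$ for $w\in\ovl{F}_1$; this uniform bound lets me replace $Y_h$ by $X$ at the cost of $\ovl{\qq}(Y_h-X)\to 0$, and handle the base-point variation using continuity of $x\mapsto\dd_xf$ in the operator-norm sense guaranteed by $C^2$-ness. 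Assembling the two estimates and integrating the uniform bound over $\tau\in[0,1]$ yields the claim; the $C^2$ assumption is exactly what makes the base-point error uniformly small, which $C^1$ alone would not guarantee in this locally convex, completion-valued setting.
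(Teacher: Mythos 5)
Your overall architecture is sound and, in spirit, it is the same argument as the one the paper points to (the paper's ``proof'' is a citation of Lemma 7 in \cite{RGM}, which, as Remark \ref{sdsddssdsdasad} indicates, is by Taylor expansion with integral remainder --- your fundamental-theorem-of-calculus identity plus splitting is exactly the disassembled version of that). Moreover, half of your splitting is carried out correctly: Lemma \ref{alalskkskaskaskas} with $n=1$ and $\compacto=\{\gamma(t)\}$ gives $\pp(\dd_xf(v))\leq \qq(v)$ for all $x$ in some open $O\ni\gamma(t)$ and all $v\in F_1$; this equicontinuity bound passes to the continuous extensions, so $\ovl{\pp}\big(\ovl{\dd_{x_{h,\tau}}f}\he(Y_h-X)\big)\leq \ovl{\qq}(Y_h-X)\rightarrow 0$ uniformly in $\tau$, which legitimately replaces $Y_h$ by $X$.

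The gap is in the remaining base-point term $\big(\ovl{\dd_{x_{h,\tau}}f}-\ovl{\dd_{\gamma(t)}f}\big)(X)$, which you yourself identify as the crux but then justify by two mechanisms that do not work as stated. First, applying Lemma \ref{alalskkskaskaskas} to $(x,v)\mapsto \dd_xf(v)-\dd_{\gamma(t)}f(v)$ only yields an estimate $\pp\big(\dd_xf(v)-\dd_{\gamma(t)}f(v)\big)\leq \qq(v)$ valid on a fixed neighbourhood of $\gamma(t)$: this is a uniform \emph{bound}, with no decay whatsoever as $x_{h,\tau}\rightarrow \gamma(t)$, so it cannot produce the limit $0$. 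Second, ``continuity of $x\mapsto \dd_xf$ in the operator-norm sense'' is not an available notion here (there is no operator norm between locally convex spaces), and the $C^2$ hypothesis does not hand you such a statement --- it is precisely the thing that has to be proved. The missing idea, and the exact point where $C^2$ enters, is a second application of the fundamental theorem of calculus, now in the base variable: for $x_{h,\tau}$ in a convex neighbourhood of $\gamma(t)$ on which the Lemma \ref{alalskkskaskaskas}-estimate for $\dd^2f$ holds,
\begin{align*}
	\textstyle \dd_{x_{h,\tau}}f(v)-\dd_{\gamma(t)}f(v)=\int_0^1 \dd^2_{\gamma(t)+s\cdot(x_{h,\tau}-\gamma(t))}f\big(x_{h,\tau}-\gamma(t),v\big)\:\dd s,
\end{align*}
whence, applying Lemma \ref{alalskkskaskaskas} to $\dd^2 f$ (continuous and bilinear in the last two arguments because $f$ is $C^2$) and using $x_{h,\tau}-\gamma(t)=\tau h\cdot Y_h$,
\begin{align*}
	\textstyle \pp\big(\dd_{x_{h,\tau}}f(v)-\dd_{\gamma(t)}f(v)\big)\leq \qq(x_{h,\tau}-\gamma(t))\cdot \qq(v)= \tau\cdot |h|\cdot \qq(Y_h)\cdot \qq(v).
\end{align*}
Since $\qq(Y_h)\rightarrow \ovl{\qq}(X)$ is bounded, the base-point contribution is $O(|h|)$ uniformly in $\tau\in[0,1]$, and your argument closes; this is exactly the Taylor-remainder estimate of the paper's cited proof. (As an aside: one can also repair the step without $\dd^2f$ at all, by approximating $X$ with elements $v\in F_1$ and using the equicontinuity bound twice together with joint continuity of $\dd f$ at $(\gamma(t),v)$ --- a $3\varepsilon$-argument showing the lemma already holds for $C^1$ maps; but that is a different argument from the one you wrote.)
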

\begin{proof}
Confer, e.g., Lemma 7 in \cite{RGM}.
\end{proof}

\begin{remark}
\label{lkcxlkcxlkxlkcxpoidsoids}
Let $F$ be a Hausdorff locally convex vector space, let $U\subseteq F$ be open, and let $G$ be a Lie group. A map $f\colon U\rightarrow G$ is said to be   
\begingroup
\setlength{\leftmargini}{12pt}
\begin{itemize}
\item
differentiable at $x\in U$ \defff there exists a chart $(\chart',\U')$ of $G$ with $f(x)\in \U'$, such that 
\begin{align}
\label{pofdpofdpofdfd}
	\textstyle (D^{\chart'}_v f)(x):=\lim_{h\rightarrow 0} 1/h\cdot ((\chart'\cp f)(x+h\cdot v)-(\chart'\cp f)(x))\in E\qquad\quad\forall\: v\in F
\end{align}
exists. Then, Lemma \ref{sdsdds} applied to coordinate changes shows that \eqref{pofdpofdpofdfd} holds for one chart around $f(x)$ \deff it holds for each chart around $f(x)$ -- and that
\begin{align*}
	\ddd_x f(v):= \big(\dd_{\chart'(f(x))}\chart'^{-1} \cp (D^{\chart'}_v f)\big)(x)\in T_{f(x)}G \qquad\quad\forall\: v\in F	
\end{align*}  
is independent of the explicit choice of $(\chart',U')$.
\item
differentiable \defff $f$ is differentiable at each $x\in U$. \hspace*{\fill}$\ddagger$
\end{itemize}
\endgroup
\end{remark}
\noindent
In particular, Lemma \ref{alalskkskaskaskas} provides us with the following statements (cf.\ also Sect.\ 3.4.1 in \cite{RGM}): 
\begingroup
\setlength{\leftmargini}{22pt}
{
\renewcommand{\theenumi}{\Roman{enumi})} 
\renewcommand{\labelenumi}{\theenumi}
\begin{enumerate}
\item
\label{as1}
Since $\Ad\colon G\times \mg\ni (g,X)\mapsto \Ad_g(X)\in \mg$ is smooth as well as linear in the second argument (by Lemma \ref{alalskkskaskaskas}), to each compact $\compact\subseteq G$ and each $\vv\in \SEM$, there exists some $\vv\leq \ww\in \SEM$, such that  
	$\vvv\cp \Ad_g\leq \www$ holds for each  
	$g\in \compact$. 
\item
\label{as5}
By Lemma \ref{alalskkskaskaskas} applied to $\Phi\equiv \Ad$ and $\compacto\equiv \{e\}$, to each $\mm\in \SEM$, there exists some $\mm\leq \qq\in \SEM$, as well as $O\subseteq G$ symmetric open with $e\in O$, such that 
	$\mmm\cp \Ad_g\leq \qqq$ holds for each 
	$g\in O$.
\item
\label{as2}
Suppose that $\im[\mu]\subseteq \U$ holds for $\mu\in C^1([r,r'],G)$. Then, we have 
\begin{align}
\label{kldlkdldsl}
	\Der(\mu)=\dermapdiff(\chart\cp\mu,\partial_t(\chart\cp\mu)),
\end{align}      
for the smooth map 
	$\dermapdiff\colon \V\times E\ni (x,X)\mapsto \dd_{\chartinv(x)}\RT_{[\chartinv(x)]^{-1}}(\dd_x\chartinv(X))\in \mg$. 
Since $\dermapdiff$ is linear in the second argument, (by Lemma \ref{alalskkskaskaskas}) for each $\qq\in \SEM$, there exists some $\qq\leq \mm\in \SEM$ with
\begin{align}
\label{omegaklla}
	(\qqq\cp\dermapdiff)(x,X)\leq \mm(X)\qquad\quad\forall\: x\in \OB_{\mm,1},\:\: X\in E.
\end{align} 
\item
\label{as3}
Suppose that $\im[\mu]\subseteq \U$ holds for $\mu\in C^1([r,r'],G)$. Then, we have
\begin{align}
\label{ixxxsdsdoisdiosd}
	\partial_t\he(\chart\cp\mu)=\dermapinvdiff(\chart\cp\mu,\Der(\mu)),
\end{align}   
for the smooth map 
	$\dermapinvdiff\colon \V\times \mg\ni(x,X)\mapsto \big(\dd_{\chartinv(x)}\chart\cp \dd_{e}\RT_{\chartinv(x)}\big)(X)\in E$. 
Since $\dermapinvdiff$ is linear in the second argument, (by Lemma \ref{alalskkskaskaskas}) for each $\qq\in \SEM$, there exists some $\uu\leq\mm\in \SEM$ with
\begin{align*}
	(\uu\cp \dermapinvdiff)(x,X)\leq \mmm(X)\qquad\quad\forall\: x\in \OB_{\mm,1},\:\: X\in \mg.
\end{align*}
For each $\mu\in C^1([r,r'],G)$ with $\im[\chart\cp\mu]\subseteq \OB_{\mm,1}$, we thus obtain from \eqref{ixxxsdsdoisdiosd}, \eqref{isdsdoisdiosd}, and \eqref{isdsdoisdiosd1} that
\begin{align}
\label{sadsndsdsnmdsdsa}
\begin{split}
	\textstyle\uu(\chart\cp\mu)&\stackrel{}{=}\textstyle\uu\big(\int_r^\bullet \dermapinvdiff((\chart\cp\mu)(s),\Der(\mu)(s))\:\dd s\big)\textstyle\leq \int_r^\bullet \mmm(\Der(\mu)(s))\:\dd s.
\end{split}
\end{align}
\end{enumerate}}
\endgroup
\noindent
For instance, we immediately obtain from \eqref{sadsndsdsnmdsdsa} that
\begin{lemma}
\label{lkdslklkdslkdsldsds}
For each $\uu\in \SEM$, there exist $\uu\leq \mm\in \SEM$, and $U\subseteq G$ open with $e\in U$, such that
\begin{align*}
	\textstyle(\uu\cp\chart)(\innt_r^\bullet \chi)\leq \int_r^\bullet \mm(\chi(s))\: \dd s
\end{align*}
holds, for each $\chi\in \DIDE_{[r,r']}$ with $\innt_r^\bullet \chi \in U$; for all $[r,r']\in \COMP$. 
\end{lemma}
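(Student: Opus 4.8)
The statement is in essence a reformulation of the estimate \eqref{sadsndsdsnmdsdsa}, once the curve $\innt_r^\bullet\chi$ is recognized as a genuine $C^1$-solution of the logarithmic-derivative equation. The first thing I would do is make this identification precise: by the definition of the product integral together with the restriction behaviour of $\EV$, the curve $\innt_r^\bullet\chi\colon [r,r']\ni t\mapsto \innt_r^t\chi$ coincides with $\EV(\chi)$. In particular, for $\chi\in\DIDE_{[r,r']}$ it lies in $C_*^1([r,r'],G)$ and satisfies $\Der(\innt_r^\bullet\chi)=\chi$ with $(\innt_r^\bullet\chi)(r)=e$. Hence $\mu:=\innt_r^\bullet\chi$ is an admissible input for \eqref{sadsndsdsnmdsdsa}, and its right logarithmic derivative is exactly $\chi$.

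Next, given $\uu\in\SEM$, I would invoke property \ref{as3} to fix a seminorm $\uu\le\mm\in\SEM$ for which \eqref{sadsndsdsnmdsdsa} holds, i.e.\ for which $\uu(\chart\cp\nu)\le \int_r^\bullet \mmm(\Der(\nu)(s))\,\dd s$ for every $\nu\in C^1([r,r'],G)$ with $\im[\chart\cp\nu]\subseteq\OB_{\mm,1}$. Here it is worth emphasizing that the seminorm $\mm$ supplied by \ref{as3} is obtained solely from the linearity estimate of Lemma \ref{alalskkskaskaskas} applied to the smooth map $\dermapinvdiff$; it therefore depends only on $\uu$ and not on the compact interval, so that one and the same $\mm$ serves uniformly for all $[r,r']\in\COMP$. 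It remains only to convert the sublevel condition $\im[\chart\cp\mu]\subseteq\OB_{\mm,1}$ into an open neighbourhood of $e$.

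To this end I would set $U:=\chartinv(\V\cap\B_{\mm,1})=\{g\in\U \mid \mm(\chart(g))<1\}$, which is open in $G$ and contains $e$ because $\chart(e)=0$. If $\innt_r^\bullet\chi\in U$ (that is, $\im[\innt_r^\bullet\chi]\subseteq U$), then $\im[\chart\cp\innt_r^\bullet\chi]\subseteq\B_{\mm,1}\subseteq\OB_{\mm,1}$, so \eqref{sadsndsdsnmdsdsa} applies to $\mu=\innt_r^\bullet\chi$ and yields
\begin{align*}
	(\uu\cp\chart)(\innt_r^\bullet\chi)=\uu(\chart\cp\mu)\le \int_r^\bullet \mmm(\Der(\mu)(s))\,\dd s=\int_r^\bullet \mmm(\chi(s))\,\dd s,
\end{align*}
which is the asserted inequality (the right-hand side being what the statement abbreviates by $\int_r^\bullet\mm(\chi(s))\,\dd s$, under the convention $\mmm=\mm\cp\dd_e\chart$). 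As nothing in the argument refers to $[r,r']$ beyond the interval-independent choice of $\mm$, the bound holds simultaneously for all compact intervals. There is no substantial obstacle: the entire content is already packaged in \eqref{sadsndsdsnmdsdsa}, and the only two points requiring care are the identification $\innt_r^\bullet\chi=\EV(\chi)$, which guarantees that the input is a $C^1$-curve with right logarithmic derivative precisely $\chi$, and the observation that $\mm$ can be chosen independently of $[r,r']$ so that the single neighbourhood $U$ works across all of $\COMP$.
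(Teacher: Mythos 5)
Your proposal is correct and takes essentially the same route as the paper: the paper obtains this lemma "immediately" from \eqref{sadsndsdsnmdsdsa} (property \ref{as3}), exactly as you do, by taking $\mu=\innt_r^\bullet\chi=\EV(\chi)$ and choosing $U$ as a chart-sublevel neighbourhood of $e$ so that $\im[\chart\cp\mu]\subseteq \OB_{\mm,1}$. The details you spell out -- that $(\innt_r^\bullet\chi)(r)=e$ so \eqref{sadsndsdsnmdsdsa} applies with $\Der(\mu)=\chi$, and that $\mm$ (hence $U$) is independent of $[r,r']$ -- are precisely what the paper leaves implicit.
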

\noindent
Moreover,
\begin{lemma}
\label{Adlip}
We have $\Ad_\mu(\phi)\in C^k([r,r'],\mg)$ for each $\mu\in C^{k+1}([r,r'],G)$, $\phi\in C^k([r,r'],\mg)$, and $k\in \NN\sqcup\{\lip,\infty\}$.
\end{lemma}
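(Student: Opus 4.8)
The plan is to view $\Ad_\mu(\phi)$ as the composition $\Ad\cp(\mu,\phi)\colon[r,r']\ni t\mapsto\Ad(\mu(t),\phi(t))\in\mg$ and to treat the cases $k\in\NN\sqcup\{\infty\}$ and $k\equiveq\lip$ separately. For $k\in\NN\sqcup\{\infty\}$ little is needed: since $\mu\in C^{k+1}([r,r'],G)\subseteq C^k([r,r'],G)$ and $\phi\in C^k([r,r'],\mg)$, the curve $(\mu,\phi)\colon[r,r']\to G\times\mg$ is of class $C^k$; as $\Ad\colon G\times\mg\to\mg$ is smooth, the chain rule gives $\Ad\cp(\mu,\phi)\in C^k([r,r'],\mg)$ at once (for $k\equiveq 0$ this is merely continuity of a composition of continuous maps).

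It remains to handle $k\equiveq\lip$, where $\mu\in C^1([r,r'],G)$ and $\phi\in C^\lip([r,r'],\mg)$. Continuity of $\Ad_\mu(\phi)$ being clear, I would fix $\pp\in\SEM$ and produce $L\geq0$ with $\ppp(\Ad_\mu(\phi)(t)-\Ad_\mu(\phi)(t'))\leq L\cdot|t-t'|$ for all $t,t'\in[r,r']$, which suffices because the seminorms $\ppp$ generate the topology of $\mg$. To this end I split
\begin{align*}
\Ad_{\mu(t)}(\phi(t))-\Ad_{\mu(t')}(\phi(t'))=\Ad_{\mu(t)}\big(\phi(t)-\phi(t')\big)+\big(\Ad_{\mu(t)}-\Ad_{\mu(t')}\big)(\phi(t')).
\end{align*}
The first summand is controlled by the compactness estimate \ref{as1}: applied to $\compact:=\mu([r,r'])$ and to $\pp$ in place of $\vv$, it yields $\ww\in\SEM$ with $\ppp\cp\Ad_g\leq\www$ for all $g\in\compact$, so that this summand is bounded by $\www(\phi(t)-\phi(t'))\leq\Lip(\www,\phi)\cdot|t-t'|$, using that $\phi$ is Lipschitz.

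The hard part is the second summand, where I must bound $\ppp((\Ad_{\mu(t)}-\Ad_{\mu(t')})(Y))$ by $|t-t'|$ times a seminorm of $Y$, \emph{uniformly} in $Y$, before specializing $Y=\phi(t')$. The device is to differentiate along $\mu$: let $\Theta\colon TG\times\mg\to\mg$, $\Theta(v_g,Y):=\dd_{(g,Y)}\Ad(v_g,0)$, be the fibre part of the tangent map $\dd\Ad$ in the $G$-direction. This $\Theta$ is continuous (being a component of $\dd\Ad$) and linear in $Y$ (since $\Ad$ is linear in its second argument, so is its partial derivative in the first). For fixed $Y$, the curve $g_Y\colon s\mapsto\Ad_{\mu(s)}(Y)$ is then of class $C^1$ with $\dot g_Y(s)=\Theta(\dot\mu(s),Y)$, and $\dot\mu([r,r'])\subseteq TG$ is compact because $\mu$ is $C^1$. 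Lemma \ref{alalskkskaskaskas} applied to $\Theta$ (with the admissible topological parameter space $TG$) therefore furnishes $\nn\in\SEM$ and an open $O\supseteq\dot\mu([r,r'])$ with $\ppp(\Theta(w,Y))\leq\nnn(Y)$ for all $w\in O$ and $Y\in\mg$; combined with \eqref{isdsdoisdiosd1} this gives
\begin{align*}
\ppp\big(g_Y(t)-g_Y(t')\big)\leq\Big|\int_{t'}^t\ppp\big(\dot g_Y(s)\big)\,\dd s\Big|\leq|t-t'|\cdot\nnn(Y).
\end{align*}
Setting $Y=\phi(t')$ and $M:=\sup_{s\in[r,r']}\nnn(\phi(s))<\infty$ (finite since $\phi$ is continuous on a compact interval) bounds the second summand by $M\cdot|t-t'|$. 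Adding the two estimates yields the Lipschitz bound with $L:=\Lip(\www,\phi)+M$, so $\Ad_\mu(\phi)\in C^\lip([r,r'],\mg)$. I expect the only genuine obstacle to be this uniform-in-$Y$ Lipschitz estimate for the second summand; it is precisely here that the hypothesis $\mu\in C^{k+1}$ (for $\lip$: $C^1$) is used, as it makes $\dot\mu$ available as a continuous, hence compactly-imaged, curve into $TG$.
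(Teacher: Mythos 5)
Your proposal is correct and takes essentially the same route as the paper: the paper defers Lemma \ref{Adlip} to Lemma 13 in \cite{RGM}, but its own Appendix proof of the companion Lemma \ref{xccxcxcxcxyxycllvovo} runs on exactly your decomposition for the Lipschitz case --- the estimate \ref{as1} over the compact image of the curve for the first summand, and Lemma \ref{alalskkskaskaskas} applied to the derivative of $s\mapsto\Ad_{\mu(s)}(Y)$ combined with \eqref{isdsdoisdiosd1} for the second --- while the $C^k$ cases follow from smoothness of $\Ad$ and the chain rule. The only cosmetic difference is that you parametrize the derivative map over $TG$ (via $\Theta$ and the compact set $\dot\mu([r,r'])$), whereas the paper folds the curve into the parameter and works with $\alpha(s,X)=\partial_s\Ad_{\mu^{-1}(s)}(X)$ on $I\times\mg$ with the compact set $[r,r']$; the two formulations are interchangeable.
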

\begin{proof}
Confer, e.g., Lemma 13 in \cite{RGM}.
\end{proof}
\begin{lemma}
\label{opopsopsdopds}
	Let $[r,r']\in \COMP$, $k\in \NN\sqcup\{\infty\}$, and $\phi\in \DIDE_{[r,r']}^k$ be fixed. Then, 
	for each $\pp\in \SEM$ and $\dind\llleq k$, there exists some $\pp\leq \qq\in \SEM$ with
		\begin{align*}
		\ppp_\infty^\dindp\big(\Ad_{[\innt_r^\bullet\phi]^{-1}}(\psi)\big)\leq \qqq_\infty^\dindp(\psi)\qquad\quad\forall\: \psi\in C^k([r,r'],\mg),\:\: 0\leq \dindp\leq \dind.
	\end{align*}
\end{lemma}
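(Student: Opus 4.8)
The plan is to differentiate the curve $\chi:=\Ad_{[\innt_r^\bullet\phi]^{-1}}(\psi)$ explicitly and then invoke Lemma \ref{alalskkskaskaskas} once. First I would record the regularity of the ingredients: since $\phi\in\DIDE_{[r,r']}^k$, the curve $\innt_r^\bullet\phi=\EV(\phi)$ is of class $C^{k+1}$, hence so is $g:=\inv\cp\EV(\phi)=[\innt_r^\bullet\phi]^{-1}$; and Lemma \ref{Adlip} gives $\chi=\Ad_g(\psi)\in C^k([r,r'],\mg)$, so the derivatives $\chi^{(i)}$ exist for $0\le i\le\dind\le k$.

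The core of the argument is a structural formula for these derivatives: for each $i$ with $0\le i\le\dind$ there are maps $c_{i,0},\dots,c_{i,i}\colon[r,r']\times\mg\to\mg$, each jointly continuous and linear in the second argument, such that
\begin{align*}
	\textstyle\chi^{(i)}(t)=\sum_{j=0}^i c_{i,j}(t,\psi^{(j)}(t))\qquad\quad\forall\: t\in[r,r'].
\end{align*}
I would establish this by induction on $i$, starting from $c_{0,0}(t,X):=\Ad_{g(t)}(X)$ and differentiating termwise with the standard chain and product rules (cf.\ Appendix \ref{Diffcalc}). Each $c_{i,j}(t,X)$ is a composite of the smooth map $\Ad$ (and its differentials) with the $C^{k+1}$-curve $g$ and finitely many of its derivatives; differentiating in $t$ produces $(\partial_t c_{i,j})(t,X)$ together with a copy of $c_{i,j}(t,\cdot)$ evaluated at $\psi^{(j+1)}(t)$, and both summands are again jointly continuous and linear in $X$ (the regularity $g\in C^{k+1}$ ensures that all coefficients arising up to the top order $\dind\le k$ are continuous). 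The decisive point -- and the step I expect to demand the most care in Milnor's calculus -- is that $\Ad(g,\cdot)$ is linear, so that its differentials are linear in the $\mg$-slot and no second derivative in that direction ever arises; this is exactly what forces every term of $\chi^{(i)}$ to be linear in a single derivative $\psi^{(j)}(t)$ and what preserves linearity of the $c_{i,j}$ in $X$ under $\partial_t$.

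Granting the formula, the estimate is bookkeeping. Fix $\pp\in\SEM$ and $\dind\llleq k$. For each pair $(i,j)$ with $0\le j\le i\le\dind$ I would apply Lemma \ref{alalskkskaskaskas} to $\dd_e\chart\cp c_{i,j}\colon[r,r']\times\mg\to E$ -- continuous and linear in the second argument -- on the compact set $[r,r']$ with target seminorm $\pp$; since $\dd_e\chart$ is a topological isomorphism, the source seminorm it returns is dominated by some $\qqq_{i,j}$ with $\qq_{i,j}\in\SEM$, so that $\ppp(c_{i,j}(t,X))\le\qqq_{i,j}(X)$ for all $t\in[r,r']$ and $X\in\mg$. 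There are finitely many such pairs, so I would pick $\mm\in\SEM$ dominating $\pp$ and all the $\qq_{i,j}$, and set $\qq:=(\dind+1)\cdot\mm\in\SEM$, which satisfies $\qq\ge\pp$. Then, for every $\dindp$ with $0\le\dindp\le\dind$ and every $i\le\dindp$,
\begin{align*}
	\textstyle\ppp(\chi^{(i)}(t))\le\sum_{j=0}^i\mmm(\psi^{(j)}(t))\le(\dind+1)\cdot\mmm_\infty^\dindp(\psi)=\qqq_\infty^\dindp(\psi)\qquad\quad\forall\: t\in[r,r'],
\end{align*}
because $j\le i\le\dindp$ yields $\mmm(\psi^{(j)}(t))\le\mmm_\infty^\dindp(\psi)$. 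Taking the supremum over $i\le\dindp$ and $t\in[r,r']$ gives $\ppp_\infty^\dindp(\Ad_{[\innt_r^\bullet\phi]^{-1}}(\psi))\le\qqq_\infty^\dindp(\psi)$, with the same $\qq$ serving every $\dindp\le\dind$ -- which is the assertion.
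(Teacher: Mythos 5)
Your proposal is correct. Note that the paper does not actually prove this lemma in-house: its ``proof'' is a citation to Lemma 14 in \cite{RGM}, so there is no internal argument to compare against line by line. Your route -- expand $\chi^{(i)}(t)=\sum_{j=0}^{i}c_{i,j}(t,\psi^{(j)}(t))$ with $c_{i,j}(t,X)=\binom{i}{j}(\partial_t^{\,i-j}A)(t,X)$ for $A(t,X)=\Ad_{g(t)}(X)$, using that linearity of $\Ad(g,\cdot)$ in the $\mg$-slot is preserved under $\partial_t$, and then estimate each coefficient map via Lemma \ref{alalskkskaskaskas} and the isomorphism $\dd_e\chart$ -- is exactly the toolkit this paper deploys for the Lipschitz analogue (Lemma \ref{xccxcxcxcxyxycllvovo}, Appendix \ref{asassadsdsdsdsdsdsds}), where the coefficient $\alpha(s,X)=\partial_s\Ad_{\mu^{-1}(s)}(X)$ is precisely your $c_{1,0}$ and is estimated by the same multilinearity lemma; your induction is the natural higher-order generalization. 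All the delicate points are handled: $\innt_r^\bullet\phi\in C^{k+1}$ justifies continuity of $\partial_t^{\,m}A$ up to the orders needed ($m\leq\dind\leq k$); linearity of the $c_{i,j}$ in $X$ survives differentiation because difference quotients and limits of linear maps are linear; the conversion between continuous seminorms on $\mg$ and elements of $\SEM$ via $\dd_e\chart$ is legitimate (this is how the paper itself defines $\ppp$); and the single choice $\qq:=(\dind+1)\cdot\mm$ with $\mm$ dominating $\pp$ and the finitely many $\qq_{i,j}$ indeed serves all $\dindp\leq\dind$ simultaneously, since $\qqq_\infty^\dindp=(\dind+1)\cdot\mmm_\infty^\dindp$ and $j\leq i\leq\dindp$ keeps every term below $\mmm_\infty^\dindp(\psi)$. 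The one cosmetic imprecision is that Lemma \ref{alalskkskaskaskas} asks for a topological space $X$ containing the compact set, so one should formally work on an open interval $I\supseteq[r,r']$ on which the $C^{k+1}$-extension of $\innt_r^\bullet\phi$ lives (or take $X=[r,r']$ itself); this changes nothing in the estimate.
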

\begin{proof}
\noindent
Confer, e.g., Lemma 14 in \cite{RGM}.
\end{proof}
\noindent
Then, modifying the argumentation used in the proof of the Lipschitz case in Lemma 13 in \cite{RGM} to our deviating convention concerning the topology on the set of Lipschitz curves, we also obtain
\begin{lemma}
	\label{xccxcxcxcxyxycllvovo}
	Let $[r,r']\in \COMP$, and $\phi\in \DIDE_{[r,r']}$ be fixed. Then, 
	for each $\pp\in \SEM$, there exists some $\pp\leq \qq\in \SEM$ with
		\begin{align*}
		\ppp^\lip_\infty\big(\Ad_{[\innt_r^\bullet\phi]^{-1}}(\psi)\big)\leq \qqq_\infty^\lip(\psi)\qquad\quad\forall\: \psi\in C^\lip([r,r'],\mg).
	\end{align*}
\end{lemma}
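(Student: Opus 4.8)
The plan is to estimate the two constituents of $\ppp^\lip_\infty$ separately: the sup-seminorm $\sup_s\ppp(\Gamma(s))$ and the Lipschitz constant $\Lip(\ppp,\Gamma)$, where I write $g(s):=[\innt_r^s\phi]^{-1}$ and $\Gamma(s):=\Ad_{g(s)}(\psi(s))$. Since $g=\inv\cp\EV(\phi)$ is of class $C^1$ and $\psi\in C^\lip$, Lemma \ref{Adlip} (with $k\equiv\lip$) already gives $\Gamma\in C^\lip([r,r'],\mg)$, so only the quantitative bound is at stake. Two observations will be used throughout: first, $\dd_e\chart\colon\mg\rightarrow E$ is an isomorphism of topological vector spaces, so the seminorms $\qqq=\qq\cp\dd_e\chart$ ($\qq\in\SEM$) generate the topology of $\mg$, and I may phrase every estimate in terms of them, using freely that $\SEM$ is directed and stable under positive rescaling; second, $\compact:=\im[g]$ is a compact subset of $G$.

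First I would apply \ref{as1} to $\compact$ and $\pp$ to obtain $\pp\leq\ww\in\SEM$ with $\ppp\cp\Ad_{g(s)}\leq\www$ for all $s\in[r,r']$. This instantly bounds the sup-part, $\ppp(\Gamma(s))\leq\www(\psi(s))\leq\www_\infty(\psi)$. For the Lipschitz constant I would decompose, for $s\neq s'$,
\begin{align*}
	\Gamma(s)-\Gamma(s')=\Ad_{g(s)}\big(\psi(s)-\psi(s')\big)+\big(\Ad_{g(s)}-\Ad_{g(s')}\big)(\psi(s')),
\end{align*}
and control the first summand again by $\ww$ together with the Lipschitz property of $\psi$, giving $\ppp(\Ad_{g(s)}(\psi(s)-\psi(s')))\leq\www(\psi(s)-\psi(s'))\leq\www^\lip_\infty(\psi)\cdot|s-s'|$.

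The crux is the second summand, where the group element — not the vector — varies; this is precisely the part that the $C^0$-topology computation of the Lipschitz case of Lemma 13 in \cite{RGM} does not track, and where the present refinement lives. Here I would use that, for fixed $X\in\mg$, the curve $s\mapsto\Ad_{g(s)}(X)$ is of class $C^1$ with
\begin{align*}
	\textstyle\partial_s\Ad_{g(s)}(X)=-\Ad_{g(s)}\big(\bl\phi(s),X\br\big),
\end{align*}
a standard differentiation rule following from the product rule \eqref{LGPR}, the identity $\Der(g)=-\Ad_{g}(\phi)$, and $\Ad$ being a homomorphism of Lie algebras. By \eqref{isdsdoisdiosd} this yields the integral representation $(\Ad_{g(s)}-\Ad_{g(s')})(X)=-\int_{s'}^s\Ad_{g(\sigma)}(\bl\phi(\sigma),X\br)\:\dd\sigma$. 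Applying Lemma \ref{alalskkskaskaskas} to the map $(h,Y,Z)\mapsto\Ad_{h}(\bl Y,Z\br)$ over the compact set $\compact$ provides $\mm,\nn\in\SEM$ with $\ppp(\Ad_{h}(\bl Y,Z\br))\leq\mmm(Y)\cdot\nnn(Z)$ for all $h\in\compact$; setting $M:=\sup_{\sigma\in[r,r']}\mmm(\phi(\sigma))<\infty$ and invoking \eqref{ffdlkfdlkfd} then gives $\ppp((\Ad_{g(s)}-\Ad_{g(s')})(\psi(s')))\leq M\cdot\nnn_\infty(\psi)\cdot|s-s'|$. Combining the two summands, $\Lip(\ppp,\Gamma)\leq\www^\lip_\infty(\psi)+M\cdot\nnn_\infty(\psi)$.

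To finish, I would choose $\qq\in\SEM$ dominating $\pp,\ww,\nn$; then both constituents are bounded by $(1+M)\cdot\qqq^\lip_\infty(\psi)$, and replacing $\qq$ by $(1+M)\cdot\qq\in\SEM$ yields the asserted inequality. The main obstacle is exactly the uniform-in-$\sigma$ estimate of the third paragraph: one must absorb the fixed, merely continuous curve $\phi$ into a single constant $M$ that is independent of $X$ and of $s,s'$. This is what the differentiation formula together with the multilinear estimate Lemma \ref{alalskkskaskaskas} (and the compactness of $\im[g]$) accomplish; without the bracket formula, differentiating $\Ad_{g(\sigma)}(X)$ directly would leave $\dot g(\sigma)$ in a moving tangent space, and the available tools would not apply cleanly.
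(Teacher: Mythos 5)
Your proof is correct and takes essentially the same route as the paper's: the same decomposition of $\Gamma(s)-\Gamma(s')$ into two summands, the same use of \ref{as1} over the compact set $\im[g]$ for the sup-part and the first summand, and the same differentiate-then-integrate argument (fundamental theorem of calculus plus Lemma \ref{alalskkskaskaskas} over a compact set) for the second summand, followed by the same rescaling trick to absorb the constants into $\qq$. The only difference is presentational: you compute $\partial_s\Ad_{g(s)}(X)=-\Ad_{g(s)}\big(\bl\phi(s),X\br\big)$ explicitly and estimate the bilinear map $(h,Y,Z)\mapsto\Ad_h(\bl Y,Z\br)$, whereas the paper treats the map $(s,X)\mapsto\partial_s\Ad_{\mu^{-1}(s)}(X)$ abstractly as continuous and linear in $X$ — your explicit formula in fact supplies the continuity that the paper merely asserts.
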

\begin{proof}
Confer Appendix \ref{asassadsdsdsdsdsdsds}.
\end{proof}

\subsection{Continuity Statements}
\label{ooiufdiuafjnfdsxyncx}
For $h\in G$, we define $\chart_h(g):=\chart(h^{-1}\cdot g)$ for each $g\in h\cdot \U$; 
and recall that, cf.\ Lemma 8 in \cite{RGM}
\begin{lemma}
\label{fhfhfhffhaaaa}
Let $\compact\subseteq \U$ be compact. Then, for each $\pp\in\SEM$, there exists some $\pp\leq \uu\in \SEM$, and a symmetric open neighbourhood $V\subseteq\U$ of $e$ with $\compact\cdot V\subseteq \U$ and $\OB_{\uu,1}\subseteq \chart(V)$, 
such that 
\begin{align*}
	\pp(\chart(q)-\chart(q'))\leq \uu(\chart_{g\cdot h}(q)-\chart_{g\cdot h}(q'))\qquad\quad \forall\: q,q'\in g\cdot V,\:\: h\in V
\end{align*} 
holds for each $g\in \compact$. 
\end{lemma}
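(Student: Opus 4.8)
The plan is to reduce the asymmetric estimate to a uniform Lipschitz bound on the transition map induced by left translation, and then to control that transition map by the fundamental theorem of calculus together with Lemma \ref{alalskkskaskaskas}. For $g\in\compact$ and $h$ near $e$, consider the smooth map
\[
	\Psi_{g\cdot h}:=\chart\cp\LT_{g\cdot h}\cp\chartinv ,
\]
defined on a neighbourhood of $0$ in $\V$. Its decisive feature is that whenever $q$ satisfies $(g\cdot h)^{-1}\cdot q\in\U$ one has $\chart(q)=\Psi_{g\cdot h}(\chart_{g\cdot h}(q))$, because $\chartinv\cp\chart$ is the identity on $\U$. Consequently
\[
	\chart(q)-\chart(q')=\Psi_{g\cdot h}(\chart_{g\cdot h}(q))-\Psi_{g\cdot h}(\chart_{g\cdot h}(q')),
\]
so it suffices to produce a seminorm $\uu\geq\pp$ together with neighbourhoods such that $\pp(\Psi_{g\cdot h}(y)-\Psi_{g\cdot h}(y'))\leq\uu(y-y')$, uniformly in $g\in\compact$ and $h\in V$, for $y,y'$ in the range that the points $\chart_{g\cdot h}(q)$ with $q\in g\cdot V$ actually occupy.

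First I would fix open neighbourhoods $\U_0\ni e$ and $\V_0\ni 0$ (with $\V_0\subseteq\V$) small enough that $g\cdot h\cdot\chartinv(z)\in\U$ for all $g\in\compact$, $h\in\U_0$, $z\in\V_0$; this is possible by compactness of $\compact$, continuity of the group operations, and $\chart(e)=0$ (at $h=e$, $z=0$ the point is just $g\in\U$). On $\compact\times\U_0\times\V_0\times E$ the map $((g,h,z),X)\mapsto\dd_z\Psi_{g\cdot h}(X)$ is continuous and linear in $X$, by smoothness of the group operations and of the chart, so Lemma \ref{alalskkskaskaskas} applied to the compact set $\compacto:=\compact\times\{e\}\times\{0\}$ yields a seminorm in $\SEM$, which after replacing it by its maximum with $\pp$ I call $\uu$, and an open $O\supseteq\compacto$ with
\[
	\pp(\dd_z\Psi_{g\cdot h}(X))\leq\uu(X)\qquad\forall\,(g,h,z)\in O,\ X\in E.
\]
By the tube lemma (here compactness of $\compact$ is used) there are a symmetric open $V_1\ni e$ and a convex open $W\ni 0$ with $\compact\times V_1\times W\subseteq O$, $V_1\subseteq\U_0$ and $W\subseteq\V_0$. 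Since $W$ is convex and lies in the domain of every $\Psi_{g\cdot h}$, the fundamental theorem of calculus and the estimate \eqref{ffdlkfdlkfd} give, for all $y,y'\in W$,
\[
	\pp\big(\Psi_{g\cdot h}(y)-\Psi_{g\cdot h}(y')\big)\leq\int_0^1\pp\big(\dd_{y'+t(y-y')}\Psi_{g\cdot h}(y-y')\big)\,\dd t\leq\uu(y-y').
\]

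It then remains to choose $V$. I would take a symmetric open neighbourhood $V\subseteq\U\cap V_1$ of $e$ small enough that $\compact\cdot V\subseteq\U$ and $V\cdot V\subseteq\chartinv(W)$, the latter being possible since $\chartinv(W)$ is a neighbourhood of $e$. For $q,q'\in g\cdot V$ and $h\in V$ one has $(g\cdot h)^{-1}\cdot q,(g\cdot h)^{-1}\cdot q'\in h^{-1}\cdot V\subseteq V\cdot V$, whence $\chart_{g\cdot h}(q),\chart_{g\cdot h}(q')\in\chart(V\cdot V)\subseteq W$; the displayed estimate thus applies with $y=\chart_{g\cdot h}(q)$ and $y'=\chart_{g\cdot h}(q')$, and since $\Psi_{g\cdot h}(\chart_{g\cdot h}(q))=\chart(q)$ this is exactly the asserted inequality. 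Finally $\chart(V)$ is an open neighbourhood of $0$, hence contains some $\B_{\ww,\varepsilon}$, and enlarging $\uu$ to $\max(\uu,(2/\varepsilon)\cdot\ww)$ — which only strengthens the estimate above and leaves $V$ untouched — secures $\OB_{\uu,1}\subseteq\chart(V)$.

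The conceptual content is short, but the genuine work is the simultaneous bookkeeping of neighbourhoods: the choices must be ordered so that the domains of all $\Psi_{g\cdot h}$ contain the convex set $W$ (needed for the fundamental theorem of calculus), so that $\chart_{g\cdot h}(g\cdot V)\subseteq W$, and so that the terminal requirements $\compact\cdot V\subseteq\U$ and $\OB_{\uu,1}\subseteq\chart(V)$ hold, all uniformly over the compact set $\compact$. I expect this uniformity over $\compact$ to be the main obstacle, and it is precisely what forces the passage from Lemma \ref{alalskkskaskaskas} through the tube lemma; the convexity of $W$ is the other load-bearing ingredient, since it is what lets the pointwise derivative bound integrate up to the desired Lipschitz-type comparison.
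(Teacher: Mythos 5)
Your proposal is correct, and all the bookkeeping goes through: the identity $\chart(q)=\Psi_{g\cdot h}(\chart_{g\cdot h}(q))$ is legitimate on the sets you construct, Lemma \ref{alalskkskaskaskas} applies to $((g,h,z),X)\mapsto \dd_z\Psi_{g\cdot h}(X)$ (jointly continuous by smoothness of multiplication and the chart, linear in $X$), the tube-lemma step and the convexity of $W$ justify the integrated estimate via \eqref{isdsdoisdiosd} and \eqref{ffdlkfdlkfd}, and the final enlargement of $\uu$ to secure $\OB_{\uu,1}\subseteq\chart(V)$ introduces no circularity since it only strengthens the right-hand side. Note that the paper itself gives no proof of this lemma -- it is quoted from Lemma 8 in \cite{RGM} -- so your argument supplies exactly the omitted content, and it does so by the natural route that the paper's own toolkit (the multilinear estimate plus the fundamental theorem of calculus on a convex chart neighbourhood, uniformly over the compact set) suggests.
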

\noindent
Now, combining Lemma \ref{lkdslklkdslkdsldsds} with Lemma \ref{fhfhfhffhaaaa}, we obtain the following variation of Proposition 1 in \cite{RGM}:
\begin{lemma}
	\label{hghghggh}
	For each $\pp\in \SEM$, there exist $\pp\leq\qq\in \SEM$ and $V\subseteq G$ open with $e\in V$, such that 
		\begin{align*}
		\textstyle\pp\big(\chart\big(\innt_r^\bullet\phi\big)-\chart\big(\innt_r^\bullet\psi\big)\big)\leq \int_r^\bullet\qqq(\phi(s)-\psi(s))\:\dd s
	\end{align*}
	holds for all $\phi,\psi\in \DIDE_{[r,r']}$ with $\innt_r^\bullet\phi,\:\innt_r^\bullet\psi\in V$; for each $[r,r']\in \COMP$.
	\end{lemma}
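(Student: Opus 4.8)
The plan is to reduce the two-curve difference on the left-hand side to an estimate for a \emph{single} product integral, and then to invoke Lemma~\ref{lkdslklkdslkdsldsds}. Write $\mu := \innt_r^\bullet\phi$ and $\nu := \innt_r^\bullet\psi$, and set $\chi := \Ad_{[\innt_r^\bullet\phi]^{-1}}(\psi-\phi)$, which lies in $\DIDE_{[r,r']}$ by \ref{kdskdsdkdslkds}; that same identity gives $\mu(t)^{-1}\cdot\nu(t) = \innt_r^t\chi$ for all $t$. The idea is to shift the centre of the chart to the point $\mu(t)$: since $\chart_{\mu(t)}(\mu(t)) = \chart(e) = 0$ and $\chart_{\mu(t)}(\nu(t)) = \chart(\mu(t)^{-1}\cdot\nu(t)) = \chart(\innt_r^t\chi)$, the difference $\chart(\mu(t))-\chart(\nu(t))$ gets converted into the single chart value $\chart(\innt_r^t\chi)$, at the cost of a controlled change of seminorm supplied by Lemma~\ref{fhfhfhffhaaaa}.

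Concretely, I would proceed in three steps, producing a chain $\pp\leq\uu\leq\mm\leq\qq$ of seminorms. First, apply Lemma~\ref{fhfhfhffhaaaa} with $\compact = \{e\}$ to obtain $\pp\leq\uu\in\SEM$ and a symmetric open $V_1\ni e$; taking $g=e$ and $h=q=\mu(t)$, $q'=\nu(t)$ (all in $V_1$) yields $\pp(\chart(\mu(t))-\chart(\nu(t)))\leq\uu(\chart(\innt_r^t\chi))$. Second, apply Lemma~\ref{lkdslklkdslkdsldsds} to $\uu$ to obtain $\uu\leq\mm\in\SEM$ and open $U\ni e$ with $(\uu\cp\chart)(\innt_r^t\chi)\leq\int_r^t\mmm(\chi(s))\,\dd s$ as soon as $\innt_r^\bullet\chi\in U$. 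Third, property \ref{as5} applied to $\mm$ provides $\mm\leq\qq\in\SEM$ and a symmetric open $O\ni e$ with $\mmm\cp\Ad_g\leq\qqq$ for all $g\in O$; once $V$ is chosen inside $O$ below, $\mu(s)^{-1}\in O$, so that $\mmm(\chi(s)) = \mmm(\Ad_{\mu(s)^{-1}}(\psi(s)-\phi(s)))\leq\qqq(\phi(s)-\psi(s))$. Chaining the three estimates for every $t\in[r,r']$ then yields the claim.

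The only genuine work is to arrange the neighbourhoods so that all three steps apply at once, and this is the step I expect to be the main obstacle — not because it is deep, but because several constraints must hold simultaneously: $V$ must sit inside $V_1$ (so the basepoint shift of Lemma~\ref{fhfhfhffhaaaa} is valid for $q,q',h\in V$), inside $O$ (so $\Ad_{\mu(s)^{-1}}$ is controlled), and must satisfy $V\cdot V\subseteq U$ (so that $\innt_r^t\chi = \mu(t)^{-1}\cdot\nu(t)$ stays in the domain $U$ of Lemma~\ref{lkdslklkdslkdsldsds}, using that $V$ is symmetric). Fixing $V_1$, $U$, $O$ in the order above and then choosing a symmetric open $V\subseteq V_1\cap O$ with $V\cdot V\subseteq U$, via continuity of the group operations at $e$, settles this. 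For any $\phi,\psi\in\DIDE_{[r,r']}$ with $\innt_r^\bullet\phi,\innt_r^\bullet\psi\in V$ every value $\mu(t),\nu(t),\mu(s)^{-1},\innt_r^t\chi$ then lies where required, and the resulting $\qq$ and $V$ depend only on $\pp$, exactly as claimed.
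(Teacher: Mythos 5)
Your proposal is correct and follows essentially the same route as the paper's own proof: the same chain $\pp\leq\uu\leq\mm\leq\qq$ obtained from Lemma \ref{fhfhfhffhaaaa} (with $\compact=\{e\}$, $g=e$, $h=q=\innt_r^t\phi$), the identity \textrm{\ref{kdskdsdkdslkds}}, Lemma \ref{lkdslklkdslkdsldsds}, and \ref{as5}, with the same final shrinking of $V$ (symmetric, $V\cdot V^{-1}\subseteq U$, $V\subseteq O$). There is no substantive difference.
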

	\begin{proof}
	We let $\pp\leq\uu\in \SEM$ and $V$ be as in Lemma \ref{fhfhfhffhaaaa} for $\compact\equiv \{e\}$ there (i.e., $V$ is symmetric with $\OB_{\uu,1}\subseteq \chart(V)$). We choose $U\subseteq G$ and $\uu\leq \mm\in \SEM$ as in Lemma \ref{lkdslklkdslkdsldsds}. We furthermore let $\mm\leq \qq\in \SEM$ and $O\subseteq G$ be as in \ref{as5}. Then, shrinking $V$ if necessary, we can assume that $V^{-1}\cdot V \subseteq U$ as well as $V\subseteq O$ holds.   
Then, for $\phi,\psi$ as in the presumptions, Lemma \ref{fhfhfhffhaaaa} applied to $q\equiv\innt_r^\bullet\phi,\:q'\equiv\innt_r^\bullet\psi,\: h\equiv\innt_r^\bullet\phi\in V$, and $g\equiv e$ gives 
	\begin{align*}
		\textstyle\pp\big(\chart\big(\innt_r^\bullet\phi\big)-\chart\big(\innt_r^\bullet\psi\big)\big)\leq
		\textstyle\uu\big(\chart_{\innt_r^\bullet\phi}\big(\innt_r^\bullet\phi\big)-\chart_{\innt_r^\bullet\phi}\big(\innt_r^\bullet\psi\big)\big)= \textstyle(\uu\cp \chart)\big([\innt_r^\bullet\phi]^{-1}[\innt_r^\bullet\psi]\big).
	\end{align*} 
	By assumption, for each $t\in [r,r']$, we have
	\begin{align*}
		\textstyle U\supseteq V^{-1}\cdot V\ni [\innt_r^t\phi]^{-1}[\innt_r^t\psi]\stackrel{\textrm{\ref{kdskdsdkdslkds}}}{=} \innt_r^t \Ad_{[\innt_r^\bullet \phi]^{-1}}(\psi-\phi)\qquad\text{with}\qquad [\innt_r^\bullet \phi]^{-1}\in V^{-1}=V\subseteq  O.
	\end{align*}
	We obtain from  Lemma \ref{lkdslklkdslkdsldsds} and \ref{as5} that
\begin{align*}
	\textstyle(\uu\cp \chart)\big([\innt_r^t\phi]^{-1}[\innt_r^t\psi]\big)\leq \int_r^{t}\mmm\big(\Ad_{[\innt_r^s \phi]^{-1}}(\psi(s)-\phi(s))\big)\:\dd s \leq \int_r^{t}\qqq(\psi(s)-\phi(s))\:\dd s
\end{align*}  
holds for each $t\in [r,r']$; which proves the claim.
\end{proof}
\noindent
We furthermore observe that
\begin{lemma}
\label{fddfxxxxfd}
Suppose that $\exp\colon \dom[\exp]\rightarrow G$ is continuous; and let $X\in \dom[\exp]$ be fixed. Then, for each open neighbourhood $V\subseteq G$ of $e$,  
there exists some $\mm\in \SEM$, such that 
\begin{align*}
	\textstyle\mmm(Y-X)\leq 1\quad\:\text{for}\quad\: Y\in \dom[\exp]\qquad\quad\Longrightarrow\qquad\quad \innt_0^\bullet \phi_Y|_{[0,1]} \in \innt_0^\bullet \phi_X|_{[0,1]}\cdot V.
\end{align*} 
\end{lemma}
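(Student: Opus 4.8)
The plan is to reduce the asserted \emph{uniform} (in the curve parameter $t$) containment to a pointwise continuity statement at each point of the one-parameter group generated by $X$, and then to globalize it by compactness of $[0,1]$. Using \eqref{odaidaooipidadais} together with $\RR\cdot\dom[\exp]\subseteq\dom[\exp]$, one has $\innt_0^t\phi_Z|_{[0,1]}=\exp(t\cdot Z)$ for every $Z\in\dom[\exp]$ and $t\in[0,1]$; hence the claimed inclusion $\innt_0^\bullet\phi_Y|_{[0,1]}\in\innt_0^\bullet\phi_X|_{[0,1]}\cdot V$ is literally the statement that $\exp(tX)^{-1}\cdot\exp(tY)\in V$ holds for all $t\in[0,1]$. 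I would fix a symmetric open neighbourhood $W\subseteq G$ of $e$ with $W\cdot W\subseteq V$, and for each reference parameter $t_0\in[0,1]$ factorize $\exp(tX)^{-1}\exp(tY)=[\exp(tX)^{-1}\exp(t_0X)]\cdot[\exp(t_0X)^{-1}\exp(tY)]$, aiming to force both bracketed factors into $W$ for $t$ near $t_0$ and $Y$ near $X$.

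For the local step I would use two inputs. Continuity of the fixed curve $t\mapsto\exp(tX)=\innt_0^t\phi_X|_{[0,1]}$ at $t_0$ yields $\epsilon_{t_0}>0$ such that $\exp(t_0X)^{-1}\exp(tX)\in W$ for $|t-t_0|<\epsilon_{t_0}$; since $W=W^{-1}$, the first factor then lies in $W$. For the second factor I would invoke continuity of $\exp$ at the point $t_0\cdot X\in\dom[\exp]$: as $\SEM$ is directed, this provides $\qq_{t_0}\in\SEM$ and $\theta_{t_0}>0$ with $\exp(t_0X)^{-1}\exp(Z)\in W$ whenever $Z\in\dom[\exp]$ satisfies $\qq_{t_0}(\dd_e\chart(Z-t_0\cdot X))<\theta_{t_0}$. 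Applying this to $Z=t\cdot Y\in\dom[\exp]$ and using $t\cdot Y-t_0\cdot X=t\cdot(Y-X)+(t-t_0)\cdot X$ with $t\in[0,1]$, one obtains $\qq_{t_0}(\dd_e\chart(tY-t_0X))\le \qq_{t_0}(\dd_e\chart(Y-X))+|t-t_0|\cdot\qq_{t_0}(\dd_e\chart(X))$. Consequently the second factor lies in $W$ provided $\qq_{t_0}(\dd_e\chart(Y-X))<\theta_{t_0}/2$ and $|t-t_0|<\delta_{t_0}:=\min\{\epsilon_{t_0},\,\theta_{t_0}/(2\qq_{t_0}(\dd_e\chart(X))+2)\}$; under these two smallness conditions $\exp(tX)^{-1}\exp(tY)\in W\cdot W\subseteq V$.

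Finally I would globalize. The intervals $(t_0-\delta_{t_0},t_0+\delta_{t_0})$, $t_0\in[0,1]$, cover the compact set $[0,1]$, so finitely many of them — for reference points $t_1,\dots,t_N$, say — suffice. Put $\qq:=\max_{1\le i\le N}\qq_{t_i}\in\SEM$ (directedness of $\SEM$) and $\theta:=\tfrac12\min_{1\le i\le N}\theta_{t_i}>0$. Then any $Y\in\dom[\exp]$ with $\qq(\dd_e\chart(Y-X))<\theta$ satisfies $\exp(tX)^{-1}\exp(tY)\in V$ for every $t\in[0,1]$: indeed $t$ lies in the $i$-th cover interval for some $i$, and there $\qq_{t_i}(\dd_e\chart(Y-X))\le\qq(\dd_e\chart(Y-X))<\theta\le\theta_{t_i}/2$ and $|t-t_i|<\delta_{t_i}$. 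Taking $\mm:=(2/\theta)\cdot\qq\in\SEM$, the hypothesis $\mmm(Y-X)=\mm(\dd_e\chart(Y-X))\le 1$ forces $\qq(\dd_e\chart(Y-X))\le\theta/2<\theta$, which yields the lemma in the stated normalization. I expect the only genuine obstacle to be the uniformity in $t$: since joint continuity of $(t,Y)\mapsto\exp(tY)$ is not available a priori, it is essential to compare $\exp(tX)$ and $\exp(tY)$ through the intermediate point $\exp(t_0X)$ — separating the fixed ray (handled by continuity of the curve) from the variation of the initial condition (handled by continuity of $\exp$) — after which compactness of $[0,1]$ closes the argument.
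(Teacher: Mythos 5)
Your proof is correct, and it shares the paper's global skeleton: reduce via \eqref{odaidaooipidadais} to showing $\exp(t\cdot X)^{-1}\cdot\exp(t\cdot Y)\in V$ for all $t\in[0,1]$, establish a local statement near each parameter value, pass to a finite subcover of the compact interval $[0,1]$, intersect the resulting neighbourhoods of $X$, and convert that intersection into the seminorm condition $\mmm(Y-X)\leq 1$. Where you genuinely differ is the local step. The paper simply observes that $\alpha\colon[0,1]\times\dom[\exp]\ni(t,Y)\mapsto\exp(t\cdot X)^{-1}\cdot\exp(t\cdot Y)$ is \emph{jointly} continuous with $\alpha(\cdot,X)=e$, and then runs a tube-lemma argument at each point $(\tau,X)$, producing product neighbourhoods $I_\tau\times O_\tau$ on which $\alpha$ lands in $V$. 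You avoid joint continuity altogether: you factor $\exp(tX)^{-1}\exp(tY)=[\exp(tX)^{-1}\exp(t_0X)]\cdot[\exp(t_0X)^{-1}\exp(tY)]$ and control the two factors separately, using continuity of the fixed curve $t\mapsto\exp(tX)$ and continuity of $\exp$ at $t_0\cdot X$ together with the affine estimate coming from $t\cdot Y-t_0\cdot X=t\cdot(Y-X)+(t-t_0)\cdot X$; this is sound, and all steps (including the choice $\mm=(2/\theta)\cdot\qq$ at the end) check out. However, the premise motivating your extra work --- that joint continuity of $(t,Y)\mapsto\exp(t\cdot Y)$ ``is not available a priori'' --- is mistaken: scalar multiplication $\RR\times\mg\rightarrow\mg$ is jointly continuous and maps $[0,1]\times\dom[\exp]$ into $\dom[\exp]$ since $\RR\cdot\dom[\exp]\subseteq\dom[\exp]$ (cf.\ Sect.\ \ref{kfdlkfdlkfdscvpdfpofdofd}), so composing with the continuous map $\exp$ and the continuous group operations immediately gives joint continuity of $\alpha$. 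Consequently the paper's route is shorter; what your version buys in exchange is an entirely explicit seminorm $\mm$ in terms of the finitely many local data $(\qq_{t_i},\theta_{t_i})$, where the paper instead concludes with an implicit choice of $\mm$ whose unit ball around $X$ lies in the finite intersection $O$.
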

\begin{proof}
	By assumption,
	$\alpha\colon [0,1]\times\dom[\exp]\ni (t,Y)\mapsto \exp(t\cdot X)^{-1}\cdot \exp(t\cdot Y)$ 	
	is continuous; and we have $\alpha(\cdot,X)=e$. For $\tau\in [0,1]$ fixed, there thus exists an open interval $I_\tau\subseteq \RR$ containing $\tau$, as well as an open neighbourhood $O_\tau\subseteq \mg$ of $X$, such that we have
\begin{align}
\label{sdopaopsaopopasasas}
	\exp(t\cdot X)^{-1}\cdot \exp(t\cdot Y)\in V\qquad\quad\forall\: t\in I_\tau\cap [0,1],\:\: Y\in O_\tau\cap \dom[\exp].
\end{align} 
We choose $\tau_1,\dots,\tau_n\in [0,1]$ with $[0,1]\subseteq I_{\tau_1}\cup{\dots}\cup I_{\tau_n}$; and define $O:=O_{\tau_1}\cap{\dots}\cap O_{\tau_n}$. Then,   
\eqref{sdopaopsaopopasasas} holds for each $t\in [0,1]$ and $Y\in O\cap \dom[\exp]$; so that the claim holds for each fixed $\mm\in \SEM$ with $\OB_{\mm,1}\subseteq O$.
\end{proof}

\section{Auxiliary Results}
\label{mnnxcncyxciuoduioasd}
In this section, we introduce the continuity notions that we will need to formulate our main results.  
We furthermore provide some elementary continuity statements that we will need in the main text. 

\subsection{Sets of Curves}
Let $[r,r']\in \COMP$ be fixed.  We will tacitly use in the following that $C^k([r,r'],\mg)$ is a real vector space for each $k\in \NN\sqcup\{\lip,\infty,\const\}$. We will furthermore use that:
\begingroup
\setlength{\leftmargini}{19pt}
{\renewcommand{\theenumi}{\Alph{enumi})} 
\renewcommand{\labelenumi}{\theenumi}
\begin{enumerate}
\item
\label{remkkk2}
	For each $k\in \NN\sqcup\{\lip,\infty\}$, $\phi\in \DIDE^k_{[r,r']}$, and $\psi\in C^k([r,r'],\mg)$,  
	we have $\Ad_{[\innt_r^\bullet\phi]^{-1}}(\psi)\in C^k([r,r'],\mg)$ 
	by Lemma \ref{Adlip}. Evidently, the same statement also holds for $k\equiveq \const$ if $G$ is abelian.
\item
\label{remkkk3}
	For each $k\in \NN\sqcup\{\lip,\infty,\const\}$, $\phi\in \DIDE^k_{[r,r']}$, $[\ell,\ell']\in \COMP$, and  
	\begin{align*}
		\textstyle\varrho\colon [\ell,\ell']\rightarrow [r,r'],\qquad t\mapsto r + (t-\ell)\cdot (r'-r)\slash (\ell'-\ell),
	\end{align*}
	we have $\dot\varrho\cdot (\phi\cp\varrho)= (r'-r)\slash (\ell'-\ell) \cdot (\phi\cp\varrho) \in \DIDE^k_{[\ell,\ell']}$ by \textrm{\ref{subst}}; with
\begin{align*}
	\textstyle\ppp_\infty^\dind(\dot\varrho\cdot (\phi\cp\varrho))&\textstyle=\Big[\frac{(r'-r)}{(\ell'-\ell)}\Big]^{\dind+1}\cdot \ppp_\infty^\dind(\phi)\quad\text{with}\quad\dind\llleq k\quad\text{for}\quad k\in \NN\sqcup\{\infty,\const\},\\[4pt]
	\textstyle\Lip(\ppp,\dot\varrho\cdot(\phi\cp\varrho))&\textstyle=\Big[\frac{(r'-r)}{(\ell'-\ell)}\Big]^{2}\cdot \Lip(\ppp, \phi)\quad\:\text{for}\quad\: k\equiveq \lip.
\end{align*}
\end{enumerate}}
\endgroup
\noindent
We say that $\mg$ is  {\bf k-complete} for $k\in \NN\sqcup\{\lip,\infty,\const\}$ \defff
\begin{align}
\label{dspopodspds}
	\textstyle\int \Ad_{[\innt_r^s \phi]^{-1}}(\chi(s))\:\dd s\in \mg 
\end{align}
holds for all $\phi,\chi\in \DIDE^k_{[r,r']}$, for each $[r,r']\in \COMP$. Then,
\begin{remark}
\label{fdfdfdfdfdscxycxycx}
\begingroup
\setlength{\leftmargini}{12pt}
\begin{itemize}
\item[]
\item
$\mg$ is $\const$-complete if $G$ is abelian.
\item
$\mg$ is {\rm k}-complete for $k\in \NN\sqcup\{\lip,\infty,\const\}$ \deff \eqref{dspopodspds} holds for $[r,r']\equiv [0,1]$. 

For this, let $\phi,\chi\in \DIDE^k_{[r,r']}$ be given. Then, for $\varrho\colon [0,1]\rightarrow [r,r']$ as in \textrm{\ref{remkkk3}} with $[\ell,\ell']\equiv [0,1]$ there, we have $\dot\varrho\cdot (\phi\cp\varrho), \dot\varrho\cdot (\chi\cp\varrho) \in \DIDE^k_{[0,1]}$ with
\begin{align*}
	\textstyle\int \Ad_{[\innt_r^s \phi]^{-1}}(\chi(s))\:\dd s &\textstyle \myeq{}{=}  \int_r^{\varrho(1)} \Ad_{[\innt_r^s\hspace{-1pt} \phi]^{-1}}(\chi(s))\:\dd s\\
	&\textstyle \myeq{\eqref{substitRI}}{=} \int_0^1 \dot\varrho(s) \cdot \Ad_{[\innt_r^{\varrho(s)}\hspace{-1pt} \phi]^{-1}}(\chi(\varrho(s)))\:\dd s\\[3pt]
	&\textstyle \myeq{}{=} \int_0^1  \Ad_{[\innt_r^{\varrho(s)}\hspace{-1pt} \phi]^{-1}}((\dot\varrho \cdot (\chi\cp \varrho))(s))\:\dd s
	\\
	&\textstyle \myeq{\textrm{\ref{subst}}}{=} \int_0^1  \Ad_{[\innt_0^{s}\dot\varrho\he\cdot\he (\phi\he\cp\he\varrho)]^{-1}}((\dot\varrho \cdot (\chi\cp \varrho))(s))\:\dd s.
\end{align*}
\end{itemize}
\endgroup
\noindent
In particular, Point \ref{remkkk2} then shows:
\begingroup
\setlength{\leftmargini}{13pt}
\begin{itemize}
\item
If $G$ is $C^0$-semiregular, then $\mg$ is {\rm 0}-complete \deff $\mg$ is integral complete -- i.e., \deff $\int \phi(s)\:\dd s \in \mg$ holds for each $\phi\in C^0([0,1],\mg)$.
\item
If $G$ is $C^k$-semiregular for $k\in \NN_{\geq 1}\sqcup\{\lip,\infty\}$, then $\mg$ is {\rm k}-complete \deff $\mg$ is Mackey-complete.\footnote{Recall that $\mg$ is Mackey complete \deff $\int \phi(s)\:\dd s \in \mg$ holds for each $\phi\in C^k([0,1],\mg)$, for any $k\in \NN_{\geq 1}\sqcup\{\lip,\infty\}$, cf., 2.14 Theorem in \cite{COS}.}\hspace*{\fill}$\ddagger$
\end{itemize}
\endgroup
\end{remark}

\subsection{Weak Continuity} 
A pair $(\phi,\psi)\in C^0([r,r'],\mg)\times C^0([r,r'],\mg)$ is said to be 
\begingroup
\setlength{\leftmargini}{12pt}
\begin{itemize}
\item
	{\bf admissible} \defff $\phi + (-\delta,\delta)\cdot \psi \subseteq \DIDE_{[r,r']}$ holds for some $\delta>0$.
\item
	{\bf regular}\hspace{17.8pt} \defff it is admissible with
	\begin{align*}
	\textstyle\limih\innt_r^\bullet \phi + h\cdot \psi = \innt_r^\bullet \phi. 
\end{align*}
\vspace{-25pt}
\end{itemize}
\endgroup
\noindent
Then, 
\begin{remark}
\label{lkfdklfdlkfdlkfd}
\begingroup
\setlength{\leftmargini}{17pt}
{
\renewcommand{\theenumi}{\arabic{enumi})} 
\renewcommand{\labelenumi}{\theenumi}
\begin{enumerate}
\item[]
\item
\label{lkfdklfdlkfdlkfd1}
It follows from \textrm{\ref{pogfpogf}} that $(\phi,\chi)$ is admissible/regular \deff $(\phi|_{[\ell,\ell']},\chi|_{[\ell,\ell']})$ is admissible/regular for each $r\leq \ell<\ell'\leq r'$. 
\item
\label{lkfdklfdlkfdlkfd2}
	Each $(0,\expal(X))$ with $X\in \dom[\exp]$ is regular; because we have
\begin{align*}
	\textstyle\innt_0^t h\cdot \phi_X|_{[0,1]} \stackrel{\eqref{odaidaooipidadais}}{=} \innt t h\cdot \phi_X|_{[0,1]} \stackrel{\eqref{odaidaooipidadais}}{=} \innt_0^{t h} \phi_X|_{[0,1]}
\end{align*}
for each $t\in[0,1]$, and each $h\in \RR$.
\hspace*{\fill}$\ddagger$
\end{enumerate}}
\endgroup
\end{remark}
\noindent
We say that $G$ is {\bf weakly k-continuous} for $k\in \NN\sqcup\{\lip,\infty,\const\}$ \defff each admissible $(\phi,\psi)\in C^k([0,1],\mg)\times C^k([0,1],\mg)$ is regular. 
\begin{lemma}
\label{fdlkfdlkfd}
If $G$ is weakly {\rm k}-continuous for $k\in \NN\sqcup\{\lip,\infty,\const\}$, then each admissible $(\phi,\psi)\in C^k([r,r'],\mg)\times C^k([r,r'],\mg)$ (for each $[r,r']\in \COMP$) is regular. 
\end{lemma}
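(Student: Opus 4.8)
The plan is to reduce the claim on an arbitrary compact interval $[r,r']$ to the defining case $[0,1]$ by means of the affine reparametrization from \textrm{\ref{remkkk3}} together with the substitution rule \textrm{\ref{subst}}. Concretely, fix an admissible pair $(\phi,\psi)\in C^k([r,r'],\mg)\times C^k([r,r'],\mg)$, let $\delta>0$ be as in its admissibility, and consider the affine bijection $\varrho\colon[0,1]\rightarrow[r,r']$, $t\mapsto r+t\cdot(r'-r)$ (i.e.\ $[\ell,\ell']\equiv[0,1]$ in \textrm{\ref{remkkk3}}). Put $\wt{\phi}:=\dot\varrho\cdot(\phi\cp\varrho)$ and $\wt{\psi}:=\dot\varrho\cdot(\psi\cp\varrho)$. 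Since $\varrho$ is affine (so $\dot\varrho$ is the positive constant $r'-r$) and $\phi,\psi\in C^k([r,r'],\mg)$, both $\wt{\phi},\wt{\psi}$ lie in $C^k([0,1],\mg)$.

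First I would verify that $(\wt{\phi},\wt{\psi})$ is admissible on $[0,1]$. For $|h|<\delta$ one has, by linearity, the pointwise identity $\wt{\phi}+h\cdot\wt{\psi}=\dot\varrho\cdot\big((\phi+h\cdot\psi)\cp\varrho\big)$; since $\phi+h\cdot\psi\in\DIDE_{[r,r']}$ by admissibility of $(\phi,\psi)$, the substitution rule \textrm{\ref{subst}} shows that $\dot\varrho\cdot\big((\phi+h\cdot\psi)\cp\varrho\big)\in\DIDE_{[0,1]}$. Hence $\wt{\phi}+(-\delta,\delta)\cdot\wt{\psi}\subseteq\DIDE_{[0,1]}$, so $(\wt{\phi},\wt{\psi})\in C^k([0,1],\mg)\times C^k([0,1],\mg)$ is admissible. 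By weak $\mathrm{k}$-continuity, $(\wt{\phi},\wt{\psi})$ is therefore regular, that is, $\limih\innt_0^\bullet\wt{\phi}+h\cdot\wt{\psi}=\innt_0^\bullet\wt{\phi}$.

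It then remains to transport this regularity back to $[r,r']$. Applying \textrm{\ref{subst}} to $\varrho$ and to an arbitrary $\chi\in\DIDE_{[r,r']}$, and using that the boundary term vanishes because $\innt_r^{\varrho(0)}\chi=\innt_r^{r}\chi=e$, yields $\innt_r^{\varrho(t)}\chi=\innt_0^t\dot\varrho\cdot(\chi\cp\varrho)$ for all $t\in[0,1]$. Taking $\chi\equiv\phi+h\cdot\psi$ and $\chi\equiv\phi$ (and recalling $\dot\varrho\cdot\big((\phi+h\cdot\psi)\cp\varrho\big)=\wt{\phi}+h\cdot\wt{\psi}$) gives
\[
	\big[\innt_r^{\varrho(t)}\phi\big]^{-1}\cdot\innt_r^{\varrho(t)}(\phi+h\cdot\psi)=\big[\innt_0^t\wt{\phi}\big]^{-1}\cdot\innt_0^t(\wt{\phi}+h\cdot\wt{\psi})\qquad\forall\:t\in[0,1].
\]
Since $\varrho$ maps $[0,1]$ bijectively onto $[r,r']$, as $t$ ranges over $[0,1]$ the point $\varrho(t)$ ranges over all of $[r,r']$; hence the regularity of $(\wt{\phi},\wt{\psi})$ just established translates verbatim into $\limih\innt_r^\bullet\phi+h\cdot\psi=\innt_r^\bullet\phi$, i.e.\ $(\phi,\psi)$ is regular.

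The argument is essentially bookkeeping once \textrm{\ref{subst}} and \textrm{\ref{remkkk3}} are in place; the only point requiring care is the matching of the two instances of the uniform-limit convention $\limih$. Because this convention is quantified over open neighbourhoods $U$ of $e$ with the membership required uniformly in the time parameter, the key observation is that $\varrho$ is a homeomorphism $[0,1]\rightarrow[r,r']$, so ``uniformly in $t\in[0,1]$'' and ``uniformly in $\tau\in[r,r']$'' impose literally the same condition on the displayed group element. I expect this translation step to be the main (albeit minor) obstacle; everything else is immediate.
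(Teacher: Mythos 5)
Your proposal is correct and follows essentially the same route as the paper's own proof: the paper likewise reparametrizes via the affine bijection $\varrho\colon[0,1]\rightarrow[r,r']$, applies \textrm{\ref{subst}} to identify $\innt_r^{\varrho}\:[\phi+h\cdot\psi]$ with $\innt_0^\bullet\:[\dot\varrho\cdot(\phi\cp\varrho)+h\cdot\dot\varrho\cdot(\psi\cp\varrho)]$, invokes Point \textrm{\ref{remkkk3}} for the $C^k$-membership of the reparametrized curves, and concludes from weak $\mathrm{k}$-continuity. Your write-up merely makes explicit the admissibility check and the transfer of the uniform limit along the homeomorphism $\varrho$, which the paper compresses into ``the claim is clear from the presumptions.''
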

\begin{proof}
We define $\varrho\colon [0,1]\ni t\mapsto r+ t\cdot (r'-r) \in [r,r']$; and observe that
\begin{align*}
	\textstyle\innt_r^{\varrho}\: \phi &\textstyle\stackrel{\textrm{\ref{subst}}}{=}\innt_0^\bullet\: \dot\varrho\cdot (\phi\cp\varrho), \\
	\textstyle\innt_r^{\varrho}\: [\phi +h\cdot \psi] &\textstyle\stackrel{\textrm{\ref{subst}}}{=}\innt_0^\bullet\: [\dot\varrho\cdot (\phi\cp\varrho) + h\cdot \dot\varrho\cdot (\psi\cp\varrho)]
\end{align*}
holds for $h>0$ suitably small. Since we have $\varrho\cdot (\phi\cp\varrho),\: \dot\varrho\cdot (\psi\cp\varrho) \in C^k([0,1],\mg)$ by Point \ref{remkkk3},  
the claim is clear from the presumptions.   
\end{proof}
\begin{lemma}
\label{fdppofdpofdpofd}
	$G$ is weakly {\rm k}-continuous for $k\in \NN\sqcup\{\lip,\infty\}$ \deff 
\begin{align}
\label{dlklkdsalklkalksaklsa}
		\textstyle\limih\innt_0^\bullet h\cdot \chi= e
\end{align}
	holds, for each $\chi\in \DIDED_\kk$ with $(-\delta,\delta)\cdot \chi \subseteq \DIDED_\kk$ for some $\delta>0$. The same statement also holds for $k\equiveq \const$ if $G$ is abelian.
\end{lemma}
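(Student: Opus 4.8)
The plan is to use the evolution identity \textrm{\ref{kdskdsdkdslkds}} to convert the regularity of a pair $(\phi,\psi)$ into condition \eqref{dlklkdsalklkalksaklsa} for a single auxiliary curve, and conversely. The implication ``$\Longrightarrow$'' is then a one-line specialization: assuming $G$ is weakly k-continuous, let $\chi\in \DIDED_\kk$ satisfy $(-\delta,\delta)\cdot\chi\subseteq \DIDED_\kk$. The pair $(0,\chi)$ lies in $C^k([0,1],\mg)\times C^k([0,1],\mg)$ and is admissible, since $0+(-\delta,\delta)\cdot\chi=(-\delta,\delta)\cdot\chi\subseteq \DIDE_{[0,1]}$. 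Its regularity reads $\limih\innt_0^\bullet h\cdot\chi=\innt_0^\bullet 0$, and because the right logarithmic derivative $\Der$ of the constant curve $e$ vanishes we have $\innt_0^\bullet 0=e$; this is precisely \eqref{dlklkdsalklkalksaklsa}.

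For ``$\Longleftarrow$'', let $(\phi,\psi)\in C^k([0,1],\mg)\times C^k([0,1],\mg)$ be admissible, with $\phi+h\cdot\psi\in \DIDE_{[0,1]}$ for all $|h|<\delta$. First I would note that $\phi=\phi+0\cdot\psi\in \DIDE_{[0,1]}$, whence $\phi\in \DIDED_\kk$ and $\innt_0^\bullet\phi\in C^{k+1}([0,1],G)$; by Point \ref{remkkk2} this gives $\chi:=\Ad_{[\innt_0^\bullet\phi]^{-1}}(\psi)\in C^k([0,1],\mg)$. Applying \textrm{\ref{kdskdsdkdslkds}} to the pair $(\phi,\phi+h\cdot\psi)$, both of which lie in $\DIDE_{[0,1]}$, then yields for every $|h|<\delta$
\begin{align*}
	\textstyle[\innt_0^\bullet\phi]^{-1}\cdot[\innt_0^\bullet(\phi+h\cdot\psi)]=\innt_0^\bullet \Ad_{[\innt_0^\bullet\phi]^{-1}}(h\cdot\psi)=\innt_0^\bullet h\cdot\chi,
\end{align*}
and in particular $h\cdot\chi\in \DIDE_{[0,1]}$, so that $(-\delta,\delta)\cdot\chi\subseteq \DIDE_{[0,1]}\cap C^k([0,1],\mg)=\DIDED_\kk$.

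It remains to feed $\chi$ into the hypothesis, and here a scaling adjustment is required: the inclusion $(-\delta,\delta)\cdot\chi\subseteq \DIDED_\kk$ need not force $\chi\in \DIDED_\kk$, since the value $h=1$ may fall outside $(-\delta,\delta)$. I would therefore pass to $\wt\chi:=(\delta/2)\cdot\chi$, which satisfies $\wt\chi\in \DIDED_\kk$ and $(-2,2)\cdot\wt\chi=(-\delta,\delta)\cdot\chi\subseteq \DIDED_\kk$; thus \eqref{dlklkdsalklkalksaklsa} applies to $\wt\chi$ and gives $\limih\innt_0^\bullet h\cdot\wt\chi=e$. A linear reparametrization $h\mapsto(\delta/2)\cdot h$ of the limit variable (a bijection of punctured neighbourhoods of $0$) then upgrades this to $\limih\innt_0^\bullet h\cdot\chi=e$. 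Combining with the displayed identity gives $\limih[\innt_0^\bullet\phi]^{-1}\cdot[\innt_0^\bullet(\phi+h\cdot\psi)]=e$, i.e.\ $\limih\innt_0^\bullet(\phi+h\cdot\psi)=\innt_0^\bullet\phi$; hence $(\phi,\psi)$ is regular and $G$ is weakly k-continuous. The abelian $k\equiveq\const$ case runs identically, the only point needing care being the membership $\chi\in C^\const([0,1],\mg)$, which is supplied by the abelian version of Point \ref{remkkk2}.

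The argument is otherwise a direct application of identity \textrm{\ref{kdskdsdkdslkds}} and Point \ref{remkkk2}, so I expect no conceptual difficulty. The one genuinely delicate step is the bookkeeping around the scaling interval: matching the two-part requirement of the hypothesis ($\chi\in \DIDED_\kk$ \emph{together with} an interval of admissible scalings) against what admissibility of $(\phi,\psi)$ actually delivers (only $(-\delta,\delta)\cdot\chi\subseteq \DIDED_\kk$). This mismatch is exactly what forces the rescaling-and-reparametrization device, and is the part of the proof I would write out most carefully.
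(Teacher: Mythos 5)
Your proof is correct and follows essentially the same route as the paper: the forward direction via the admissible pair $(0,\chi)$, and the converse via identity \textrm{\ref{kdskdsdkdslkds}} together with Point \ref{remkkk2}, reducing regularity of $(\phi,\psi)$ to \eqref{dlklkdsalklkalksaklsa} for $\chi=\Ad_{[\innt_0^\bullet\phi]^{-1}}(\psi)$. The only difference is that you spell out the rescaling device needed because admissibility only yields $(-\delta,\delta)\cdot\chi\subseteq\DIDED_\kk$ rather than $\chi\in\DIDED_\kk$ itself --- a point the paper's proof passes over silently with ``the claim is thus clear,'' so your extra bookkeeping is a faithful completion of the same argument rather than a departure from it.
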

\begin{proof}
The one implication is evident. For the other implication, we suppose that $(\phi,\psi)\in C^k([0,1],\mg)\times C^k([0,1],\mg)$ is admissible. Since $\phi\in \DIDE^k_{[0,1]}$ holds, we have  
\begin{align*}
	\textstyle[\innt_0^t\phi]^{-1}[\innt_0^t\phi + h\cdot \psi]\stackrel{\textrm{\ref{kdskdsdkdslkds}}}{=}
	\innt_0^t h\cdot \Ad_{[\innt_0^\bullet \phi]^{-1}}(\psi)\qquad\quad\forall\: t\in [0,1]
\end{align*}
with $\chi:= \Ad_{[\innt_0^\bullet \phi]^{-1}}(\psi)\in C^k([0,1],\mg)$ by 
Point \ref{remkkk2}. The claim is thus clear from \eqref{dlklkdsalklkalksaklsa}. 
\end{proof}
\begin{corollary}
\label{ffddf}
If $G$ is abelian, then $G$ is weakly $\const$-continuous.
\end{corollary}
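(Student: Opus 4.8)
The plan is to reduce the statement to the criterion already isolated in Lemma \ref{fdppofdpofdpofd} and then read off the conclusion from Remark \ref{lkfdklfdlkfdlkfd2}, so that essentially no new computation is required. Since $G$ is abelian, the last sentence of Lemma \ref{fdppofdpofdpofd} applies with $k\equiveq\const$: $G$ is weakly $\const$-continuous if and only if condition \eqref{dlklkdsalklkalksaklsa}, i.e.\ $\limih\innt_0^\bullet h\cdot\chi=e$, holds for every $\chi\in\DIDED_\const$ for which some $\delta>0$ with $(-\delta,\delta)\cdot\chi\subseteq\DIDED_\const$ exists. Thus I only have to verify this uniform limit for each such $\chi$.

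Next I would unwind what $\chi\in\DIDED_\const=\DIDE^\const_{[0,1]}$ means. By the definition of the exponential map in Sect.\ \ref{kfdlkfdlkfdscvpdfpofdofd}, every such $\chi$ is exactly $\expal(X)=\phi_X|_{[0,1]}$ for some $X\in\dom[\exp]$; moreover $\RR\cdot\dom[\exp]\subseteq\dom[\exp]$ makes the side condition $(-\delta,\delta)\cdot\chi\subseteq\DIDED_\const$ automatic. For such a $\chi$ the required identity \eqref{dlklkdsalklkalksaklsa} reads $\limih\innt_0^\bullet h\cdot\expal(X)=e$, which is precisely the assertion that the pair $(0,\expal(X))$ is regular. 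This is exactly the content of Remark \ref{lkfdklfdlkfdlkfd2}; feeding it back through Lemma \ref{fdppofdpofdpofd} yields the Corollary.

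The one place I would be careful — and where the only (minor) work sits — is the uniformity in $t$ underlying Remark \ref{lkfdklfdlkfdlkfd2}. From the identity $\innt_0^t h\cdot\phi_X|_{[0,1]}=\innt_0^{th}\phi_X|_{[0,1]}=\exp(th\cdot X)$ recorded there (valid for $t\in[0,1]$ and $h\in\RR$ via the $1$-parameter group property \eqref{odaidaooipidadais}), I would observe that $s\mapsto\exp(s\cdot X)$ coincides on $[0,1]$ with the curve $\EV(\phi_X|_{[0,1]})$, which is of class $C^1$ (in fact $C^\infty$) and in particular continuous, and hence, extended to negative arguments through $\inv$, is continuous near $0$ with value $e$ at $0$. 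Because $|th|\le|h|$ for all $t\in[0,1]$, continuity of this curve at $0$ upgrades directly to the uniform statement: given a neighbourhood $U\subseteq G$ of $e$, choosing $\delta_U$ with $\exp(s\cdot X)\in U$ whenever $|s|<\delta_U$ forces $\exp(th\cdot X)\in U$ for all $t\in[0,1]$ and $h\in\MD_{\delta_U}$. Notably, no continuity hypothesis on $\exp$ itself enters, since $X$ is held fixed throughout.
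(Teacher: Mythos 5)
Your proof is correct and follows exactly the paper's own route: the paper proves Corollary \ref{ffddf} by combining Lemma \ref{fdppofdpofdpofd} (abelian case, $k\equiveq\const$) with Remark \ref{lkfdklfdlkfdlkfd}.\ref{lkfdklfdlkfdlkfd2}, precisely as you do. Your additional verification of the uniformity in $t$ behind Remark \ref{lkfdklfdlkfdlkfd}.\ref{lkfdklfdlkfdlkfd2} (via $|th|\le|h|$ and continuity of $s\mapsto\exp(s\cdot X)$ at $0$) is a sound filling-in of a detail the paper leaves implicit.
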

\begin{proof}
This is clear from Lemma \ref{fdppofdpofdpofd} and Remark \ref{lkfdklfdlkfdlkfd}.\ref{lkfdklfdlkfdlkfd2}. 
\end{proof}

\subsection{Mackey Continuity}
\label{lkkldsklsdkl}
We write $\{\phi_n\}_{n\in \NN}\mackarr{\kk} \phi$ for $k\in \NN\sqcup\{\lip,\infty,\const\}$, $\{\phi_n\}_{n\in \NN} \subseteq C^k([r,r'],\mg)$, and $\phi\in C^k([r,r'],\mg)$ \defff
\begin{align}
\label{podsopdspodssd}
	\ppp^\dind_\infty(\phi-\phi_n)\leq \mackeyconst^\dind_\pp\cdot \lambda_{n} \qquad\quad\forall\: n\geq \mackeyindex^\dind_\pp,\:\:\pp\in \SEM,\:\:\dind\llleq k
\end{align}
holds for certain $\{\mackeyconst^\dind_\pp\}_{\dind\llleq k,\:\pp\in \SEM}\subseteq \RR_{\geq 0}$, $\{\mackeyindex^\dind_\pp\}_{\dind\llleq k,\:\pp\in \SEM}\subseteq \NN$, and $\RR_{\geq 0}\supseteq \{\lambda_{n}\}_{n\in \NN}\rightarrow 0$.
\begin{remark}
\label{idsoidsodso}
Suppose that $\DIDE^k_{[r,r']}\supseteq \{\phi_n\}_{n\in \NN}\mackarr{\kk} \phi\in \DIDE^k_{[r,r']}$ holds for $k\in \NN\sqcup\{\lip,\infty,\const\}$. Then,
\begin{align*}
	\{\phi_{\iota(n)}\}_{n\in \NN}\mackarr{\kk} \phi 
\end{align*}
holds for each strictly increasing $\iota\colon \NN\rightarrow\NN$. 
\hspace*{\fill}$\ddagger$
\end{remark}
\noindent
We say that $G$ is {\bf Mackey k-continuous} \defff 
\begin{align}
\label{opdopdsopaaaa}
	\textstyle\DIDED_k\supseteq \{\phi_n\}_{n\in \NN}\mackarr{\kk} \phi\in \DIDED_k \qquad\quad\Longrightarrow\qquad\quad \limin\innt_r^\bullet \phi_n=\innt_r^\bullet \phi.
\end{align} 
In analogy to Lemma \ref{fdlkfdlkfd}, we obtain 
\begin{lemma}
\label{fdlkfdlkfd1}
$G$ is Mackey {\rm k}-continuous for $k\in \NN\sqcup\{\lip,\infty,\const\}$ \deff
\begin{align*}
	\textstyle\DIDE^k_{[r,r']}\supseteq \{\phi_n\}_{n\in \NN}\mackarr{\kk} \phi\in \DIDE^k_{[r,r']} \qquad\quad\Longrightarrow\qquad\quad \limin\innt_r^\bullet \phi_n=\innt_r^\bullet \phi,
\end{align*}
for each $[r,r']\in \COMP$.
\end{lemma}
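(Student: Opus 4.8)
The plan is to reduce the statement on a general compact interval $[r,r']$ to the normalized interval $[0,1]$, exactly as was done in the proof of Lemma \ref{fdlkfdlkfd} for weak continuity, using the affine reparametrization $\varrho$ and the substitution rule \textrm{\ref{subst}}. First I would note that one implication is immediate: taking $[r,r']\equiv [0,1]$ in the displayed condition recovers the defining implication \eqref{opdopdsopaaaa} of Mackey $\mathrm{k}$-continuity. So the content is the forward direction, namely that Mackey $\mathrm{k}$-continuity (which is phrased only on $[0,1]$) already forces the analogous convergence on every compact interval.

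For the nontrivial direction, fix $[r,r']\in \COMP$ and suppose $\DIDE^k_{[r,r']}\supseteq \{\phi_n\}_{n\in \NN}\mackarr{\kk}\phi\in \DIDE^k_{[r,r']}$. Define the affine bijection $\varrho\colon [0,1]\ni t\mapsto r + t\cdot(r'-r)\in[r,r']$ as in Lemma \ref{fdlkfdlkfd}, and set $\wt{\phi}_n:=\dot\varrho\cdot(\phi_n\cp\varrho)$ and $\wt{\phi}:=\dot\varrho\cdot(\phi\cp\varrho)$. By Point \ref{remkkk3} these lie in $\DIDE^k_{[0,1]}$. The two key steps are then: (i) verify that the Mackey convergence is preserved under this reparametrization, i.e.\ $\{\wt{\phi}_n\}_{n\in\NN}\mackarr{\kk}\wt{\phi}$; and (ii) transport the resulting conclusion $\limin\innt_0^\bullet\wt{\phi}_n=\innt_0^\bullet\wt{\phi}$ back to the interval $[r,r']$. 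For step (i), the crucial observation is that $\mackarr{\kk}$ is defined entirely through the seminorm estimates \eqref{podsopdspodssd}, and by the explicit identities in Point \ref{remkkk3} the seminorms transform by the fixed constant factors $[(r'-r)/(\ell'-\ell)]^{\dind+1}$ (respectively $[(r'-r)/(\ell'-\ell)]^2$ in the Lipschitz case). Applying these to $\phi-\phi_n$ shows $\ppp^\dind_\infty(\wt\phi-\wt\phi_n)$ is just a constant multiple of $\ppp^\dind_\infty(\phi-\phi_n)$, so the Mackey bounds $\mackeyconst^\dind_\pp\cdot\lambda_n$ carry over after absorbing the constant into $\mackeyconst^\dind_\pp$, with the same null sequence $\{\lambda_n\}_{n\in\NN}$ and indices $\mackeyindex^\dind_\pp$.

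For step (ii), I would apply Mackey $\mathrm{k}$-continuity on $[0,1]$ to obtain $\limin\innt_0^\bullet\wt{\phi}_n=\innt_0^\bullet\wt{\phi}$, i.e.\ uniform convergence on $[0,1]$ in the sense of the $\limin$-convention. The substitution rule \textrm{\ref{subst}} gives $\innt_r^{\varrho(t)}\phi_n=\innt_0^t\wt{\phi}_n$ and likewise for $\phi$ (the boundary factor $\innt_r^{\varrho(0)}\phi_n=\innt_r^r\phi_n=e$ drops out). Since $\varrho\colon[0,1]\to[r,r']$ is a homeomorphism, uniform convergence of $\innt_0^\bullet\wt{\phi}_n$ on $[0,1]$ is equivalent to uniform convergence of $\innt_r^\bullet\phi_n$ on $[r,r']$; reparametrizing the supremum over $t$ by $\tau=\varrho(t)$ changes nothing, as it is a bijection of index sets. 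This yields $\limin\innt_r^\bullet\phi_n=\innt_r^\bullet\phi$, as desired.

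The only mild obstacle is bookkeeping in step (i): one must check that the Mackey constants and indices can genuinely be chosen uniformly, which is where the clean multiplicative transformation law of Point \ref{remkkk3} does all the work — the factor depends only on $[r,r']$ and not on $n$ or on the particular curves, so it can be absorbed once and for all. Everything else is the same affine-substitution bookkeeping already carried out for Lemma \ref{fdlkfdlkfd}, so I expect the proof to be short, essentially a remark that the argument of Lemma \ref{fdlkfdlkfd} applies verbatim with uniform $\limih$ convergence replaced by the $\mackarr{\kk}$/$\limin$ pair.
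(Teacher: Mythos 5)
Your proposal is correct and follows essentially the same route as the paper: both directions are handled identically, with the affine reparametrization $\varrho$, the seminorm transformation law of Point \ref{remkkk3} (applied to $\phi-\phi_n$ to preserve $\mackarr{\kk}$), the hypothesis on $[0,1]$, and the substitution rule \textrm{\ref{subst}} to transport the conclusion back to $[r,r']$. The paper's proof is exactly the short chain of implications you describe, so your expectation that the argument of Lemma \ref{fdlkfdlkfd} carries over verbatim is borne out.
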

\begin{proof}
The one implication is evident. For the other implication, we suppose that \eqref{opdopdsopaaaa} holds.  
Then, for $[r,r']\in \COMP$ fixed, we let $\varrho\colon [0,1]\ni t\mapsto r+ t\cdot (r'-r) \in [r,r']$; and obtain 
\begin{align*}
	\textstyle \DIDE^k_{[r,r']}\supseteq \{\phi_n\}_{n\in \NN}\mackarr{\kk} \phi\in \DIDE^k_{[r,r']}\qquad
	&\stackrel{\text{\ref{remkkk3}}}{\Longrightarrow} \qquad \DIDED_\kk\supseteq\{\dot\varrho\cdot (\phi_n\cp\varrho)\}_{n\in \NN}\mackarr{\kk} \dot\varrho\cdot (\phi\cp\varrho)\in \DIDED_\kk\\[2pt]
	&\textstyle\Longrightarrow \qquad \limin\innt_r^\bullet \dot\varrho\cdot (\phi_n\cp\varrho)=\innt_r^\bullet \dot\varrho\cdot (\phi\cp\varrho)\\
	&\textstyle\stackrel{\textrm{\ref{subst}}}{\Longrightarrow} \qquad \limin\innt_r^\bullet \phi_n=\innt_r^\bullet \phi,
\end{align*}
whereby the second step is due to the presumptions.
\end{proof}
\noindent
In analogy to Lemma \ref{fdppofdpofdpofd}, we obtain 
\begin{lemma}
\label{fdppofdpofdpofd2}
	$G$ is Mackey {\rm k}-continuous for $k\in \NN\sqcup\{\lip,\infty\}$ \deff   
\begin{align}
\label{opdopdsopvcvcvc}
	\textstyle\DIDED_\kk\supseteq \{\phi_n\}_{n\in \NN}\mackarr{\kk} 0 \qquad\quad\Longrightarrow\qquad\quad \limin\innt_0^\bullet\phi_n=e.
\end{align}
The statement also holds for $k\equiveq \const$ if $G$ is abelian.
\end{lemma}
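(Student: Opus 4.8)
The plan is to mirror the proof of Lemma \ref{fdppofdpofdpofd}: the content of the asserted equivalence is that Mackey k-continuity need only be tested at the origin $\phi\equiv 0$, and the passage from an arbitrary base point $\phi$ to the origin is carried out by the adjoint-translation identity \textrm{\ref{kdskdsdkdslkds}}. The forward implication is immediate, since specializing the defining implication \eqref{opdopdsopaaaa} to $\phi\equiv 0\in\DIDED_\kk$ (note $\innt_0^\bullet 0=e$) is exactly \eqref{opdopdsopvcvcvc}.

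For the reverse implication I assume \eqref{opdopdsopvcvcvc} and take $\DIDED_\kk\supseteq\{\phi_n\}_{n\in\NN}\mackarr{\kk}\phi\in\DIDED_\kk$. I set $\chi_n:=\Ad_{[\innt_0^\bullet\phi]^{-1}}(\phi_n-\phi)$. By \textrm{\ref{kdskdsdkdslkds}} one has $\chi_n\in\DIDE_{[0,1]}$ together with the pointwise identity $\innt_0^t\chi_n=[\innt_0^t\phi]^{-1}[\innt_0^t\phi_n]$ for all $t$, and by Point \ref{remkkk2} one has $\chi_n\in C^k([0,1],\mg)$; hence $\chi_n\in\DIDED_\kk$. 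Granting the convergence $\{\chi_n\}_{n\in\NN}\mackarr{\kk}0$, the hypothesis \eqref{opdopdsopvcvcvc} gives $\limin\innt_0^\bullet\chi_n=e$, which by the displayed identity and the definition of $\limin$ is precisely the assertion $\limin\innt_0^\bullet\phi_n=\innt_0^\bullet\phi$; as this is the defining implication \eqref{opdopdsopaaaa}, Mackey k-continuity follows.

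The crux, and the only step beyond bookkeeping, is the convergence $\{\chi_n\}_{n\in\NN}\mackarr{\kk}0$, i.e.\ that $\Ad_{[\innt_0^\bullet\phi]^{-1}}$ preserves Mackey convergence with constants that stay uniform across all $\pp\in\SEM$ and all orders $\dind\llleq k$ simultaneously. Here I would invoke Lemma \ref{opopsopsdopds} (for $k\in\NN\sqcup\{\infty\}$) and Lemma \ref{xccxcxcxcxyxycllvovo} (for $k\equiveq\lip$): for a fixed $\pp$ and $\dind$ these furnish a single $\pp\leq\qq\in\SEM$ with $\ppp^\dind_\infty(\Ad_{[\innt_0^\bullet\phi]^{-1}}(\psi))\leq\qqq^\dind_\infty(\psi)$ for all $\psi\in C^k([0,1],\mg)$. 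Inserting $\psi=\phi_n-\phi$ and using the estimate \eqref{podsopdspodssd} defining $\phi_n\mackarr{\kk}\phi$ at the seminorm $\qq$ yields $\ppp^\dind_\infty(\chi_n)\leq\qqq^\dind_\infty(\phi-\phi_n)\leq\mackeyconst^\dind_\qq\cdot\lambda_n$ for $n\geq\mackeyindex^\dind_\qq$, with the very same null sequence $\{\lambda_n\}_{n\in\NN}$; taking $\mackeyconst^\dind_\qq,\mackeyindex^\dind_\qq$ (indexed through the assignment $(\pp,\dind)\mapsto\qq$) as the new Mackey data gives $\{\chi_n\}_{n\in\NN}\mackarr{\kk}0$.

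Finally, in the abelian $k\equiveq\const$ case the adjoint representation is trivial, so $\chi_n=\phi_n-\phi$; here the membership $\chi_n\in\DIDED_\const$ follows from \textrm{\ref{kdskdsdkdslkds}} together with the $\const$-part of Point \ref{remkkk2}, and $\{\chi_n\}_{n\in\NN}\mackarr{\const}0$ is immediate from $\ppp^0_\infty(\chi_n)=\ppp^0_\infty(\phi-\phi_n)$, so the seminorm lemmas are not needed. I expect this verification of uniform Mackey convergence through the adjoint action to be the only genuinely delicate point, everything else being a transcription of the weak-continuity argument.
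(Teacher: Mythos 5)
Your proof is correct and follows essentially the same route as the paper's: the forward direction by specializing to $\phi\equiv 0$, and the reverse via the identity \textrm{\ref{kdskdsdkdslkds}}, membership $\Ad_{[\innt_0^\bullet\phi]^{-1}}(\phi_n-\phi)\in\DIDED_\kk$ from Point \ref{remkkk2}, and the seminorm estimates of Lemma \ref{opopsopsdopds} and Lemma \ref{xccxcxcxcxyxycllvovo} to transfer the Mackey convergence. You merely spell out the uniform-constants bookkeeping that the paper leaves implicit, which is exactly the right verification.
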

\begin{proof}
The one implication is evident. For the other implication, we suppose that  $\DIDED_\kk\supseteq \{\phi_n\}_{n\in \NN}\mackarr{\kk} \phi\in \DIDED_\kk$ holds; and observe that
\begin{align*}
	\textstyle[\innt_0^t\phi]^{-1}[\innt_0^t\phi_n]\stackrel{\textrm{\ref{kdskdsdkdslkds}}}{=}
	\innt_0^t \underbrace{\Ad_{[\innt_0^\bullet \phi]^{-1}}(\phi_n-\phi)}_{\psi_n\in \DIDED_\kk}\qquad\quad\forall\: n\in \NN,\:\: t\in [0,1]
\end{align*}
holds by Point \ref{remkkk2}. 
Then, by Lemma \ref{opopsopsdopds} and Lemma \ref{xccxcxcxcxyxycllvovo}, we have $\DIDED_\kk\supseteq \{\psi_n\}_{n\in  \NN}\mackarr{\kk}0$; from which the claim is clear.   
\end{proof}
\noindent
We furthermore observe that
\begin{lemma}
\label{dsopdsopsdopsdop}
	 If $G$ is Mackey {\rm k}-continuous for $k\in \NN\sqcup\{\lip,\infty,\const\}$, then $G$ is weakly {\rm k}-continuous. 
\end{lemma}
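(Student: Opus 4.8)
The plan is to argue directly from the two definitions, using Lemma~\ref{aslksalksalksakasklaslksalklkalksa} to pass from the uniform limit in the definition of regularity to a sequential statement that feeds exactly into the hypothesis of Mackey $k$-continuity. Fix an admissible pair $(\phi,\psi)\in C^k([0,1],\mg)\times C^k([0,1],\mg)$; by definition $\phi+(-\delta,\delta)\cdot\psi\subseteq \DIDE_{[0,1]}$ for some $\delta>0$, and since $\phi,\psi\in C^k([0,1],\mg)$ this gives $\phi+h\cdot\psi\in\DIDED_\kk$ for all $h\in(-\delta,\delta)$, in particular $\phi\in\DIDED_\kk$. What must be shown is that $(\phi,\psi)$ is regular, i.e.\ that $\limih\innt_0^\bullet \phi+h\cdot\psi=\innt_0^\bullet\phi$.

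First I would reduce to sequences via Lemma~\ref{aslksalksalksakasklaslksalklkalksa}: applying it to the $C^0$-family $h\mapsto[\innt_0^\bullet\phi]^{-1}\cdot\innt_0^\bullet \phi+h\cdot\psi$, whose convergence to $e$ is by convention exactly the asserted limit, it suffices to establish $\limin\innt_0^\bullet \phi+h_n\cdot\psi=\innt_0^\bullet\phi$ for an arbitrary sequence $\MD_\delta\supseteq\{h_n\}_{n\in\NN}\to 0$. Fixing such a sequence, I set $\phi_n:=\phi+h_n\cdot\psi\in\DIDED_\kk$, so that the target reads $\limin\innt_0^\bullet\phi_n=\innt_0^\bullet\phi$.

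The crux is to check that $\DIDED_\kk\supseteq\{\phi_n\}_{n\in\NN}\mackarr{\kk}\phi\in\DIDED_\kk$ in the sense of \eqref{podsopdspodssd}, after which Mackey $k$-continuity \eqref{opdopdsopaaaa} delivers the conclusion at once. Here I would invoke the homogeneity of the $C^k$-seminorms: each $\ppp^\dind_\infty$ is a seminorm, whence $\ppp^\dind_\infty(\phi-\phi_n)=|h_n|\cdot\ppp^\dind_\infty(\psi)$ for every $\pp\in\SEM$ and $\dind\llleq k$. Taking $\lambda_n:=|h_n|$ (which tends to $0$), the constants $\mackeyconst^\dind_\pp:=\ppp^\dind_\infty(\psi)$ (finite, as $\psi\in C^k([0,1],\mg)$), and $\mackeyindex^\dind_\pp:=1$, the defining estimate \eqref{podsopdspodssd} holds with equality; thus $\{\phi_n\}_{n\in\NN}\mackarr{\kk}\phi$, and \eqref{opdopdsopaaaa} gives $\limin\innt_0^\bullet\phi_n=\innt_0^\bullet\phi$, as required.

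I do not expect a genuine obstacle; the only points needing care are the homogeneity of the seminorms $\ppp^\dind_\infty$ (including the Lipschitz seminorm $\ppp^\lip_\infty=\max(\ppp_\infty,\Lip(\ppp,\cdot))$, which is homogeneous as well) and the observation that this direct argument is uniform across all $k\in\NN\sqcup\{\lip,\infty,\const\}$. In particular it covers $k\equiveq\const$ with \emph{no} abelian hypothesis, because both the defining implication \eqref{opdopdsopaaaa} of Mackey $\const$-continuity and the notion of regularity are used on $[0,1]$ directly, so that the reductions of Lemma~\ref{fdppofdpofdpofd} and Lemma~\ref{fdppofdpofdpofd2} (which required $G$ abelian in the constant case) are never needed.
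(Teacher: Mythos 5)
Your proof is correct and is essentially the paper's argument: both rest on the single observation that, by homogeneity of the seminorms $\ppp^\dind_\infty$, one has $\{\phi+h_n\cdot\psi\}_{n\in\NN}\mackarr{\kk}\phi$ for every sequence $\RR_{\neq 0}\supseteq\{h_n\}_{n\in\NN}\rightarrow 0$, so that Mackey {\rm k}-continuity delivers exactly the uniform convergence defining regularity. The paper merely packages the reduction to sequences as a proof by contradiction (negating regularity produces the sequences $\{h_n\}$ and $\{\tau_n\}$ directly), while you route it through Lemma~\ref{aslksalksalksakasklaslksalklkalksa} -- the same maneuver -- and your observation that the case $k\equiveq\const$ needs no abelian hypothesis agrees with the paper's statement.
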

\begin{proof}
	If $G$ is not weakly {\rm k}-continuous, then there exists an admissible $(\phi,\psi)\in C^k([r,r'],\mg)\times C^k([r,r'],\mg)$, an open neighbourhood $U\subseteq G$ of $e$, as well as sequences $\{\tau_n\}_{n\in \NN}\subseteq [r,r']$ and $\RR_{\neq 0}\supseteq\{h_n\}\rightarrow 0$, such that
\begin{align*}
	\textstyle[\innt_r^{\tau_n}\phi]^{-1}[\innt_r^{\tau_n} \phi+h_n\cdot \psi ]\notin   U\qquad\quad\forall\: n\in \NN
\end{align*}
holds. 
Then, $G$ cannot be Mackey {\rm k}-continuous, because we have $\{\phi+h_n\cdot \psi\}_{n\in \NN}\mackarr{\kk} \phi$. 
\end{proof}
\noindent
Remarkably, the uniform convergence on the right side of \eqref{opdopdsopvcvcvc} in Lemma \ref{fdppofdpofdpofd2} can be replaced by a weaker convergence property; namely,
\begin{lemma}
\label{fdfdfdfdfddffdasaasassasaasgfgfgf} 
$G$ is  
	Mackey {\rm k}-continuous for $k\in \NN\sqcup\{\lip,\infty\}$ \deff 
	\begin{align}
	\label{lkdslkdslkdslkdslklkdsdsppfddfdggg}
	\textstyle\DIDED_\kk\supseteq \{\phi_n\}_{n\in \NN}\mackarr{\kk} 0 \qquad\quad\Longrightarrow\qquad\quad \lim_n\innt \phi_n=e.
\end{align}
The statement also holds for $k\equiveq\const$ if $G$ is abelian.
\end{lemma}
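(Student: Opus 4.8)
The plan is to deduce both implications from the null-sequence characterisation of Mackey $k$-continuity in Lemma \ref{fdppofdpofdpofd2}, working throughout on the fixed interval $[0,1]$.

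For the forward implication I would argue directly: if $G$ is Mackey $k$-continuous, then by Lemma \ref{fdppofdpofdpofd2} every $\DIDED_\kk\supseteq\{\phi_n\}_{n\in\NN}\mackarr{\kk}0$ satisfies the uniform convergence $\limin\innt_0^\bullet\phi_n=e$. Evaluating this uniform limit at the single point $t=1$ gives $\lim_n\innt\phi_n=\lim_n\innt_0^1\phi_n=e$, which is precisely the condition \eqref{lkdslkdslkdslkdslklkdsdsppfddfdggg}.

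For the converse I would assume \eqref{lkdslkdslkdslkdslklkdsdsppfddfdggg} and, again by Lemma \ref{fdppofdpofdpofd2}, reduce Mackey $k$-continuity to establishing \eqref{opdopdsopvcvcvc}; so let $\DIDED_\kk\supseteq\{\phi_n\}_{n\in\NN}\mackarr{\kk}0$ be given, and suppose towards a contradiction that $\limin\innt_0^\bullet\phi_n=e$ fails. Passing to a subsequence via Remark \ref{idsoidsodso} (which preserves Mackey-nullness), I would obtain an open neighbourhood $U\subseteq G$ of $e$ and times $\tau_n\in(0,1]$ with $\innt_0^{\tau_n}\phi_n\notin U$ for all $n$ -- here $\tau_n\neq0$ because $\innt_0^0\phi_n=e\in U$. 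The key idea is to realise each partial product integral as a full one over $[0,1]$: with the affine reparametrisations $\varrho_n\colon[0,1]\ni t\mapsto t\cdot\tau_n\in[0,\tau_n]$ and $\psi_n:=\dot\varrho_n\cdot(\phi_n\cp\varrho_n)=\tau_n\cdot(\phi_n\cp\varrho_n)$, Point \ref{remkkk3} gives $\psi_n\in\DIDED_\kk$, and the substitution rule \textrm{\ref{subst}} -- whose boundary term $\innt_0^{\varrho_n(0)}\phi_n=\innt_0^0\phi_n=e$ drops out -- yields $\innt\psi_n=\innt_0^{\tau_n}\phi_n$.

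The hard part will be checking that $\{\psi_n\}_{n\in\NN}\mackarr{\kk}0$, which is exactly where the scaling identities of Point \ref{remkkk3} do the work: since $\tau_n\in(0,1]$ the reparametrisation factors $\tau_n^{\dind+1}$ (and $\tau_n^2$ in the Lipschitz case) are all $\leq1$, and restricting $\phi_n$ from $[0,1]$ to the subinterval $[0,\tau_n]$ can only decrease the sup- and Lipschitz-seminorms; hence $\ppp_\infty^\dind(\psi_n)\leq\ppp_\infty^\dind(\phi_n)$ for all $\pp\in\SEM$ and $\dind\llleq k$, so the very same data $\{\mackeyconst_\pp^\dind\}$, $\{\mackeyindex_\pp^\dind\}$ and $\{\lambda_n\}$ witnessing $\{\phi_n\}\mackarr{\kk}0$ also witness $\{\psi_n\}\mackarr{\kk}0$. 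Feeding $\{\psi_n\}$ into the hypothesis \eqref{lkdslkdslkdslkdslklkdsdsppfddfdggg} then gives $\lim_n\innt_0^{\tau_n}\phi_n=\lim_n\innt\psi_n=e$, so $\innt_0^{\tau_n}\phi_n\in U$ for large $n$ -- contradicting the choice of the $\tau_n$. For $k\equiveq\const$ with $G$ abelian I would run the identical argument using the abelian case of Lemma \ref{fdppofdpofdpofd2}, noting that for constant $\phi_n=\phi_{X_n}|_{[0,1]}$ the curve $\psi_n=\phi_{\tau_n X_n}|_{[0,1]}$ is again constant with $\tau_nX_n\in\dom[\exp]$ and $\ppp(\tau_nX_n)\leq\ppp(X_n)$.
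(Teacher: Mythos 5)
Your proposal is correct and follows essentially the same route as the paper's proof: both reduce to the null-sequence characterisation of Lemma \ref{fdppofdpofdpofd2}, extract a subsequence and times $\tau_n$ witnessing failure of the uniform limit, rescale via the affine reparametrisation $\varrho_n\colon[0,1]\rightarrow[0,\tau_n]$ so that \textrm{\ref{subst}} turns $\innt_0^{\tau_n}\phi_{\iota_n}$ into a full product integral of a curve that is again Mackey-null (Remark \ref{idsoidsodso} and the scaling identities of Point \textrm{\ref{remkkk3}}), and then contradict \eqref{lkdslkdslkdslkdslklkdsdsppfddfdggg}. Your explicit verification that the factors $\tau_n^{\dind+1}\leq 1$ (resp.\ $\tau_n^{2}$) and the restriction to $[0,\tau_n]$ only decrease the seminorms, so the same witnessing data works, is exactly the point the paper leaves implicit in its citation of Point \textrm{\ref{remkkk3}}.
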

\begin{proof}
The one implication is evident. For the other implication, we suppose that \eqref{lkdslkdslkdslkdslklkdsdsppfddfdggg} holds; and that $G$ is not Mackey k-continuous. By Lemma \ref{fdppofdpofdpofd2}, there exist $\DIDED_\kk\supseteq \{\phi_n\}_{n\in \NN}\mackarr{\kk} 0$, $U\subseteq G$ open with $e\in G$, a sequence $\{\tau_n\}_{n\in \NN}\subseteq [0,1]$, and $\iota\colon \NN\rightarrow \NN$ strictly increasing, such that  
\begin{align}
\label{khsasapoisoippoisasasaq}
	\textstyle 
	\innt_0^{\tau_n} \phi_{\iota_n}\notin  U\qquad\quad\forall\: n\in \NN
\end{align}
holds. For each $n\in \NN$,  
we define  
\begin{align*}
	\DIDED_\kk\ni \chi_n:= \dot\varrho_n\cdot (\phi_{\iota_n}\cp\varrho_n)\qquad\quad\text{with}\qquad\quad\varrho_n\colon [0,1]\ni t\mapsto t\cdot \tau_n  \in [0,\tau_n];
\end{align*}
and conclude from Remark \ref{idsoidsodso} and Point \ref{remkkk3} that $\{\chi_n\}_{n\in \NN}\mackarr{\kk} 0$ holds.  
Then, \eqref{lkdslkdslkdslkdslklkdsdsppfddfdggg} implies  
\begin{align*}
	\textstyle\lim_n \innt_0^{\tau_n} \phi_{\iota_n}\stackrel{\textrm{\ref{subst}}}{=}\lim_n \innt \chi_n=e,  
\end{align*} 
which contradicts \eqref{khsasapoisoippoisasasaq}.
\end{proof}

\subsection{Sequentially Continuity}
\label{ljdjkjdsds}
We write $\{\phi_n\}_{n\in \NN}\seqarr{\kk} \phi$ for $k\in \NN\sqcup\{\lip,\infty,\const\}$, $\{\phi_n\}_{n\in \NN} \subseteq C^k([r,r'],\mg)$, and $\phi\in C^k([r,r'],\mg)$ \defff
\begin{align*}
	\textstyle\lim_n\ppp^\dind_\infty(\phi-\phi_n)=0\qquad\quad\forall\:\pp\in \SEM,\:\: \dind\llleq k 
\end{align*}
holds. 
We say that $G$ is {\bf sequentially k-continuous} \defff 
\begin{align*}
	\textstyle\DIDED_\kk\supseteq \{\phi_n\}_{n\in \NN}\seqarr{\kk} \phi\in \DIDED_\kk \qquad\quad\Longrightarrow\qquad\quad \limin\innt_r^\bullet \phi_n=\innt_r^\bullet \phi.
\end{align*}
\begin{remark}
\label{dssdsdsdds}
\begingroup
\setlength{\leftmargini}{17pt}
{
\renewcommand{\theenumi}{\arabic{enumi})} 
\renewcommand{\labelenumi}{\theenumi}
\begin{enumerate}
\item[]
\item
\label{dssdsdsdds2}
	Suppose that $G$ is sequentially {\rm k}-continuous for $k\in \NN\sqcup \{\lip,\infty,\const\}$. Evidently, then $G$ is Mackey {\rm k}-continuous; thus, weakly {\rm k}-continuous by Lemma \ref{dsopdsopsdopsdop}.
\item
\label{dssdsdsdds3}
	$G$ is sequentially {\rm k}-continuous for $k\in \NN\sqcup \{\lip,\infty,\const\}$ \deff 
\begin{align*}
	\textstyle\DIDE^k_{[r,r']}\supseteq \{\phi_n\}_{n\in \NN}\seqarr{\kk} \phi\in \DIDE^k_{[r,r']} \qquad\quad\Longrightarrow\qquad\quad \limin\innt_r^\bullet \phi_n=\innt_r^\bullet \phi
\end{align*}	
holds for each $[r,r']\in \COMP$. 
This just follows as in Lemma \ref{fdlkfdlkfd1}. 
\item
\label{dssdsdsdds4}
	Let $k\in \NN\sqcup\{\lip,\infty,\const\}$, with $G$ abelian for $k\equiveq \const$. Then, the same arguments as in Lemma \ref{fdppofdpofdpofd2} show that $G$ is  
	sequentially {\rm k}-continuous \deff 
	\begin{align*}
	\textstyle\DIDED_\kk\supseteq \{\phi_n\}_{n\in \NN}\seqarr{\kk} 0 \qquad\quad\Longrightarrow\qquad\quad \lim_n\innt_0^\bullet \phi_n=e.
\end{align*}
\vspace{-17pt}
\item
\label{dssdsdsdds5}
Let $k\in \NN\sqcup\{\lip,\infty,\const\}$, with $G$ abelian for $k\equiveq \const$. Then, the same arguments as in Lemma \ref{fdfdfdfdfddffdasaasassasaasgfgfgf} show that $G$ is  
	sequentially {\rm k}-continuous \deff 	  
\begin{align*}
	\textstyle\DIDED_\kk\supseteq \{\phi_n\}_{n\in \NN}\seqarr{\kk} 0 \qquad\quad\Longrightarrow\qquad\quad \lim_n\innt\phi_n=e.
\end{align*}
\item
\label{dssdsdsdds1}
	If $G$ is $C^k$-continuous for $k\in \NN\sqcup\{\lip,\infty,\const\}$, then $G$ is sequentially {\rm k}-continuous -- This is clear 
\begingroup
\setlength{\leftmarginii}{12pt}
\begin{itemize}
\item
for $k\in \NN \sqcup \{\lip,\infty\}$\quad from Point \ref{dssdsdsdds5},
\item 
for $k\equiveq\const$\hspace{52.9pt}\quad from Lemma \ref{fddfxxxxfd}.
\end{itemize}
\endgroup
\item
\label{dssdsdsdds6}
Let $k\in \NN\sqcup\{\lip,\infty,\const\}$ be given; and suppose that the $C^k$-topology on $C^k([0,1],\mg)$ is first countable, and that $G$ is sequentially {\rm k}-continuous. Then, $G$ is $C^k$-continuous.

In fact, if $\evol_\kk$ is not $C^k$-continuous, then 
there exists $U\subseteq G$ open, such that $W:=\evol_\kk^{-1}(U)\subseteq \DIDED_\kk$ is not open, i.e., not a neighbourhood of some $\phi\in W$.  
Since $C^k([0,1],\mg)$ (thus, $\DIDED_\kk$) is first countable, there exists a sequence $\{\phi_n\}_{n\in \NN}\subseteq \DIDED_\kk \setm W$ with $\phi_n\seqarr{\kk} \phi$. We obtain $\evol_\kk(\phi_n)\in A:= G\setm U$ for each $n\in \NN$; thus, $\evol_\kk(\phi)=\lim_n\evol_\kk(\phi_n)\in A$, as $\evol_\kk$ is sequentially continuous, and since $A$ is closed. This contradicts that $\evol_\kk(\phi)\in U=G\setm A$ holds.     \hspace*{\fill}$\ddagger$
\end{enumerate}}
\endgroup
\noindent
\end{remark}

\subsection{Piecewise Integrable Curves}
We now finally discuss piecewise integrable curves. Specifically, we provide the basic facts and definitions\footnote{Confer Sect.\ 4.3 in \cite{RGM} for the statements mentioned but not proven here.}; and furthermore show that sequentially 0-continuity and Mackey 0-continuity carry over to the 
piecewise integrable category. This will be used in Sect.\  \ref{uoudsuoiiudsds} to generalize Theorem 1 in \cite{TRM}. 

\subsubsection{Basic Facts and Definitions}
For $k\in \NN\sqcup\{\lip,\infty,\const\}$ and $[r,r']\in \COMP$, we let $\DP^k([r,r'],\mg)$ denote the set of all $\psi\colon [r,r']\rightarrow \mg$, such that there exist $r=t_0<{\dots}<t_n=r'$ and 
\begin{align}
\label{opopooppo}
	\psi[p]\in \DIDE^k_{[t_p,t_{p+1}]}\qquad\text{with}\qquad\psi|_{(t_p,t_{p+1})}=\psi[p]|_{(t_p,t_{p+1})}\qquad\text{for}\qquad p=0,\dots,n-1.
\end{align}  
In this situation, we define $\innt_r^r\psi:=e$, as well as 
\begin{align}
\label{defpio}
	\textstyle\innt_r^t\psi&\textstyle:=\innt_{t_{p}}^t \psi[p] \cdot \innt_{t_{p-1}}^{t_p} \psi[p-1]\cdot {\dots} \cdot \innt_{t_0}^{t_1}\psi[0]\qquad\quad \forall\: t\in (t_{p}, t_{p+1}].
\end{align} 
A standard refinement argument in combination with \textrm{\ref{pogfpogf}} then shows that this is well defined; i.e., independent of any choices we have made. 
It is furthermore not hard to see that for $\phi,\psi\in \DP^k([r,r'],\mg)$, we have 
 $\Ad_{[\innt_r^\bullet\phi]^{-1}}(\psi -\phi)\in \DP^k([r,r'],\mg)$ with 
\begin{align}
\label{dfdssfdsfd}
	\textstyle\big[\innt_r^t \phi\big]^{-1}\big[\innt_r^t \psi\big]=\innt_r^t \Ad_{[\innt_r^\bullet\phi]^{-1}}(\psi -\phi)
	\qquad\quad\forall\: t\in [r,r'].
\end{align} 
We write 
\begingroup
\setlength{\leftmargini}{12pt}
\begin{itemize}
\item
$\{\phi_n\}_{n\in \NN}\seqarrr \phi$ for $\{\phi_n\}_{n\in \NN} \subseteq \DP^0([r,r'],\mg)$ and $\phi\in \DP^0([r,r'],\mg)$ \defff
\begin{align*}
	\textstyle\lim_n\ppp_\infty(\phi-\phi_n)=0\qquad\quad\forall\:\pp\in \SEM\hspace{38pt} 
\end{align*}	
holds.
\item
$\{\phi_n\}_{n\in \NN}\mackarrr \phi$ for $\{\phi_n\}_{n\in \NN} \subseteq \DP^0([r,r'],\mg)$ and $\phi\in \DP^0([r,r'],\mg)$ \defff
\begin{align*}
	\ppp_\infty(\phi-\phi_n)\leq \mackeyconst_\pp\cdot \lambda_{n} \qquad\quad\forall\: n\geq \mackeyindex_\pp,\:\: \pp\in \SEM
\end{align*}
holds for certain $\{\mackeyconst_\pp\}_{\pp\in \SEM}\subseteq \RR_{\geq 0}$, $\{\mackeyindex_\pp\}_{\pp\in \SEM}\subseteq \NN$, and $\RR_{\geq 0}\supseteq \{\lambda_{n}\}_{n\in \NN}\rightarrow 0$.
\end{itemize}
\endgroup
\subsubsection{A Continuity Statement}
We recall the construction made in Sect.\ 4.3 in \cite{RGM}. 
\begingroup
\setlength{\leftmargini}{16pt}
{
\renewcommand{\theenumi}{\roman{enumi})} 
\renewcommand{\labelenumi}{\theenumi}
\begin{enumerate}
\item
\label{anmanmsnmsaanms1}
	We fix (a bump function) $\rhon\colon [0,1]\rightarrow [0,2]$ smooth with 
\begin{align}
\label{odsposdposdpopospoadsasa}
	\textstyle\rhon|_{(0,1)}>0,\:\:\int_0^1 \rhon(s)\:\dd s=1\qquad\quad\:\:\text{as well as}\qquad\quad\:\: \rhon^{(k)}(0)=0=\rhon^{(k)}(1)\qquad\forall\: k\in \NN.
\end{align} 
Then, given $[r,r']\in \COMP$ and $r=t_0<{\dots}<t_n=r'$, we let
\begin{align*}
	\rho_p\colon [t_p,t_{p+1}]\rightarrow [0,2],\qquad t\mapsto \rhon((t-t_p)\slash(t_{p+1}-t_p))\qquad\qquad\forall\: p=0,\dots,n-1;
\end{align*}
and define $\rho\colon [r,r']\rightarrow [0,2]$ by 
\begin{align*}
	\rho|_{[t_p,t_{p+1}]}:=\rho_p\qquad\quad \forall\: p=0,\dots,n-1. 
\end{align*}
Then, $\rho$ is smooth, with $\rho^{(k)}(t_p)=0$ for each $k\in \NN$, $p=0,\dots,n$; and \eqref{substitRI} shows that
\begin{align*}
	\textstyle\varrho\colon [r,r']\rightarrow [r,r'],\qquad t\mapsto r+\int_r^t \rho(s)\:\dd s 
\end{align*}
holds, with $\varrho(t_p)=t_p$ for $p=0,\dots,n-1$.
\item
\label{anmanmsnmsaanms2}
	For $\psi\in \DP^0([r,r'],\mg)$ with $r=t_0<{\dots}<t_n=r'$ as well as $\psi[0],\dots,\psi[n-1]$ as in \eqref{opopooppo}, we let $\varrho\colon [r,r']\rightarrow [r,r']$ and  $\rho\equiv \dot\varrho\colon [r,r']\rightarrow [0,2]$ be as in \ref{anmanmsnmsaanms1}. 	 
Then, it is straightforward from the definitions that  
\begin{align*}
	\textstyle\chi:= \rho\cdot (\psi\cp\varrho)\in \DIDE^0_{[r,r']}\qquad\text{holds, with}\qquad \textstyle \innt_r^\varrho \psi=\innt_r^\bullet \chi\qquad\text{and}\qquad \ppp_\infty(\chi)\leq 2\cdot \ppp_\infty(\psi)
\end{align*}
for each $\pp\in \SEM$.\footnote{In the proof of Lemma 24 in \cite{RGM},  this statement was more generally verified for the case that $k\in \NN\sqcup\{\lip,\infty\}$ holds.}
\end{enumerate}}
\endgroup
\noindent
We obtain
\begin{lemma}
\label{dfooddfop}
\begingroup
\setlength{\leftmargini}{17pt}
{
\renewcommand{\theenumi}{\arabic{enumi})} 
\renewcommand{\labelenumi}{\theenumi}
\begin{enumerate}
\item[]
\item
\label{dskdslkdskl1}
If $G$ is sequentially {\rm 0}-continuous, then 
\begin{align*}
	\:\:\textstyle\DP^0([r,r'],\mg)\supseteq \{\phi_n\}_{n\in \NN}\seqarrr \phi\in\DP^0([r,r'],\mg)\: \:\:\qquad\Longrightarrow\qquad\:\: \limin\innt_r^\bullet \phi_n=\innt_r^\bullet \phi.\:\:
\end{align*}
\vspace{-19pt}
\item
\label{dskdslkdskl2}
If $G$ is Mackey {\rm 0}-continuous, then
\begin{align*}
	\textstyle\DP^0([r,r'],\mg)\supseteq \{\phi_n\}_{n\in \NN}\mackarrr \phi\in\DP^0([r,r'],\mg) \qquad\:\:\Longrightarrow\qquad\:\: \limin\innt_r^\bullet \phi_n=\innt_r^\bullet \phi.
\end{align*} 
\end{enumerate}}
\endgroup
\vspace{-2pt}
\noindent
Specifically, in both situations, for each $\pp\in \SEM$, there exist some $\pp\leq \qq\in \SEM$ and $n_\pp\in \NN$ with 
\begin{align*}
	\textstyle(\pp\cp\chart)([\innt_r^\bullet \phi]^{-1}[\innt_r^\bullet \phi_n])\leq \int \qqq(\phi_n(s)-\phi(s))\:\dd s\qquad\quad\forall\: n\geq n_\pp.
\end{align*} 
\end{lemma}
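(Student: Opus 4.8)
The plan is to reduce the piecewise-integrable assertion to the genuinely integrable one --- for which the continuity hypotheses on $G$ were tailored --- by means of the reparametrisation construction in \ref{anmanmsnmsaanms1} and \ref{anmanmsnmsaanms2}. First I would rewrite the object to be controlled: by \eqref{dfdssfdsfd} we have $[\innt_r^t\phi]^{-1}[\innt_r^t\phi_n]=\innt_r^t\eta_n$ with $\eta_n:=\Ad_{[\innt_r^\bullet\phi]^{-1}}(\phi_n-\phi)\in\DP^0([r,r'],\mg)$, so that both the convergence claim and the final estimate become statements about $\innt_r^\bullet\eta_n$. Applying \ref{anmanmsnmsaanms2} to each $\eta_n$ (with its own partition) produces $\zeta_n:=\rho_n\cdot(\eta_n\cp\varrho_n)\in\DIDE^0_{[r,r']}$ satisfying $\innt_r^{\varrho_n(t)}\eta_n=\innt_r^t\zeta_n$ and $\ppp_\infty(\zeta_n)\le 2\cdot\ppp_\infty(\eta_n)$ for all $\pp\in\SEM$. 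Since $\varrho_n\colon[r,r']\rightarrow[r,r']$ is a continuous surjection fixing $r$ and $r'$, the families $\{\innt_r^\tau\eta_n\mid\tau\in[r,r']\}$ and $\{\innt_r^t\zeta_n\mid t\in[r,r']\}$ coincide, so uniform control of the one is uniform control of the other.

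Next I would transfer the convergence to the $\zeta_n$. The curve $[\innt_r^\bullet\phi]^{-1}$ has compact image, so \ref{as1} supplies, for each $\pp\in\SEM$, some $\pp\le\ww\in\SEM$ with $\ppp(\eta_n(s))\le\www(\phi_n(s)-\phi(s))$, whence $\ppp_\infty(\zeta_n)\le 2\cdot\www_\infty(\phi_n-\phi)$. From $\phi_n\seqarrr\phi$ (resp.\ $\phi_n\mackarrr\phi$) one then reads off $\zeta_n\seqarr{0}0$ (resp.\ $\zeta_n\mackarr{0}0$) in $\DIDE^0_{[r,r']}$. Now the hypothesis enters: sequential (resp.\ Mackey) $0$-continuity of $G$, in the form valid on an arbitrary compact interval (Remark \ref{dssdsdsdds}.\ref{dssdsdsdds3}, resp.\ Lemma \ref{fdlkfdlkfd1}), gives $\limin\innt_r^\bullet\zeta_n=e$. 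Transporting this through $\varrho_n$ yields $\limin\innt_r^\bullet\eta_n=e$, i.e.\ $\limin\innt_r^\bullet\phi_n=\innt_r^\bullet\phi$, which is the asserted convergence in both parts.

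It remains to extract the explicit bound. Fixing $\pp\in\SEM$, I would take $\pp\le\mm\in\SEM$ and the neighbourhood $U\subseteq G$ of $e$ from Lemma \ref{lkdslklkdslkdsldsds}, and use the convergence $\limin\innt_r^\bullet\zeta_n=e$ just obtained to choose $n_\pp$ so large that $\innt_r^\bullet\zeta_n\in U$ for all $n\ge n_\pp$. For such $n$, Lemma \ref{lkdslklkdslkdsldsds} gives $(\pp\cp\chart)(\innt_r^\bullet\zeta_n)\le\int_r^\bullet\mmm(\zeta_n(s))\:\dd s$. Because $\mmm(\zeta_n(s))=\rho_n(s)\cdot\mmm(\eta_n(\varrho_n(s)))$ and $\dot\varrho_n=\rho_n$, the substitution formula \eqref{substitRI} turns the total integral $\int_r^{r'}\mmm(\zeta_n(s))\:\dd s$ into $\int_r^{r'}\mmm(\eta_n(u))\:\dd u$; a second application of \ref{as1} (now with $\mm$ in place of $\pp$) bounds $\mmm(\eta_n(u))$ by $\qqq(\phi_n(u)-\phi(u))$ for a suitable $\pp\le\qq\in\SEM$. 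Combining these with the identity $\innt_r^{\varrho_n(t)}\eta_n=\innt_r^t\zeta_n$ and the surjectivity of $\varrho_n$ gives $(\pp\cp\chart)([\innt_r^\bullet\phi]^{-1}[\innt_r^\bullet\phi_n])\le\int\qqq(\phi_n(s)-\phi(s))\:\dd s$ for all $n\ge n_\pp$, which is the asserted estimate; its right-hand side tends to $0$ since it is bounded by $(r'-r)\cdot\qqq_\infty(\phi_n-\phi)$ in the sequential case and by $(r'-r)\cdot\mackeyconst_\qq\cdot\lambda_n$ in the Mackey case.

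The principal obstacle is not a computation but the logical role of the hypothesis. The estimate of Lemma \ref{lkdslklkdslkdsldsds} controls only the single seminorm $\pp\cp\chart$ of $\innt_r^\bullet\zeta_n$ and is moreover conditional on $\innt_r^\bullet\zeta_n$ already lying in $U$; a single-seminorm bound cannot by itself confine the product integral to $U$, so one cannot bootstrap the estimate into force unconditionally. It is precisely the sequential/Mackey $0$-continuity of $G$ that produces $\innt_r^\bullet\zeta_n\in U$ for large $n$ and thereby breaks this circularity. The remaining care is routine but essential: one must check that the reparametrisation \ref{anmanmsnmsaanms2} preserves the product integral exactly and that \eqref{substitRI} transfers the integral of seminorms without loss, the vanishing of $\rho_n$ at the partition points being what renders $\zeta_n$ genuinely integrable.
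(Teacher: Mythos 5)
Your proposal is correct and follows essentially the same route as the paper's own proof: the same reduction via \eqref{dfdssfdsfd} to $\eta_n=\Ad_{[\innt_r^\bullet\phi]^{-1}}(\phi_n-\phi)$, the same reparametrisation \ref{anmanmsnmsaanms2}, the same $\Ad$-estimates from \ref{as1}, and the same combination of Lemma \ref{lkdslklkdslkdsldsds} with \eqref{substitRI}, with the sequential\slash Mackey hypothesis supplying the eventual containment $\innt_r^\bullet\zeta_n\in U$ exactly as in the paper. The only (immaterial) difference is one of ordering: you establish the uniform convergence first and then extract the seminorm bound, whereas the paper derives both simultaneously from the final chain of inequalities.
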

\begin{proof}
Let $\phi\in \DP^0([r,r'],\mg)$ and $\{\phi_n\}_{n\in \NN} \subseteq \DP^0([r,r'],\mg)$ be given. 
For $\pp\equiv \uu\in \SEM$ fixed, we choose $U\subseteq G$ and $\uu\leq \mm\in \SEM$ as in Lemma \ref{lkdslklkdslkdsldsds}. 
We furthermore let $\mm\leq \qq\equiv \ww\in \SEM$ be as in \ref{as1}, for $\compact\equiv \im[[\innt_r^\bullet \phi]^{-1}]$
and $\vv\equiv \mm$ there. Then, for each $n\in \NN$, we let $\varrho_n\equiv\varrho$, $\rho_n\equiv \rho$, and $\chi_n\equiv \chi$ be as in \ref{anmanmsnmsaanms2}, for
\begin{align*}
	\psi\equiv \psi_n:= \Ad_{[\innt_r^\bullet\phi]^{-1}}(\phi_n -\phi) \in \DP^0([r,r'],\mg)
\end{align*}
there. Then, 
\begingroup
\setlength{\leftmargini}{12pt}
{
\begin{itemize}
\item
we have
\begin{align}
\label{poaopaopasdds}
	\textstyle[\innt_r^{\varrho_n(t)} \phi]^{-1}[\innt_r^{\varrho_n(t)} \phi_n]\stackrel{\eqref{dfdssfdsfd}}{=}\innt_r^{\varrho_n(t)} \Ad_{[\innt_r^\bullet\phi]^{-1}}(\phi_n -\phi)\stackrel{\text{\ref{anmanmsnmsaanms2}}}{=}\innt_r^t \chi_n
	\qquad\quad\forall\: n\in \NN,\:\: t\in [r,r'].
\end{align}
\item
we have  
	$\mmm_\infty(\chi_n)\leq 2\cdot \mmm_\infty(\psi_n)\leq 2\cdot \qqq_\infty(\phi_n -\phi)$ for each $n\in \NN$ by Lemma \ref{opopsopsdopds},  
which shows that
\begingroup
\setlength{\leftmarginii}{12pt}
\begin{itemize}
\item
$\DIDE_{[r,r']}^0\supseteq \{\chi_n\}_{n\in \NN} \seqarr{0} 0$\:\hspace{6pt} holds if we are in the situation of \textit{\ref{dskdslkdskl1}},
\item
$\DIDE_{[r,r']}^0\supseteq \{\chi_n\}_{n\in \NN} \mackarr{0} 0$\: \hspace{0pt} 
holds if we are in the situation of \textit{\ref{dskdslkdskl2}}.
\end{itemize}
\endgroup
\noindent
In both situations, there thus exists some $n_\pp\in \NN$ with $\innt_r^\bullet \chi_n \in U$ for each $n\geq n_\pp$. 
\end{itemize}}
\endgroup
\noindent
We obtain from Lemma \ref{lkdslklkdslkdsldsds} (second step), and \ref{as1}  (last step) that\footnote{For the third step observe that $\rho_n\geq 0$ holds for each $n\in \NN$.} 
\begin{align*}
	\textstyle(\pp\cp\chart)([\innt_r^{\varrho_n(t)} \phi]^{-1}[\innt_r^{\varrho_n(t)} \phi_n])
	&\textstyle\stackrel{\eqref{poaopaopasdds}}{=}(\pp\cp\chart)(\innt_r^t\chi_n)\\[-4pt]
	&\textstyle\stackrel{\phantom{\eqref{substitRI}}}{\leq} \int_r^t (\mmm\cp \chi_n)(s) \:\dd s\\
	&\textstyle\stackrel{\eqref{substitRI}}{=} \int_r^{\varrho_n(t)} (\mmm\cp \psi_n)(s) \:\dd s\\[-4pt]
	&\textstyle\stackrel{\phantom{\eqref{substitRI}}}{\leq} \int_r^{\varrho_n(t)} \qqq(\phi_n(s)-\phi(s))\:\dd s
\end{align*} 
holds for all $n\geq n_\pp$ and $t\in[r,r']$; which proves the claim.
\end{proof}
%\begin{remark}
%\label{fddfdfdf}
%Besides the application in Sect.\ \ref{uoudsuoiiudsds}, it is likely that Lemma \ref{dfooddfop} also serves to generalize the integrability results obtained in Theorem 3 in \cite{RGM} in the locally $\mu$-convex setting, to the sequentially $0$-continuous-, as well as the Mackey $0$-continuous context. 
%Since Theorem \ref{fdfffdfd} shows that $G$ is Mackey $0$-continuous if $G$ is $C^0$-semiregular - this would substitute, e.g., ``locally $\mu$-convex'' in Theorem 3.2) in \cite{RGM} by a necessary continuity condition -- namely, Mackey $0$-continuity.
%%\footnote{More specifically, Theorem 3.2) in \cite{RGM} states that presumed that $G$ is locally $\mu$-convex, then $G$ is $C^k$-semiregular for $k\in \NN_{\geq 1}\sqcup \{\lip,\infty\}$ \deff $G$ is Mackey complete and {\rm k}-confined.} 
%A detailed analysis of this issue, however, would go beyond the scope of this article -- so that we will present it in a separate paper.\hspace*{\fill}$\ddagger$ 
%\end{remark}

\section{Mackey Continuity}
\label{lkfdlkfdlkfdljgljgd}
In this section, we show that
\begin{theorem}
\label{fdfffdfd}
If $G$ is $C^k$-semiregular for $k\in \NN\sqcup\{\lip,\infty\}$, then $G$ is Mackey {\rm k}-continuous. 
\end{theorem}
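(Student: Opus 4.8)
The plan is to reduce the statement, via Lemma~\ref{fdfdfdfdfddffdasaasassasaasgfgfgf}, to the single assertion that $\DIDED_\kk\supseteq\{\phi_n\}_{n\in\NN}\mackarr{\kk}0$ forces $\lim_n\innt\phi_n=e$; here $C^k$-semiregularity enters only through the identity $\DIDED_\kk=C^k([0,1],\mg)$, which guarantees that every $C^k$-curve (and every curve we build out of the $\phi_n$) lies in the domain of the product integral. I would argue by contradiction: if $\lim_n\innt\phi_n=e$ fails, there is an open $U\ni e$ and, by Remark~\ref{idsoidsodso}, a subsequence — which I relabel as $\{\phi_n\}$ — that still Mackey-converges to $0$ while $\innt\phi_n\notin U$ for all $n$. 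Since the defining rate $\{\lambda_n\}$ of the Mackey convergence tends to $0$, I may thin this subsequence once more so that $\lambda_n$ decays as fast as I wish, say $\lambda_n\leq 2^{-n^2}$, without disturbing the Mackey constants $\mackeyconst_\pp^\dind$.

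The heart of the argument is to encode the whole sequence into one curve. Fix a nested sequence $a_0>a_1>\dots\downarrow 0$ with $\ell_n:=a_n-a_{n+1}=2^{-n}$, and on each block $[a_{n+1},a_n]$ place a reparametrized copy $\chi_n:=\dot\varrho_n\cdot(\phi_n\cp\varrho_n)$ of $\phi_n$, where $\varrho_n\colon[a_{n+1},a_n]\to[0,1]$ is the smooth surjective reparametrization built from the bump function of \ref{anmanmsnmsaanms1} (so that $\dot\varrho_n$ and all its derivatives vanish at both endpoints). Two features make this work. First, the substitution identity \ref{subst} gives $\innt_{a_{n+1}}^{a_n}\chi_n=\innt_0^1\phi_n=\innt\phi_n$, so the glued curve records each $\innt\phi_n$ on its $n$-th block. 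Second, because every derivative of $\dot\varrho_n$ vanishes at $a_n$ and $a_{n+1}$, all derivatives of $\chi_n$ vanish there too, hence the blocks match to infinite order at every interior junction $a_n$.

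It remains to verify that the glued curve $\Phi$ (with $\Phi|_{[a_{n+1},a_n]}=\chi_n$ and $\Phi(0):=0$) is genuinely of class $C^k$ across the accumulation point $0$. Since $\varrho_n^{(j)}$ scales like $\ell_n^{-(j-1)}$, a Leibniz/Fa\`a-di-Bruno estimate yields $\ppp_\infty^\dind(\chi_n)\leq c_\dind\cdot\ell_n^{-(\dind+1)}\cdot\ppp_\infty^\dind(\phi_n)$ (the exponent being $2$ in the Lipschitz case), exactly as in Point~\ref{remkkk3} up to constants $c_\dind$ coming from the derivatives of the bump. Combining this with the Mackey bound $\ppp_\infty^\dind(\phi_n)\leq\mackeyconst_\pp^\dind\lambda_n$ and the choices $\lambda_n\leq 2^{-n^2}$, $\ell_n=2^{-n}$, I get $\ppp_\infty^\dind(\chi_n)\to 0$ for every $\pp\in\SEM$ and every $\dind\llleq k$. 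Together with the vanishing of all derivatives at the junctions, a routine real-analysis argument then shows that $\Phi$ is $C^k$ with vanishing derivatives at $0$; I expect this verification — handling all $\dind$ simultaneously when $k\equiveq\infty$, and the power-$2$ scaling when $k\equiveq\lip$ — to be the main technical obstacle. Granting it, $C^k$-semiregularity puts $\Phi\in\DIDE^k_{[0,a_0]}$, so $\EV(\Phi)=\innt_0^\bullet\Phi$ is defined and, being of class $C^{k+1}$, continuous; in particular $\innt_0^{a_n}\Phi\to\innt_0^0\Phi=e$. Finally, \ref{pogfpogf} together with $\innt_{a_{n+1}}^{a_n}\Phi=\innt\phi_n$ gives $\innt\phi_n=\innt_0^{a_n}\Phi\cdot[\innt_0^{a_{n+1}}\Phi]^{-1}\to e$, contradicting $\innt\phi_n\notin U$. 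This establishes the reduced claim, and hence the theorem.
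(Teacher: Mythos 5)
Your proposal is correct and is essentially the paper's own proof: the paper argues by the same contradiction, glues the reparametrized curves $\rho_n\cdot(\phi_n\cp\varrho_n)$ onto dyadic blocks shrinking to the accumulation point, passes to a subsequence with $\lambda_n\leq 2^{-(n+1)^2}$, and then uses $C^k$-semiregularity together with continuity of $t\mapsto \innt_0^t\phi$ for the glued curve $\phi$ to contradict the failure of Mackey continuity -- the only cosmetic difference being that you reduce to endpoint convergence via Lemma \ref{fdfdfdfdfddffdasaasassasaasgfgfgf}, whereas the paper invokes the uniform version (Lemma \ref{fdppofdpofdpofd2}) and instead exploits surjectivity of $\varrho_n$ to get the contradiction on the whole image $\im[\innt_0^\bullet\phi_n]$. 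The one step you defer as routine -- that the glued curve is of class $C^k$ at $0$ -- is precisely where the paper invests its technical work (Sects.\ \ref{ljdflkfdlkfdlkfd} and \ref{dsdsdsdsdssd}, the case analysis in the proof, and Appendix \ref{asassadsdsdsdsdsdsddssddss} for the Lipschitz case), but your scaling estimate and decay choice are the right ingredients: the super-exponential bound $\lambda_n\leq 2^{-n^2}$ beats any fixed power $\delta_n^{-c(\dind)}$ (the paper's cruder exponent is $(\dind+1)^2$ rather than your $\dind+1$, cf.\ \eqref{ljcvljvclkvccvc}, which changes nothing), so the deferred verification does go through.
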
 
\noindent 
The proof of Theorem \ref{fdfffdfd} is based on a bump function argument similar to that one used in the proof of Theorem 2 in \cite{RGM}. It furthermore  makes use of the fact that  $[0,1]\ni t \mapsto \innt_0^t \phi \in G$ is continuous for each $\phi\in \DIDED_\kk$. However, before we can provide the proof, we need some technical preparation first. 
\subsection{Some Estimates}
\label{ljdflkfdlkfdlkfd}
Suppose we are given $\varrho\colon [r,r']\rightarrow [r,r']$; and let $\rho\equiv\dot\varrho$ as well as  
\begin{align*}
	\textstyle C[\rho,\dind]:=\max\big(1,\max_{0\leq m,n \leq \dind}(\sup\{ |\rho^{(m)}(t)|^{n+1}\:| \: t\in [r,r']\})\big)\qquad\quad \forall\: \dind\in \NN.
\end{align*}
We observe the following: 
\begingroup
\setlength{\leftmargini}{12pt}
\begin{itemize}
\item
Let $\psi\in C^k([r,r'],\mg)$ for $k\in \NN\sqcup\{\infty\}$ and $\dind\llleq k$ be given.  
By \ref{chainrule}, \ref{productrule} in Appendix \ref{Diffcalc}, we have
\begin{align*}
	\textstyle(\rho\cdot (\psi\cp\varrho))^{(\dind)}&\textstyle=\sum_{q,m,n=0}^{\dind} h_\dind(q,m,n)\cdot \big(\he\rho^{(m)}\big)^{n+1}\cdot (\psi^{(q)}\cp\varrho),
\end{align*}
for a map $h_\dind\colon (0,\dots,\dind)^3\rightarrow \{0,1\}$ that is independent of $\varrho,\rho,\phi$.\footnote{More concretely, $h_\dind(q,m,n)$ are the coefficients appearing in the Leibniz rule for iterated derivatives of compositions.} We obtain
\begin{align}
\label{opsdposd}
	\ppp\big((\rho\cdot (\psi\cp\varrho))^{(\dind)}\big)\leq (\dind+1)^3\cdot C[\rho,\dind]\cdot \ppp_\infty^\dind(\psi) 
\end{align}
for each $\pp\in \SEM$, $0\leq \dind\llleq k$, and $\psi\in C^k([r,r'],\mg)$.
\item
Let $\psi\in C^\lip([r,r'],\mg)$ be given. Then we have 
\begin{align*}
	\textstyle\ppp((\rho\cdot (\psi\cp\varrho))(t)&-(\rho\cdot (\psi\cp\varrho))(t'))\\[3pt]
	&\leq|\rho(t)-\rho(t')|\cdot \ppp(\psi(\varrho(t)))
\:\: +\:\: |\rho(t')|\cdot \ppp(\psi(\varrho(t))- \psi(\varrho(t')))\\[3pt]
 &\leq |t-t'|\cdot C[\rho,1] \cdot \ppp_\infty(\psi)\quad\hspace{1.4pt}  +\:\:\he  C[\rho,0]\cdot  \Lip(\ppp,\psi)\cdot \underbrace{|\varrho(t)-\varrho(t')|}_{\leq \:C[\rho,0]\:\cdot\: |t-t'|}\\
 &\leq 2\cdot C[\rho,1]^2\cdot \ppp_\infty^\lip(\psi)\cdot |t-t'|
\end{align*}
for each $t,t'\in [r,r']$; thus,
\begin{align}
\label{ljfdljfdkfdjkff}
	\Lip(\ppp,\rho\cdot (\psi\cp\varrho))\leq 2\cdot C[\rho,1]^2\cdot\pp_\infty^\lip(\psi).	
\end{align}
\end{itemize}
\endgroup
\noindent
Let now $\rhon\colon [0,1]\rightarrow [0,2]$ be a fixed bump function  as in \eqref{odsposdposdpopospoadsasa}; as well as $\{t_n\}_{n\in \NN}\subseteq [0,1]$ strictly decreasing with $t_0=1$. 
For each $n\in \NN$, we define $0<\delta_n:=t_{n}-t_{n+1}<1$, as well as
\begin{align*}
	\textstyle\rho_n:=\delta_n^{-1}\cdot(\rhon\cp\kappa_n)\qquad\quad\text{for}\qquad\quad\kappa_n\colon [t_{n+1},t_{n}]\ni t\mapsto \delta_n^{-1}\cdot (t-t_{n+1})\in [0,1].
\end{align*}
We obtain from \eqref{substitRI} that 
\begin{align*}
	\textstyle\varrho_n\colon [t_{n+1},t_{n}]\ni t\mapsto  \int_{t_{n+1}}^t\rho_n(s)\:\dd s \in [0,1]\qquad\quad\forall\: n\in \NN
\end{align*}
holds; and furthermore observe that 
\begin{align}
\label{ljcvljvclkvccvc}
	C[\rho_n,\dind]\leq \delta_n^{-(\dind+1)^2}\cdot C[\rhon,\dind]\qquad\quad\forall\: n\in \NN.
\end{align}
\subsection{A Construction}
\label{dsdsdsdsdssd}
Suppose we are given 
$\{\phi_n\}_{n\in \NN}\subseteq \DIDE_{[0,1]}^k$ with $k\in \NN\sqcup\{\lip,\infty\}$; and let $\rhon,\rho_n,\varrho_n,\{\tau_n\}_{n\in \NN}, \{\delta_n\}_{n\in \NN}$ be as in Sect.\ \ref{ljdflkfdlkfdlkfd}. 
 Then,
\begingroup
\setlength{\leftmargini}{12pt}
\begin{itemize}
\item
We obtain from \eqref{opsdposd} and \eqref{ljcvljvclkvccvc} that    
\begin{align}
\label{dspoopsdopdsaasas}
\begin{split}
	\ppp\big((\rho_n\cdot(\phi_n\cp\varrho_n))^{(\dind)}\big)&\stackrel{\eqref{opsdposd}}{\leq} (\dind+1)^3\cdot C[\rho_n,\dind]\cdot \ppp_\infty^\dind(\phi_n)\\
	&\stackrel{\eqref{ljcvljvclkvccvc}}{\leq} (\dind+1)^3\cdot \delta_n^{-(\dind+1)^2}\cdot C[\rhon,\dind]\cdot \ppp_\infty^\dind(\phi_n)
\end{split}
\end{align}
holds, for each $\pp\in \SEM$, $\dind\llleq k$, and $n\in \NN$. 
\item
We obtain from \eqref{ljfdljfdkfdjkff} and \eqref{ljcvljvclkvccvc} that 
\begin{align}
\label{dspoopsdopdsaasassddsds}
\begin{split}
	\Lip(\ppp,\rho_n\cdot (\phi_n\cp\varrho_n))&\stackrel{\eqref{ljfdljfdkfdjkff}}{\leq} 2\cdot C[\rho_n,1]^2\cdot\ppp_\infty^\lip(\phi_n)
	\stackrel{\eqref{ljcvljvclkvccvc}}{\leq} 2\cdot \delta_n^{-8}\cdot C[\rhon,1]^2\cdot\ppp_\infty^\lip(\phi_n)
\end{split}
\end{align}
holds, for each $\pp\in \SEM$  
and $n\in \NN$. 
\end{itemize}
\endgroup 
\noindent
We define $\phi\colon [0,1]\rightarrow \mg$, by
\begin{align}
\label{dsodsiiodsoids}
	\phi(0):=0\qquad\quad\text{and}\qquad\quad\phi|_{[t_{n+1},t_{n}]}:=\rho_n\cdot (\phi_n\cp\varrho_n)\qquad\forall\: n\in \NN. 
\end{align}
Then, it is straightforward to see that\footnote{The technical details can be found, e.g., in the proof of Lemma 24 in \cite{RGM}.}    
$\phi|_{[t_ {n+1}, 1]}\in \DIDE_{[t_ {n+1}, 1]}^k$ holds for each $n\in \NN$, with  
\begin{align}
\label{sdsdsdsaasaad}
	\textstyle \innt_{0}^{\varrho_n} \phi_{n}\stackrel{\textrm{\ref{subst}}}{=} \innt_{t_{n+1}}^\bullet \phi|_{[t_{n+1},t_n]}\qquad\quad\forall\: n\in\NN.
\end{align}
Moreover, for $k\equiveq \lip$, we obtain from \eqref{dspoopsdopdsaasassddsds} that
\begin{align}
\label{asnbsanbsanbsanbyxxy}
	\Lip(\ppp,\phi|_{[t_ {n+1}, 1]})\leq 2\cdot C[\rhon,1]^2\cdot\max\big(\delta_0^{-8}\cdot \ppp_\infty^\lip(\phi_{0}),\dots,\delta_{n}^{-8}\cdot \ppp_\infty^\lip(\phi_{n})\big)
\end{align}
holds, cf.\ Appendix \ref{asassadsdsdsdsdsdsddssddss}.

\subsection{Proof of Theorem \ref{fdfffdfd}}
We are ready for the
\begin{proof}[Proof of Theorem \ref{fdfffdfd}]
Suppose that the claim is wrong, i.e., that $G$ is $C^k$-semiregular for $k\in \NN\sqcup\{\lip,\infty\}$ but not Mackey k-continuous. Then, by Lemma \ref{fdppofdpofdpofd2}, there exists a sequence $\DIDED_\kk\supseteq \{\phi_n\}_{n\in \NN}\mackarr{\kk} 0$ (with $\{\mackeyconst^\dind_\pp\}_{\dind\lleq k,\:\pp\in \SEM}\subseteq \RR_{\geq 0}$, $\{\mackeyindex^\dind_\pp\}_{\dind\llleq k,\:\pp\in \SEM}\subseteq \NN$, and $\RR_{\geq 0}\supseteq \{\lambda_{n}\}_{n\in \NN}\rightarrow 0$ as in \eqref{podsopdspodssd} for $\phi\equiv 0$ there), as well as $U\subseteq G$ open with $e\in U$, such that $\im[\innt_0^\bullet \phi_n]\not\subseteq U$ holds  for infinitely many $n\in \NN$. Passing to a subsequence if necessary, we thus can assume that
\begin{align}
\label{posdopsdopsd}
	\textstyle \im[\innt_0^\bullet \phi_n]\not\subseteq U\qquad\text{and}\qquad \lambda_n\leq 2^{-(n+1)^2} \qquad\quad\forall\: n\in \NN
\end{align}
holds. 
We let $t_0:=1$, and $t_n:= 1-\sum_{k=1}^n 2^{-k}$ for each $n\in \NN_{\geq 1}$; so that $\delta_n=t_{n}-t_{n+1}=2^{-(n+1)}$ holds for each $n\in \NN$. We  
construct $\phi\colon [0,1]\rightarrow \mg$ as in \eqref{dsodsiiodsoids} in Sect.\ \ref{dsdsdsdsdssd}; and fix $V\subseteq U$ open with $e\in V$ and $V\cdot V^{-1}\subseteq U$. 
\vspace{6pt}

\noindent
Suppose now that we have shown that $\phi$ is of class $C^k$; i.e., that $\phi \in \DIDED_\kk$ holds as $G$ is $C^k$-semiregular. Since $[0,1]\ni t\mapsto \innt_0^t\phi\in G$ is continuous, there exists some $\ell\geq 1$ with $\innt_0^t\phi\in V$ for each $t\in [0,t_\ell]$; thus,
\begin{align*}
	\textstyle\innt_{0}^{\varrho_n(t)} \phi_n\stackrel{\eqref{sdsdsdsaasaad}}{=}\innt_{t_{n+1}}^t \phi|_{[t_{n+1},t_{n}]}\stackrel{\textrm{\ref{pogfpogf}}}{=}[\innt_0^t\phi] \cdot [\innt_0^{t_{n+1}}\phi]^{-1} \in V\cdot V^{-1}\subseteq U 
\end{align*} 
for each $t\in [t_{n+1},t_n]$ with $n\geq \ell$, which contradicts \eqref{posdopsdopsd}. 
\vspace{6pt}

\noindent
To prove the claim, it thus suffices to show that $\phi$ is of class $C^k$:
\begingroup
\setlength{\leftmargini}{12pt}
\begin{itemize}
\item
Suppose first that $k\in \NN\sqcup\{\infty\}$ holds. Then, it suffices to show that  
	\begin{align*}
		\textstyle\lim_{(0,1]\ni h\rightarrow 0} 1/h \cdot \phi^{(\dind)}(h)=0\qquad\quad\forall\: \dind\llleq k
	\end{align*}
	holds, because $\phi$ is of class $C^k$ on $(0,1]$. 
	
	For this, we let $h\in [t_{n+1},t_n]$ for $n\in \NN$ be given; and observe that
	\begin{align*}
		\textstyle h\geq t_{n}=t_{n} - t_{n+1} +t_{n+1}=2^{-(n+1)} + 1-\sum_{k=1}^{n+1} 2^{-k}\geq 2^{-(n+1)}
	\end{align*}
	holds. Then, for $\pp\in \SEM$ fixed and $n\geq \mackeyindex^\dind_\pp$, we obtain from \eqref{dspoopsdopdsaasas} (with $\delta_n=2^{-(n+1)}$) as well as \eqref{posdopsdopsd} that 
\begin{align*} 
	1/h\cdot \ppp\big(\phi^{(\dind)}(h)\big)&\textstyle\stackrel{\phantom{\eqref{fdfdfdfdfdfdsf}}}{\leq} 2^{n+1}\cdot  \ppp\big((\rho_n\cdot(\phi_n\cp\varrho_n))^{(\dind)}\big)\\[1pt]
	&\textstyle\stackrel{\eqref{dspoopsdopdsaasas}}{\leq} (\dind+1)^3\cdot 2^{((\dind+1)^2+1)\cdot (n+1)}\cdot C[\rhon,\dind]\cdot \ppp_\infty^\dind(\phi_n) \\[1pt]
	&\stackrel{\eqref{posdopsdopsd}}{\leq} (\dind+1)^3\cdot C[\rhon,\dind]\cdot  \mackeyconst^\dind_\pp \cdot 2^{((\dind+1)^2+1)\cdot (n+1) -(n+1)^2 }\\
	&\stackrel{\phantom{\eqref{fdfdfdfdfdfdsf}}}{=} (\dind+1)^3\cdot C[\rhon,\dind]\cdot  \mackeyconst^\dind_\pp \cdot 2^{(((\dind+1)^2+1)-(n+1))\cdot (n+1)}
\end{align*}
holds;  
which clearly tends to zero for $n\rightarrow \infty$.
\item
Suppose now that $k\equiveq \lip$ holds. The previous point then shows $\phi\in C^0([0,1],\mg)$. For $\pp\in \SEM$ fixed, we thus have $\ppp_\infty(\phi)<\infty$. We let $\mackeyindex_\pp\equiv \mackeyindex^\lip_\pp$ for each $\pp\in \SEM$, define 
\begin{align*}
	D_\pp:=\max\big(2 \cdot\delta_0^{-8}\cdot \ppp_\infty^\lip(\phi_{0}),\dots,2 \cdot\delta_{\mackeyindex_\pp}^{-8}\cdot \ppp_\infty^\lip(\phi_{\mackeyindex_\pp})\big),
\end{align*}
and obtain for $n\geq \mackeyindex_\pp$ that 
\begin{align*}
	\Lip(\ppp, \phi|_{[t_{n+1},1]})&\stackrel{\eqref{asnbsanbsanbsanbyxxy}}{\leq}  C[\rhon,1]^2\cdot \max\big(2\cdot\delta_0^{-8}\cdot \ppp_\infty^\lip(\phi_{0}),\dots,2\cdot\delta_{n}^{-8}\cdot \ppp_\infty^\lip(\phi_{n})\big)\\[-4pt]
	&\stackrel{\eqref{podsopdspodssd}}{\leq}  C[\rhon,1]^2\cdot \max\big(D_\pp,\mackeyconst_\pp^\lip\cdot  \max\big(2^{1+8(\mackeyindex_\pp+2)}\cdot \lambda_{\mackeyindex_\pp+1},\dots,2^{1+8(n+1)}\cdot \lambda_n\big)\big)\\
	&\textstyle\stackrel{\eqref{posdopsdopsd}}{\leq}  C[\rhon,1]^2\cdot \max\big(D_\pp,\mackeyconst_\pp^\lip\cdot \max\big\{ 2^{1+8(\ell+1)}\cdot 2^{-(\ell+1)^2}\:\big|\: \mackeyindex_\pp+1\leq \ell\leq n\big\}\big)\\
	&\textstyle\stackrel{\phantom{\eqref{fdfdfdfdfdfdsf}}}{=} C[\rhon,1]^2\cdot\max\big(D_\pp, \mackeyconst_\pp^\lip\cdot\max\big\{2^{-\ell^2+ 6\ell+8}\:\big|\:  \mackeyindex_\pp+1\leq \ell\leq n\big\}\big)\\
	&\textstyle\stackrel{\phantom{\eqref{fdfdfdfdfdfdsf}}}{=} C[\rhon,1]^2\cdot \max\big(D_\pp,\mackeyconst_\pp^\lip\cdot\max\big\{2^{17-(\ell-3)^2}\:\big|\: \mackeyindex_\pp+1\leq \ell\leq n\big\}\big)\\[-3pt]
	&\textstyle\stackrel{\phantom{\eqref{fdfdfdfdfdfdsf}}}{\leq}   C[\rhon,1]^2\cdot \max\big(D_\pp,\mackeyconst_\pp^\lip\cdot 2^{17}\big)=: \mathfrak{L}
\end{align*}
holds; thus,
\vspace{-6pt}
\begin{align*}
	\ppp(\phi(t)-\phi(t'))\leq \mathfrak{L}\cdot |t-t'|\qquad\quad\forall\: t,t'\in (0,1].
\end{align*}
Moreover, since $\phi$ is continuous with $\phi(0)=0$, for each $t\in [0,1]$ we have
\begin{align*}
	\ppp(\phi(0)-\phi(t))&\textstyle=\lim_{\ell\rightarrow \infty} \ppp(\phi(0)-\phi(1/\ell)+ \phi(1/\ell)-\phi(t))\\
	&\textstyle\leq \lim_{\ell\rightarrow \infty} \ppp(\phi(1/\ell))+ \lim_{\ell\rightarrow \infty}\ppp(\phi(1/\ell)-\phi(t))\\
	&\textstyle=  \lim_{\ell\rightarrow \infty}\ppp(\phi(1/\ell)-\phi(t))\\
		&\textstyle\leq   \lim_{\ell\rightarrow \infty} \mathfrak{L}\cdot |1/\ell - t|\\
		&=\mathfrak{L}\cdot |0-t|.
\end{align*}
This shows $\Lip(\ppp,\phi)\leq \mathfrak{L}$, i.e., $\phi\in C^\lip([0,1],\mg)$.\qedhere
\end{itemize}
\endgroup
\end{proof}

\section{The Strong Trotter Property}
\label{uoudsuoiiudsds}
In this Section, we want to give a brief application of the notions introduced so far.   
For this, we recall that a Lie group $G$ is said to have the strong Trotter property 
\defff \eqref{sddssddszz} holds;  
and now will show 
\begin{proposition}
\label{opsdopsdopdsopsd}
\begingroup
\setlength{\leftmargini}{17pt}
\begin{enumerate}
\item
\label{opsdopsdopdsopsd1}
	If $G$ is sequentially {\rm 0}-continuous, then $G$ has the strong Trotter property.
\item
\label{opsdopsdopdsopsd2}
If $G$ is Mackey $0$-continuous, then 
	\eqref{sddssddszz} holds for each $\mu\in C_*^{1}([0,1],G)$ with $\dot\mu(0)\in \dom[\exp]$ and $\Der(\mu)\in C^\lip([0,1],\mg)$. 
\end{enumerate}
\endgroup
\end{proposition}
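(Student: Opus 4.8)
The plan is to reduce the (uniform) convergence \eqref{sddssddszz} to a statement about a single sequence of piecewise integrable curves, to which the continuity Lemma \ref{dfooddfop} can be applied. Throughout I write $X:=\dot\mu(0)\in\dom[\exp]$ and $\phi:=\Der(\mu)\in C^0([0,1],\mg)\cap \DIDE_{[0,1]}$; since $\mu(0)=e$ we have $\innt_0^t\phi=\mu(t)$ for $t\in[0,1]$ and $\phi(0)=\dd_{e}\RT_{e}(\dot\mu(0))=X$. Fix $\ell>0$. First I would argue, in the spirit of Lemma \ref{aslksalksalksakasklaslksalklkalksa}, that it suffices to show: for every strictly increasing $\{n_m\}_{m\in\NN}\subseteq\NN$ with $n_m\rightarrow\infty$ and every $\{\tau_m\}_{m\in\NN}\subseteq[0,\ell]$ one has $\lim_m \exp(-\tau_m X)\cdot\mu(\tau_m/n_m)^{n_m}=e$. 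Indeed, negating the (footnote) definition of uniform convergence on $[0,\ell]$ produces exactly such sequences with $\exp(-\tau_m X)\cdot\mu(\tau_m/n_m)^{n_m}$ staying outside a fixed neighbourhood of $e$, contradicting the displayed limit.

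Second, for fixed $m$ (with $n_m$ so large that $\ell/n_m\le 1$, so that $\mu(\tau_m/n_m)$ is defined) I would realise both group elements as product integrals on $[0,1]$. On the one hand, $\exp(\tau_m X)=\innt_0^1\eta_m$, where $\eta_m\equiv\phi_{\tau_m X}|_{[0,1]}\in\DIDE^\const_{[0,1]}$ is the constant curve $\tau_m X$ (using $\tau_m X\in\dom[\exp]$ and \eqref{odaidaooipidadais}), and moreover $\innt_0^s\eta_m=\exp(s\tau_m X)$ for $s\in[0,1]$. On the other hand, partitioning $[0,1]$ into the $n_m$ intervals $[(k-1)/n_m,k/n_m]$ and placing on the $k$-th of them the reparametrised copy $s\mapsto \tau_m\cdot\phi\big(\tau_m(s-(k-1)/n_m)\big)$ of $\phi|_{[0,\tau_m/n_m]}$, I obtain by \textrm{\ref{subst}} a curve $\chi_m\in\DP^0([0,1],\mg)$ each of whose blocks integrates to $\innt_0^{\tau_m/n_m}\phi=\mu(\tau_m/n_m)$; hence $\innt_0^1\chi_m=\mu(\tau_m/n_m)^{n_m}$ by \textrm{\ref{pogfpogf}}.

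Third, I would compare the two curves. By \eqref{dfdssfdsfd} the curve $\psi_m:=\Ad_{[\innt_0^\bullet\eta_m]^{-1}}(\chi_m-\eta_m)$ lies in $\DP^0([0,1],\mg)$ and satisfies $\innt_0^t\psi_m=[\innt_0^t\eta_m]^{-1}[\innt_0^t\chi_m]$; in particular $\innt_0^1\psi_m=\exp(-\tau_m X)\cdot\mu(\tau_m/n_m)^{n_m}$. Since $[\innt_0^s\eta_m]^{-1}=\exp(-s\tau_m X)$ ranges over the fixed compact set $\compact:=\{\exp(-uX)\,|\,u\in[0,\ell]\}$, \ref{as1} provides, for each $\pp\in\SEM$, some $\pp\le\ww\in\SEM$ with $\ppp\big(\psi_m(s)\big)\le\www\big(\chi_m(s)-\eta_m(s)\big)$ for all $s$ and $m$. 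On the $k$-th block one has $\chi_m(s)-\eta_m(s)=\tau_m\big(\phi(t)-X\big)$ with $t:=\tau_m(s-(k-1)/n_m)\in[0,\ell/n_m]$, whence
\begin{align*}
	\textstyle\ppp_\infty(\psi_m)\le \ell\cdot\sup\{\www(\phi(t)-X)\,|\,t\in[0,\ell/n_m]\}.
\end{align*}
As $\phi$ is continuous with $\phi(0)=X$, the right-hand side tends to $0$, so $\{\psi_m\}_{m\in\NN}\seqarrr 0$ in $\DP^0([0,1],\mg)$; Lemma \ref{dfooddfop}.\ref{dskdslkdskl1} then gives $\lim_m\innt_0^1\psi_m=e$, proving Part \ref{opsdopsdopdsopsd1}. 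For Part \ref{opsdopsdopdsopsd2}, the extra hypothesis $\phi\in C^\lip([0,1],\mg)$ upgrades the estimate via $\www(\phi(t)-X)=\www(\phi(t)-\phi(0))\le\Lip(\www,\phi)\cdot t$, so that $\ppp_\infty(\psi_m)\le \Lip(\www,\phi)\cdot\ell^2\cdot(1/n_m)$, i.e.\ $\{\psi_m\}_{m\in\NN}\mackarrr 0$ with $\lambda_m=1/n_m$; now Lemma \ref{dfooddfop}.\ref{dskdslkdskl2} applies under the weaker hypothesis of Mackey $0$-continuity.

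I expect the main obstacle to be the passage from the desired uniform limit to a single sequence of curves, together with the bookkeeping around the tiled curve $\chi_m$: one must check that $\psi_m$ is genuinely piecewise integrable (which is precisely \eqref{dfdssfdsfd}) and that the $\Ad$-prefactors $[\innt_0^\bullet\eta_m]^{-1}$ stay in one compact set independent of $m$, so that \ref{as1} yields a seminorm bound uniform in $m$. Once these are in place, the only difference between the two parts is the dichotomy between mere continuity of $\phi$ at $0$ (giving $\seqarrr 0$) and its Lipschitzness (giving the faster $\mackarrr 0$), which is exactly what allows the second assertion to run on the weaker Mackey hypothesis.
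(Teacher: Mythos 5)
Your proof is correct, and while it rests on the same pillars as the paper's proof (the contradiction setup producing sequences $\{\tau_m\}\subseteq[0,\ell]$, $n_m\rightarrow\infty$; the tiled curve in $\DP^0([0,1],\mg)$ whose product integral is $\mu(\tau_m/n_m)^{n_m}$; and Lemma \ref{dfooddfop} as the continuity input), the comparison step is organized in a genuinely different way. The paper first passes to a subsequence with $\tau_n\rightarrow\tau$, proves $\{\phi_n\}_{n\in\NN}\seqarrr\tau\cdot\phi_X$ (resp.\ $\mackarrr$) against the \emph{fixed} constant limit curve $\tau\cdot\phi_X$, and must then split
\begin{align*}
	\textstyle\exp(-\tau_n\cdot X)\cdot\mu(\tau_n/\iota_n)^{\iota_n}=\exp((\tau-\tau_n)\cdot X)\cdot\big(\exp(-\tau\cdot X)\cdot\innt\phi_n\big)\in V\cdot V\subseteq U,
\end{align*}
using \eqref{podsoppodspodsds} for the first factor. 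You avoid both the subsequence extraction and this splitting: conjugating with the \emph{varying} constant curves $\eta_m=\phi_{\tau_m X}|_{[0,1]}$ via \eqref{dfdssfdsfd} produces curves $\psi_m\in\DP^0([0,1],\mg)$ whose product integral is exactly the element $\exp(-\tau_m\cdot X)\cdot\mu(\tau_m/n_m)^{n_m}$ under scrutiny, and you show $\{\psi_m\}_{m\in\NN}\seqarrr 0$ (resp.\ $\mackarrr 0$), so Lemma \ref{dfooddfop} with limit curve $0$ concludes directly. What this buys is a shorter endgame; what it costs is that two points the paper only needs \emph{inside} the proof of Lemma \ref{dfooddfop} must be justified in the proof of the Proposition itself: (i) compactness of $\compact=\{\exp(-u\cdot X)\:|\:u\in[0,\ell]\}$, which does hold since $u\mapsto\exp(-u\cdot X)=\innt_0^{u/\ell}\phi_{-\ell X}|_{[0,1]}$ is continuous by \eqref{odaidaooipidadais}, $\RR\cdot\dom[\exp]\subseteq\dom[\exp]$, and continuity of evolutions, and (ii) the uniform bound $\ppp\cp\Ad_g\leq\www$ for all $g\in\compact$ from \ref{as1}, which makes your estimate $\ppp_\infty(\psi_m)\leq\ell\cdot\sup\{\www(\phi(t)-X)\:|\:t\in[0,\ell/n_m]\}$ independent of $m$; you state both, and both are sound. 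The block-wise computation underlying this estimate is the same one the paper records in \eqref{opopasopsaopas1}, \eqref{opopasopsaopas2}, and \eqref{fpogfpopogfpogfaasacc}, and your Lipschitz refinement yielding the Mackey rate $\lambda_m=1/n_m$ is precisely the paper's, so the two proofs agree on where sequential continuity suffices and where the Mackey hypothesis can take over.
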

\noindent
Here,
\begingroup
\setlength{\leftmargini}{12pt}
{
\renewcommand{\theenumi}{\arabic{enumi})} 
\renewcommand{\labelenumi}{\theenumi}
\begin{itemize}
\item
By Remark \ref{dssdsdsdds}.\ref{dssdsdsdds1}, 
Proposition \ref{opsdopsdopdsopsd}.\ref{opsdopsdopdsopsd1} 
generalizes Theorem 1 in \cite{TRM}, stating that $G$ admits the strong Trotter property if $G$ is locally $\mu$-convex (recall that, by Theorem 1 in \cite{RGM}, local $\mu$-convexity is  equivalent to that $G$ is $C^0$-continuous).
\item
By Theorem \ref{fdfffdfd}, the presumptions made in Proposition \ref{opsdopsdopdsopsd}.\ref{opsdopsdopdsopsd2} are always fulfilled, e.g., if $G$ is $C^0$-semiregular, and $\mu$ is of class $C^2$ with $\dot\mu(0)\in \dom[\exp]$.
\end{itemize}}
\endgroup
\noindent
We will need the following observations:  
Let $\ell>0$, $\mu\in C^1_*([0,1],\U)$ be given; and define $\phi\equivdef \Der(\mu)$, $X:=\dot\mu(0)=\phi(0)$, as well as  
\begin{align*}
	\mu_{\tau}\colon [0,1/\ell]\ni t\mapsto \mu(\tau\cdot t)\in G\qquad\quad\forall\: \tau\in [0,\ell].
\end{align*}
Then, for each $t\in [0,1/\ell]$, we have 
\begin{align*}
	\Der(\mu_{\tau})(t)-\tau\cdot X&\stackrel{\eqref{kldlkdldsl}}{=}\dermapdiff((\chart\cp\mu_\tau)(t),\partial_t(\chart\cp\mu_\tau)(t))-\tau\cdot X\\[-4pt]
	&\stackrel{\phantom{\eqref{kldlkdldsl}}}{=}\tau\cdot \dermapdiff((\chart\cp\mu)(\tau\cdot t),\partial_t(\chart\cp\mu)(\tau\cdot t))-\tau\cdot X\\[-4pt]
	&\stackrel{\phantom{\eqref{kldlkdldsl}}}{=}\tau\cdot \Der(\mu)(\tau\cdot t)-\tau\cdot X\\[-4pt]
	&\stackrel{\phantom{\eqref{kldlkdldsl}}}{=}\tau\cdot (\phi(\tau\cdot t)-X).
\end{align*} 
For each $\pp\in \SEM$, $\tau\in[0,\ell]$, and $s\leq 1/\ell$, we thus  obtain
\begin{align}
\label{opopasopsaopas1}
		\textstyle\ppp_\infty(\Der(\mu_\tau)|_{[0,s]}-\tau\cdot X)=\ell\cdot \sup\{\ppp(\phi(\tau\cdot t)-\phi(0))\: |\: t\in [0,s] \};
	\end{align} 
whereby, for the case that $\phi\in C^\lip([0,1],\mg)$ holds, we additionally have
\begin{align}
\label{opopasopsaopas2}
	\textstyle \sup\{\ppp(\phi(\tau\cdot t)-\phi(0))\: |\: t\in [0,s] \}
\leq  s\cdot \ell\cdot \Lip(\ppp,\phi). 
\end{align}
We are ready for the 
\begin{proof}[Proof of Proposition \ref{opsdopsdopdsopsd}]
Let $\phi\equivdef \Der(\mu)$ and $X:=\dot\mu(0)=\phi(0)$, for
\begingroup
\setlength{\leftmargini}{16pt}
\begin{enumerate}
\item
	$\mu\in C_*^1([0,1],G)$ with $\dot\mu(0)\in \dom[\exp]$ if $G$ is sequentially {\rm 0}-continuous.
\item
$\mu\in C_*^1([0,1],G)$ with $\dot\mu(0)\in \dom[\exp]$ and $\phi\in C^\lip([0,1],\mg)$    
if $G$ is Mackey $0$-continuous. 
\end{enumerate}
\endgroup
\noindent
We suppose that \eqref{sddssddszz} is wrong; i.e., that     
 there exists some $\ell>0$, an open neighbourhood $U\subseteq G$ of $e$, a sequence $\{\tau_n\}_{n\in \NN}\subseteq [0,\ell]$, and a strictly increasing sequence $\{\iota_n\}_{n\in \NN}\subseteq \NN_{\geq 1}\cap [\ell,\infty)$ with
\begin{align}
\label{pospopods}
	\exp(-\tau_n\cdot X)\cdot \mu(\tau_n/\iota_n)^{\iota_n}\notin U\qquad\quad\forall\: n\in \NN. 
\end{align}
Passing to a subsequence if necessary, we can additionally assume that $\lim_n\tau_n=\tau\in [0,\ell]$ exists. We choose $V\subseteq G$ open with $e\in V$ and $V\cdot V\subseteq U$, and fix some $n_V\in \NN$ with
\begin{align}
\label{podsoppodspodsds}
	\textstyle
	\exp((\tau-\tau_n)\cdot X)\in V\qquad\quad\forall\: n\geq n_V.
\end{align}
Moreover, for each $n\in \NN$: 
 \begingroup
\setlength{\leftmargini}{12pt}
\begin{itemize}
\item 
We define 
\begin{align*}
	\chi_n:= \Der(\mu_{\tau_n})|_{[0,1/\iota_n]}\in \DIDE_{[0,1/\iota_n]}^k\qquad\quad\text{with}\qquad\quad \mu_{\tau_n}\colon [0,1/\ell]\ni t\mapsto \mu(t\cdot \tau_n).
\end{align*}
\item
We define $t_{n,m}:=m/\iota_n$ for $m=0,\dots,\iota_{n}$; as well as 
	$\phi_n[m]\colon [t_{n,m},t_{n,m+1}]\ni t\mapsto \chi_n(\cdot - t_{n,m})\in \mg$  
for $m=0,\dots,\iota_{n-1}$. Then, we have\footnote{In the first step below, \textrm{\ref{subst}} is applied with $\varrho\colon [t_{n,m},t_{n,m+1}] \mapsto t- t_{n,m}\in [0,1/\iota_n]$.}
\begin{align}
\label{dsljfdsoidsoidsoipds}
	\textstyle\innt\phi_n[m]\stackrel{\textrm{\ref{subst}}}{=}\innt\chi_n=\mu_{\tau_n}(1/\iota_n)=\mu(\tau_n/\iota_n)\qquad\quad\forall\: m=0,\dots,\iota_{n-1}.
\end{align}
\item
We define $\phi_n\in\DP^0([0,1],\mg)$ by
	\begin{align*}
	\phi_{n}|_{[t_{n,m},t_{n,m+1})}\hspace{2pt}&:=\phi_{n}[m]|_{[t_{n,m},t_{n,m+1})}\qquad\quad\forall\: m=0,\dots,\iota(n)-2,\\[2pt]
	\phi_{n}|_{[t_{n,\iota_{n-1}},t_{n,\iota_n}]}&:=\phi_{n}[\iota_n-1]; 
\end{align*}
and obtain
	\begin{align}
\label{fdlklkfdslkfds}
	\textstyle\innt \phi_n \stackrel{\eqref{defpio}}{=}\innt^{t_{n,\iota_n}}_{t_{n,\iota_n-1}}\phi_n[\iota_n-1]\cdot {\dots}\cdot \innt^{t_{n,1}}_{t_{n,0}}\phi_n[0] \stackrel{\eqref{dsljfdsoidsoidsoipds}}{=}\mu(\tau_n/\iota_n)^{\iota_n}\qquad\quad\forall\: n\in \NN.
\end{align}
\end{itemize}
\endgroup
\noindent
Then, for each $n\in \NN$ and $\pp\in \SEM$, we have 
\begin{align}
\label{fpogfpopogfpogfaasacc}
\begin{split}
	\ppp_\infty(\phi_n- \tau\cdot \phi_X)&\stackrel{\phantom{\eqref{opopasopsaopas1}}}{\leq} \ppp_\infty(\phi_n- \tau_n\cdot X)+\ppp(\tau_n\cdot X- \tau\cdot X)\\
	&\textstyle\stackrel{\phantom{\eqref{opopasopsaopas1}}}{=}\ppp_\infty\big(\Der(\mu_{\tau_n})|_{[0,1/{\iota_n}]}-\tau_n\cdot X\big)+ |\tau-\tau_n|\cdot \ppp(X)\\
	&\textstyle\stackrel{\eqref{opopasopsaopas1}}{\leq} \ell\cdot \sup\{\ppp(\phi(\tau_n\cdot t)-\phi(0))\: |\: t\in [0,1/\iota_n] \}+ |\tau-\tau_n|\cdot \ppp(X).
\end{split}
\end{align} 
For the case that $\phi\in C^\lip([0,1],\mg)$ holds, we furthermore obtain
\begin{align*}
	\textstyle\ppp_\infty(\phi_n- \tau\cdot \phi_X)&\stackrel{\eqref{fpogfpopogfpogfaasacc}, \eqref{opopasopsaopas2}}{\leq} \ell^2\slash\iota_n \cdot \Lip(\ppp,\phi) + |\tau-\tau_n|\cdot \ppp(X)\\
	&\quad\he\leq\quad\he  \underbrace{(\Lip(\ppp,\phi) + \ppp(X))}_{\mackeyconst_\pp}\cdot \underbrace{\textstyle(\ell^2\slash\iota_n+|\tau-\tau_n|)}_{\lambda_n}
\end{align*} 
for each $n\in \NN$ and $\pp\in \SEM$; 
whereby $\lim_n \lambda_n=0$ holds. We thus have
\begin{align*}
	\DP^0([0,1],\mg)\supseteq &\{\phi_n\}_{n\in \NN}\seqarrr\hspace{2.55pt} \tau\cdot\phi_X\\
	\text{as well as}\qquad\quad\DP^0([0,1],\mg)\supseteq &\{\phi_n\}_{n\in \NN}\mackarrr \tau\cdot\phi_X\qquad\text{if}\qquad \phi\in C^\lip([0,1],\mg)\qquad\text{holds}.
\end{align*} 
In both cases, by Lemma \ref{dfooddfop},  
there exists some $\NN\ni n'_V\geq n_V$ with 
\begin{align}
\label{idsiodsidsoidsoids}
	\textstyle[\innt \tau \cdot \phi_X|_{[0,1]}]^{-1}\cdot[\innt \phi_n]\in V\qquad\quad\forall\: n\geq n'_V;
\end{align}
and we obtain for $n\geq n'_V$ that
\begin{align*}
	\exp(-\tau_n\cdot X)\cdot \mu(\tau_n/\iota_n)^{\iota_n}&=\overbrace{\exp\big((\tau-\tau_n)\cdot X\big)}^{\stackrel{\eqref{podsoppodspodsds}}{\in} V}\:\cdot\:\underbrace{\big(\overbrace{\exp(-\tau\cdot X)}^{\equiveq\: [\innt\tau\he\cdot\he \phi_X|_{[0,1]}]^{-1}}\:\cdot\: \overbrace{\mu(\tau_n/\iota_n)^{\iota_n}}^{\stackrel{\eqref{fdlklkfdslkfds}}{=} \innt \phi_n}\big)}_{\stackrel{\eqref{idsiodsidsoidsoids}}{\in} V}
\in V\cdot V\subseteq  U
\end{align*}  
holds,   
which contradicts \eqref{pospopods}.
\end{proof}

\section{Differentiation}
\label{opdfodf}
In this section, we discuss several differentiability properties of the evolution map. The whole discussion is based on the following generalization of Proposition 7 in \cite{RGM}.  
\begin{proposition}
\label{rererererr}
Let $\{\varepsilon_n\}_{n\in \NN}\subseteq C^0([r,r'],\mg)$,   
$\chi\in C^0([r,r'],\mg)$, and $\RR_{\neq 0}\supseteq \{h_n\}_{n\in \NN}\rightarrow 0$ be given with  
$\{h_n\cdot \chi + h_n\cdot \varepsilon_n\}_{n\in \NN}\subseteq \DIDE_{[r,r']}$, such that
\begingroup
\setlength{\leftmargini}{19pt}{
\renewcommand{\theenumi}{{\roman{enumi}})} 
\renewcommand{\labelenumi}{\theenumi}
\begin{enumerate}
\item
\label{saaaaaw1}
	$\lim_{n}\varepsilon_n(t)=0$\hspace{74.7pt}holds for each $t\in [r,r']$,
\item
\label{saaaaaw2}
	$\sup\{\pp_\infty(\varepsilon_n)\:|\: n\in \NN)\}<\infty$\:\:\:\: holds for each $\pp\in \SEM$.
\end{enumerate}}
\endgroup
\noindent
Then, the following two conditions are equivalent:
\begingroup
\setlength{\leftmargini}{17pt}
{
\renewcommand{\theenumi}{{\alph{enumi})}} 
\renewcommand{\labelenumi}{\theenumi}
\begin{enumerate}
\item
\label{dpopoffdpolkfdlkfdlklkfd1}
$\limin \chart(\innt_r^\bullet h_n\cdot \chi + h_n\cdot \varepsilon_n)=0$.
\item
\label{dpopoffdpolkfdlkfdlklkfd2}
\:
$\limin 1/h_n\cdot \chart\big(\innt_r^\bullet h_n\cdot \chi+h_n\cdot \varepsilon_n\big)=\int_r^\bullet (\dd_e\chart\cp \chi)(s)\:\dd s\in \comp{E}$.
\end{enumerate}}
\endgroup
\end{proposition}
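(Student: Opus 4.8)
The plan is to reduce both conditions to a single integral identity and then to analyse its integrand. First I would put $\phi_n:=h_n\cdot \chi+h_n\cdot \varepsilon_n\in \DIDE_{[r,r']}$ and $\mu_n:=\innt_r^\bullet \phi_n\in C_*^1([r,r'],G)$, so that $\Der(\mu_n)=\phi_n$. For every $n$ with $\im[\mu_n]\subseteq \U$, equation \eqref{ixxxsdsdoisdiosd} applies, and using that $\dermapinvdiff$ is linear in its second argument, that $(\chart\cp\mu_n)(r)=\chart(e)=0$, and \eqref{isdsdoisdiosd}, I obtain the master identity
\begin{align*}
	\textstyle 1/h_n\cdot \chart\big(\innt_r^t h_n\cdot \chi+h_n\cdot \varepsilon_n\big)=\int_r^t \dermapinvdiff\big((\chart\cp\mu_n)(s),\chi(s)+\varepsilon_n(s)\big)\:\dd s\qquad\forall\: t\in [r,r'].
\end{align*}
I would also record that $\dermapinvdiff(0,\cdot)=\dd_e\chart$, because $\chartinv(0)=e$ and $\RT_e=\id$; hence the right-hand side of \ref{dpopoffdpolkfdlkfdlklkfd2} equals $\int_r^\bullet \dermapinvdiff(0,\chi(s))\:\dd s$.

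The implication \ref{dpopoffdpolkfdlkfdlklkfd2}$\Rightarrow$\ref{dpopoffdpolkfdlkfdlklkfd1} is then immediate and does not even need the master identity: a sequence converging uniformly in $\comp{E}$ is uniformly bounded w.r.t.\ each $\ovl{\pp}$, so multiplying by $h_n\rightarrow 0$ shows that $\chart(\innt_r^\bullet \phi_n)=h_n\cdot[1/h_n\cdot \chart(\innt_r^\bullet \phi_n)]\rightarrow 0$ uniformly, which is \ref{dpopoffdpolkfdlkfdlklkfd1}.

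For \ref{dpopoffdpolkfdlkfdlklkfd1}$\Rightarrow$\ref{dpopoffdpolkfdlkfdlklkfd2} I would note that \ref{dpopoffdpolkfdlkfdlklkfd1} forces $\im[\mu_n]\subseteq \U$ for large $n$, so the master identity is available; then I subtract $\int_r^t\dermapinvdiff(0,\chi(s))\:\dd s$ and split the integrand into
\begin{align*}
	A_n(s):=\dermapinvdiff\big((\chart\cp\mu_n)(s),\chi(s)\big)-\dermapinvdiff(0,\chi(s))\qquad\text{and}\qquad B_n(s):=\dermapinvdiff\big((\chart\cp\mu_n)(s),\varepsilon_n(s)\big).
\end{align*}
For $A_n$ I would use a compactness argument: $\dermapinvdiff$ is continuous, $\chi([r,r'])$ is compact, and $(x,s)\mapsto \dermapinvdiff(x,\chi(s))-\dermapinvdiff(0,\chi(s))$ vanishes on $\{0\}\times [r,r']$, so a finite-subcover argument produces, for each $\pp\in \SEM$ and each $\eta>0$, a $0$-neighbourhood $\OB_{\qq,\delta}$ with $\pp(A_n(s))<\eta$ whenever $(\chart\cp\mu_n)(s)\in \OB_{\qq,\delta}$; by \ref{dpopoffdpolkfdlkfdlklkfd1} this holds for all $s$ once $n$ is large, so $\sup_s\pp(A_n(s))\rightarrow 0$ and hence $\int_r^\bullet A_n(s)\:\dd s\rightarrow 0$ uniformly by \eqref{ffdlkfdlkfd}. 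For $B_n$ I would invoke \ref{as3}: to $\uu\in \SEM$ pick $\uu\leq \mm\in \SEM$ with $(\uu\cp\dermapinvdiff)(x,X)\leq \mmm(X)$ for $x\in \OB_{\mm,1}$; since \ref{dpopoffdpolkfdlkfdlklkfd1} gives $(\chart\cp\mu_n)(s)\in \OB_{\mm,1}$ for all $s$ and large $n$, the $\uu$-seminorm of $\int_r^t B_n(s)\:\dd s$ is bounded by $\int_r^{r'}\mmm(\varepsilon_n(s))\:\dd s$.

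The main obstacle is this last integral. Assumptions \ref{saaaaaw1} and \ref{saaaaaw2} only supply that $\mmm(\varepsilon_n(\cdot))$ tends to $0$ pointwise while remaining uniformly bounded, so the integrand does not converge uniformly and I cannot pass the limit inside naively. Here I would appeal to the bounded (Arzel\`a) convergence theorem for Riemann integrals of continuous functions to conclude $\int_r^{r'}\mmm(\varepsilon_n(s))\:\dd s\rightarrow 0$, whence $\int_r^\bullet B_n(s)\:\dd s\rightarrow 0$ uniformly. Adding the two estimates gives $1/h_n\cdot \chart(\innt_r^\bullet \phi_n)\rightarrow \int_r^\bullet \dermapinvdiff(0,\chi(s))\:\dd s=\int_r^\bullet(\dd_e\chart\cp\chi)(s)\:\dd s$ uniformly in $\comp{E}$, i.e.\ \ref{dpopoffdpolkfdlkfdlklkfd2}, completing the equivalence.
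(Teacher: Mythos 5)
Your proof is correct, but it takes a genuinely different route from the paper's. The paper (adapting Proposition 7 of \cite{RGM}) never writes down the chart ODE for the solution curves themselves: it approximates $\chi$ by piecewise-linear curves $\chi_m$ -- chosen precisely so that $\gamma_{h,m}=h\cdot\dd_e\chart\big(\int_r^\bullet\chi_m(s)\:\dd s\big)$ lies in $E$ rather than merely in $\comp{E}$ -- builds the model curves $\mu_{h,m}=\chartinv\cp\gamma_{h,m}$ whose logarithmic derivatives are $h\cdot\dermapdiff(\gamma_{h,m},\dd_e\chart(\chi_m))$, and then compares $\innt_r^\bullet h_n\cdot\chi+h_n\cdot\varepsilon_n$ with $\mu_{h_n,m}$ through the stability estimate of Lemma \ref{hghghggg}; this produces a double limit in $(n,m)$ whose terms are controlled by \eqref{omegaklla}, \eqref{pdspopodspodspoapoapoa}, compactness of $\im[\chi]$, and -- exactly as in your last step -- dominated convergence for $\int\qqq(\varepsilon_n(s))\:\dd s$. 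You instead exploit \eqref{ixxxsdsdoisdiosd} directly: condition \ref{dpopoffdpolkfdlkfdlklkfd1} keeps the curves $\innt_r^\bullet\phi_n$ in $\U$ for large $n$, the resulting exact integral identity lets you pass to the limit in the integrand, your $A_n$-term is killed by the finite-subcover argument (which plays the role of the paper's compactness-of-$\im[\chi]$ step, with $\dermapinvdiff$ in place of $\dermapdiff$), and your $B_n$-term by the estimate from \ref{as3} plus bounded convergence -- this last appeal is legitimate and is in fact the same analytic crux the paper faces, since it invokes Lebesgue's dominated convergence theorem for precisely this integral. What your route buys is economy: no approximation scheme, no auxiliary model curves, and a single limit in $n$. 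What the paper's route buys is that it stays entirely at the level of comparing evolutions via Lemma \ref{hghghggg}, machinery it needs elsewhere anyway, instead of re-deriving the solution through the chart ODE. Both proofs rest on the same two pillars -- continuity of the $\dermapdiff$/$\dermapinvdiff$-maps near the zero section over the compact interval, and dominated convergence for the $\varepsilon_n$-integral -- so yours is a valid and, for this particular statement, arguably more streamlined alternative.
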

\noindent
\noindent
The proof of Proposition \ref{rererererr} will be established in Sect.\ \ref{dsdsdsdsds}. We now first use this proposition, to discuss the differential of the evolution maps as well as the differentiation of parameter-dependent integrals.

\subsection{Some Technical Statements}
We will need the following variation of Proposition \ref{rererererr}:
\begin{corollary}
\label{pofdpofdop}
Let $\delta>0$, $\{\varepsilon_h\}_{h \in \MD_\delta}\subseteq C^0([r,r'],\mg)$, and    
$\chi\in C^0([r,r'],\mg)$ be given with $\{h\cdot \chi + h\cdot \varepsilon_h\}_{h\in \MD_\delta}\subseteq \DIDE_{[r,r']}$, such that 
\begingroup
\setlength{\leftmargini}{19pt}{
\renewcommand{\theenumi}{{\rm\roman{enumi})}} 
\renewcommand{\labelenumi}{\theenumi}
\begin{enumerate}
\item
\label{csaaaaaw1}
	$\lim_{h\rightarrow 0}\varepsilon_h(t)=0$\hspace{71.8pt} holds for each $t\in [r,r']$,
\item
\label{csaaaaaw2}
	$\sup\{\pp_\infty(\varepsilon_h)\:|\: h\in \MD_{\delta_\pp}
	\}<\infty$\qquad holds for some $0<\delta_\pp\leq \delta$, for each $\pp\in \SEM$.
\end{enumerate}}
\endgroup
\noindent 
Then, the following  conditions are equivalent:\footnote{Recall Remark \ref{fdfdfdfdfdfdnbcxnbcxnbcbnx} for the notation used in \ref{podspods4}.}
\begingroup
\setlength{\leftmargini}{19pt}
{
\renewcommand{\theenumi}{{\rm\alph{enumi})}} 
\renewcommand{\labelenumi}{\theenumi}
\begin{enumerate}
\item
\label{podspods1}
$\limih\he \chart(\innt_r^\bullet h\cdot \chi + h\cdot \varepsilon_h)=0$.
\vspace{2pt}
\item
\label{podspods2}
$\limin \chart(\innt_r^\bullet h_n\cdot \chi + h_n\cdot \varepsilon_{h_n})=0$ for each sequence $\MD_\delta\supseteq \{h_n\}_{n\in \NN}\rightarrow 0$.
\item
\label{podspods3}
$\limin \he 1/h_n\cdot \chart\big(\innt_r^\bullet h_n\cdot \chi+h_n\cdot \varepsilon_{h_n}\big)=\int_r^\bullet (\dd_e\chart\cp \chi)(s)\:\dd s\in \comp{E}$ for each sequence $\MD_\delta\supseteq \{h_n\}_{n\in \NN}\rightarrow 0$.
\item
\label{podspods4}
$\frac{\dd}{\dd h}\big|^\infty_{h=0}\he \chart\big(\innt_r^\bullet h\cdot \chi+h\cdot \varepsilon_h\big)=\int_r^\bullet (\dd_e\chart\cp \chi)(s)\:\dd s\in \comp{E}$.
\end{enumerate}}
\endgroup 
\end{corollary}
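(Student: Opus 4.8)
The plan is to read off all four equivalences from Proposition \ref{rererererr}, applied along arbitrary sequences, together with the vector-space version of Lemma \ref{aslksalksalksakasklaslksalklkalksa}, whose role is to upgrade the sequential limits $\limin$ to the uniform limits $\limih$. Concretely, I would prove the cycle \ref{podspods1} $\Leftrightarrow$ \ref{podspods2} $\Leftrightarrow$ \ref{podspods3} $\Leftrightarrow$ \ref{podspods4}. Throughout, all expressions $\chart(\innt_r^\bullet h\cdot \chi + h\cdot \varepsilon_h)$ are understood for $|h|$ small; that this is legitimate follows, exactly as in the setup of Proposition \ref{rererererr}, from condition \ref{csaaaaaw2}, which gives $\pp_\infty(h\cdot \chi + h\cdot \varepsilon_h)\rightarrow 0$ for each $\pp\in\SEM$, so that $\innt_r^\bullet h\cdot \chi + h\cdot \varepsilon_h\rightarrow e$ uniformly (cf.\ Lemma \ref{hghghggh}) and the relevant images eventually lie in $\U$.

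For \ref{podspods1} $\Leftrightarrow$ \ref{podspods2}: the implication \ref{podspods1} $\Rightarrow$ \ref{podspods2} is immediate, since a uniform limit as $h\rightarrow 0$ restricts to the corresponding sequential limit along every $\MD_\delta\supseteq \{h_n\}_{n\in\NN}\rightarrow 0$. The converse \ref{podspods2} $\Rightarrow$ \ref{podspods1} is precisely Lemma \ref{aslksalksalksakasklaslksalklkalksa}, applied in its vector-space form to the continuous curves $\mu_h:=\chart(\innt_r^\bullet h\cdot \chi + h\cdot \varepsilon_h)\in C^0([r,r'],E)$, with limit $0$.

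For \ref{podspods2} $\Leftrightarrow$ \ref{podspods3}: I would fix a sequence $\MD_\delta\supseteq \{h_n\}_{n\in\NN}\rightarrow 0$ and apply Proposition \ref{rererererr} with $\varepsilon_n:=\varepsilon_{h_n}$. Its inclusion hypothesis $\{h_n\cdot \chi + h_n\cdot \varepsilon_n\}_{n\in\NN}\subseteq \DIDE_{[r,r']}$ holds by assumption, and hypothesis \ref{saaaaaw1} is immediate from \ref{csaaaaaw1}. The one point requiring care is hypothesis \ref{saaaaaw2}: condition \ref{csaaaaaw2} bounds $\pp_\infty(\varepsilon_h)$ only for $h\in \MD_{\delta_\pp}$, so I would argue that, since $h_n\rightarrow 0$, all but finitely many $h_n$ lie in $\MD_{\delta_\pp}$; combining this bound with the finitely many (individually finite) values $\pp_\infty(\varepsilon_{h_n})$ yields $\sup_n \pp_\infty(\varepsilon_{h_n})<\infty$ for every $\pp\in\SEM$. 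With the hypotheses verified, Proposition \ref{rererererr} gives, along this sequence, the equivalence of \ref{dpopoffdpolkfdlkfdlklkfd1} and \ref{dpopoffdpolkfdlkfdlklkfd2}, which are exactly the sequence-restricted forms of \ref{podspods2} and \ref{podspods3}. As the sequence was arbitrary, \ref{podspods2} $\Leftrightarrow$ \ref{podspods3} follows.

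For \ref{podspods3} $\Leftrightarrow$ \ref{podspods4}: unwinding the definition of $\frac{\dd}{\dd h}\big|^\infty_{h=0}$, condition \ref{podspods4} reads $\limih 1/h\cdot \chart(\innt_r^\bullet h\cdot \chi + h\cdot \varepsilon_h)=\int_r^\bullet (\dd_e\chart\cp\chi)(s)\:\dd s$, i.e.\ the uniform counterpart of the sequential statement \ref{podspods3}. I would again invoke Lemma \ref{aslksalksalksakasklaslksalklkalksa}, this time for curves valued in the completion $\comp{E}$, applied to the difference $\nu_h:=1/h\cdot \chart(\innt_r^\bullet h\cdot \chi + h\cdot \varepsilon_h)-\int_r^\bullet (\dd_e\chart\cp\chi)(s)\:\dd s$ with limit $0$; here \ref{podspods4} $\Rightarrow$ \ref{podspods3} is trivial and the converse is the content of the Lemma. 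The main obstacle is bookkeeping rather than conceptual: one must ensure the expressions are defined for small $h$ and, above all, that the passage from the local bound \ref{csaaaaaw2} to the global sequential bound \ref{saaaaaw2} required by Proposition \ref{rererererr} is carried out correctly, both of which are handled as indicated above.
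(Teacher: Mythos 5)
Your proposal is correct and follows essentially the same route as the paper's proof: Lemma \ref{aslksalksalksakasklaslksalklkalksa} (in its vector-space form on $\ovl{E}$) handles both \ref{podspods1}$\Leftrightarrow$\ref{podspods2} and \ref{podspods3}$\Leftrightarrow$\ref{podspods4}, while Proposition \ref{rererererr} with $\varepsilon_n\equiv\varepsilon_{h_n}$ gives \ref{podspods2}$\Leftrightarrow$\ref{podspods3}. Your explicit verification that the local bound in \ref{csaaaaaw2} yields the sequential bound \emph{\ref{saaaaaw2}} (all but finitely many $h_n$ lie in $\MD_{\delta_\pp}$, and the remaining $\pp_\infty(\varepsilon_{h_n})$ are finite since each $\varepsilon_{h_n}$ is continuous on a compact interval) is a point the paper leaves implicit.
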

\begin{proof}
By Lemma \ref{aslksalksalksakasklaslksalklkalksa} (applied to $(G,+)\equiv (\he\ovl{E},+\he)$ there), \ref{podspods1} is equivalent to \ref{podspods2}.  
Moreover, by Proposition \ref{rererererr}, \ref{podspods2} is equivalent to \ref{podspods3}, because 
\begingroup
\setlength{\leftmargini}{15pt}
\begin{itemize}
\item[$-$]
Condition \ref{csaaaaaw1}\hspace{3pt} implies Condition {\it\ref{saaaaaw1}}\hspace{2.5pt} in Proposition \ref{rererererr}, for $\varepsilon_n\equiv \varepsilon_{h_n}$ there, 
\item[$-$]
Condition \ref{csaaaaaw2} implies Condition {\it\ref{saaaaaw2}} in Proposition \ref{rererererr}, for $\varepsilon_n\equiv \varepsilon_{h_n}$ there.
\end{itemize}
\endgroup
\noindent
Finally, by Lemma \ref{aslksalksalksakasklaslksalklkalksa} (applied to $(G,+)\equiv (\he\ovl{E},+\he)$ there), \ref{podspods3} is equivalent to \ref{podspods4}.  
\end{proof}
\noindent
Given a net $\{\psi_\alpha\}_{\alpha\in I}\subseteq  C^0([r,r'],\mg)$, and some $\psi \in C^0([r,r'],\mg)$, we write  $\{\psi_\alpha\}_{\alpha\in I}\netarr{0}\psi$ \defff
\begin{align*}
	\textstyle\lim_\alpha\ppp(\psi-\psi_\alpha)=0\qquad\quad\forall\:\pp\in \SEM.
\end{align*}
\begin{lemma}
\label{podspodspodsodsp}
Suppose that $G$ is Mackey {\rm k}-continuous for $k\in \NN\sqcup\{\lip,\infty,\const\}$. 
Suppose furthermore that we are given $\DIDE^k_{[r,r']}\supseteq \{\phi_n\}_{n\in \NN}\mackarr{\kk} \phi \in \DIDE^k_{[r,r']}$ as well as $\DIDE^k_{[r,r']}\supseteq \{\psi_\alpha\}_{\alpha\in I}\netarr{0} \psi \in \DIDE^k_{[r,r']}$ for $[r,r']\in \COMP$, such that the expressions
\begin{align*}
\xi(\phi,\psi)&\textstyle:=\dd_e\LT_{\innt \phi}\big(\int \Ad_{[\innt_r^s \phi]^{-1}}(\psi(s))\:\dd s \big)\\
\xi(\phi_n,\psi_\alpha)&\textstyle:=\dd_e\LT_{\innt \phi_n}\big(\int \Ad_{[\innt_r^s \phi_n]^{-1}}(\psi_\alpha(s))\:\dd s \big)\qquad\quad\forall\: n\in \NN
\end{align*} 
are well defined; i.e., such that the occurring Riemann integrals exist in $\mg$. Then, we have
\begin{align*}
	\textstyle \lim_{(n,\alpha)}\xi(\phi_n,\psi_\alpha)=\xi(\phi,\psi).
\end{align*} 
\end{lemma}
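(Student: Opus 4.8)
The plan is to decouple the two ingredients of $\xi$: the basepoint $\innt\phi$ at which the tangent vector sits, and the inner Riemann integral $\int\Ad_{[\innt_r^s\phi]^{-1}}(\psi(s))\,\dd s\in\mg$ to which $\dd_e\LT_{\innt\phi}$ is applied. Writing $Y_n:=\innt\phi_n$, $Y:=\innt\phi$, and $Z_{(n,\alpha)}:=\int\Ad_{[\innt_r^s\phi_n]^{-1}}(\psi_\alpha(s))\,\dd s$, $Z:=\int\Ad_{[\innt_r^s\phi]^{-1}}(\psi(s))\,\dd s$, one has $\xi(\phi_n,\psi_\alpha)=\beta(Y_n,Z_{(n,\alpha)})$ and $\xi(\phi,\psi)=\beta(Y,Z)$ for the map $\beta\colon G\times\mg\ni(g,X)\mapsto\dd_e\LT_g(X)\in TG$, which is smooth (it is the tangent map of $\mult$ precomposed with the smooth assignment $(g,X)\mapsto(0_g,X)$), hence continuous. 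So it suffices to show $Y_n\to Y$ in $G$ and $Z_{(n,\alpha)}\to Z$ in $\mg$ as nets over the product directed set $\NN\times I$, and then invoke continuity of $\beta$.

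First I would handle the basepoints. Since $G$ is Mackey {\rm k}-continuous and $\{\phi_n\}_{n\in\NN}\mackarr{\kk}\phi$ with $\phi_n,\phi\in\DIDE^k_{[r,r']}$, Lemma \ref{fdlkfdlkfd1} yields $\limin\innt_r^\bullet\phi_n=\innt_r^\bullet\phi$; evaluating this uniform limit at $t=r'$ gives $[\innt\phi]^{-1}\cdot\innt\phi_n\to e$, i.e.\ $Y_n\to Y$ in $G$. As $Y_n$ depends only on $n$, the corresponding net over $\NN\times I$ converges to $Y$.

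The main work is the joint convergence of the inner integrals. Set $w_{n,s}:=[\innt_r^s\phi]^{-1}\cdot\innt_r^s\phi_n$, so that the uniform limit above means $w_{n,s}\to e$ uniformly in $s$, and $\Ad_{[\innt_r^s\phi_n]^{-1}}=\Ad_{w_{n,s}^{-1}}\cp\Ad_{[\innt_r^s\phi]^{-1}}$. Since both integrals lie in $\mg$ by hypothesis and $\mg\hookrightarrow\ovl{\mg}$ is a topological embedding, it is enough to estimate, for each $\pp\in\SEM$, via \eqref{ffdlkfdlkfd},
\[
\ovl{\ppp}\big(Z_{(n,\alpha)}-Z\big)\le\int_r^{r'}\big(A_n^\alpha(s)+B_n(s)\big)\,\dd s,
\]
where $A_n^\alpha(s):=\ppp\big(\Ad_{[\innt_r^s\phi_n]^{-1}}(\psi_\alpha(s)-\psi(s))\big)$ and $B_n(s):=\ppp\big((\Ad_{w_{n,s}^{-1}}-\id)(\Ad_{[\innt_r^s\phi]^{-1}}(\psi(s)))\big)$. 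For the $A$-term I would apply \ref{as5} to $\pp$ to obtain a symmetric open $O\ni e$ and $\pp\le\qq\in\SEM$ with $\ppp\cp\Ad_g\le\qqq$ on $O$; once $n$ is large enough that $w_{n,s}^{-1}\in O$ for all $s$, the factorization gives $A_n^\alpha(s)\le\qqq(\Ad_{[\innt_r^s\phi]^{-1}}(\psi_\alpha(s)-\psi(s)))$, and then \ref{as1} with the compact set $\im[[\innt_r^\bullet\phi]^{-1}]$ furnishes $\qq\le\ww\in\SEM$ with $A_n^\alpha(s)\le\www_\infty(\psi_\alpha-\psi)$ — a bound independent of $n$ that vanishes as $\alpha\to\infty$ because $\psi_\alpha\to\psi$ uniformly. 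For the $B$-term I would use that $V:=\Ad_{[\innt_r^\bullet\phi]^{-1}}(\psi)$ is a fixed curve with compact image, that $(g,X)\mapsto\Ad_g(X)-X$ is continuous and vanishes on $\{e\}\times\mg$, and that $w_{n,s}^{-1}\to e$ uniformly; a tube-lemma argument then yields $\sup_s B_n(s)\to0$ as $n\to\infty$. Combining, the right-hand side tends to $0$ jointly in $(n,\alpha)$, so $Z_{(n,\alpha)}\to Z$ in $\mg$.

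Finally I would assemble: $(Y_n,Z_{(n,\alpha)})\to(Y,Z)$ in $G\times\mg$ over $\NN\times I$, so continuity of $\beta$ gives $\xi(\phi_n,\psi_\alpha)=\beta(Y_n,Z_{(n,\alpha)})\to\beta(Y,Z)=\xi(\phi,\psi)$ in $TG$; the cases $k\in\NN\sqcup\{\lip,\infty,\const\}$ are treated identically, as the estimates above do not use the differentiability order. The expected main obstacle is precisely the $A/B$ estimate for the inner integrals: because infinite-dimensional Lie groups are not locally compact, one cannot enclose all the $[\innt_r^s\phi_n]^{-1}$ in a single compact set, and the point is to split the varying adjoint action through the factorization $[\innt_r^s\phi_n]^{-1}=w_{n,s}^{-1}[\innt_r^s\phi]^{-1}$ and to control the two factors separately — the first by the local estimate \ref{as5} near $e$, the second by the compact estimate \ref{as1} for the fixed curve $\innt_r^\bullet\phi$.
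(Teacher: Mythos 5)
Your proof is correct, and while its outer skeleton coincides with the paper's -- convergence of the basepoints $\innt\phi_n\rightarrow\innt\phi$ from Mackey k-continuity, convergence of the inner Riemann integrals in $\mg$, and assembly through the smooth map $(g,X)\mapsto\dd_{(g,e)}\mult(0,X)$ -- the technical core is handled by a genuinely different argument. The paper (Lemma \ref{fdpfdpopofd} in Appendix \ref{asasajkakjsajkjaskjakjs}) proves convergence of the inner integrals for an \emph{arbitrary} continuous map $\Gamma\colon G\times\mg\rightarrow\mg$ in place of $(g,X)\mapsto\Ad_{g^{-1}}(X)$: it compares $\Gamma\big(\innt_r^s\phi_n,\psi_\alpha(s)\big)$ with $\Gamma\big(\innt_r^s\phi,\psi(s)\big)$ directly, using joint continuity of $\Gamma$ at the points of the compact set $\big\{\big(\innt_r^t\phi,\psi(t)\big)\:\big|\:t\in[r,r']\big\}$ and a finite covering of $[r,r']$; neither linearity nor the homomorphism property of $\Ad$ enters. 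You instead exploit exactly these two structural features: the factorization $\Ad_{[\innt_r^s\phi_n]^{-1}}=\Ad_{w_{n,s}^{-1}}\cp\Ad_{[\innt_r^s\phi]^{-1}}$ with $w_{n,s}:=[\innt_r^s\phi]^{-1}\cdot\innt_r^s\phi_n$, together with linearity of each $\Ad_g$, splits the error into your $A$-term -- killed by the local estimate \ref{as5} composed with the compact estimate \ref{as1} and the uniform convergence $\psi_\alpha\netarr{0}\psi$ -- and your $B$-term -- killed by a tube-lemma argument for $(g,X)\mapsto\Ad_g(X)-X$ on the compact image of $\Ad_{[\innt_r^\bullet\phi]^{-1}}(\psi)$; both pieces are then integrated via \eqref{ffdlkfdlkfd}. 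What the paper's route buys is generality and reusability (its Lemma \ref{fdpfdpopofd} is the analogue of Lemma 41 in \cite{RGM} and applies to any continuous $\Gamma$); what your route buys is explicit seminorm bounds and a clean separation of where each hypothesis enters ($\psi_\alpha\rightarrow\psi$ only in the $A$-term, $\phi_n\rightarrow\phi$ only in the $B$-term and in the threshold for the $A$-estimate). Both proofs use Mackey k-continuity solely through the uniform convergence $\limin\innt_r^\bullet\phi_n=\innt_r^\bullet\phi$ of Lemma \ref{fdlkfdlkfd1}, which is why both cover all $k\in\NN\sqcup\{\lip,\infty,\const\}$ at once.
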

\begin{proof}
This follows by the same arguments as in Corollary 13 and Lemma 41 in \cite{RGM}. For completeness, the adapted argumentation is provided in Appendix \ref{asasajkakjsajkjaskjakjs}.
\end{proof}

\subsection{The Differential of the Evolution Map}
\label{cxpopocxpocxcx}
We now discuss the differential of the evolution map -- for which 
we recall the conventions fixed in Remark \ref{lkcxlkcxlkxlkcxpoidsoids}.  
Then, Corollary \ref{pofdpofdop} (with $\varepsilon_h\equiv 0$ there)  provides us with  
\begin{proposition} 
\label{saasassansamnmsa}
Suppose that $(\phi,\psi)$ is admissible, with $\dom[\phi], \dom[\psi]=[r,r']$. 
\begingroup
\setlength{\leftmargini}{16pt}{
\renewcommand{\theenumi}{{\arabic{enumi}})} 
\renewcommand{\labelenumi}{\theenumi}
\begin{enumerate}
\item
\label{saasassansamnmsa1}
The pair $(\phi,\psi)$ is regular \deff we have
\begin{align*}
	\textstyle\frac{\dd}{\dd h}\big|^\infty_{h=0}\:\chart\big([\innt_r^\bullet\phi]^{-1}[\innt_r^\bullet\phi+h\cdot \psi]\big)= \int_r^\bullet (\dd_e\chart\cp\Ad_{[\innt_r^s \phi]^{-1}})(\psi(s))\:\dd s\in \ovl{E}.
\end{align*} 
\item
\label{saasassansamnmsa2}
If $(\phi,\psi)$ is regular, then $(-\delta,\delta)\ni h\mapsto \innt \phi+h\cdot \psi\in G$ is differentiable at $h=0$ (for $\delta>0$ suitably small) \deff 
$\int \Ad_{[\innt_r^s \phi]^{-1}}(\psi(s))\:\dd s\in \mg$ holds. In this case, we have
\begin{align}
\label{fpidpfpofdfdfd}
	\textstyle\frac{\dd}{\dd h}\big|_{h=0} \he\innt\phi+h\cdot \psi= \dd_e\LT_{\innt \phi}\big(\int \Ad_{[\innt_r^s \phi]^{-1}}(\psi(s))\:\dd s\he\big).
\end{align}
\end{enumerate}}
\endgroup
\end{proposition}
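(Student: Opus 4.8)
The plan is to reduce both parts to Corollary \ref{pofdpofdop} applied with $\varepsilon_h\equiv 0$ and the fixed curve $\chi:=\Ad_{[\innt_r^\bullet\phi]^{-1}}(\psi)\in C^0([r,r'],\mg)$. The starting point is the algebraic identity \textrm{\ref{kdskdsdkdslkds}}: since $(\phi,\psi)$ is admissible, $\phi+h\cdot\psi\in\DIDE_{[r,r']}$ for all small $h$, and
\begin{align*}
	\textstyle[\innt_r^t\phi]^{-1}[\innt_r^t\phi+h\cdot\psi]\stackrel{\textrm{\ref{kdskdsdkdslkds}}}{=}\innt_r^t\Ad_{[\innt_r^\bullet\phi]^{-1}}(h\cdot\psi)=\innt_r^t h\cdot\chi\qquad\forall\: t\in[r,r'].
\end{align*}
In particular $\{h\cdot\chi\}_{h\in\MD_\delta}\subseteq\DIDE_{[r,r']}$, so Corollary \ref{pofdpofdop} applies (with $\varepsilon_h\equiv 0$, for which its hypotheses \ref{csaaaaaw1} and \ref{csaaaaaw2} hold trivially). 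First I would observe that, reading the definition of \textbf{regular} through the chart $\chart$ (a homeomorphism near $e$ with $\chart(e)=0$), regularity of $(\phi,\psi)$ is exactly condition \ref{podspods1} of the corollary, i.e.\ $\limih\chart(\innt_r^\bullet h\cdot\chi)=0$.

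For Part \ref{saasassansamnmsa1} I would then invoke the equivalence $\ref{podspods1}\Leftrightarrow\ref{podspods4}$ of Corollary \ref{pofdpofdop}. Condition \ref{podspods4} reads $\frac{\dd}{\dd h}\big|^\infty_{h=0}\chart(\innt_r^\bullet h\cdot\chi)=\int_r^\bullet(\dd_e\chart\cp\chi)(s)\,\dd s$, and substituting back $\chart(\innt_r^\bullet h\cdot\chi)=\chart([\innt_r^\bullet\phi]^{-1}[\innt_r^\bullet\phi+h\cdot\psi])$ together with $(\dd_e\chart\cp\chi)(s)=(\dd_e\chart\cp\Ad_{[\innt_r^s\phi]^{-1}})(\psi(s))$ yields precisely the claimed formula. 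So Part \ref{saasassansamnmsa1} is immediate.

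For Part \ref{saasassansamnmsa2} I would specialize the uniform statement of Part \ref{saasassansamnmsa1} to the endpoint $t=r'$ and read differentiability of $h\mapsto\innt\phi+h\cdot\psi$ at $h=0$ through the left-translated chart $\chart_{\innt\phi}(g)=\chart([\innt\phi]^{-1}\cdot g)$, in the sense of Remark \ref{lkcxlkcxlkxlkcxpoidsoids}. Since $\chart_{\innt\phi}(\innt\phi)=0$, the relevant difference quotient is $1/h\cdot\chart([\innt\phi]^{-1}[\innt\phi+h\cdot\psi])$, whose limit, by Part \ref{saasassansamnmsa1} at $t=r'$, is $\int(\dd_e\chart\cp\Ad_{[\innt_r^s\phi]^{-1}})(\psi(s))\,\dd s\in\ovl{E}$. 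The crucial point is that differentiability in the sense of Remark \ref{lkcxlkcxlkxlkcxpoidsoids} demands this limit to lie in $E$ rather than merely in $\ovl{E}$. Here I would use that $\dd_e\chart\colon\mg\rightarrow E$ is a topological isomorphism, so that its extension to the completions carries $\mg$ onto $E$; combined with the commutation rule \eqref{pofdpofdpofdsddsdsfd} for continuous linear maps and Riemann integrals, this shows $\int(\dd_e\chart\cp\Ad_{[\innt_r^s\phi]^{-1}})(\psi(s))\,\dd s\in E$ exactly when $\int\Ad_{[\innt_r^s\phi]^{-1}}(\psi(s))\,\dd s\in\mg$, which is the asserted criterion. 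This completion bookkeeping is the step I expect to be the main (though mild) obstacle, the genuinely delicate analytic content having already been absorbed into Corollary \ref{pofdpofdop}.

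Finally, to obtain formula \eqref{fpidpfpofdfdfd}, I would feed the difference-quotient limit into the formula of Remark \ref{lkcxlkcxlkxlkcxpoidsoids}: the derivative equals $\dd_0\chart_{\innt\phi}^{-1}$ applied to it, and since $\chart_{\innt\phi}^{-1}=\LT_{\innt\phi}\cp\chartinv$, the chain rule gives $\dd_0\chart_{\innt\phi}^{-1}=\dd_e\LT_{\innt\phi}\cp\dd_0\chartinv$. Writing the limit as $\dd_e\chart(\int\Ad_{[\innt_r^s\phi]^{-1}}(\psi(s))\,\dd s)$ via \eqref{pofdpofdpofdsddsdsfd} and using $\dd_0\chartinv\cp\dd_e\chart=\id_\mg$ (the chain rule applied to $\chartinv\cp\chart=\id$) collapses the composition to $\dd_e\LT_{\innt\phi}(\int\Ad_{[\innt_r^s\phi]^{-1}}(\psi(s))\,\dd s)$, which is exactly \eqref{fpidpfpofdfdfd}.
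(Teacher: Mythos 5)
Your proposal is correct and takes essentially the same route as the paper's own proof: Part \ref{saasassansamnmsa1}) via the identity \textrm{\ref{kdskdsdkdslkds}} with $\chi:=\Ad_{[\innt_r^\bullet\phi]^{-1}}(\psi)$ and the equivalence of \ref{podspods1} and \ref{podspods4} in Corollary \ref{pofdpofdop} for $\varepsilon_h\equiv 0$, and Part \ref{saasassansamnmsa2}) by reading differentiability through the left-translated chart $\chart_{\innt\phi}$ (the paper's $\chart'$) and transporting the Riemann integral between $\mg$ and $E$. The only cosmetic difference is that the paper handles the two directions of the iff in Part \ref{saasassansamnmsa2}) by two separate applications of \eqref{pofdpofdpofdsddsdsfd} (once with $\mathfrak{L}\equiv\dd_e\chart$, once with $\mathfrak{L}\equiv\dd_0\chartinv$), whereas you package both into the single observation that the extension of $\dd_e\chart$ to the completions is an isomorphism carrying $\mg$ onto $E$ -- the content is identical.
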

\begin{proof}
\begingroup
\setlength{\leftmargini}{16pt}{
\renewcommand{\theenumi}{{\arabic{enumi}})} 
\renewcommand{\labelenumi}{\theenumi}
\begin{enumerate}
\item
For $|h|<\delta$, with $\delta>0$ suitably small, we have
\vspace{-5pt}
\begin{align}
\label{nmdsnmdsnmsd}
	\textstyle \chart\big([\innt_r^t \phi]^{-1}[\innt_r^t\phi+h\cdot \psi]\big)\stackrel{\textrm{\ref{kdskdsdkdslkds}}}{=}\chart\big(\innt_r^t h\cdot \overbrace{\Ad_{[\innt_r^\bullet \phi]^{-1}}(\psi)}^{=:\:\chi}\big)\qquad\quad\forall\: t\in [r,r'].
\end{align}
We obtain from the Equivalence of \ref{podspods1} and \ref{podspods4} in Corollary \ref{pofdpofdop} for $\varepsilon_h\equiv 0$ there that (third step) 
\begin{align*}
	&\textstyle \limih\innt_r^\bullet \phi+h\cdot \psi=\innt_r^\bullet \phi\\
	\textstyle\stackrel{\phantom{\eqref{nmdsnmdsnmsd}}}{\Longleftrightarrow}\qquad\quad&\textstyle \limih\chart\big([\innt_r^\bullet \phi]^{-1}[\innt_r^\bullet\phi+h\cdot \psi]\big)=0\\
	\textstyle\stackrel{\eqref{nmdsnmdsnmsd}}{\Longleftrightarrow}\qquad\quad& \textstyle \limih \chart(\innt_r^\bullet h\cdot \chi)=0\\[4pt]
	\textstyle\Longleftrightarrow\qquad\quad& \textstyle\frac{\dd}{\dd h}\big|^\infty_{h=0}\:\chart(\innt_r^\bullet h\cdot \chi)= \int_r^\bullet (\dd_e\chart\cp\chi)(s)\:\dd s\in \ovl{E}\\[2pt]
	\textstyle\stackrel{\eqref{nmdsnmdsnmsd}}{\Longleftrightarrow}\qquad\quad& \textstyle\frac{\dd}{\dd h}\big|^\infty_{h=0}\:\chart\big([\innt _r^\bullet\phi]^{-1}[\innt_r^\bullet\phi+h\cdot \psi]\big)= \int_r^\bullet (\dd_e\chart\cp\Ad_{[\innt_r^s \phi]^{-1}})(\psi(s))\:\dd s\in \ovl{E}.
\end{align*} 
\item
%The statement is straightforward from Part \ref{saasassansamnmsa} and   Remark \ref{lkcxlkcxlkxlkcxpoidsoids}, cf.\ Appendix \ref{psamnsamnmnassmnasasa}.\qedhere
Let $(\phi,\psi)$ be regular; and $\mu\colon (-\delta,\delta)\ni h\mapsto \innt \phi+h\cdot \psi\in G$. 
\begingroup
\setlength{\leftmarginii}{12pt}
\begin{itemize}
\item
Suppose that $\int \Ad_{[\innt_r^s \phi]^{-1}}(\psi(s))\:\dd s\in \mg$ holds; and let (shrink $\delta>0$ if necessary)
\begin{align*}
	\textstyle\gamma\colon (-\delta,\delta)\rightarrow \V,\qquad h\mapsto \chart([\innt\phi]^{-1}[\innt\phi+h\cdot \psi]).
\end{align*}
Then, we have
\vspace{-3pt}
\begin{align}
\label{salsadlklkdsadsa}
	\textstyle\dot\gamma(0)\stackrel{\text{Part }\ref{saasassansamnmsa1}}{=}\int(\dd_e\chart\cp\Ad_{[\innt_r^s \phi]^{-1}})(\psi(s))\:\dd s\stackrel{\eqref{pofdpofdpofdsddsdsfd}}{=}\dd_e\chart\big(\int \Ad_{[\innt_r^s \phi]^{-1}}(\psi(s))\:\dd s\big).
\end{align}
Since $\mu=\chart'^{-1}\cp\gamma$ (thus, $\gamma=\chart'\cp\mu$) holds for the chart 
\begin{align}
\label{podoposdddsdsaacxyc}
	\textstyle\chart'\colon \innt\phi\cdot \U=:\U' \rightarrow \V,\qquad g\mapsto \chart([\innt\phi]^{-1}\cdot g), 	
\end{align}
\eqref{salsadlklkdsadsa} shows that $\mu$ is differentiable at $0$ -- Specifically, 
we have, cf.\ Remark \ref{lkcxlkcxlkxlkcxpoidsoids}
\vspace{-5pt} 
\begin{align*}
	\textstyle\dot\mu(0)&\textstyle\stackrel{\phantom{\eqref{salsadlklkdsadsa}}}{\equiveq}\dd_0\chart'^{-1}\big( \frac{\dd}{\dd h}\big|_{h=0}\: (\chart'\cp\mu)(h)\big)\\
	&\textstyle\stackrel{\eqref{salsadlklkdsadsa}}{=}(\dd_0\chart'^{-1}\cp \dd_e\chart)\big(\int \Ad_{[\innt_r^s \phi]^{-1}}(\psi(s))\:\dd s\big)\\[-2pt]
	&\textstyle\stackrel{\phantom{\eqref{salsadlklkdsadsa}}}{=}\big(\dd_e\LT_{\innt\phi}\cp \dd_0\chart^{-1}\cp \dd_e\chart\big)\big(\int \Ad_{[\innt_r^s \phi]^{-1}}(\psi(s))\:\dd s\big)\\[-2pt]
	&\textstyle\stackrel{\phantom{\eqref{salsadlklkdsadsa}}}{=}\dd_e\LT_{\innt \phi}\big(\int \Ad_{[\innt_r^s \phi]^{-1}}(\psi(s))\:\dd s\big); 
\end{align*}
which shows \eqref{fpidpfpofdfdfd}.
\item
Suppose that $\mu$ is differentiable at $h=0$. Then, for $\chart'$ as in \eqref{podoposdddsdsaacxyc} we have, cf.\ Remark \ref{lkcxlkcxlkxlkcxpoidsoids}
\begin{align*}
	\textstyle E\ni \frac{\dd}{\dd h}\big|_{h=0}\: (\chart'\cp\mu)(h)= \textstyle\frac{\dd}{\dd h}\big|_{h=0}\:\chart\big([\innt\phi]^{-1}[\innt\phi+h\cdot \psi]\big)\textstyle \stackrel{\text{Part }\ref{saasassansamnmsa1}}{=}\int (\dd_e\chart\cp\Ad_{[\innt_r^s \phi]^{-1}})(\psi(s))\:\dd s.
\end{align*}
We obtain
\vspace{-8pt}
\begin{align*}
	\textstyle\mg\ni \dd_0\chartinv\big(\int (\dd_e\chart\cp\Ad_{[\innt_r^s \phi]^{-1}})(\psi(s))\:\dd s\big)\stackrel{\eqref{pofdpofdpofdsddsdsfd}}{=} \int \Ad_{[\innt_r^s \phi]^{-1}}(\psi(s))\:\dd s.
\end{align*}
In particular, \eqref{fpidpfpofdfdfd} holds by the previous point. \qedhere 
\end{itemize}
\endgroup
\end{enumerate}}
\endgroup
\end{proof}

\subsubsection{The Generic Case}
\label{lvclkvclkvc}
Combining Proposition \ref{saasassansamnmsa} with Theorem \ref{fdfffdfd} and Lemma \ref{dsopdsopsdopsdop}, we obtain
\begin{theorem}
\label{sasasasaasdassdsasdadds}
Suppose that $G$ is $C^k$-semiregular for $k\in \NN\sqcup\{\lip, \infty\}$. Then, $\evol_\kk$ is differentiable \deff $\mg$ is {\rm k}-complete.  In this case, $\evol^k_{[r,r']}$ is differentiable for each $[r,r']\in \COMP$, with 
\begin{align*}
	\textstyle\dd_\phi\he \evol^k_{[r,r']}(\psi)=\dd_e\LT_{\innt \phi}\big(\int \Ad_{[\innt_r^s \phi]^{-1}}(\psi(s))\:\dd s \big)\qquad\quad \forall\: \phi,\psi\in C^k([r,r'],\mg).
\end{align*}  
In particular, 
\begingroup
\setlength{\leftmargini}{17pt}
{
\renewcommand{\theenumi}{\alph{enumi})} 
\renewcommand{\labelenumi}{\theenumi}
\begin{enumerate}
\item
\label{nmcv1}
$\dd_\phi\he \evol^k_{[r,r']}\colon C^k([r,r'],\mg)\rightarrow T_{\innt\!\phi} G\he$ is linear and $C^0$-continuous for each $\phi\in C^k([r,r'],\mg)$,
\item
\label{nmcv2}
for each sequence $\{\phi_n\}_{n\in \NN}\mackarr{\kk} \phi$, and each net $\{\psi_\alpha\}_{\alpha\in I}\netarr{0} \psi$, 
we have
\begin{align*}
	\textstyle \lim_{(n,\alpha)}\dd_{\phi_n}\he\evol_{[r,r']}^k(\psi_\alpha)=\dd_{\phi}\evol_{[r,r']}^k\:(\psi).
\end{align*}	
%for all sequences $\{\phi_m\}_{m\in \NN}\mackarr{\kk} \phi$ and $\{\psi_n\}_{n\in \NN}\seqarr{0} \psi$.
\end{enumerate}}
\endgroup	
\end{theorem}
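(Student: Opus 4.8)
The plan is to assemble the statement from Proposition \ref{saasassansamnmsa}, Theorem \ref{fdfffdfd}, and Lemma \ref{dsopdsopsdopsdop}, treating the differentiability of $\evol^k_{[r,r']}$ one direction at a time. First I would note that, since $G$ is $C^k$-semiregular, Theorem \ref{fdfffdfd} makes $G$ Mackey {\rm k}-continuous, hence weakly {\rm k}-continuous by Lemma \ref{dsopdsopsdopsdop}; and by Lemma \ref{fdlkfdlkfd} this weak continuity holds over every $[r,r']\in\COMP$. Moreover $C^k$-semiregularity gives $\DIDE^k_{[r,r']}=C^k([r,r'],\mg)$, so that $\phi+(-\delta,\delta)\cdot\psi\subseteq\DIDE_{[r,r']}$ for \emph{every} $\phi,\psi\in C^k([r,r'],\mg)$ and every $\delta>0$. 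Thus every such pair $(\phi,\psi)$ is admissible, and weak {\rm k}-continuity then forces it to be regular; this discharges the regularity hypothesis of Proposition \ref{saasassansamnmsa} once and for all.

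With regularity in hand, I would identify the directional derivative of $\evol^k_{[r,r']}$ at $\phi$ in the direction $\psi$, in the sense of Remark \ref{lkcxlkcxlkxlkcxpoidsoids}, with the derivative at $h=0$ of the curve $h\mapsto\evol^k_{[r,r']}(\phi+h\cdot\psi)=\innt \phi+h\cdot\psi$. Proposition \ref{saasassansamnmsa}.\ref{saasassansamnmsa2} then gives: since $(\phi,\psi)$ is regular, this derivative exists \deff $\int\Ad_{[\innt_r^s\phi]^{-1}}(\psi(s))\:\dd s\in\mg$, in which case it equals the right-hand side of \eqref{fpidpfpofdfdfd}. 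As the differentiability notion of Remark \ref{lkcxlkcxlkxlkcxpoidsoids} is purely directional, $\evol_\kk=\evol^k_{[0,1]}$ is differentiable \deff this integral lies in $\mg$ for all $\phi,\psi\in C^k([0,1],\mg)=\DIDED_\kk$; by Remark \ref{fdfdfdfdfdscxycxycx} the resulting $[0,1]$-condition is precisely {\rm k}-completeness of $\mg$. Conversely, once $\mg$ is {\rm k}-complete the integral lies in $\mg$ for every $[r,r']$ and all $\phi,\psi$, so $\evol^k_{[r,r']}$ is differentiable at each $\phi$, and the displayed formula for $\dd_\phi\evol^k_{[r,r']}(\psi)$ is read off from \eqref{fpidpfpofdfdfd}.

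For the refinements: linearity of $\psi\mapsto\dd_\phi\evol^k_{[r,r']}(\psi)$ in part a) is immediate, since the Riemann integral is linear, $\Ad_{[\innt_r^s\phi]^{-1}}$ is linear in $\psi(s)$, and $\dd_e\LT_{\innt\phi}$ is linear. For the $C^0$-continuity I would fix $\phi$ and apply \ref{as1} with $\compact\equiv\im[[\innt_r^\bullet\phi]^{-1}]$ (compact as a continuous image of $[r,r']$) to obtain, for each $\pp\in\SEM$, some $\pp\leq\qq\in\SEM$ with $\ppp(\Ad_{[\innt_r^s\phi]^{-1}}(X))\leq\qqq(X)$ for all $s\in[r,r']$ and $X\in\mg$; together with \eqref{ffdlkfdlkfd} this bounds the $\ppp$-seminorm of $\int\Ad_{[\innt_r^s\phi]^{-1}}(\psi(s))\:\dd s$ by $(r'-r)\cdot\qqq_\infty(\psi)$, a $C^0$-estimate. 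Since this integral lies in $\mg$ and $\dd_e\LT_{\innt\phi}$ is a continuous linear isomorphism onto $T_{\innt\phi}G$, the $C^0$-continuity of $\psi\mapsto\dd_\phi\evol^k_{[r,r']}(\psi)$ follows. Part b) is then a direct instance of Lemma \ref{podspodspodsodsp}: under the present hypotheses $\DIDE^k_{[r,r']}=C^k([r,r'],\mg)$ and all the integrals exist in $\mg$, so the quantity $\xi(\phi,\psi)$ of that lemma is exactly $\dd_\phi\evol^k_{[r,r']}(\psi)$, and its conclusion is precisely the asserted joint convergence.

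I expect the proof to be essentially bookkeeping, the cited results carrying the analytic weight. The only point needing care is the reduction of the global differentiability of $\evol^k_{[r,r']}$ to the family of pointwise-in-$\psi$ conditions: one must use that Remark \ref{lkcxlkcxlkxlkcxpoidsoids} defines differentiability purely through directional derivatives, so that Proposition \ref{saasassansamnmsa}.\ref{saasassansamnmsa2} can be invoked separately for each direction $\psi$, and the resulting conditions collapse, via Remark \ref{fdfdfdfdfdscxycxycx}, exactly to {\rm k}-completeness. Beyond this I anticipate no real obstacle.
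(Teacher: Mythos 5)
Your proposal is correct and follows essentially the same route as the paper's proof: the first part is assembled from Theorem \ref{fdfffdfd}, Lemma \ref{dsopdsopsdopsdop}, Remark \ref{fdfdfdfdfdscxycxycx}, and Proposition \ref{saasassansamnmsa}.\ref{saasassansamnmsa2}; part a) from linearity together with the estimate coming from \ref{as1} and \eqref{ffdlkfdlkfd}; and part b) directly from Lemma \ref{podspodspodsodsp}. The only cosmetic difference is that the paper packages the outer map as $\dd_{(\innt\phi,e)}\mult(0,\cdot)$ via \eqref{LGPR} and invokes smoothness of the multiplication, whereas you invoke continuity of $\dd_e\LT_{\innt\phi}$ — these are the same argument.
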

\begin{proof}
The first part is clear from Theorem \ref{fdfffdfd}, Lemma \ref{dsopdsopsdopsdop},  Remark \ref{fdfdfdfdfdscxycxycx}, and Proposition \ref{saasassansamnmsa}.\ref{saasassansamnmsa2}. Then, {\it\ref{nmcv2}} is clear from Lemma {\it \ref{podspodspodsodsp}}. Moreover, (by the first part) $\dd_\phi\he \evol^k_{[r,r']}$ is linear; with (cf.\ \eqref{LGPR}) 
\begin{align*}
	\textstyle\dd_\phi\he \evol^k_{[r,r']}(\psi)= \dd_{(\innt\phi,e)}\:\mult(0,\Gamma_\phi(\psi))\quad\:\:\:\text{for}\quad\:\:\: \Gamma_\phi\colon C^k([0,1],\mg)\ni \psi\rightarrow \int \Ad_{[\innt_r^s \phi]^{-1}}(\psi(s))\:\dd s \in \mg.
\end{align*}
Then, since $\Gamma_\phi$ is $C^0$-continuous by \eqref{ffdlkfdlkfd} and \ref{as1}, {\it \ref{nmcv1}} is clear from smoothness of the Lie group multiplication.
\end{proof}
\begin{corollary}
\label{lkdlkflkfdlkdf}
Suppose that $G$ is $C^k$-semiregular for $k\in \NN\sqcup\{\lip, \infty\}$, and that $\mg$ is {\rm k}-complete. Then, $\mu\colon \RR\ni h\mapsto \innt \phi+h\cdot \psi$ is of class $C^1$ for each $\phi,\psi\in C^k([r,r'],\mg)$ and $[r,r']\in \COMP$. 
\end{corollary}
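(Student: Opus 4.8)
The plan is to read off everything from Theorem \ref{sasasasaasdassdsasdadds}, whose standing hypotheses ($G$ being $C^k$-semiregular and $\mg$ being $k$-complete) coincide with those of the corollary. Thus $\evol^k_{[r,r']}$ is differentiable in the sense of Remark \ref{lkcxlkcxlkxlkcxpoidsoids}, and both the explicit derivative formula and the joint-continuity property \ref{sasasasaasdassdsasdadds}.\ref{nmcv2} are at our disposal.

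First I would note that $\mu$ is defined on all of $\RR$: since $G$ is $C^k$-semiregular, we have $\phi+h\cdot\psi\in C^k([r,r'],\mg)=\DIDE^k_{[r,r']}$ for every $h\in\RR$. To obtain differentiability of $\mu$ at a fixed $h_0\in\RR$, I would set $\phi_0:=\phi+h_0\cdot\psi\in C^k([r,r'],\mg)$ and observe that differentiability of $\evol^k_{[r,r']}$ at $\phi_0$ in the direction $\psi$ is, by definition, the existence of
\[
	\textstyle\lim_{t\rightarrow 0}1/t\cdot\big((\chart'\cp\evol^k_{[r,r']})(\phi_0+t\cdot\psi)-(\chart'\cp\evol^k_{[r,r']})(\phi_0)\big)
\]
for a chart $\chart'$ around $\innt\phi_0=\mu(h_0)$. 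Since $\phi_0+t\cdot\psi=\phi+(h_0+t)\cdot\psi$, this is exactly the derivative of $\chart'\cp\mu$ at $h_0$; hence $\mu$ is differentiable at $h_0$, and the derivative formula of Theorem \ref{sasasasaasdassdsasdadds} gives $\dot\mu(h_0)=\dd_{\phi+h_0\cdot\psi}\evol^k_{[r,r']}(\psi)$.

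It then remains to verify that $\dot\mu\colon\RR\rightarrow TG$ is continuous; as $\RR$ is first countable, sequential continuity suffices. Given $h_n\rightarrow h_0$, the identity $\ppp^\dind_\infty\big((\phi+h_0\cdot\psi)-(\phi+h_n\cdot\psi)\big)=|h_0-h_n|\cdot\ppp^\dind_\infty(\psi)$ (valid for all $\pp\in\SEM$ and $\dind\llleq k$) exhibits $\{\phi+h_n\cdot\psi\}_{n\in\NN}\mackarr{\kk}(\phi+h_0\cdot\psi)$, with $\mackeyconst^\dind_\pp:=\ppp^\dind_\infty(\psi)$ and $\lambda_n:=|h_0-h_n|\rightarrow 0$ in \eqref{podsopdspodssd}. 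Applying property \ref{sasasasaasdassdsasdadds}.\ref{nmcv2} to this Mackey-convergent sequence together with the constant net $\psi_\alpha\equiv\psi$ (for which trivially $\{\psi_\alpha\}_{\alpha}\netarr{0}\psi$) yields $\lim_n\dd_{\phi+h_n\cdot\psi}\evol^k_{[r,r']}(\psi)=\dd_{\phi+h_0\cdot\psi}\evol^k_{[r,r']}(\psi)$ in $TG$, i.e.\ $\lim_n\dot\mu(h_n)=\dot\mu(h_0)$. Thus $\dot\mu$ is continuous, and $\mu$ is of class $C^1$.

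I expect the continuity of $\dot\mu$ to be the only genuine difficulty: the tangent vectors $\dot\mu(h)$ live in the varying tangent spaces $T_{\innt(\phi+h\cdot\psi)}G$, so one cannot argue componentwise in a single fixed space. Property \ref{sasasasaasdassdsasdadds}.\ref{nmcv2} is precisely the tool that controls this variation (it encodes convergence in the tangent bundle $TG$), and feeding it the correct input reduces to the elementary observation that $h\mapsto\phi+h\cdot\psi$ converges in the Mackey sense as $h\rightarrow h_0$, the relevant seminorms scaling linearly in $h$.
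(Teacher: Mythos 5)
Your proposal is correct and follows essentially the same route as the paper: pointwise differentiability of $\mu$ is read off from the differentiability of $\evol^k_{[r,r']}$ in Theorem \ref{sasasasaasdassdsasdadds}, and continuity of $\dot\mu$ follows by feeding the Mackey-convergent sequence $\{\phi+h_n\cdot\psi\}_{n\in\NN}\mackarr{\kk}\phi+h_0\cdot\psi$ (with constant net $\psi_\alpha\equiv\psi$) into Theorem \ref{sasasasaasdassdsasdadds}.\ref{nmcv2}. The only cosmetic difference is that the paper first cites Theorem \ref{fdfffdfd} and Lemma \ref{dsopdsopsdopsdop} for continuity of $\mu$, which you instead obtain implicitly from pointwise differentiability.
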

\begin{proof}
Theorem \ref{fdfffdfd} and Lemma  \ref{dsopdsopsdopsdop} show that $\mu$ is continuous. Moreover, for each  $t\in \RR$, and each sequence $\{h_n\}_{n\in \NN}\rightarrow 0$, we have $\{\phi +(t+h_n)\cdot \psi\}_{n\in \NN}\mackarr{\kk} (\phi + t\cdot \psi)$; thus, 
\begin{align*}
	\textstyle\lim_n \dot\mu(t+h_n)= \lim_n \dd_{\phi+(t+ h_n )\cdot \psi}\:\evol_{[r,r']}^k( \psi)= \dd_{\phi+t\cdot \psi}\:\evol_{[r,r']}^k(\psi)=\dot\mu(t)
\end{align*}
by Theorem \ref{sasasasaasdassdsasdadds}.\textit{\ref{nmcv2}}. This shows that $\dot\mu$ is continuous, i.e., that $\mu$ is of class $C^1$.  
\end{proof}
\begin{remark}
\label{sdsddssdsdasad}
\begingroup
\setlength{\leftmargini}{17pt}{
\begin{enumerate}
\item[]
\item
It is straightforward from Corollary \ref{lkdlkflkfdlkdf},  the differentiation rules \ref{productrule} and \ref{chainrule}, as well as   
\eqref{LGPR}, \eqref{opofdpopfd}, {\rm\ref{pogfpogf}}, and {\rm\ref{homtausch}} (for $\Psi\equiv \conj_g$ there) that \eqref{fpidpfpofdfdfd} also holds for all $\phi,\psi\in \DP^k([r,r'],\mg)$, for each $[r,r']\in \COMP$. 
\item
Expectably, $\mu$ as defined in Corollary \ref{lkdlkflkfdlkdf} is even of class $C^\infty$. A detailed proof of this fact, however, would require further technical preparation -- which we do not want to carry out at this point.
\item
Expectably, the equivalence  
\begin{align*}
	\textstyle\limih \chart(\innt_r^\bullet h\cdot \chi)=0\qquad\quad \Longleftrightarrow\qquad\quad \frac{\dd}{\dd h}\big|^\infty_{h=0}\: \chart\big(\innt_r^\bullet h\cdot \chi\big)=\int_r^\bullet (\dd_e\chart\cp \chi)(s)\:\dd s\in \comp{E}
\end{align*}
also holds for $\chi\in \DP^0([r,r'],\mg)$ -- implying that Proposition \ref{saasassansamnmsa}.\ref{saasassansamnmsa1} carries over to the piecewise category. This might be shown by the same arguments (Taylor expansion) as used in the proof of Lemma 7 in \cite{RGM} (cf.\ Lemma \ref{sdsdds}) additionally using \eqref{defpio} as well as that for $n\in \NN$ fixed,  
\begin{align*}
	f\colon G^n\rightarrow G,\qquad (g_1,\dots,g_n)\mapsto g_1\cdot {\dots}\cdot g_n	
\end{align*}
is smooth, with $\dd_{(e,\dots,e)}f(X_1,\dots,X_n)=X_1+\dots+X_n$ for all $X_1,\dots,X_n\in \mg$. 
The details, however, appear to be quite technical, so that we leave this issue to another paper.   \hspace*{\fill}$\ddagger$ 
\end{enumerate}}
\endgroup
\end{remark}

\subsubsection{The Exponential Map}
We recall the conventions fixed in Sect.\ \ref{kfdlkfdlkfdscvpdfpofdofd},  specifically that $\exp= \evol^\const_{[0,1]}\cp\expal|_{\dom[\exp]}$ holds. Then, Proposition \ref{saasassansamnmsa}.\ref{saasassansamnmsa2}, for $k\equiv \const$ and $[r,r']\equiv [0,1]$ there, reads as follows.
\begin{corollary}
\label{vclklkvclkvcoivciocvoivc}
Suppose that $(\expal(X),\expal(Y))$ is regular for $X,Y\in \mg$. Then, $(-\delta,\delta)\ni h\mapsto \exp(X+h\cdot Y)$ is differentiable at $h=0$ (for $\delta>0$ suitably small) \deff $\int \Ad_{\exp(-s\cdot X)}(Y)\:\dd s\in \mg$ holds. In this case, we have
\begin{align*}
	\textstyle\frac{\dd}{\dd h}\big|_{h=0}\he\exp(X+h\cdot Y)=\dd_e\LT_{\exp(X)}\big(\int \Ad_{\exp(-s\cdot X)}(Y)\:\dd s\big).
\end{align*} 
\end{corollary}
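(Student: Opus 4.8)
The plan is to obtain this corollary as a direct specialization of Proposition \ref{saasassansamnmsa}.\ref{saasassansamnmsa2} to the case of constant curves, with $\phi\equiv\expal(X)=\phi_X|_{[0,1]}$, $\psi\equiv\expal(Y)=\phi_Y|_{[0,1]}$, and $[r,r']\equiv[0,1]$ there. The assumption that $(\expal(X),\expal(Y))$ is regular is exactly the regularity hypothesis required by Proposition \ref{saasassansamnmsa}.\ref{saasassansamnmsa2}; in particular it encodes admissibility, so that $\expal(X+h\cdot Y)\in\DIDED_\const$ (i.e.\ $X+h\cdot Y\in\dom[\exp]$) for $|h|<\delta$ with $\delta>0$ suitably small. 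The only remaining work is therefore to translate each term occurring in that proposition into the exponential-map notation.

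First I would identify the one-parameter family. Since $\phi+h\cdot\psi=\phi_X|_{[0,1]}+h\cdot\phi_Y|_{[0,1]}=\phi_{X+h\cdot Y}|_{[0,1]}=\expal(X+h\cdot Y)$, the definition of the exponential map (Sect.\ \ref{kfdlkfdlkfdscvpdfpofdofd}) gives
\begin{align*}
	\innt\phi+h\cdot\psi=\innt\phi_{X+h\cdot Y}|_{[0,1]}=\exp(X+h\cdot Y),
\end{align*}
and in particular $\innt\phi=\exp(X)$. Hence the map $(-\delta,\delta)\ni h\mapsto\innt\phi+h\cdot\psi$ is precisely $h\mapsto\exp(X+h\cdot Y)$, and the left translation $\dd_e\LT_{\innt\phi}$ becomes $\dd_e\LT_{\exp(X)}$.

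Next I would rewrite the integrand. For $s\in[0,1]$, equation \eqref{odaidaooipidadais} (an application of \textrm{\ref{subst}}) yields $\innt_0^s\phi=\innt_0^s\phi_X|_{[0,1]}=\exp(s\cdot X)$, and since $t\mapsto\exp(t\cdot X)$ is a one-parameter group (Sect.\ \ref{kfdlkfdlkfdscvpdfpofdofd}) this gives $[\innt_0^s\phi]^{-1}=\exp(-s\cdot X)$. As $\psi(s)=Y$ is constant, we obtain $\Ad_{[\innt_0^s\phi]^{-1}}(\psi(s))=\Ad_{\exp(-s\cdot X)}(Y)$, so that the integrability condition $\int\Ad_{[\innt_0^s\phi]^{-1}}(\psi(s))\,\dd s\in\mg$ of Proposition \ref{saasassansamnmsa}.\ref{saasassansamnmsa2} becomes exactly $\int\Ad_{\exp(-s\cdot X)}(Y)\,\dd s\in\mg$. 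Substituting these identifications into the equivalence and into formula \eqref{fpidpfpofdfdfd} yields both the asserted characterization of differentiability at $h=0$ and the stated expression for $\frac{\dd}{\dd h}\big|_{h=0}\exp(X+h\cdot Y)$.

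There is no genuine obstacle here: the whole argument is bookkeeping built on Proposition \ref{saasassansamnmsa}.\ref{saasassansamnmsa2}, and the only point requiring a moment's care is the identification $[\innt_0^s\phi]^{-1}=\exp(-s\cdot X)$, which rests on \eqref{odaidaooipidadais} together with the one-parameter group property of $t\mapsto\exp(t\cdot X)$.
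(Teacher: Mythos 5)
Your proposal is correct and matches the paper exactly: the paper itself introduces this corollary with the remark that it is just Proposition \ref{saasassansamnmsa}.\ref{saasassansamnmsa2} read for $k\equiv\const$ and $[r,r']\equiv[0,1]$, which is precisely your specialization $\phi\equiv\expal(X)$, $\psi\equiv\expal(Y)$. The bookkeeping you carry out — $\innt\phi_{X+hY}|_{[0,1]}=\exp(X+h\cdot Y)$ and $[\innt_0^s\phi_X|_{[0,1]}]^{-1}=\exp(-s\cdot X)$ via \eqref{odaidaooipidadais} and the one-parameter group property — is exactly the translation the paper leaves implicit.
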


\begin{remark}
\label{kjdskjdskjdsdkjs}
\begingroup
\setlength{\leftmargini}{17pt}{
\renewcommand{\theenumi}{{\arabic{enumi}})} 
\renewcommand{\labelenumi}{\theenumi}
\begin{enumerate}
\item[]
\item
Suppose that $G$ admits an exponential map; and that $G$ is weakly $\const$-continuous. Then, Corollary \ref{vclklkvclkvcoivciocvoivc} shows that we have
\begin{align}
\label{sdssddssd}
	\textstyle\frac{\dd}{\dd h}\big|_{h=0}\he\exp(X+h\cdot Y)=\dd_e\LT_{\exp(X)}\big(\int \Ad_{\exp(-s\cdot X)}(Y)\:\dd s\big)\qquad\quad\forall\: X,Y\in \mg
\end{align} 
\deff $\mg$ is $\const$-complete. For instance, $G$ is weakly $\const$-continuous, and $\mg$ is $\const$-complete if
\begingroup
\setlength{\leftmarginii}{12pt}
\begin{itemize}
\item
$\exp\colon \mg\rightarrow G$ is of class $C^1$, by Remark \ref{dssdsdsdds}.\ref{dssdsdsdds2}, Remark \ref{dssdsdsdds}.\ref{dssdsdsdds1}, and Corollary \ref{vclklkvclkvcoivciocvoivc}.   
\item
$G$ is abelian, by Corollary \ref{ffddf} and Remark \ref{fdfdfdfdfdscxycxycx}.
\end{itemize}
\endgroup
\item
Suppose that $\mg$ is $\const$-complete; and that $G$ admits a continuous exponential map. Then, $G$ is $C^\const$-semiregular; and $G$ is weakly $\const$-continuous by  Remark \ref{dssdsdsdds}.\ref{dssdsdsdds2} and Remark \ref{dssdsdsdds}.\ref{dssdsdsdds1}. More formally,  \eqref{sdssddssd} then reads 
\begin{align}
\label{sdsdsdlksdklsd}
\textstyle\dd_{\phi}\he \evol_\const(\psi)=\dd_e\LT_{\innt \phi}\big(\int \Ad_{[\innt_r^s \psi]^{-1}}(\psi(s))\:\dd s \big)\qquad\quad \forall\: \phi,\psi\in C^\const([0,1],\mg).
\end{align}
The same arguments as in \cite{RGM} then show that $\evol_\const$ (thus $\exp$) is of class $C^1$. More specifically, one has to replace Lemma 23 by Lemma \ref{fddfxxxxfd} in the proof of Lemma 41 in \cite{RGM}. Then, substituting Equation (95) in \cite{RGM} by \eqref{sdsdsdlksdklsd}, the proof of Corollary 13 in \cite{RGM} just carries over to the case where $k\equiveq \const$ holds (a similar adaption has been done in the proof of  Lemma \ref{podspodspodsodsp}). 

As in the Lipschitz case, cf.\ Remark 7 in \cite{RGM}, it is to be expected that a (quite elaborate and technical) induction shows that $\exp$ is even smooth if $\mg$ is Mackey complete (or, more generally, if all the occurring iterated Riemann integrals exist in $\mg$).   
\hspace*{\fill}$\ddagger$
\end{enumerate}}
\endgroup
\end{remark}

\subsection{Integrals with Parameters}
\label{dffdfd}
Given an open interval $J\subseteq \RR$ as well as $x\in J$, in the following, we denote 
\begin{align*}
	J[x]:=\{h\in \RR_{\neq 0}\:|\: x+h\in J\}.
\end{align*} 
The next theorem generalizes Theorem 5 in \cite{RGM} (with significantly simplified proof).
\begin{theorem}
\label{ofdpofdpofdpofdpofd}
Suppose that $G$ is Mackey {\rm k}-continuous for $k\in \NN\sqcup\{\lip,\infty,\const\}$ -- additionally abelian if $k\equiveq \const$ holds. Let $\Phi\colon I\times [r,r']\rightarrow \mg$ ($I\subseteq \RR$ open) be given with $\Phi(z,\cdot)\in \DIDE^k_{[r,r']}$ for each $z\in I$.  
Then, 
\begin{align*}
	\textstyle\frac{\dd}{\dd h}\big|^\infty_{h=0}\: \chart\big([\innt_r^\bullet \Phi(x,\cdot)]^{-1}[\innt_r^\bullet\Phi(x+h,\cdot)]\big)=\textstyle\int_r^\bullet (\dd_e\chart\cp \Ad_{[\innt_r^s\Phi(x,\cdot)]^{-1}})(\partial_z\Phi(x,s))\:\dd s \in \comp{E}
\end{align*}
holds for $x\in I$, provided that
\begingroup
\setlength{\leftmargini}{15pt}{
\renewcommand{\theenumi}{{\alph{enumi}})} 
\renewcommand{\labelenumi}{\theenumi}
\begin{enumerate}
\item
\label{saasaassasasa2}
We have $(\partial_z \Phi)(x,\cdot)\in C^k([r,r'],\mg)$.\footnote{More specifically, this means that for each $t\in [r,r']$, the map $I\ni z\mapsto \Phi(z,t)$ is differentiable at $z=x$ with derivative $(\partial_z\Phi)(x,t)$, such that $(\partial_z\Phi)(x,\cdot)\in C^k([r,r'],\mg)$ holds. The latter condition in particular   ensures that $\ppp^\dind_\infty((\partial_z\Phi)(x,\cdot))< \infty$ holds for each $\pp\in \SEM$ and $\dind\llleq k$, cf.\ \ref{aaaaaw2}.}
\item
\label{saasaassasasa1}
For each $\pp\in \SEM$ and $\dind\llleq k$, there exists $L_{\pp,\dind}\geq 0$, as well as $I_{\pp,\dind}\subseteq I$ open with $x\in I_{\pp,\dind}$, such that
\begin{align*}
	1/|h|\cdot\ppp^\dind_\infty(\Phi(x+h,\cdot)-\Phi(x,\cdot))\leq L_{\pp,\dind}\qquad\quad \forall\: h\in I_{\pp,\dind}[x].
\end{align*}
\vspace{-22pt}
\end{enumerate}}
\endgroup
\noindent
In particular, we have
\begin{align*}
	\textstyle\frac{\dd}{\dd h}\big|_{h=0} \he \innt\Phi(x+h,\cdot)=\textstyle \dd_e\LT_{\innt \Phi(x,\cdot)}\big(\int \Ad_{[\innt_r^s\Phi(x,\cdot)]^{-1}}(\partial_z\Phi(x,s))\:\dd s\he\big)
\end{align*}
\deff the Riemann integral on the right side exists in $\mg$.  
\end{theorem}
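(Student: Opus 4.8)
The plan is to reduce the first (uniform) display to Corollary~\ref{pofdpofdop}, and then to obtain the ``In particular'' statement by the argument already used in Proposition~\ref{saasassansamnmsa}.\ref{saasassansamnmsa2}. Fix $x\in I$. By identity~\ref{kdskdsdkdslkds} I would first write
\[
	[\innt_r^\bullet\Phi(x,\cdot)]^{-1}[\innt_r^\bullet\Phi(x+h,\cdot)]=\innt_r^\bullet\Ad_{[\innt_r^\bullet\Phi(x,\cdot)]^{-1}}\big(\Phi(x+h,\cdot)-\Phi(x,\cdot)\big),
\]
and set $\chi:=\Ad_{[\innt_r^\bullet\Phi(x,\cdot)]^{-1}}(\partial_z\Phi(x,\cdot))$, which lies in $C^k([r,r'],\mg)\subseteq C^0([r,r'],\mg)$ by Point~\ref{remkkk2} and hypothesis~\ref{saasaassasasa2} (in the case $k\equiveq\const$ this uses that $G$ is abelian). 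Defining $\varepsilon_h\in C^0([r,r'],\mg)$ through $h\cdot\chi+h\cdot\varepsilon_h=\Ad_{[\innt_r^\bullet\Phi(x,\cdot)]^{-1}}(\Phi(x+h,\cdot)-\Phi(x,\cdot))$, the curve appearing in the first display equals $\chart(\innt_r^\bullet h\cdot\chi+h\cdot\varepsilon_h)$; moreover $h\cdot\chi+h\cdot\varepsilon_h\in\DIDE_{[r,r']}$ by~\ref{kdskdsdkdslkds}, so Corollary~\ref{pofdpofdop} becomes applicable once its hypotheses are checked.

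Hypothesis~\ref{csaaaaaw1} of Corollary~\ref{pofdpofdop} follows from the pointwise differentiability in~\ref{saasaassasasa2} together with continuity of $\Ad_{[\innt_r^t\Phi(x,\cdot)]^{-1}}$, and hypothesis~\ref{csaaaaaw2} follows from the $\dind\equiveq0$ case of~\ref{saasaassasasa1}: choosing $\qq\geq\pp$ as in~\ref{as1} for $\compact\equiv\im[[\innt_r^\bullet\Phi(x,\cdot)]^{-1}]$ gives $\pp_\infty(\varepsilon_h)\leq\tfrac1{|h|}\qqq_\infty(\Phi(x+h,\cdot)-\Phi(x,\cdot))+\qqq_\infty(\partial_z\Phi(x,\cdot))$, which is bounded for $h$ small. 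The decisive step is then condition~\ref{podspods1}, i.e.\ $\limih\chart([\innt_r^\bullet\Phi(x,\cdot)]^{-1}[\innt_r^\bullet\Phi(x+h,\cdot)])=0$: here I would observe that the quantitative bound~\ref{saasaassasasa1} says precisely $\{\Phi(x+h_n,\cdot)\}_{n\in\NN}\mackarr{\kk}\Phi(x,\cdot)$ for every sequence $\RR_{\neq0}\supseteq\{h_n\}\to0$ (take $\lambda_n\equiv|h_n|$, $\mackeyconst^\dind_\pp\equiv L_{\pp,\dind}$, and $\mackeyindex^\dind_\pp$ large enough that $x+h_n\in I_{\pp,\dind}$). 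Mackey $k$-continuity in the form of Lemma~\ref{fdlkfdlkfd1} then yields $\limin\innt_r^\bullet\Phi(x+h_n,\cdot)=\innt_r^\bullet\Phi(x,\cdot)$, hence $\limin\chart(\dots)=0$ by continuity of $\chart$, and Lemma~\ref{aslksalksalksakasklaslksalklkalksa} upgrades this to the uniform limit over $h$. Corollary~\ref{pofdpofdop} then gives condition~\ref{podspods4}, which is exactly the first display.

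For the ``In particular'' part I would mirror Proposition~\ref{saasassansamnmsa}.\ref{saasassansamnmsa2}. Put $\mu(h):=\innt\Phi(x+h,\cdot)$ and use the chart $\chart'\colon g\mapsto\chart([\innt\Phi(x,\cdot)]^{-1}g)$ around $\mu(0)=\innt\Phi(x,\cdot)$, so that $(\chart'\cp\mu)(h)$ is the value at $\bullet=r'$ of the curve in the first display, with $(\chart'\cp\mu)(0)=0$. Evaluating the first display at $\bullet=r'$ shows that $\lim_{h\to0}\tfrac1h(\chart'\cp\mu)(h)=\int(\dd_e\chart\cp\Ad_{[\innt_r^s\Phi(x,\cdot)]^{-1}})(\partial_z\Phi(x,s))\,\dd s\in\comp E$ exists, and by~\eqref{pofdpofdpofdsddsdsfd} (applied to the continuous linear map $\dd_e\chart$ and extended to completions) this limit equals $\comp{\dd_e\chart}(W)$ with $W:=\int\Ad_{[\innt_r^s\Phi(x,\cdot)]^{-1}}(\partial_z\Phi(x,s))\,\dd s\in\comp\mg$. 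Since $\comp{\dd_e\chart}$ is a topological isomorphism restricting to $\mg\to E$, the limit lies in $E$ --- equivalently, $\mu$ is differentiable at $0$ in the sense of Remark~\ref{lkcxlkcxlkxlkcxpoidsoids} --- if and only if $W\in\mg$; and in that case $\chart'^{-1}=\LT_{\innt\Phi(x,\cdot)}\cp\chartinv$ together with the chain rule and $\dd_0\chartinv\cp\dd_e\chart=\id$ yields $\dot\mu(0)=\dd_e\LT_{\innt\Phi(x,\cdot)}(W)$, which is the asserted formula.

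The main obstacle is the verification of condition~\ref{podspods1} (the vanishing uniform limit): this is the only place where Mackey $k$-continuity is genuinely used, and the crux is to recognize that the bound~\ref{saasaassasasa1} is exactly a statement of Mackey convergence of $\Phi(x+h_n,\cdot)$ to $\Phi(x,\cdot)$ at every order $\dind\llleq k$. Secondary technical points are the bookkeeping in hypothesis~\ref{csaaaaaw2}, where the $\Ad$-estimate~\ref{as1} is combined with the $\dind\equiveq0$ case of~\ref{saasaassasasa1}, and, in the ``In particular'' part, the care needed to commute the Riemann integral with $\dd_e\chart$ at the level of completions so that membership of the limit in $E$ matches membership of $W$ in $\mg$.
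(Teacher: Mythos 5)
Your proof is correct, and its skeleton is the paper's: reduce via identity \ref{kdskdsdkdslkds} to a curve of the form $\innt_r^\bullet (h\cdot\chi+h\cdot\varepsilon_h)$, check the hypotheses of Corollary \ref{pofdpofdop}, feed Mackey k-continuity into the vanishing-limit condition, and obtain the ``in particular'' clause by the chart argument of Proposition \ref{saasassansamnmsa}.\ref{saasassansamnmsa2}. The genuine difference is which sequence you hand to Mackey k-continuity. The paper proves $\psi_{h_n}:=\Ad_{[\innt_r^\bullet\Phi(x,\cdot)]^{-1}}(\Phi(x+h_n,\cdot)-\Phi(x,\cdot))\mackarr{\kk}0$, which requires the $\Ad$-estimates at all orders $\dind\llleq k$ (Lemmas \ref{opopsopsdopds} and \ref{xccxcxcxcxyxycllvovo}) and requires $\psi_h\in\DIDE^k_{[r,r']}$ --- this membership is exactly where abelianness is used for $k\equiveq\const$. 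You instead read hypothesis \ref{saasaassasasa1} directly as $\{\Phi(x+h_n,\cdot)\}_{n\in\NN}\mackarr{\kk}\Phi(x,\cdot)$, apply Mackey continuity through Lemma \ref{fdlkfdlkfd1}, and translate back to the chart statement via \ref{kdskdsdkdslkds} and continuity of $\chart$; the $\Ad$-estimates then enter only at order zero (via \ref{as1}) when checking hypothesis \ref{csaaaaaw2}. Two side remarks. First, your appeal to Lemma \ref{aslksalksalksakasklaslksalklkalksa} is redundant: the sequential statement you establish is precisely condition \ref{podspods2} of Corollary \ref{pofdpofdop}, whose equivalence with \ref{podspods1} and \ref{podspods4} is already part of that corollary. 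Second, your invocation of Point \ref{remkkk2} (and hence of abelianness when $k\equiveq\const$) is unnecessary: Corollary \ref{pofdpofdop} only needs $\chi\in C^0([r,r'],\mg)$, which Lemma \ref{Adlip} (with $k\equiveq 0$) supplies with no hypothesis on $G$; and since the curves you feed to Mackey continuity are the $\Phi(x+h_n,\cdot)\in\DIDE^k_{[r,r']}$ themselves rather than their $\Ad$-conjugates, your route in fact covers the case $k\equiveq\const$ without the abelianness assumption that the paper's route genuinely uses.
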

\begin{proof}
The last statement follows from the first statement and Lemma \ref{sdsdds} -- just as in the proof of Proposition \ref{saasassansamnmsa}.\ref{saasassansamnmsa2}. 
Now, for $x+h\in I$, we have 
\begin{align*}
	\Phi(x+h,t)=\Phi(x,t)+h\cdot \partial_z\Phi(x,t)+ h\cdot \varepsilon(x+h,t)\qquad\quad\forall\: t\in [r,r'],
\end{align*}
with $\varepsilon\colon I\times [r,r']\rightarrow\mg$ such that
\begingroup
\setlength{\leftmargini}{16pt}{
\renewcommand{\theenumi}{{\rm \roman{enumi}})} 
\renewcommand{\labelenumi}{\theenumi}
\begin{enumerate}
\item
\label{aaaaaw1}
	$\lim_{h\rightarrow 0}\varepsilon(x+h,t)=\varepsilon(x,t)=0$\hspace{128.2pt}$\forall\: t\in [r,r']$,
\item
\label{aaaaaw2}
	$\ppp^\dind_\infty(\varepsilon(x+h,\cdot))\leq L_{\pp,\dind} + \ppp_{\infty}^\dind((\partial_z\Phi)(x,\cdot))=:C_{\pp,\dind}<\infty$\hspace{18.95pt}$\forall\:h\in I_{\pp,\dind}[x]$\quad for all\quad $\pp\in \SEM$,\: $\dind\llleq k$.
\end{enumerate}}
\endgroup
\noindent
We let $\alpha:= \innt_r^\bullet \Phi(x,\cdot)$; and obtain 
\vspace{-6pt}
\begin{align}
\label{kjfdjfdccxvc}
	\textstyle[\innt_r^\bullet \Phi(x,\cdot)]^{-1}[\innt_r^\bullet\Phi(x+h,\cdot)]=\innt_r^\bullet \overbrace{h\cdot \underbrace{\Ad_{\alpha^{-1}}(\partial_z\Phi(x,\cdot))}_{\chi}+  h\cdot \underbrace{\Ad_{\alpha^{-1}}(\varepsilon(x+h,\cdot))}_{\varepsilon_h}}^{\psi_h}
\end{align}
with $\psi_h\in \DIDE_{[r,r']}^k$, because our presumptions ensure that $\chi,\varepsilon_h\in C^k([r,r'],\mg)$ holds. By Lemma \ref{opopsopsdopds} and Lemma \ref{xccxcxcxcxyxycllvovo}, for each $\pp\in \SEM$ and $\dind\llleq k$, there exists some $\pp\leq \qq\in \SEM$ with\footnote{If $k\equiveq \const$ holds, we can just choose $\dind\equiveq 0$ and $\qq\equiveq\pp$, because $G$ is presumed to be abelian in this case.}
\vspace{-3pt}
\begin{align*}
	\ppp_\infty^\dind(\psi_h)\leq |h|\cdot\qqq_\infty^\dind(\partial_z\Phi(x,\cdot)+ \varepsilon(x+h,\cdot))\stackrel{\ref{saasaassasasa1}}{\leq} |h|\cdot L_{\qq,\dind}
	\qquad\quad\forall\: h\in I_{\qq,\dind}[x].
\end{align*}
For each fixed sequence $I[x]\supseteq \{h_n\}_{n\in \NN}\rightarrow 0$, we thus have $\psi_{h_n}\mackarr{\kk} 0$. Since $G$ is Mackey k-continuous, 
this implies
\begin{align}
\label{iosdisdioddsioiosd}
	\textstyle \limin \chart(\innt_r^\bullet  h_n\cdot \chi + h_n\cdot \varepsilon_{h_n})=\limin \chart(\innt_r^\bullet  \psi_{h_n})=0. 
\end{align} 
Now, for $\delta>0$ such small that $\MD_\delta\subseteq I[x]$ holds, by \ref{aaaaaw1} and \ref{aaaaaw2}, $\{\varepsilon_{h}\}_{h\in \MD_\delta}$ fulfills the presumptions in Corollary \ref{pofdpofdop}. We thus have 
\begin{align*}
	\textstyle\limih\he \textstyle\chart\big([\innt_r^\bullet \Phi(x,\cdot)]^{-1}[\innt_r^\bullet\Phi(x+h,\cdot)]\big)= \textstyle\int_r^\bullet (\dd_e\chart\cp \Ad_{[\innt_r^s\Phi(x,\cdot)]^{-1}})(\partial_z\Phi(x,s))\:\dd s \in \comp{E}
\end{align*}
by \eqref{kjfdjfdccxvc}, \eqref{iosdisdioddsioiosd}, as well as the equivalence of \ref{podspods2} and \ref{podspods4} in Corollary \ref{pofdpofdop}.
\end{proof}
\noindent
We immediately obtain
\begin{corollary}
\label{pofdpofdpofd}
Suppose that $G$ is $C^k$-semiregular for $k\in \NN\sqcup\{\lip,\infty\}$; and that $\mg$ is {\rm k}-complete. Let $\Phi\colon I\times [r,r']\rightarrow \mg$ ($I\subseteq \RR$ open) be given with $\Phi(z,\cdot)\in \DIDE^k_{[r,r']}$ for each $z\in I$.  
Then, 
\begin{align*}
	\textstyle\frac{\dd}{\dd h}\big|_{h=0} \he \innt\Phi(x+h,\cdot)=\textstyle \dd_e\LT_{\innt \Phi(x,\cdot)}\big(\int \Ad_{[\innt_r^s\Phi(x,\cdot)]^{-1}}(\partial_z\Phi(x,s))\:\dd s\he\big)
\end{align*}
holds for $x\in I$, provided that the conditions \ref{saasaassasasa2} and \ref{saasaassasasa1} in Theorem \ref{ofdpofdpofdpofdpofd} are fulfilled. 
\end{corollary}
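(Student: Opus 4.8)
The plan is to obtain Corollary \ref{pofdpofdpofd} as a direct specialization of Theorem \ref{ofdpofdpofdpofdpofd}, so that only two of that theorem's hypotheses need to be checked; there is no genuine obstacle here, and the point is precisely that once the earlier results are in place the conclusion is immediate.

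First I would verify the running hypothesis of Theorem \ref{ofdpofdpofdpofdpofd}, namely Mackey $k$-continuity of $G$. Since $k\in \NN\sqcup\{\lip,\infty\}$ and $G$ is $C^k$-semiregular by assumption, this is exactly the content of Theorem \ref{fdfffdfd}. The abelian proviso of Theorem \ref{ofdpofdpofdpofdpofd} is vacuous in the present situation because $k\neq \const$. Together with the standing assumptions \ref{saasaassasasa2} and \ref{saasaassasasa1} (which are carried over verbatim), all the premises of Theorem \ref{ofdpofdpofdpofdpofd} are thus in force, so its limit-formula conclusion holds at the given $x\in I$.

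Second I would verify that the Riemann integral on the right-hand side exists in $\mg$, since this is precisely the additional input required for the ``in particular'' clause of Theorem \ref{ofdpofdpofdpofdpofd}. By $C^k$-semiregularity we have $\DIDE^k_{[r,r']}=C^k([r,r'],\mg)$ on every compact interval (cf.\ the remark following \textrm{\ref{subst}}, i.e.\ Lemma 12 in \cite{RGM}). Consequently $\Phi(x,\cdot)\in \DIDE^k_{[r,r']}$ by hypothesis, while condition \ref{saasaassasasa2} gives $(\partial_z\Phi)(x,\cdot)\in C^k([r,r'],\mg)=\DIDE^k_{[r,r']}$. Applying the definition of $k$-completeness, \eqref{dspopodspds}, with $\phi\equiv\Phi(x,\cdot)$ and $\chi\equiv(\partial_z\Phi)(x,\cdot)$ — both of which now lie in $\DIDE^k_{[r,r']}$ — then yields $\int \Ad_{[\innt_r^s\Phi(x,\cdot)]^{-1}}(\partial_z\Phi(x,s))\:\dd s\in \mg$.

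With both points settled, the ``in particular'' assertion of Theorem \ref{ofdpofdpofdpofdpofd} delivers the claimed differentiation formula, completing the proof. The only step that requires any attention at all is the correct matching of the two slots in the $k$-completeness condition to $\Phi(x,\cdot)$ and $(\partial_z\Phi)(x,\cdot)$, and even this is automatic once semiregularity is used to place both curves in $\DIDE^k_{[r,r']}$; the absence of any hard step is exactly what makes this a corollary rather than a theorem.
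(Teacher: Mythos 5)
Your proposal is correct and follows essentially the same route as the paper, whose entire proof reads ``This is clear from Theorem \ref{fdfffdfd} and Theorem \ref{ofdpofdpofdpofdpofd}.'' You merely make explicit the two details the paper leaves implicit: that semiregularity plus Theorem \ref{fdfffdfd} supplies the Mackey $k$-continuity hypothesis, and that semiregularity (giving $C^k([r,r'],\mg)=\DIDE^k_{[r,r']}$) together with $k$-completeness applied to $\phi\equiv\Phi(x,\cdot)$, $\chi\equiv(\partial_z\Phi)(x,\cdot)$ places the Riemann integral in $\mg$, which is exactly what the ``in particular'' clause of Theorem \ref{ofdpofdpofdpofdpofd} requires.
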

\begin{proof}
This is clear from Theorem \ref{fdfffdfd} and Theorem \ref{ofdpofdpofdpofdpofd}.
\end{proof}
\noindent
We furthermore obtain the following generalization of Corollary 11 in \cite{RGM}.
\begin{corollary}
\label{dspopdspodsposd}
Suppose that $G$ is Mackey {\rm k}-continuous for $k\in \{\infty,\const\}$ -- additionally abelian if $k\equiveq \const$ holds. 
Suppose furthermore that $\MX\colon I\rightarrow \dom[\exp]\subseteq \mg$ is of class $C^1$; and define $\alpha:=\exp\cp\he\MX$. Then, for $x\in I$, we have
\begin{align*}
	\textstyle\frac{\dd}{\dd h}\big|_{h=0} \: \alpha(x+h)=\dd_e\LT_{\exp(\MX(x))}\big(\int_0^1 \Ad_{\exp(-s\cdot \MX(x))}(\dot\MX(x)) \:\dd s \he \big),
\end{align*}
provided that the Riemann integral on the right side exists in $\mg$.  If this is the case for each $x\in I$, then $\alpha$ is of class $C^1$.
\end{corollary}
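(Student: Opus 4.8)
The plan is to deduce the derivative formula directly from Theorem \ref{ofdpofdpofdpofdpofd}, and then to upgrade it to $C^1$-regularity by a continuity argument. For the formula I would apply Theorem \ref{ofdpofdpofdpofdpofd} on $[r,r']\equiv[0,1]$ to the family that is constant in the second variable, namely $\Phi\colon I\times[0,1]\rightarrow\mg$, $\Phi(z,t):=\MX(z)$; equivalently $\Phi(z,\cdot)=\expal(\MX(z))$. Since $\MX(z)\in\dom[\exp]$, each $\Phi(z,\cdot)$ is a constant curve contained in $\DIDED_\const$, and as a constant curve it is of class $C^\infty$; hence $\Phi(z,\cdot)\in\DIDE^k_{[0,1]}$ for both $k\equiveq\infty$ and $k\equiveq\const$. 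Because $\MX$ is of class $C^1$, we have $(\partial_z\Phi)(x,\cdot)=\phi_{\dot\MX(x)}|_{[0,1]}$, again a constant curve, hence in $C^k([0,1],\mg)$; this is Condition \ref{saasaassasasa2}. For Condition \ref{saasaassasasa1} I would use that on a constant curve $\ppp^\dind_\infty$ collapses to $\ppp$ of its value, so that $\ppp^\dind_\infty(\Phi(x+h,\cdot)-\Phi(x,\cdot))=\ppp(\MX(x+h)-\MX(x))$; the estimate $\ppp(\MX(x+h)-\MX(x))\leq|h|\cdot\sup\{\ppp(\dot\MX(s))\:|\:s\text{ between }x,x+h\}$ (from \eqref{isdsdoisdiosd1}) together with continuity of $\ppp\cp\dot\MX$ yields the required local bound $L_{\pp,\dind}$, uniformly in $\dind$. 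Finally I would read off the integrand via \eqref{odaidaooipidadais}, which gives $\innt_0^s\Phi(x,\cdot)=\exp(s\cdot\MX(x))$, hence $[\innt_0^s\Phi(x,\cdot)]^{-1}=\exp(-s\cdot\MX(x))$, together with $\partial_z\Phi(x,s)=\dot\MX(x)$; substituting into the ``in particular'' part of Theorem \ref{ofdpofdpofdpofdpofd} produces exactly the claimed formula, valid \deff the displayed integral lies in $\mg$.

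For the $C^1$-statement I would first reduce it to a statement about a single $\mg$-valued map. Writing $\Gamma(y):=\int_0^1\Ad_{\exp(-s\cdot\MX(y))}(\dot\MX(y))\:\dd s\in\mg$ (which exists for every $y\in I$ by hypothesis), the formula just proved reads $\dot\alpha(y)=\dd_e\LT_{\alpha(y)}(\Gamma(y))=\dd_{(\alpha(y),e)}\mult(0,\Gamma(y))$ by \eqref{LGPR}. Since $\mult$ is smooth and $\alpha$ is continuous (being differentiable at each point, cf.\ Remark \ref{lkcxlkcxlkxlkcxpoidsoids}), the map $\dot\alpha\colon I\rightarrow TG$ is the composition of the map $y\mapsto(\alpha(y),\Gamma(y))\in G\times\mg$ with the (smooth) tangent map of $\mult$; hence $\alpha$ is of class $C^1$ as soon as $\Gamma\colon I\rightarrow\mg$ is continuous. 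In the case $k\equiveq\const$ with $G$ abelian this is immediate, since then $\Ad_g=\id$ for all $g$, so $\Gamma(y)=\dot\MX(y)$ is continuous by $\MX\in C^1$.

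It remains to prove continuity of $\Gamma$ in the case $k\equiveq\infty$, which is the technical heart of the argument. Fixing $x\in I$ and a sequence $\{x_n\}_{n\in\NN}\rightarrow x$ (sequences suffice, as $I\subseteq\RR$ is first countable), I would estimate, using \eqref{ffdlkfdlkfd} and that $\Gamma(x_n)-\Gamma(x)\in\mg$,
\begin{align*}
	\textstyle\ppp(\Gamma(x_n)-\Gamma(x))\leq\int_0^1\ppp\big(\Ad_{g_n(s)}(Y_n)-\Ad_{g(s)}(Y)\big)\:\dd s,
\end{align*}
where $g_n(s):=\exp(-s\cdot\MX(x_n))$, $g(s):=\exp(-s\cdot\MX(x))$, $Y_n:=\dot\MX(x_n)$, $Y:=\dot\MX(x)$, and then split the integrand pointwise as $\Ad_{g_n(s)}(Y_n-Y)+(\Ad_{g_n(s)}-\Ad_{g(s)})(Y)$. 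The $C^1$-estimate above shows $\phi_{\MX(x_n)}|_{[0,1]}\mackarr{\infty}\phi_{\MX(x)}|_{[0,1]}$ (take $\lambda_n:=|x_n-x|$ and Mackey constants independent of the derivative order), so Mackey $\infty$-continuity gives $\limin\innt_0^\bullet\phi_{\MX(x_n)}|_{[0,1]}=\innt_0^\bullet\phi_{\MX(x)}|_{[0,1]}$, i.e.\ $g_n\rightarrow g$ uniformly on $[0,1]$. Combining this uniform convergence with the $\Ad$-bound furnished by Lemma \ref{alalskkskaskaskas} (resp.\ \ref{as1}) on an open neighbourhood of the compact set $g([0,1])$, into which $g_n(s)$ eventually falls uniformly in $s$, controls the first summand by $\qqq(Y_n-Y)\rightarrow0$ for a suitable continuous seminorm $\qqq$ on $\mg$; joint continuity of $\Ad$ together with compactness of $[0,1]$ (a tube-lemma argument) controls the second summand uniformly in $s$. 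Both therefore tend to $0$, giving $\Gamma(x_n)\rightarrow\Gamma(x)$.

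The main obstacle is precisely this continuity of $\Gamma$ for $k\equiveq\infty$: one cannot simply invoke Lemma \ref{podspodspodsodsp} as a black box, since that lemma requires the off-diagonal expressions $\xi(\phi_{\MX(x_n)}|_{[0,1]},\phi_{\dot\MX(x_m)}|_{[0,1]})$ to be well defined in $\mg$, i.e.\ the integrals $\int_0^1\Ad_{\exp(-s\cdot\MX(x_n))}(\dot\MX(x_m))\:\dd s$ for $n\neq m$, which are \emph{not} guaranteed by the hypothesis that only the diagonal integrals $\Gamma(y)$ exist in $\mg$. The integrand-splitting estimate is designed to sidestep this: it only ever forms the genuine $\mg$-valued difference $\Gamma(x_n)-\Gamma(x)$ and the scalar integral of its seminorm, so no off-diagonal $\mg$-integral is needed. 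Securing the uniform control of the two summands (in $s$ and $n$) from Mackey $\infty$-continuity is thus the crux of the proof.
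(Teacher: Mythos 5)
Your proposal is correct, and for the derivative formula it coincides with the paper's own argument: both apply Theorem \ref{ofdpofdpofdpofdpofd} to the family $\Phi(z,t):=\MX(z)$, with condition b) secured by exactly the estimate $\ppp(\MX(x+h)-\MX(x))\leq |h|\cdot \sup\{\ppp(\dot\MX(z))\:|\: z\in[x-\delta,x+\delta]\}$ from \eqref{isdsdoisdiosd1} (the paper declares a) and b) ``clear''; you spell them out, including the collapse of $\ppp^\dind_\infty$ on constant curves and the identification $[\innt_0^s\Phi(x,\cdot)]^{-1}=\exp(-s\cdot\MX(x))$ via \eqref{odaidaooipidadais}).

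For the $C^1$ statement your route genuinely differs, and your stated reason for the detour is apt. The paper sets $\phi_n:=\expal(\MX(x+h_n))$, $\psi_n:=\expal(\dot\MX(x+h_n))$, notes $\{\phi_n\}_{n\in\NN}\mackarr{\kk}\expal(\MX(x))$ and $\{\psi_n\}_{n\in\NN}\seqarr{0}\expal(\dot\MX(x))$, and then cites Lemma \ref{podspodspodsodsp} to get $\lim_n\dot\alpha(x+h_n)=\dot\alpha(x)$; you instead prove continuity of $\Gamma(y)=\int_0^1\Ad_{\exp(-s\cdot\MX(y))}(\dot\MX(y))\:\dd s$ directly, by splitting the integrand as $\Ad_{g_n(s)}(Y_n-Y)+(\Ad_{g_n(s)}-\Ad_{g(s)})(Y)$ and controlling both terms uniformly via Mackey continuity, Lemma \ref{alalskkskaskaskas}, and a tube argument. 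As you observe, Lemma \ref{podspodspodsodsp} as stated requires \emph{all} $\xi(\phi_n,\psi_m)$ to be well defined in $\mg$, i.e.\ the off-diagonal integrals $\int_0^1\Ad_{\exp(-s\cdot\MX(x+h_n))}(\dot\MX(x+h_m))\:\dd s$, while the corollary's hypothesis supplies only the diagonal ones; so the paper's citation is, strictly speaking, not covered by that lemma's hypotheses (the paper moreover asserts $\psi_n\in\DIDED_\const$, which would need $\dot\MX(x+h_n)\in\dom[\exp]$ -- not given). The repair the paper presumably intends is to fall back on Lemma \ref{fdpfdpopofd}, the engine behind Lemma \ref{podspodspodsodsp}: it is valued in the completion $\mgc$, needs no existence hypothesis, gives $\lim_{(n,m)}\wh{\Gamma}(\phi_n,\psi_m)=\wh{\Gamma}(\phi,\psi)$ in $\mgc$, and restricting to the cofinal diagonal -- whose terms and limit lie in $\mg$ by hypothesis -- yields convergence in $\mg$, after which smoothness of $\mult$ finishes. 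Your estimate is essentially this argument unpacked for constant curves: it buys self-containedness and makes the existence issue transparent, at the cost of redoing the compactness work that Lemma \ref{fdpfdpopofd} packages. One small gloss in your write-up: Mackey $\infty$-continuity applied to $\phi_{\MX(x_n)}|_{[0,1]}$ gives uniform convergence of $s\mapsto\exp(s\cdot\MX(x_n))$, whereas you need it for $g_n(s)=\exp(-s\cdot\MX(x_n))$; either run the same argument for $-\MX$ (legitimate, since $\RR\cdot\dom[\exp]\subseteq\dom[\exp]$ and $-\MX$ is again $C^1$), or pass to inverses using compactness of $g([0,1])$.
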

\begin{proof}
We let $\Phi\colon I\times [0,1]\ni(z,t)\mapsto \MX(z)$; and observe that $\alpha(z)= \innt \Phi(z,\cdot)$ holds for each $z\in I$. 
Then, the first statement is clear from Theorem \ref{ofdpofdpofdpofdpofd}. For the second statement, we suppose that
\begin{align*}
	\textstyle\dot\alpha(x) =\dd_e\LT_{\exp(\MX(x))}\big(\int_0^1 \Ad_{\exp(-s\cdot \MX(x))}(\dot\MX(x)) \:\dd s \he \big)\qquad\quad\forall\: x\in I
\end{align*}
is well defined; i.e., that the Riemann integral on the right side exists for each $x\in I$. We fix $x\in I$ and $\delta>0$ with $[x-\delta,x+\delta]\subseteq I$; and observe that
\begin{align*}
	\ppp(\MX(x+h)-\MX(x))\leq |h|\cdot \sup\{\ppp(\dot\MX(z))\:|\: z\in[x-\delta,x+\delta] \}\qquad\quad\forall\: \pp\in \SEM,\:\: |h|\leq \delta
\end{align*}
holds by \eqref{isdsdoisdiosd1}. For each sequence $I\supseteq \{h_n\}_{n\in \NN}\rightarrow 0$, we thus have 
\begin{align*}
	\DIDED_\const\supseteq \{\phi_n\}_{n\in \NN} \mackarr{\kk} \phi\in \DIDED_\const\qquad\text{for}\qquad \phi:= \expal(\MX(x))\quad\text{and}\quad \phi_n:=\expal(\MX(x+h_n))\quad\forall\: n\in \NN. 
\end{align*}
Moreover, since $\MX$ is of class $C^1$, we have
\begin{align*}
	\DIDED_\const\supseteq \{\psi_n\}_{n\in \NN} \seqarr{0} \psi\in \DIDED_\const\qquad\text{for}\qquad \psi:= \expal(\dot\MX(x))\quad\text{and}\quad \psi_n:=\expal(\dot\MX(x+h_n))\quad\forall\: n\in \NN;
\end{align*}  
so that Lemma \ref{podspodspodsodsp} shows 
\begin{align*}
	\textstyle\lim_{n\rightarrow \infty}\dot\alpha(x+h_n)=\lim_{(n,n)\rightarrow(\infty,\infty)} \xi(\phi_n,\psi_n)=\xi(\phi, \psi)=\dot\alpha(x).
\end{align*}
This shows that $\dot\alpha$ is continuous at $x$. Since $x\in I$ was arbitrary, it follows that $\alpha$ is of class $C^1$.
\end{proof}
\noindent	
For instance, we obtain the following generalization of Remark 2.3) in \cite{RGM}.
\begin{example}
\label{fdoifdoifddof}
Suppose that $G$ is Mackey $\const$-continuous and abelian. Then, for each $\phi\in C^0([r,r'],\mg)$ with $[r,r']\ni t\mapsto \int_r^t \phi(s)\:\dd s\in \dom[\exp]$, we have, cf.\ Appendix \ref{podspospodspodspodspodsp}
	\begin{align}
	\label{podspodspods}
		\textstyle\innt \phi=\exp\big(\int \phi(s)\: \dd s\he\big).
	\end{align}
 In particular, if $\dom[\exp]=\mg$ holds ($G$ admits an exponential map), then $G$ is $C^k$-semiregular for $k\in \NN\sqcup\{\lip,\infty\}$ if $\mg$ is {\rm k}-complete.
 \hspace*{\fill}$\ddagger$
\end{example}

\subsection{Differentiation at Zero}
\label{dsdsdsdsds}
In this subsection, we prove Proposition \ref{rererererr}. 
We start with some general remarks:
\vspace{6pt}

\noindent 
Let $[r,r']\in \COMP$ and $\chi\in C^0([r,r'],\mg)$ be given. For $m\geq 1$ fixed, we define $t_k:= r+ k/m\cdot (r'-r)$ for $k=0,\dots,m$; as well as $X_k:=\chi(t_k)$ for $k=0,\dots,m-1$. We furthermore define  $\chi_m\in C^0([r,r'],\mg)$ by $\chi_m(r):=X_0$ and
\begin{align*}
	\chi_m(t)= X_k + (t-t_k)/(t_{k+1}-t_k)\cdot (X_{k+1}-X_k)\qquad\quad\forall\: t\in (t_k,t_{k+1}],\:\: k=0,\dots,m-1.
\end{align*}
Then, $\{\chi_m\}_{m\geq 1}\subseteq C^0([r,r'],\mg)$ constructed in this way, admits the following properties:
\begingroup
\setlength{\leftmargini}{19pt}
{
\renewcommand{\theenumi}{{\bf\alph{enumi})}} 
\renewcommand{\labelenumi}{\theenumi}
\begin{enumerate}
\item
\label{qwwqwqwq1}
We have $\lim_m \ppp_\infty(\chi- \chi_m)=0$ for each $\pp\in \SEM$.
\item
\label{qwwqwqwq2}
We have $\gamma_{h,m}:=h\cdot \dd_e\chart(\int_r^\bullet \chi_m(s)\: \dd s) \in E$ for each $h\in \RR$ and $m\geq 1$.
\item
\label{qwwqwqwq3}
Since $\im[\chi]\subseteq \mg$ is bounded, also $\{\im[\chi_m]\}_{m\geq 1}\subseteq \mg,\: \{\im[\dd_e\chart(\int_r^\bullet \chi_m(s)\: \dd s)]\}_{m\geq 1}\subseteq E$ are bounded. Thus,
\begingroup
\setlength{\leftmarginii}{12pt}
{
\renewcommand{\theenumi}{{\roman{enumi}})} 
\renewcommand{\labelenumi}{\theenumi}
\begin{itemize}
\item
	For each $\pp\in \SEM$, there exists some $C_\pp>0$ with
\begin{align}
\label{pdspopodspodspoapoapoa}
	\pp_\infty(\gamma_{h,m})\leq |h|\cdot C_\pp\qquad\quad\forall\: h\in \RR,\:\: m\geq 1.
\end{align}
\item
	For $\delta>0$ suitably small,
	\vspace{-6pt}
\begin{align*}
	\mu_{h,m}:=\chartinv\cp \gamma_{h,m}\in C^1([r,r'],G)
\end{align*}
is well defined for each $|h|\leq\delta$, $m\geq 1$; and we define
\begin{align}
\label{podsapoaoipdaiaipoa}
	\textstyle\chi_{h,m}:=\Der(\mu_{h,m})\stackrel{\eqref{kldlkdldsl}}{=} h\cdot \dermapdiff(\gamma_{h,m},\dd_e\chart(\chi_m))\qquad\quad \forall\: |h|\leq \delta,\:\: m\geq 1.
\end{align} 
Moreover, for each fixed open neighbourhood $V\subseteq G$ of $e$, there exists some $0<\delta_V\leq \delta$ with
\begin{align}
\label{fdpofdpfdpigfoihfdih}
	\textstyle V\ni \mu_{h,m}\equiveq \innt_r^\bullet \chi_{h,m} \qquad\quad \forall\: |h|\leq \delta_V,\:\: m\geq 1.
\end{align}
\vspace{-20pt}
\end{itemize}}
\endgroup
\end{enumerate}}
\endgroup
\noindent
Modifying the proof of Proposition 7 in \cite{RGM}, we obtain the
\begin{proof}[Proof of Proposition \ref{rererererr}]
Suppose first that {\it\ref{dpopoffdpolkfdlkfdlklkfd2}} holds; and let 
\begin{align*}
	\textstyle A:=\chart(\innt_r^\bullet h_n\cdot \chi + h_n\cdot \varepsilon_n)\qquad\quad\qquad B:= \int_r^\bullet (\dd_e\chart\cp \chi)(s)\:\dd s. 
\end{align*} 
Then, {\it\ref{dpopoffdpolkfdlkfdlklkfd1}} is clear from 
$\pp_\infty(A)\leq |h_n|\cdot \pp_\infty(1/h_n\cdot A- B)+|h_n|\cdot \pp_\infty(B)$. 
\vspace{6pt}

\noindent
Suppose now that {\it\ref{dpopoffdpolkfdlkfdlklkfd1}} holds -- i.e. that we have $\limin \chart(\innt_r^\bullet \psi_n) =0$ with 
\begin{align*}
	\psi_n:=  h_n\cdot \chi+ h_n\cdot \varepsilon_n\qquad\quad\forall\: n\in \NN.
\end{align*} 
We now have to show that for $\pp\in \SEM$ fixed, the expression  
\begin{align*}
	\Delta_n&:=\textstyle 1/|h_n|\cdot \cpp_\infty\big(\chart(\innt_r^\bullet \psi_n)-h_n\cdot  \int_r^\bullet \dd_e\chart(\chi(s)) \:\dd s\big)
\end{align*}
tends to zero for $n\rightarrow \infty$.  
For this, we choose $\pp\leq \qq\in \SEM$ and $e\in V\subseteq G$ as in Lemma \ref{hghghggh}; 
and let 
\begin{align*}
	\{\chi_m\}_{m\geq 1},\quad\{\chi_{h,m}\}_{m\geq 1},\quad\{\gamma_{h,m}\}_{m\geq 1},\quad\{\mu_{h,m}\}_{m\geq 1},\quad\delta_V>0
\end{align*}
be as above -- with $\delta_V$ additionally such small that $\innt_r^\bullet \psi_{n} \in V$ holds for each $n\in \NN$ with $|h_n|<\delta_V$. We choose $\ell\in \NN$  such large that $\{h_n\}_{n\geq \ell}\subseteq (-\delta_V,\delta_V)$ holds. Then, for each $n\geq \ell$ and $m\geq 1$, we obtain from \eqref{ffdlkfdlkfd} (second step),  \eqref{fdpofdpfdpigfoihfdih} and Lemma \ref{hghghggh} (fifth step), as well as \eqref{podsapoaoipdaiaipoa} (last step) that
\begin{align*}
	\Delta_n&\textstyle\leq 1/|h_n|\cdot \pp_\infty\big(\chart(\innt_r^\bullet \psi_n)-h_n\cdot  \dd_e\chart(\int_r^\bullet \chi_m(s) \:\dd s)\big)\:\:  + \:\: \cpp_\infty\big(\int_r^\bullet \dd_e\chart(\chi(s)) \:\dd s-\int_r^\bullet \dd_e\chart(\chi_m(s)) \:\dd s\big)\\
&\leq\textstyle 1/|h_n| \cdot\textstyle\pp_\infty\big(\chart(\innt_r^\bullet \psi_n)-\gamma_{h_n,m}\big)\hspace{81.1pt}+ \:\:\int (\pp\cp\dd_e\chart)(\chi(s)-\chi_m(s)) \:\dd s\\
	&=\textstyle 1/|h_n| \cdot\textstyle\pp_\infty\big(\chart(\innt_r^\bullet \psi_n)-\chart(\mu_{h_n,m})\big)\hspace{64.35pt}+ \:\: \int \ppp(\chi(s)-\chi_m(s)) \:\dd s\\
		&=\textstyle 1/|h_n| \cdot\textstyle\pp_\infty\big(\chart(\innt_r^\bullet \psi_n)-\chart(\innt_r^\bullet \chi_{h_n,m})\big)\hspace{50.5pt}+ \:\: \int \ppp(\chi(s)-\chi_m(s)) \:\dd s\\
	&\textstyle \leq 1/|h_n|\cdot \int \qqq\big(\psi_n(s)-\chi_{h_n,m}(s)\big)\:\dd s
	\hspace{65.4pt}+ \:\: \int \ppp(\chi(s)-\chi_m(s)) \:\dd s	
	\\[0.2pt]
	&\textstyle \leq\:\:\: \int \qqq(\varepsilon_n(s))\:\dd s \:\:\:+ \:\:\: \int \qqq\big(\chi(s)-\dermapdiff(\gamma_{h_n,m}(s),\dd_e\chart(\chi_m(s)))\big)\:\dd s
	\:\:\:+ \:\:\: (r'-r)\cdot \ppp_\infty(\chi-\chi_m) 
\end{align*}
holds. By Lebesgue's dominated convergence theorem and {\it\ref{saaaaaw1}}, {\it\ref{saaaaaw2}}, the first term tends to zero for $n\rightarrow \infty$; and, by \ref{qwwqwqwq1}, the third term tends to zero for $m\rightarrow \infty$. Thus, $\varepsilon>0$ given, there exists some $\ell_\varepsilon\geq \ell$, such that both the first-, and the third term is bounded by $\varepsilon/4$ for all $m,n\geq \ell_\varepsilon$. 
Moreover, since $\chi=\dermapdiff(0,\dd_e\chart(\chi))$ holds (second step), we can estimate the second term by
\begin{align}
\label{ksdkldsklsksdlklsdlk}
\begin{split}
	\textstyle\int \qqq\big(\chi(s)\:-&\:\dermapdiff(\gamma_{h_n,m}(s),\dd_e\chart(\chi_m(s)))\big)\:\dd s\\[2pt]
	&\leq \textstyle (r'-r)\cdot\qqq_\infty\big(\chi-\dermapdiff(\gamma_{h_n,m},\dd_e\chart(\chi_m))\big)\\	
	&\textstyle=(r'-r)\cdot\qqq_\infty\big(\dermapdiff(0,\dd_e\chart(\chi))-\dermapdiff(\gamma_{h_n,m},\dd_e\chart(\chi_m))\big)
	\\[3pt]
	&\textstyle\leq (r'-r)\cdot\qqq_\infty\big(\dermapdiff(0,\dd_e\chart(\chi))-\dermapdiff(\gamma_{h_n,m},\dd_e\chart(\chi))\big)\textstyle\\[3pt]
	&\textstyle\quad\he +(r'-r)\cdot\qqq_\infty\big(\dermapdiff(\gamma_{h_n,m},\dd_e\chart(\chi-\chi_m))\big).
\end{split}
\end{align}
\begingroup
\setlength{\leftmargini}{12pt}
\begin{itemize}
\item
Since $\im[\chi]$ is compact, increasing $\ell_\varepsilon$ if necessary, by \eqref{pdspopodspodspoapoapoa}, we can achieve that the fourth line in \eqref{ksdkldsklsksdlklsdlk} is bounded by $\varepsilon/4$ for each $n,m\geq \ell_\varepsilon$.
\item
To estimate the last line in \eqref{ksdkldsklsksdlklsdlk}, we choose $\qq\leq \mm\in \SEM$ as in \eqref{omegaklla}; and increase $\ell_\varepsilon$ in such a way (use \eqref{pdspopodspodspoapoapoa}) that  
$\mm_\infty(\gamma_{h_n,m})\leq  1$ holds for all $n,m\geq \ell_\varepsilon$; thus,
\vspace{-8pt}
\begin{align*}
	\qqq_\infty\big(\dermapdiff(\gamma_{h_n,m},\dd_e\chart(\chi-\chi_m))\big) \stackrel{\eqref{omegaklla}}{\leq} \mmm_\infty(\chi-\chi_m).
\end{align*}  
Then, it is clear from \ref{qwwqwqwq1} that for $\ell_\varepsilon'\geq \ell_\varepsilon$ suitably large, the last line in \eqref{ksdkldsklsksdlklsdlk} is bounded by $\varepsilon/4$ for all $m,n\geq \ell'_\varepsilon$.  
\end{itemize}
\endgroup
\noindent
We thus have $\Delta_n\leq \varepsilon$ for each $n\geq \ell'_\varepsilon\in \NN$; which shows $\lim_n \Delta_n=0$.
\end{proof} 

\section{Extension: The Metrizable Category}
\label{sklkdslkdskldslkds}
We recall that a Hausdorff locally convex vector space is said to be \emph{metrizable} \defff it admits a metric that generates the topology thereon. We furthermore recall that $G$ is said to be $C^k$-regular \defff $G$ is $C^k$-semiregular such that $\evol_\kk$ is smooth w.r.t.\ the $C^k$-topology. 

After this paper had been put on the arXiv, the author's attention was drawn by Gl\"ockner and Schmeding to the fact that in metrizable locally convex vector spaces, convergence of a sequence implies its Mackey convergence (and vice versa). Specifically, it was argued that the following two results will hold:
\begin{lemma}
\label{mnxmncxmncxn}
Suppose that $\mg$ is metrizable; and let $k\in \NN\sqcup\{\lip,\infty,\const\}$. Then, the following conditions are equivalent:
\vspace{-5pt}
\begingroup
\setlength{\leftmargini}{20pt}{
\renewcommand{\theenumi}{{\roman{enumi}})} 
\renewcommand{\labelenumi}{\theenumi}
\begin{enumerate}
\item
\label{mnxmncxmncxn1}
$G$ is $C^k$-continuous. 
\item
\label{mnxmncxmncxn2}
$G$ is sequentially {\rm k}-continuous.
\item
\label{mnxmncxmncxn3}
$G$ is Mackey {\rm k}-continuous.
\end{enumerate}}
\endgroup
\end{lemma}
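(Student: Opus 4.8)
The plan is to close a short cycle of implications, leaning on the three facts already recorded in Remark~\ref{dssdsdsdds} and invoking the metrizability hypothesis in exactly two places. Among the six implications, three are free: \ref{mnxmncxmncxn1}$\Rightarrow$\ref{mnxmncxmncxn2} is Remark~\ref{dssdsdsdds}.\ref{dssdsdsdds1}, while \ref{mnxmncxmncxn2}$\Rightarrow$\ref{mnxmncxmncxn3} is Remark~\ref{dssdsdsdds}.\ref{dssdsdsdds2} and holds for \emph{any} $\mg$ (Mackey convergent sequences are convergent, so a sequentially continuous $\evol_\kk$ in particular handles the Mackey convergent ones). It therefore suffices to establish \ref{mnxmncxmncxn2}$\Rightarrow$\ref{mnxmncxmncxn1} and \ref{mnxmncxmncxn3}$\Rightarrow$\ref{mnxmncxmncxn2}; both will be consequences of metrizability, and together with the free implications they yield all three equivalences.

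For \ref{mnxmncxmncxn2}$\Rightarrow$\ref{mnxmncxmncxn1} I would first show that the $C^k$-topology on $C^k([0,1],\mg)$ is metrizable, hence first countable, and then quote Remark~\ref{dssdsdsdds}.\ref{dssdsdsdds6}. Since $\dd_e\chart\colon \mg\rightarrow E$ is an isomorphism of topological vector spaces, metrizability of $\mg$ means that $\SEM$ admits a countable cofinal subfamily $\SEM_0\subseteq\SEM$. The seminorms $\ppp^\dind_\infty$ with $\pp\in\SEM_0$ and $\dind\llleq k$ then generate the $C^k$-topology, and there are only countably many of them (in each case $k\in\NN\sqcup\{\lip,\infty,\const\}$ the index set of admissible $\dind$ is countable). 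Thus the $C^k$-topology is metrizable, and Remark~\ref{dssdsdsdds}.\ref{dssdsdsdds6} upgrades sequentially $k$-continuity to $C^k$-continuity.

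For \ref{mnxmncxmncxn3}$\Rightarrow$\ref{mnxmncxmncxn2} the key input is the observation attributed to Gl\"ockner and Schmeding: in a metrizable locally convex space a $C^k$-convergent sequence is automatically Mackey convergent to the same limit. I would prove this by a diagonal argument. Replacing the countable generating family above by its running maxima, fix an increasing sequence of seminorms $q_1\le q_2\le\cdots$ generating the $C^k$-topology; given $\DIDED_\kk\supseteq\{\phi_n\}_{n\in\NN}\seqarr{\kk}\phi\in\DIDED_\kk$, put $y_n:=\phi-\phi_n$. For each $m$ choose $N_m$, increasing in $m$, with $q_m(y_n)\le 1/m$ for all $n\ge N_m$, and set $\lambda_n:=1/m$ whenever $N_m\le n<N_{m+1}$ (and $\lambda_n:=1$ for $n<N_1$). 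Then $\lambda_n\rightarrow 0$, and for fixed $j$ and $n\ge N_j$ the index $m\ge j$ with $n\in[N_m,N_{m+1})$ gives $q_j(y_n)\le q_m(y_n)\le 1/m=\lambda_n$. Since every defining seminorm $\ppp^\dind_\infty$ is dominated by some $C\cdot q_m$ (cofinality), one obtains $\ppp^\dind_\infty(y_n)\le C\,\lambda_n$ for $n$ large, i.e.\ $\{\phi_n\}_{n\in\NN}\mackarr{\kk}\phi$. Mackey $k$-continuity now yields $\limin\innt_r^\bullet\phi_n=\innt_r^\bullet\phi$, which is precisely sequentially $k$-continuity.

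Assembling these, \ref{mnxmncxmncxn1}$\Rightarrow$\ref{mnxmncxmncxn2}$\Rightarrow$\ref{mnxmncxmncxn3}$\Rightarrow$\ref{mnxmncxmncxn2}$\Rightarrow$\ref{mnxmncxmncxn1} closes the loop and gives the three pairwise equivalences. The only substantive step is the diagonal construction, and the one point that needs care is the bookkeeping: the single null sequence $\{\lambda_n\}$ together with the constants and threshold indices must work \emph{simultaneously} for the whole defining family $\{\ppp^\dind_\infty\}$ rather than merely for the countable cofinal subfamily, which is exactly what the domination remark secures. Everything else is a direct appeal to the results of Section~\ref{mnnxcncyxciuoduioasd}.
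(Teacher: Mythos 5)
Your proposal is correct and follows essentially the same route as the paper: the equivalence of \ref{mnxmncxmncxn1} and \ref{mnxmncxmncxn2} via metrizability (hence first countability) of the $C^k$-topology combined with Remark \ref{dssdsdsdds}.\ref{dssdsdsdds1} and Remark \ref{dssdsdsdds}.\ref{dssdsdsdds6}, and the equivalence of \ref{mnxmncxmncxn2} and \ref{mnxmncxmncxn3} via the fact that in a metrizable locally convex space convergent sequences are automatically Mackey convergent. Your inlined countable-seminorm argument and diagonal construction are precisely the content of the paper's Lemma \ref{fdfddfdfdf} and Lemma \ref{pofsdisfdodjjxcycxjsdsd} (proved in the appendices and assembled in Corollary \ref{dffd}), so the only difference is that the paper factors these steps out as separate statements.
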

\begin{corollary}
\label{ofdpoopfdopfd}
	Suppose that $\mg$ is a Fr\'{e}chet space; and let $k\in \NN\sqcup\{\infty\}$. Then, $G$ is $C^k$-regular \deff $G$ is $C^k$-semiregular. 
\end{corollary}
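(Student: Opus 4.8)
The plan is to prove the non-trivial implication, that $C^k$-semiregularity forces $C^k$-regularity; the converse is immediate from the definition, since $C^k$-regularity was defined to mean $C^k$-semiregularity \emph{together with} smoothness of $\evol_\kk$ w.r.t.\ the $C^k$-topology. So I assume throughout that $\mg$ is a Fr\'echet space and that $G$ is $C^k$-semiregular for some $k\in \NN\sqcup\{\infty\}$, and aim to conclude that $\evol_\kk$ is smooth.

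First I would invoke Theorem \ref{fdfffdfd}: since $k\in \NN\sqcup\{\infty\}\subseteq \NN\sqcup\{\lip,\infty\}$ and $G$ is $C^k$-semiregular, $G$ is Mackey {\rm k}-continuous. As a Fr\'echet space is in particular metrizable, Lemma \ref{mnxmncxmncxn} then applies and yields that $G$ is $C^k$-continuous; i.e.\ $\evol_\kk$ is continuous w.r.t.\ the $C^k$-topology. The crucial direction supplied here is Mackey {\rm k}-continuity $\Rightarrow$ $C^k$-continuity, which in general fails and becomes available only through metrizability.

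Next I would upgrade continuity to differentiability by appealing to Theorem 4 in \cite{RGM}, which states that $\evol_\kk$ is of class $C^1$ \deff it is continuous and $\mg$ is Mackey complete (for $k\in \NN_{\geq 1}\sqcup\{\lip,\infty\}$), respectively integral complete (for $k\equiveq 0$). Here the Fr\'echet hypothesis does the remaining work: a Fr\'echet space is complete, hence both integral complete and Mackey complete, so the relevant completeness condition holds automatically for every $k\in \NN\sqcup\{\infty\}$. Combined with the continuity just established, this gives that $\evol_\kk$ is of class $C^1$. Finally, Theorem E in \cite{HGGG} asserts that for a $C^k$-semiregular group with $k\in \NN\sqcup\{\infty\}$, $\evol_\kk$ is smooth \deff it is of class $C^1$; applying this yields smoothness of $\evol_\kk$, i.e.\ $C^k$-regularity of $G$.

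The argument is essentially an assembly of previously established results, so there is no genuine analytic obstacle in the corollary itself; the only points requiring care are bookkeeping ones. One must check that $k\in \NN\sqcup\{\infty\}$ lies in the index range of each cited result -- in particular that the case $k\equiveq 0$ is covered, where \emph{integral} rather than \emph{Mackey} completeness is the pertinent notion in Theorem 4 of \cite{RGM} -- and verify that ``Fr\'echet'' is strong enough to supply both completeness notions simultaneously. The genuine mathematical content sits upstream, in Theorem \ref{fdfffdfd} and in Lemma \ref{mnxmncxmncxn}, the latter being precisely where metrizability is exploited to identify $C^k$-continuity, sequential {\rm k}-continuity, and Mackey {\rm k}-continuity.
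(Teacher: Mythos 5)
Your proposal is correct and follows essentially the same route as the paper's own proof: Theorem \ref{fdfffdfd} gives Mackey {\rm k}-continuity, Lemma \ref{mnxmncxmncxn} (via metrizability of the Fr\'echet space $\mg$) upgrades this to $C^k$-continuity, and completeness of $\mg$ together with Theorem 4 in \cite{RGM} yields smoothness of $\evol_\kk$. Your additional explicit appeal to Theorem E in \cite{HGGG} to pass from $C^1$ to smooth is just a more careful spelling-out of the final citation, not a different argument.
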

\begin{proof}
The one implication is evident. Suppose thus that $G$ is $C^k$-semiregular. Then, $G$ is Mackey k-continuous by Theorem \ref{fdfffdfd}; so that $\evol_\kk$ is $C^k$-continuous by Lemma \ref{mnxmncxmncxn}. Since $\mg$ is complete (thus, integral complete and Mackey complete), Theorem 4 in \cite{RGM} shows that $\evol_\kk$ is smooth, i.e., that $G$ is $C^k$-regular.
\end{proof}
\noindent
The rest of this section is dedicated to a selfcontained proof of Lemma \ref{mnxmncxmncxn}.   
\subsection*{Some Standard Facts:}
Let $F$ be a Hausdorff locally convex vector space, with system of continuous seminorms $\SEMM$. A subsystem $\SEMMM\subseteq \SEMM$ is said to be a fundamental system \defff $\{\B_{\hh,\:\varepsilon}(0)\}_{\hh\in \SEMMM,\varepsilon>0}$ is a local base of zero in $F$. 
We recall that
\begin{lemma}
\label{xcxcxccxxcxc}
Let $\SEMMM\subseteq \SEMM$ be a fundamental system, and $\SEMG\subseteq \SEMM$ a subsystem. Then, the following statements are equivalent:
\begingroup
\setlength{\leftmargini}{17pt}
{
\renewcommand{\theenumi}{\small{\bf\arabic{enumi})}}  
\renewcommand{\labelenumi}{\theenumi}
\begin{enumerate}
\item
\label{xcxcxccxxcxc1}
$\SEMG$ is a fundamental system.
\item
\label{xcxcxccxxcxc2}
To each $\hh\in \SEMMM$, there exist $c>0$ and $\zzs\in  \SEMG$ with $\hh\leq c\cdot \zzs$.
\end{enumerate}}
\endgroup 
\end{lemma}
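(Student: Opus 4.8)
The plan is to prove the two implications separately, relying only on the fact that a system of seminorms is fundamental precisely when its open balls $\B_{\cdot,\cdot}$ form a local base at $0$, together with the positive homogeneity of seminorms.

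For the implication \ref{xcxcxccxxcxc2} $\Rightarrow$ \ref{xcxcxccxxcxc1}, I would first note that each $\B_{\zzs,\varepsilon}$ (with $\zzs \in \SEMG$, $\varepsilon > 0$) is an open $0$-neighbourhood, since $\zzs$ is continuous; so it only remains to show that these balls are cofinal among $0$-neighbourhoods. Given an arbitrary $0$-neighbourhood $U$, fundamentality of $\SEMMM$ provides $\hh \in \SEMMM$ and $\eta > 0$ with $\B_{\hh,\eta} \subseteq U$. Condition \ref{xcxcxccxxcxc2} then yields $c > 0$ and $\zzs \in \SEMG$ with $\hh \le c \cdot \zzs$, whence $\zzs(X) < \eta/c$ forces $\hh(X) \le c\,\zzs(X) < \eta$; that is, $\B_{\zzs,\eta/c} \subseteq \B_{\hh,\eta} \subseteq U$. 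This exhibits a $\SEMG$-ball inside $U$, so $\{\B_{\zzs,\varepsilon}\}$ is a local base at $0$ and $\SEMG$ is fundamental.

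For the converse \ref{xcxcxccxxcxc1} $\Rightarrow$ \ref{xcxcxccxxcxc2}, fix $\hh \in \SEMMM$. The ball $\B_{\hh,1}$ is a $0$-neighbourhood, so fundamentality of $\SEMG$ gives $\zzs \in \SEMG$ and $\varepsilon > 0$ with $\B_{\zzs,\varepsilon} \subseteq \B_{\hh,1}$; equivalently, $\zzs(X) < \varepsilon \Rightarrow \hh(X) < 1$. The remaining --- and only slightly delicate --- step is to upgrade this inclusion to the pointwise estimate $\hh \le (1/\varepsilon)\cdot \zzs$ by a scaling argument. For $X$ with $\zzs(X) > 0$ and any $0 < t < \varepsilon/\zzs(X)$, homogeneity gives $\zzs(tX) = t\,\zzs(X) < \varepsilon$, hence $t\,\hh(X) = \hh(tX) < 1$, i.e. $\hh(X) < 1/t$; letting $t \uparrow \varepsilon/\zzs(X)$ yields $\hh(X) \le \zzs(X)/\varepsilon$. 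For $X$ with $\zzs(X) = 0$ the same computation gives $t\,\hh(X) < 1$ for every $t > 0$, forcing $\hh(X) = 0$. In both cases $\hh(X) \le (1/\varepsilon)\,\zzs(X)$, so condition \ref{xcxcxccxxcxc2} holds with $c := 1/\varepsilon$.

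I do not expect any serious obstacle here, as the argument is entirely standard locally-convex bookkeeping. The one place warranting care is the scaling step in the second implication, where the set-theoretic inclusion of balls must be converted into a genuine pointwise inequality of seminorms; the two cases $\zzs(X) = 0$ and $\zzs(X) > 0$ should be separated, the former to rule out the degenerate possibility $\hh(X) > 0 = \zzs(X)$, and the latter handled by the limit $t \uparrow \varepsilon/\zzs(X)$.
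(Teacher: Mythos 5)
Your proof is correct. The implication 2) $\Rightarrow$ 1) coincides with the paper's argument: both pass from $\B_{\hh,\varepsilon}\subseteq V$ and the estimate $\hh\leq c\cdot\zzs$ to the ball inclusion $\B_{\zzs,\varepsilon/c}\subseteq\B_{\hh,\varepsilon}\subseteq V$. Where you diverge is the implication 1) $\Rightarrow$ 2): the paper disposes of it in one line by citing Proposition 22.6 in \cite{MV} (the standard seminorm criterion for continuity of a linear map between locally convex spaces) applied to the identity $\id_F$, whereas you inline precisely the scaling argument that underlies that proposition --- from $\B_{\zzs,\varepsilon}\subseteq\B_{\hh,1}$ you deduce $\hh\leq(1/\varepsilon)\cdot\zzs$ by homogeneity, correctly splitting off the degenerate case $\zzs(X)=0$ to exclude $\hh(X)>0=\zzs(X)$. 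Your version buys self-containedness (no external reference is needed, and the careful case distinction is exactly the point a citation would sweep under the rug), at the cost of a few extra lines; the paper's version buys brevity by delegating to a textbook fact. Mathematically the two are equivalent, and no gap remains in either direction.
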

\begin{proof}
If $\SEMG$ is a fundamental system, then \ref{xcxcxccxxcxc2} follows from Proposition 22.6 in \cite{MV} when applied to the identity $\id_F$. Suppose thus that \ref{xcxcxccxxcxc2} holds; and let $V\subseteq F$ be open with $0\in V$. We choose $\hh\in \SEMMM$ with $\B_{\hh,\varepsilon}(0)\subseteq V$, fix $c>0$ and $\zzs\in \SEMG$ with $\hh\leq c\cdot \zzs$; and observe that $\B_{\zzs,\frac{\varepsilon}{c}}(0)\subseteq \B_{\hh,\varepsilon}(0)\subseteq V$ holds. Since $\B_{\zzs,\frac{\varepsilon}{c}}(0)\subseteq F$ is open, \ref{xcxcxccxxcxc1} follows.
\end{proof}
\begin{lemma}
\label{pofdofdopdf}
The following statements are equivalent:
\begingroup
\setlength{\leftmargini}{17pt}
{
\renewcommand{\theenumi}{\small{\bf\arabic{enumi})}} 
\renewcommand{\labelenumi}{\theenumi}
\begin{enumerate}
\item
\label{pofdofdopdf1}
$F$ is metrizable.
\item
\label{pofdofdopdf2}
There exists a countable fundamental system $\{\qq[m]\:|\: m\in \NN\}\subseteq \SEMM$.
\item
\label{pofdofdopdf3}
There exists $\{\qq[m]\:|\: m\in \NN\}\subseteq \SEMM$ as in \ref{pofdofdopdf2} with $\qq[m]\leq \qq[m+1]$ for each $m\in \NN$.
\end{enumerate}}
\endgroup 
\end{lemma}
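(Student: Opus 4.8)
The plan is to prove the non-trivial directions of \ref{pofdofdopdf2}$\Leftrightarrow$\ref{pofdofdopdf3} and of \ref{pofdofdopdf1}$\Leftrightarrow$\ref{pofdofdopdf2}, which together yield the full equivalence; the organizing tool throughout is Lemma \ref{xcxcxccxxcxc} (always applied with $\SEMMM\equiv\SEMM$, which is trivially a fundamental system). The implication \ref{pofdofdopdf3}$\Rightarrow$\ref{pofdofdopdf2} is immediate. For \ref{pofdofdopdf2}$\Rightarrow$\ref{pofdofdopdf3}, given a countable fundamental system $\{\qq[m]\:|\:m\in\NN\}\subseteq\SEMM$, I would pass to $\wt{\qq}[m]:=\max(\qq[0],\dots,\qq[m])$: each $\wt{\qq}[m]$ is again a continuous seminorm (a finite maximum of such), the sequence is non-decreasing by construction, and since $\qq[m]\leq\wt{\qq}[m]$, Lemma \ref{xcxcxccxxcxc}.\ref{xcxcxccxxcxc2} (with $c\equiv 1$) shows $\{\wt{\qq}[m]\:|\:m\in\NN\}$ is a fundamental system.

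For \ref{pofdofdopdf2}$\Rightarrow$\ref{pofdofdopdf1}, I would take the countable fundamental system to be non-decreasing (by the previous step) and set
\begin{align*}
	d(x,y):=\sum_{m\in\NN}2^{-m}\cdot\frac{\qq[m](x-y)}{1+\qq[m](x-y)}\qquad\quad\forall\:x,y\in F.
\end{align*}
Symmetry is clear, finiteness follows from $\qq[m](x-y)/(1+\qq[m](x-y))\leq 1$, and the triangle inequality follows from monotonicity and subadditivity of $t\mapsto t/(1+t)$ on $\RR_{\geq0}$. Crucially, $d(x,y)=0$ forces $\qq[m](x-y)=0$ for every $m$, so $x=y$ because $\{\qq[m]\}$ is a fundamental system and $F$ is Hausdorff. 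It then remains to check that $d$ generates the topology of $F$: every $d$-ball about $0$ contains a finite intersection of seminorm balls — via a tail estimate $\sum_{m>N}2^{-m}<\tfrac{r}{2}$ of the series, together with the monotonicity $\qq[m]\leq\qq[m+1]$ to bound the first $N$ terms by $\qq[N]$ — and conversely each seminorm ball $\B_{\qq[m_0],\varepsilon}(0)$ contains a $d$-ball, by retaining only the $m_0$-th summand.

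For \ref{pofdofdopdf1}$\Rightarrow$\ref{pofdofdopdf2}, suppose $d$ generates the topology. Then $\{\B_d(0,1/n)\}_{n\geq1}$ is a countable local base at $0$, and local convexity provides, for each $n$, finitely many seminorms $\qq_{n,1},\dots,\qq_{n,k_n}\in\SEMM$ and $\varepsilon_n>0$ with $\B_{\hh_n,\varepsilon_n}(0)\subseteq\B_d(0,1/n)$, where $\hh_n:=\max(\qq_{n,1},\dots,\qq_{n,k_n})\in\SEMM$. I would then show the countable subsystem $\SEMG:=\{\hh_n\:|\:n\geq1\}$ is fundamental: given any $\hh\in\SEMM$, the neighbourhood $\B_{\hh,1}(0)$ of $0$ contains some $\B_d(0,1/n)\supseteq\B_{\hh_n,\varepsilon_n}(0)$, and the resulting inclusion $\B_{\hh_n,\varepsilon_n}(0)\subseteq\B_{\hh,1}(0)$ yields $\hh\leq(1/\varepsilon_n)\cdot\hh_n$ by the standard homogeneity argument. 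Lemma \ref{xcxcxccxxcxc}.\ref{xcxcxccxxcxc2} then gives that $\SEMG$ is a fundamental system, which is \ref{pofdofdopdf2}.

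The main obstacle is the topology-generation step in \ref{pofdofdopdf2}$\Rightarrow$\ref{pofdofdopdf1}: establishing the two inclusions of neighbourhood bases (the tail estimate of the defining series for one direction, and the single-summand bound for the other) is where the real, if routine, work lies, and it is precisely there that the monotonicity supplied by \ref{pofdofdopdf3} is used. I would also emphasize that the Hausdorff hypothesis on $F$ enters exactly once, in guaranteeing that $d$ separates points; without it $d$ would only be a pseudometric.
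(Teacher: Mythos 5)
Your proof is correct, but it is organized differently from the paper's. For the equivalence of \ref{pofdofdopdf2} and \ref{pofdofdopdf3} you argue essentially as the paper does, except that you dominate $\qq[0],\dots,\qq[m]$ by the maximum $\wt{\qq}[m]:=\max(\qq[0],\dots,\qq[m])$ where the paper uses the sum $\oo[m]:=\qq[0]+\dots+\qq[m]$; both are continuous seminorms dominating each $\qq[j]$ with $j\leq m$, so in either case Lemma \ref{xcxcxccxxcxc} applies verbatim and the two variants are interchangeable. The genuine difference lies in the equivalence of \ref{pofdofdopdf1} and \ref{pofdofdopdf2}: the paper simply cites Proposition 25.1 in \cite{MV}, whereas you prove both directions from scratch -- constructing the standard metric $d(x,y)=\sum_m 2^{-m}\,\qq[m](x-y)/(1+\qq[m](x-y))$ for \ref{pofdofdopdf2}$\Rightarrow$\ref{pofdofdopdf1} (correctly noting where monotonicity, the tail estimate, the single-summand bound, and the Hausdorff property each enter), and extracting a countable fundamental system from the metric balls via the homogeneity argument $\B_{\hh_n,\varepsilon_n}(0)\subseteq\B_{\hh,1}(0)\Rightarrow\hh\leq(1/\varepsilon_n)\cdot\hh_n$ together with Lemma \ref{xcxcxccxxcxc} for \ref{pofdofdopdf1}$\Rightarrow$\ref{pofdofdopdf2}. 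What your approach buys is self-containedness: the lemma no longer rests on an external reference, at the cost of about a page of routine verification; what the paper's approach buys is brevity, delegating exactly the part of the argument that is textbook material. Both uses of Lemma \ref{xcxcxccxxcxc} with $\SEMMM\equiv\SEMM$ are legitimate, since the balls of all continuous seminorms form a local base of zero in any locally convex space.
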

\begin{proof}
The equivalence of \ref{pofdofdopdf1} and \ref{pofdofdopdf2} is covered by Proposition 25.1 in \cite{MV}. It is furthermore clear that \ref{pofdofdopdf3} implies \ref{pofdofdopdf2}. Let thus $\{\qq[m]\:|\: m\in \NN\}\subseteq \SEMM$ be as in \ref{pofdofdopdf2}; and define 
\begin{align*}
	\SEMG:=\{\oo[m]\equiv\qq[0]+{\dots}+\qq[m] \:|\: m\in \NN\}\subseteq \SEMM.
\end{align*}
Since $\qq[m]\leq \oo[m]$ holds for each $m\in \NN$, Lemma \ref{xcxcxccxxcxc} shows that $\SEMG$ is a fundamental system; which establishes \ref{pofdofdopdf3}. 
\end{proof}
\noindent
Let $\SEMMM\subseteq \SEMM$ be a fundamental system. 
We write $\{X_n\}_{n\in \NN} \mackarrr X$ for $\{X_n\}_{n\in \NN}\subseteq F$ and $X\in F$ \defff 
\begin{align}
\label{podsopdspodssdnbbn}
	\hh(X-X_n)\leq \mackeyconst_\hh\cdot \lambda_{n} \qquad\quad\forall\: n\geq \mackeyindex_\hh,\:\:\hh\in \SEMMM
\end{align}
holds for certain $\{\mackeyconst_\hh\}_{\hh\in \SEMMM}\subseteq \RR_{\geq 0}$, $\{\mackeyindex_\hh\}_{\hh\in \SEMMM}\subseteq \NN$, and $\RR_{\geq 0}\supseteq \{\lambda_{n}\}_{n\in \NN}\rightarrow 0$.
\begin{remark} 
It is immediate from Lemma \ref{xcxcxccxxcxc} that the definition made in \eqref{podsopdspodssdnbbn} does not depend on the explicit choice of the fundamental system $\SEMMM$.\hspace*{\fill}$\ddagger$
\end{remark}
\noindent
We obtain
\begin{lemma}
\label{pofsdisfdodjjxcycxjsdsd}
Suppose that $F$ is metrizable; and let $\{X_n\}_{n\in \NN}\subseteq F$ be a sequence with $\{X_n\}_{n\in \NN}\rightarrow X\in F$. Then, we have $\{X_n\}_{n\in \NN} \mackarrr X$.  
\end{lemma}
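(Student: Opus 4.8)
The plan is to reduce the statement to one well-chosen fundamental system and then to produce the Mackey data $(\mackeyconst_\hh,\mackeyindex_\hh,\lambda_n)$ by a diagonal argument. First I would invoke Lemma \ref{pofdofdopdf}: since $F$ is metrizable, there is a countable fundamental system $\{\qq[m]\mid m\in\NN\}\subseteq\SEMM$ which is moreover increasing, $\qq[m]\leq\qq[m+1]$ for all $m\in\NN$. By the Remark following the definition of $\mackarrr$, the relation $\{X_n\}_{n\in\NN}\mackarrr X$ does not depend on the chosen fundamental system, so it suffices to establish the defining estimate \eqref{podsopdspodssdnbbn} for $\SEMMM\equiv\{\qq[m]\mid m\in\NN\}$. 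The convergence hypothesis $\{X_n\}_{n\in\NN}\rightarrow X$ means precisely that $\lim_n\qq[m](X-X_n)=0$ for each fixed $m\in\NN$.

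Next I would perform a diagonal selection. Using that $\lim_n\qq[m](X-X_n)=0$ for every $m$, I choose recursively a strictly increasing sequence $\NN\supseteq\{n_m\}_{m\in\NN}$ with $\qq[m](X-X_n)\leq 2^{-m}$ for all $n\geq n_m$; this is possible because each limit is $0$, and at stage $m$ one may always take $n_m>n_{m-1}$ within the corresponding tail. I then define $\lambda_n:=1$ for $n<n_0$ and $\lambda_n:=2^{-m}$ whenever $n_m\leq n<n_{m+1}$. Since the block index grows without bound as $n\rightarrow\infty$, this yields $\RR_{\geq 0}\supseteq\{\lambda_n\}_{n\in\NN}\rightarrow 0$, as required by the definition.

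Finally I would verify \eqref{podsopdspodssdnbbn} with constants $\mackeyconst_{\qq[m]}:=1$ and indices $\mackeyindex_{\qq[m]}:=n_m$. Fix $m_0\in\NN$ and take any $n\geq n_{m_0}$; then $n$ lies in some block $n_m\leq n<n_{m+1}$ with $m\geq m_0$, so $\lambda_n=2^{-m}$. Monotonicity of the seminorms gives $\qq[m_0](X-X_n)\leq\qq[m](X-X_n)$, and since $n\geq n_m$ the defining property of $n_m$ yields $\qq[m](X-X_n)\leq 2^{-m}=\lambda_n$, whence $\qq[m_0](X-X_n)\leq\mackeyconst_{\qq[m_0]}\cdot\lambda_n$ for all $n\geq\mackeyindex_{\qq[m_0]}$. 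The one point I would treat with care — essentially the only real content of the argument — is the alignment between the block definition of $\lambda_n$ and the monotonicity of the $\qq[m]$: one must pair the fixed seminorm index $m_0$ with the (larger) block index $m$ of the current $n$, so that $\qq[m_0]\leq\qq[m]$ and $\qq[m](X-X_n)\leq 2^{-m}=\lambda_n$ can be used together. Everything else is routine.
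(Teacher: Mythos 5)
Your proposal is correct and is essentially the paper's own argument: both choose the increasing countable fundamental system from Lemma \ref{pofdofdopdf}, select a strictly increasing sequence of indices so that the $m$-th seminorm is small on the corresponding tail, define $\lambda_n$ blockwise, and use monotonicity of the seminorms to align the fixed seminorm index with the (larger) block index of $n$. The only differences are cosmetic (tolerance $2^{-m}$ instead of $1/m$, and your explicit handling of $n<n_0$, which is slightly tidier than the paper's).
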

\begin{proof}
Although this statement is well known from the literature (cf., e.g., 4.\ Proposition in Sect.\ 10.1 in \cite{JAR}), for completeness reasons, we provide an elementary proof that is adapted to our particular formulation of Mackey convergence, cf.\ Appendix \ref{podspospodspodsdsdspodspodspsdsss}.
\end{proof}
\noindent
We recall that the $C^k$-topology on 
$\F_k:=C^k([0,1],F)$ for $k\in \NN\sqcup\{\lip,\infty,\const\}$ is  the Hausdorff locally convex topology that is generated by the   seminorms $\SEMMM_k:=\{\qq_\infty^\dind\:|\: \qq\in \SEMM,\: \dind\llleq k\}$ (cf.\ Sect. \ref{dsdsdssdfdffddfdfdd}). Since $\SEMMM_k$ is a fundamental system, the definition made in \eqref{podsopdspodssdnbbn} coincides with the definition made in \eqref{podsopdspodssd}. 
We furthermore recall that
\begin{lemma}
\label{fdfddfdfdf}
If $F$ is metrizable, then $C^k([0,1],F)$ is metrizable for each $k\in \NN\sqcup\{\lip,\infty,\const\}$.
\end{lemma}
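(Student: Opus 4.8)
The plan is to produce an explicit countable fundamental system of seminorms for $C^k([0,1],F)$ and then invoke the equivalence of \ref{pofdofdopdf1} and \ref{pofdofdopdf2} in Lemma \ref{pofdofdopdf}. Recall that the $C^k$-topology on $\F_k$ is by definition generated by $\SEMMM_k=\{\qq_\infty^\dind\:|\:\qq\in\SEMM,\:\dind\llleq k\}$. Since $\SEMM$ is closed under finite maxima and, within each of the four cases for $k$, the index set $\{\dind\:|\:\dind\llleq k\}$ is closed under maxima, the family $\SEMMM_k$ is directed (note that the value $\lip$ never mixes with integer orders, so monotonicity of $\qq_\infty^\dind$ in both $\qq$ and $\dind$ applies); hence $\SEMMM_k$ is itself a fundamental system for $C^k([0,1],F)$ in the sense fixed above.

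Since $F$ is metrizable, Lemma \ref{pofdofdopdf} furnishes a countable fundamental system $\{\qq[m]\:|\:m\in\NN\}\subseteq\SEMM$ for $F$. I would then consider the subsystem
\[
	\SEMG:=\{(\qq[m])_\infty^\dind\:|\:m\in\NN,\:\dind\llleq k\}\subseteq\SEMMM_k .
\]
This family is countable: the index set $\{\dind\:|\:\dind\llleq k\}$ is finite for $k\in\NN\sqcup\{\lip,\const\}$ and equals $\NN$ for $k\equiveq\infty$, so it is countable in every case, and a countable union over $m\in\NN$ of countable sets is countable.

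The core step is to verify, via Lemma \ref{xcxcxccxxcxc} applied inside the space $C^k([0,1],F)$ (with fundamental system $\SEMMM_k$ and subsystem $\SEMG$), that each $\qq_\infty^\dind\in\SEMMM_k$ is dominated up to a constant by an element of $\SEMG$. Given $\qq\in\SEMM$ and $\dind\llleq k$, Lemma \ref{xcxcxccxxcxc} applied to the fundamental system $\{\qq[m]\}$ of $F$ yields $c>0$ and $m\in\NN$ with $\qq\leq c\cdot\qq[m]$ as seminorms on $F$. Feeding this pointwise estimate into the definitions of the $C^k$-seminorms gives $\qq_\infty^\dind\leq c\cdot(\qq[m])_\infty^\dind$: for the sup-type seminorms (the cases $\dind\in\NN$) this is immediate from monotonicity of the supremum, and for $\dind\equiveq\lip$ one applies the estimate separately to $\qq_\infty(\gamma)$ and to the Lipschitz constant, using $\Lip(\qq,\gamma)\leq c\cdot\Lip(\qq[m],\gamma)$, and then to their maximum. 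Thus condition \ref{xcxcxccxxcxc2} of Lemma \ref{xcxcxccxxcxc} holds, so $\SEMG$ is a countable fundamental system for $C^k([0,1],F)$, and Lemma \ref{pofdofdopdf} concludes that $C^k([0,1],F)$ is metrizable.

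I do not expect a genuine obstacle here; the statement is essentially a bookkeeping argument. The only points requiring a little care are confirming that $\SEMMM_k$ is truly directed (so that it is a fundamental system and Lemma \ref{xcxcxccxxcxc} is applicable) and that the domination estimate propagates correctly through the Lipschitz seminorm $\qq_\infty^\lip=\max(\qq_\infty,\Lip(\qq,\cdot))$ and not merely through the sup-seminorms.
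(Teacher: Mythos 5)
Your proof is correct and follows essentially the same route as the paper's: both transfer the countable fundamental system $\{\qq[m]\:|\:m\in\NN\}$ of $F$ to $C^k([0,1],F)$ via the domination criterion of Lemma \ref{xcxcxccxxcxc} (checking the sup- and Lipschitz-seminorm cases) and conclude with Lemma \ref{pofdofdopdf}. The only cosmetic difference is that you use the full countable family $\{(\qq[m])_\infty^\dind\:|\:m\in\NN,\:\dind\llleq k\}$, whereas the paper uses the diagonal family $\{\qq[m]_\infty^{\dind[k,m]}\:|\:m\in\NN\}$ with $\dind[\infty,m]=m$, exploiting the monotonicity $\qq[m]\leq\qq[m+1]$ to handle $k\equiveq\infty$ -- both arguments are equally valid.
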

\begin{proof}
Confer, e.g., Appendix \ref{podspospodsssdds}. 
\end{proof}
\subsection*{The Proof of Lemma \ref{mnxmncxmncxn}:}
We obtain from Lemma \ref{pofdofdopdf} and Lemma \ref{fdfddfdfdf}:
\begin{corollary}
\label{dffd}
Suppose that $\mg$ is metrizable; and let $k\in \NN\sqcup\{\lip,\infty,\const\}$. Then, $G$ is sequentially {\rm k}-continuous \deff $G$ is Mackey {\rm k}-continuous.
\end{corollary}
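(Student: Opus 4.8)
The plan is to treat the two implications separately, observing that only one of them actually uses the metrizability of $\mg$.

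For the implication ``sequentially k-continuous $\Rightarrow$ Mackey k-continuous'' I would invoke nothing beyond the definitions; this is in fact already recorded in Remark \ref{dssdsdsdds}.\ref{dssdsdsdds2}. The point is that Mackey convergence is always stronger than $\seqarr{\kk}$-convergence: if $\DIDED_\kk\supseteq\{\phi_n\}_{n\in\NN}\mackarr{\kk}\phi\in\DIDED_\kk$, then by definition there are constants $\{\mackeyconst^\dind_\pp\}$ and a null sequence $\{\lambda_n\}_{n\in\NN}$ with $\ppp^\dind_\infty(\phi-\phi_n)\leq \mackeyconst^\dind_\pp\cdot\lambda_n$ for large $n$, whence $\lim_n\ppp^\dind_\infty(\phi-\phi_n)=0$ for every $\pp\in\SEM$ and $\dind\llleq k$; that is, $\{\phi_n\}_{n\in\NN}\seqarr{\kk}\phi$. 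Sequential k-continuity then yields $\limin\innt_r^\bullet\phi_n=\innt_r^\bullet\phi$, which is exactly the conclusion demanded by Mackey k-continuity.

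For the converse ``Mackey k-continuous $\Rightarrow$ sequentially k-continuous'' I would reason as follows. Suppose $\DIDED_\kk\supseteq\{\phi_n\}_{n\in\NN}\seqarr{\kk}\phi\in\DIDED_\kk$. Since the seminorms $\{\ppp^\dind_\infty\mid\pp\in\SEM,\ \dind\llleq k\}$ generate the $C^k$-topology, the relation $\seqarr{\kk}$ is precisely convergence of $\{\phi_n\}_{n\in\NN}$ to $\phi$ in the space $C^k([0,1],\mg)$. Because $\mg$ is metrizable, Lemma \ref{fdfddfdfdf} shows that $C^k([0,1],\mg)$ is metrizable as well, so I would apply Lemma \ref{pofsdisfdodjjxcycxjsdsd} to $F\equiv C^k([0,1],\mg)$ to obtain $\{\phi_n\}_{n\in\NN}\mackarrr\phi$. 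As noted in the paragraph preceding Lemma \ref{fdfddfdfdf}, the system $\SEMMM_k$ is a fundamental system for the $C^k$-topology, so this general Mackey convergence coincides with $\{\phi_n\}_{n\in\NN}\mackarr{\kk}\phi$. Mackey k-continuity now gives $\limin\innt_r^\bullet\phi_n=\innt_r^\bullet\phi$, establishing sequential k-continuity and completing the equivalence.

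The genuine mathematical content lies entirely in Lemma \ref{pofsdisfdodjjxcycxjsdsd} -- the fact that in a metrizable locally convex space every convergent sequence is Mackey convergent -- together with the metrizability of $C^k([0,1],\mg)$ furnished by Lemma \ref{fdfddfdfdf}. I therefore expect no real obstacle in the assembly itself; the only steps demanding care are the two bookkeeping identifications, namely that $\seqarr{\kk}$ is topological convergence in $C^k([0,1],\mg)$, and that the abstract Mackey convergence $\mackarrr$ on $C^k([0,1],\mg)$ agrees with $\mackarr{\kk}$, the latter being legitimate precisely because $\SEMMM_k$ is a fundamental system.
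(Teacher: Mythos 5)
Your proof is correct and follows exactly the paper's own argument: the easy direction is the observation that $\mackarr{\kk}$-convergence implies $\seqarr{\kk}$-convergence, and the converse direction combines Lemma \ref{fdfddfdfdf} (metrizability of $C^k([0,1],\mg)$) with Lemma \ref{pofsdisfdodjjxcycxjsdsd} (convergent sequences in metrizable spaces are Mackey convergent). Your extra remark that $\mackarrr$ agrees with $\mackarr{\kk}$ because $\SEMMM_k$ is a fundamental system is precisely the identification the paper records in the paragraph preceding Lemma \ref{fdfddfdfdf}.
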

\begin{proof}
Let $\{\phi_n\}_{n\in \NN}\subseteq \DIDED_\kk$, and $\phi\in \DIDED_\kk$ be given. 
\begingroup
\setlength{\leftmargini}{12pt}
\begin{itemize}
\item
Evidently, $\{\phi_n\}_{n\in \NN}\mackarr{\kk}\phi$ implies $\{\phi_n\}_{n\in \NN}\seqarr{\kk}\phi$; so that $G$ is Mackey {\rm k}-continuous if $G$ is sequentially {\rm k}-continuous. 
\item
By Lemma \ref{fdfddfdfdf}, $C^k([0,1],\mg)$ is metrizable. Lemma \ref{pofsdisfdodjjxcycxjsdsd} thus shows that $\{\phi_n\}_{n\in \NN}\seqarr{\kk}\phi$ implies $\{\phi_n\}_{n\in \NN}\mackarr{\kk}\phi$. Consequently, $G$ is sequentially {\rm k}-continuous if $G$ is Mackey {\rm k}-continuous. \qedhere  
\end{itemize}
\endgroup
\end{proof}
\noindent
We are ready for the 
\begin{proof}[Proof of Lemma \ref{mnxmncxmncxn}]
The equivalence of \ref{mnxmncxmncxn2} and \ref{mnxmncxmncxn3} is covered by Corollary \ref{dffd}. Moreover, since Lemma \ref{fdfddfdfdf} shows that $C^k([0,1],\mg)$ is metrizable (thus, first countable), the equivalence of \ref{mnxmncxmncxn1} and \ref{mnxmncxmncxn2} is clear from Remark \ref{dssdsdsdds}.\ref{dssdsdsdds1} as well as Remark \ref{dssdsdsdds}.\ref{dssdsdsdds6}.   
\end{proof}

\addtocontents{toc}{\protect\setcounter{tocdepth}{0}}
\appendix

\section*{APPENDIX}

\section{Appendix}
\subsection{Bastiani's Differential Calculus}
\label{Diffcalc}
In this Appendix, we recall the differential calculus from \cite{HA,HG,MIL,KHN}, cf.\ also Sect.\ 3.3.1 in \cite{RGM}. 
\vspace{6pt}

\noindent
Let $E$ and $F$ be Hausdorff locally convex vector spaces. 
A map $f\colon U\rightarrow E$, with $U\subseteq F$ open, is said to be  
differentiable at $x\in U$ \defff 
\begin{align*}
	\textstyle(D_v f)(x):=\lim_{h\rightarrow 0}1/h\cdot (f(x+h\cdot v)-f(x))\in E
\end{align*} 
exists for each $v\in F$. Then, $f$ is said to be differentiable \defff it is differentiable at each $x\in U$. More generally, $f$ is said to be $k$-times differentiable for $k\geq 1$ \defff 
	\begin{align*}
	D_{v_k,\dots,v_1}f\equiv D_{v_k}(D_{v_{k-1}}( {\dots} (D_{v_1}(f))\dots))\colon U\rightarrow E
\end{align*}
is well defined for each $v_1,\dots,v_k\in F$ -- implicitly meaning that $f$ is $p$-times differentiable for each $1\leq p\leq k$. In this case, we define
\begin{align*}
	\dd^p_xf(v_1,\dots,v_p)\equiv \dd^p f(x,v_1,\dots,v_p):=D_{v_p,\dots,v_1}f(x)\qquad\quad\forall\: x\in U,\:\:v_1,\dots,v_p\in F
\end{align*} 	
for $p=1,\dots,k$; and let $\dd f\equiv \dd^1 f$, as well as $\dd_x f\equiv \dd^1_x f$ for each $x\in U$.  
Then, $f$ is said to be  
\begingroup
\setlength{\leftmargini}{11pt}
\begin{itemize}
\item
of class $C^0$ \defff it is continuous -- In this case, we let $\dd^0 f\equiv f$.
\item
of class $C^k$ for $k\geq 1$ \defff it is $k$-times differentiable, such that 
\begin{align*}
	\dd^pf\colon U\times F^p\rightarrow E,\qquad (x,v_1,\dots,v_p)\mapsto D_{v_p,\dots,v_1}f(x)
\end{align*} 
is continuous for each $p=0,\dots,k$.  
In this case, $\dd^p_x f$ is symmetric and $p$-multilinear for each $x\in U$ and $p=1,\dots,k$, cf.\ \cite{HG}.  
\item
of class $C^\infty$ \defff it is of class $C^k$ for each $k\in \NN$. 
\end{itemize}
\endgroup
\noindent
We have the following differentiation rules \cite{HG}:
\begingroup
\setlength{\leftmargini}{16pt}
{
\renewcommand{\theenumi}{{\bf \alph{enumi})}} 
\renewcommand{\labelenumi}{\theenumi}
\begin{enumerate}
\item
\label{iterated}
A map $f\colon F\supseteq U\rightarrow E$ is of class $C^k$ for $k\geq 1$ \defff $\dd f$ is of class $C^{k-1}$ when considered as a map $F' \supseteq U' \rightarrow E$ for $F'\equiv F \times F$ and $U'\equiv U\times F$.
\item
\label{linear}
If $f\colon U\rightarrow F$ is linear and continuous, then $f$ is smooth; with $\dd^1_xf=f$ for each $x\in E$, as well as $\dd^kf=0$ for each $k\geq 2$.  
\item
\label{chainrule}
	Suppose that $f\colon F\supseteq U\rightarrow U'\subseteq F'$ and $f'\colon F'\supseteq U'\rightarrow  F''$ are of class $C^k$ for $k\geq 1$, for Hausdorff locally convex vector spaces $F,F',F''$. Then, $f'\cp f\colon U\rightarrow F''$ is of class $C^k$ with 
	\begin{align*}
		\dd_x(f'\cp f)=\dd_{f(x)}f'\cp \dd_x f\qquad\quad \forall\: x\in U.
	\end{align*}
\item
\label{productrule}
	Let $F_1,\dots,F_m,E$ be Hausdorff locally convex vector spaces, and let $f\colon F_1\times {\dots} \times F_m\supseteq U\rightarrow E$ be of class $C^0$. Then, $f$ is of class $C^1$ \deff  for $p=1,\dots,m$, the ``partial derivative''
	\begin{align*}
		\partial_p f \colon U\times F_p\ni((x_1,\dots,x_m),v_p)&\textstyle\mapsto \lim_{h\rightarrow 0} 1/h\cdot (f(x_1,\dots, x_p+h\cdot v_p,\dots,x_m)-f(x_1,\dots,x_m))
	\end{align*}
	exists in $E$, and is continuous. In this case, we have
	\begin{align*}
		\textstyle\dd_{(x_1,\dots,x_m)} f(v_1,\dots,v_m)&\textstyle=\sum_{p=1}^m\partial_p f((x_1,\dots,x_m),v_p)\\
		&\textstyle= \sum_{p=1}^m \dd f((x_1,\dots,x_m),(0,\dots,0, v_p,0,\dots,0))
	\end{align*}
	for each $(x_1,\dots,x_m)\in U$, and $v_p\in F_p$ for $p=1,\dots,m$.
\end{enumerate}}
\endgroup

\subsection{Proof of Lemma \ref{xccxcxcxcxyxycllvovo}}
\label{asassadsdsdsdsdsdsds}
In this appendix, we prove
\begin{customlem}{\ref{xccxcxcxcxyxycllvovo}}
	Let $[r,r']\in \COMP$, and $\phi\in \DIDE_{[r,r']}$ be fixed. Then, 
	for each $\pp\in \SEM$, there exists some $\pp\leq \qq\in \SEM$ with
		\begin{align*}
		\ppp^\lip_\infty\big(\Ad_{[\innt_r^\bullet\phi]^{-1}}(\psi)\big)\leq \qqq_\infty^\lip(\psi)\qquad\quad\forall\: \psi\in C^\lip([r,r'],\mg).
	\end{align*}
\end{customlem}
\begin{proof}
By definition, there exists some  $\mu\in C^1(I,G)$, for $I$ an open interval containing $[r,r']$, with $\Der(\mu)|_{[r,r']}=\phi$ and $\mu(r)=e$.  
	We now have to show that 
\begin{align*}
		C^\lip([r,r'],\mg)\ni\chi\colon [r,r']\ni t\mapsto \Ad_{\mu^{-1}(t)}(\psi(t)) 
\end{align*}	
 holds, for each fixed 
	$\psi\in C^\lip([r,r'],\mg)$.   
	For this, we let $\pp\in \SEM$ be fixed; and obtain
\begin{align}
\label{opsdopsdopsdopsd0}
\ppp(\chi(t)-\chi(t'))
	&\leq \ppp\big(\Ad_{\mu^{-1}(t)}(\psi(t)-\psi(t'))\big)
	 + \ppp\big(\big(\Ad_{\mu^{-1}(t)}-\Ad_{\mu^{-1}(t')}\big)(\psi(t'))\big).
\end{align} 
\begingroup
\setlength{\leftmargini}{12pt}
\begin{itemize}
\item
	We let $\compact:=\im[\mu^{-1}]$, choose $\pp\leq \ww\in \SEM$ as in \ref{as1} for $\vv\equiv \pp$ there; and obtain
\begin{align}
\label{opsdopsdopsdopsd1}
	\ppp(\chi(t))&\leq \www(\psi(t))\hspace{169.9pt}\forall\: t\in [r,r'],\\[3pt]
\label{opsdopsdopsdopsd2}
	\ppp\big(\Ad_{\mu^{-1}(t)}(\psi(t)-\psi(t'))\big)&\leq  \www(\psi(t)-\psi(t'))\leq \Lip(\www, \psi)\cdot |t-t'|\qquad\quad\forall\: t,t'\in [r,r']. 
\end{align}
\item 
The map $\alpha\colon I\times \mg \ni(s,X)\rightarrow \partial_s\Ad_{\mu^{-1}(s)}(X)$ is well defined, continuous, and linear in the second argument. By Lemma \ref{alalskkskaskaskas} applied to $\compacto\equiv \compact$, there thus exists some $\pp\leq \mm\in \SEM$ with
\begin{align*}
	(\ppp\cp \alpha)(s,X)\leq \mmm(X)\qquad\quad\forall\: s\in [r,r'],\:\: X\in \mg.
\end{align*}
Then, we obtain from \eqref{isdsdoisdiosd1} that
\begin{align}
\begin{split}
\label{opsdopsdopsdopsd3}
	\textstyle\ppp\big(\big(\Ad_{\mu^{-1}(t)}-\Ad_{\mu^{-1}(t')}\big)(\psi(t'))\big)&\textstyle\leq \int_{t'}^{t}  \ppp\big(\partial_s \Ad_{\mu^{-1}(s)}(\psi(t'))\big)\:\dd s\\
	&\textstyle= \int_{t'}^{t}  (\ppp\cp\alpha)(s,\psi(t'))\:\dd s\\
	&\leq \mmm_\infty(\psi)\cdot |t-t'|
\end{split}
\end{align}
holds, for each $t,t'\in [r,r']$ with $t'\leq t$.
\end{itemize}
\endgroup
\noindent
We choose $\qq\in \SEM$ with $\qq\geq 2\cdot \max(\mm,\ww)$ (i.e., $\pp,\mm,\ww\leq \qq$); and obtain
\vspace{-4pt}
\begin{align}
\label{mnmnvmncv}
	\ppp_\infty(\chi)\stackrel{\eqref{opsdopsdopsdopsd1}}{\leq} \www_\infty(\psi)\leq \qqq_\infty(\psi).
\end{align}
We furthermore obtain from \eqref{opsdopsdopsdopsd0}, \eqref{opsdopsdopsdopsd2}, \eqref{opsdopsdopsdopsd3} that
\begin{align*}
	\ppp(\chi(t)-\chi(t'))
\leq \Lip(\www, \psi)\cdot |t-t'|   +\mmm_\infty(\psi)  \cdot |t-t'|\leq   \qqq_\infty^\lip(\psi) \cdot |t-t'|
\end{align*}
holds for each $t,t'\in [r,r']$; thus,
\begin{align*}
	\Lip(\ppp,\chi)\leq \qqq_\infty^\lip(\psi) \qquad\quad\stackrel{\eqref{mnmnvmncv}}{\Longrightarrow}\qquad\quad \ppp_\infty^\lip(\chi)\leq \qqq_\infty^\lip(\psi),
\end{align*} 
which proves the claim.
\end{proof}

\subsection{Proof of Equation \eqref{asnbsanbsanbsanbyxxy}}
\label{asassadsdsdsdsdsdsddssddss}
In this appendix, we show
\begin{align}
\tag{\ref{asnbsanbsanbsanbyxxy}}
	\Lip(\ppp,\phi|_{[t_ {n+1}, 1]})\leq 2\cdot C[\rhon,1]^2\cdot\max\big(\delta_0^{-8}\cdot \ppp_\infty^\lip(\phi_{0}),\dots,\delta_{n}^{-8}\cdot \ppp_\infty^\lip(\phi_{n})\big).
\end{align}
\begin{proof}[Proof of Equation \eqref{asnbsanbsanbsanbyxxy}]
We let $\wph_n:= \rho_n\cdot (\phi_n\cp\varrho_n)$ for each $n\in \NN$; so that
\begin{align*}
	\Lip(\ppp,\wph_n)\leq  2\cdot \delta_n^{-8}\cdot C[\rhon,1]^2\cdot\ppp_\infty^\lip(\phi_n)
\end{align*}
holds by \eqref{dspoopsdopdsaasassddsds}.  
Then, for $t,t'\in [t_{\ell+1},t_{\ell}]$ with $\ell\in \NN$, we have
\begin{align}
\label{wweewewewewddsads12}
\begin{split}
	\ppp(\phi(t)-\phi(t'))&\textstyle = \ppp(\wph_\ell(t)-\wph_\ell(t'))\\
&\leq \Lip(\ppp,\wph_\ell)\cdot |t-t'|\\
&\leq  2\cdot  C[\rhon,1]^2\cdot\delta_\ell^{-8}\cdot\ppp_\infty^\lip(\phi_\ell)\cdot |t-t'|.
\end{split} 
\end{align}
Moreover, for $t\in [t_{(\ell+1)+m},t_{\ell+m}]$ and $t'\in [t_{\ell+1},t_{\ell}]$, with $m\geq 1$ and $\ell\in \NN$, we have
\begin{align}
\label{wweewewewewddsads123}
\begin{split}
	\textstyle\ppp(\phi(t)-\phi(t'))
	&\textstyle\leq 
	\ppp(\phi(t)-\phi(t_{\ell+m}))\\
	&\textstyle\quad\he + \sum_{k=m-1}^{1} \ppp(\phi(t_{(\ell+1)+k})-\phi(t_{\ell + k}))  \\
	&\textstyle\quad\he  + \ppp(\phi(t_{\ell+1})-\phi(t'))\\[2pt]
	&\textstyle\leq 
	\ppp(\wph_{\ell+m}(t)-\wph_{\ell+m}(t_{\ell+m}))\\
	&\textstyle\quad\he + \sum_{k=m-1}^{1} \ppp(\wph_{\ell +k}(t_{(\ell+1)+k})-\wph_{\ell+k}(t_{\ell+k}))\\
	&\quad\he    + \ppp(\wph_{\ell}(t_{\ell+1})-\wph_{\ell}(t'))\\[2pt]
	&\textstyle\leq 
	\Lip(\ppp,\wph_{\ell+m})\cdot |t-t_{\ell+m}|\\
	&\textstyle\quad\he + \sum_{k=m-1}^{1} \Lip(\ppp,\wph_{\ell+k})\cdot |t_{(\ell+1)+k}-t_{\ell+ k}|\\
	&\quad\he  + \Lip(\ppp,\wph_\ell)\cdot |t_{\ell+1}-t'|\\[2pt]
	&\textstyle\leq 
	2\cdot \delta_{\ell+m}^{-8}\cdot C[\rhon,1]^2\cdot\ppp_\infty^\lip(\phi_{\ell+m})\cdot |t-t_{\ell+m}|\\
	&\textstyle\quad\he + \sum_{k=m-1}^{1} 2\cdot \delta_{\ell+k}^{-8}\cdot C[\rhon,1]^2\cdot\ppp_\infty^\lip(\phi_{\ell+k})\cdot |t_{(\ell+1)+k}-t_{\ell+k}|\\
	&\quad\he  + 2\cdot \delta_{\ell}^{-8}\cdot C[\rhon,1]^2\cdot\ppp_\infty^\lip(\phi_{\ell})\cdot |t_{\ell+1}-t'|\\[2pt]
	&\textstyle\leq 2\cdot C[\rhon,1]^2\cdot \max_{0\leq k\leq m}\big(\delta_{\ell + k}^{-8}\cdot \ppp_\infty^\lip(\phi_{\ell + k})\big)\cdot|t-t'|.
\end{split} 
\end{align}
Combining \eqref{wweewewewewddsads12} with \eqref{wweewewewewddsads123}, we obtain \eqref{asnbsanbsanbsanbyxxy}.
\end{proof}

\subsection{Proof of Lemma \ref{podspodspodsodsp}}
\label{asasajkakjsajkjaskjakjs}
In this appendix, we prove 
\begin{customlem}{\ref{podspodspodsodsp}}
Suppose that $G$ is Mackey {\rm k}-continuous for $k\in \NN\sqcup\{\lip,\infty,\const\}$. 
Suppose furthermore that we are given $\DIDE^k_{[r,r']}\supseteq \{\phi_n\}_{n\in \NN}\mackarr{\kk} \phi \in \DIDE^k_{[r,r']}$ as well as $\DIDE^k_{[r,r']}\supseteq \{\psi_\alpha\}_{\alpha\in I}\netarr{0} \psi \in \DIDE^k_{[r,r']}$ for $[r,r']\in \COMP$, such that the expressions
\begin{align*}
\xi(\phi,\psi)&\textstyle:=\dd_e\LT_{\innt \phi}\big(\int \Ad_{[\innt_r^s \phi]^{-1}}(\psi(s))\:\dd s \big)\\
\xi(\phi_n,\psi_\alpha)&\textstyle:=\dd_e\LT_{\innt \phi_n}\big(\int \Ad_{[\innt_r^s \phi_n]^{-1}}(\psi_\alpha(s))\:\dd s \big)\qquad\quad\forall\: n\in \NN
\end{align*} 
are well defined; i.e., such that the occurring Riemann integrals exist in $\mg$. Then, we have
\begin{align}
\label{nmdsnmsdkjdsjdsdsdssdds}
	\textstyle \lim_{(n,\alpha)}\xi(\phi_n,\psi_\alpha)=\xi(\phi,\psi).
\end{align} 
\end{customlem}
\noindent
For this, we first show the following analogue to Lemma 41 in \cite{RGM}.
\begin{lemma}
\label{fdpfdpopofd}
Suppose that $G$ is Mackey {\rm k}-continuous for $k\in \NN\sqcup\{\lip,\infty,\const\}$; and let $[r,r']\in \COMP$ be fixed. Let  
$\Gamma\colon G\times \mg\rightarrow \mg$ be continuous; and define 
\begin{align*}
	\textstyle\wh{\Gamma}\colon \DIDE^k_{[r,r']}\times C^k([r,r'],\mg)\rightarrow \mgc,\qquad (\phi,\psi)\mapsto\int \Gamma\big(\innt_r^{s}\phi,\psi(s)\big)\:\dd s.
\end{align*} 
Then, for each sequence $\DIDE^k_{[r,r']}\supseteq \{\phi_n\}_{n\in \NN}\mackarr{\kk} \phi \in \DIDE^k_{[r,r']}$ and each net $C^k([r,r'],\mg)\supseteq \{\psi_\alpha\}_{\alpha\in I}\netarr{0} \psi \in C^k([r,r'],\mg)$, we have
\begin{align*}
	\textstyle\lim_{(n,\alpha)} \wh{\Gamma}(\phi_n,\psi_\alpha)=\wh{\Gamma}(\phi,\psi).
\end{align*}

\end{lemma}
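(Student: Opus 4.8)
The plan is to reduce the claim to a single uniform-in-$s$ convergence statement for the integrand, and then to extract that uniform convergence from a compactness argument. Since $\wh{\Gamma}$ takes values in the completion $\mgc$, no integrability hypothesis is needed at this stage: the curve $s\mapsto\Gamma(\innt_r^s\phi,\psi(s))$ is continuous (because $s\mapsto\innt_r^s\phi$ and $\psi$ are continuous and $\Gamma$ is continuous), so its Riemann integral exists in $\mgc$, and likewise for every pair $(\phi_n,\psi_\alpha)$. Fixing $\vv\in\SEM$ and abbreviating $\gamma_{n,\alpha}(s):=\Gamma(\innt_r^s\phi_n,\psi_\alpha(s))-\Gamma(\innt_r^s\phi,\psi(s))$, the estimate \eqref{ffdlkfdlkfd} gives $\ovl\vvv\big(\wh{\Gamma}(\phi_n,\psi_\alpha)-\wh{\Gamma}(\phi,\psi)\big)\leq\int\vvv(\gamma_{n,\alpha}(s))\,\dd s\leq (r'-r)\cdot\sup_{s\in[r,r']}\vvv(\gamma_{n,\alpha}(s))$. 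Hence it suffices to prove that for each $\vv\in\SEM$ and each $\varepsilon>0$ there is a threshold $(n_0,\alpha_0)$ with $\sup_s\vvv(\gamma_{n,\alpha}(s))\leq\varepsilon$ for all $(n,\alpha)\geq(n_0,\alpha_0)$; this is exactly what the directed-set limit $\lim_{(n,\alpha)}$ requires, so I would never need a dominated-convergence theorem for the net.

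Two convergence inputs feed into this. First, since $G$ is Mackey $k$-continuous and $\DIDE^k_{[r,r']}\supseteq\{\phi_n\}\mackarr{\kk}\phi\in\DIDE^k_{[r,r']}$, Lemma \ref{fdlkfdlkfd1} yields $\limin\innt_r^\bullet\phi_n=\innt_r^\bullet\phi$; thus for every open neighbourhood $W\subseteq G$ of $e$ there is $n_0$ with $[\innt_r^s\phi]^{-1}\cdot\innt_r^s\phi_n\in W$ for all $s\in[r,r']$ and all $n\geq n_0$. Second, $\{\psi_\alpha\}\netarr{0}\psi$ means $\psi_\alpha\to\psi$ uniformly on $[r,r']$, i.e. $\lim_\alpha\vvv_\infty(\psi-\psi_\alpha)=0$ for every $\vv\in\SEM$; so for every neighbourhood $N\subseteq\mg$ of $0$ there is $\alpha_0$ with $\psi_\alpha(s)-\psi(s)\in N$ for all $s$ and all $\alpha\geq\alpha_0$.

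The heart of the proof — and the main obstacle — is to uniformize the continuity of $\Gamma$ along the compact curve $s\mapsto(\innt_r^s\phi,\psi(s))$, so that these two inputs can be combined. I would fix $\vv,\varepsilon$ and consider the continuous function $F\colon[r,r']\times G\times\mg\to\RR_{\geq0}$, $(s,g,Y)\mapsto\vvv\big(\Gamma(g,Y)-\Gamma(\innt_r^s\phi,\psi(s))\big)$, whose zero set contains the compact graph $C:=\{(s,\innt_r^s\phi,\psi(s))\mid s\in[r,r']\}$, which in turn lies in the open set $O:=F^{-1}([0,\varepsilon))$. Around each $s_0$ one picks a basic box $J_{s_0}\times(\innt_r^{s_0}\phi\cdot U_{s_0})\times(\psi(s_0)+N_{s_0})\subseteq O$, shrinks $J_{s_0}$ (by continuity of $s\mapsto\innt_r^s\phi$ and of $\psi$), and passes to a symmetric $W_{s_0}$ with $W_{s_0}W_{s_0}\subseteq U_{s_0}$ and a convex balanced $M_{s_0}$ with $M_{s_0}+M_{s_0}\subseteq N_{s_0}$, so that $[\innt_r^{s_0}\phi]^{-1}\innt_r^s\phi\in W_{s_0}$ and $\psi(s)-\psi(s_0)\in M_{s_0}$ on $J_{s_0}$. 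A finite subcover $J_{s_1},\dots,J_{s_m}$ of $[r,r']$ produces $W:=\bigcap_jW_{s_j}$ and $M:=\bigcap_jM_{s_j}$ with the uniform property: whenever $[\innt_r^s\phi]^{-1}g\in W$ and $Y-\psi(s)\in M$, a short bookkeeping (writing $[\innt_r^{s_j}\phi]^{-1}g=([\innt_r^{s_j}\phi]^{-1}\innt_r^s\phi)([\innt_r^s\phi]^{-1}g)$, and $Y-\psi(s_j)=(Y-\psi(s))+(\psi(s)-\psi(s_j))$, with $s$ in the covering interval $J_{s_j}$) places $(s,g,Y)$ back into the box over $s_j$, whence $F(s,g,Y)<\varepsilon$.

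Finally I would apply this uniform tube with $g=\innt_r^s\phi_n$ and $Y=\psi_\alpha(s)$: choosing $n_0$ from the first input so that $[\innt_r^s\phi]^{-1}\innt_r^s\phi_n\in W$ for all $s$ and $n\geq n_0$, and $\alpha_0$ from the second input (using $\B_{\www,\eta}\subseteq M$ for suitable $\ww\in\SEM$, $\eta>0$, and $\www_\infty(\psi_\alpha-\psi)<\eta$ eventually) so that $\psi_\alpha(s)-\psi(s)\in M$ for all $s$ and $\alpha\geq\alpha_0$, yields $\sup_s\vvv(\gamma_{n,\alpha}(s))\leq\varepsilon$ for all $(n,\alpha)\geq(n_0,\alpha_0)$. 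By the reduction of the first paragraph this gives $\ovl\vvv\big(\wh{\Gamma}(\phi_n,\psi_\alpha)-\wh{\Gamma}(\phi,\psi)\big)\to0$ for every $\vv\in\SEM$, i.e. $\lim_{(n,\alpha)}\wh{\Gamma}(\phi_n,\psi_\alpha)=\wh{\Gamma}(\phi,\psi)$ in $\mgc$. The one genuinely delicate point is the uniformization step, precisely because $\Gamma$ is merely continuous (it need not be uniformly continuous near $C$, and $\mg$ need not be locally compact): compactness of $[r,r']$ together with the group- and vector-space uniform structures is exactly what lets the finite-subcover bookkeeping convert pointwise continuity into the required uniform estimate.
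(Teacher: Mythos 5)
Your proof is correct and takes essentially the same route as the paper's: both reduce, via \eqref{ffdlkfdlkfd}, to uniform ($C^0$) convergence of the integrand $s\mapsto\Gamma\big(\innt_r^{s}\phi_n,\psi_\alpha(s)\big)$, and both establish this by a finite-subcover argument with halved neighbourhoods along the compact graph of $\big(\innt_r^\bullet\phi,\psi\big)$, fed by Mackey k-continuity (through Lemma \ref{fdlkfdlkfd1}) and the uniform convergence of the net $\{\psi_\alpha\}_{\alpha\in I}$. The only cosmetic difference is the packaging of the uniformization step: you phrase it as a tube-lemma statement for $F(s,g,Y)$ on $[r,r']\times G\times\mg$, while the paper compares two arbitrary points near each of finitely many curve points via the map $((g,X),(g',X'))\mapsto\ppp(\Gamma(g,X)-\Gamma(g',X'))$.
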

\begin{proof}
By \eqref{ffdlkfdlkfd}, it suffices to show that for
\begin{align*}
	\textstyle\wt{\Gamma}\colon \DIDE^k_{[r,r']}\times C^k([r,r'],\mg)\rightarrow C^0([r,r'],\mg),\qquad (\phi,\psi)\mapsto \big[t\mapsto \Gamma\big(\innt_r^{t}\phi,\psi(t)\big)\big],
\end{align*}
we have 
	$\textstyle\lim_{(n,\alpha)} \wt{\Gamma}(\phi_n,\psi_\alpha)=\wt{\Gamma}(\phi,\psi)$ 
w.r.t.\ the $C^0$-topology; i.e., that for $\pp\in\SEM$ and $\varepsilon>0$  
 fixed, there exist $N_\varepsilon\in \NN$ and $\alpha_\varepsilon\in I$ with
\begin{align}
\label{podspodspodspodsds}
\ppp_\infty\big(\wt{\Gamma}(\phi_n,\psi_\alpha)-\wt{\Gamma}(\phi,\psi)\big)<\varepsilon\qquad\quad\forall\: n\geq N_\varepsilon,\:\: \alpha\geq \alpha_\varepsilon. 
\end{align}
For this, we let $\mu:=\innt_r^\bullet\phi$, and consider the continuous map
\begin{align*}
	\alpha\colon G\times \mg \times  G\times \mg \rightarrow \mg,\qquad ((g,X),(g',X'))\mapsto \ppp(\Gamma(g,X)-\Gamma(g',X')).
\end{align*}
 Then, for $t\in [r,r']$ fixed, there exists an open neighbourhood $W[t]\subseteq G$ of $e$, as well as $U[t]\subseteq \mg$ open with $0\in U[t]$, such that 
\begin{align}
\label{pofsofspofs}
	\textstyle\alpha((g,X),(g',X'))< \varepsilon\qquad\quad\forall\: (g,X),(g',X')\in \big[\mu(t)\cdot W[t]\big]\times \big[\psi(t) + U[t]\big]
\end{align}
holds. We  choose
\begingroup
\setlength{\leftmargini}{15pt}{
\renewcommand{\theenumi}{{\it\alph{enumi}})} 
\renewcommand{\labelenumi}{\theenumi}
\begin{enumerate}
\item
\label{kjfdkjfdkj1}
	$V[t]\subseteq G$ open with $e\in V[t]$ and $V[t]\cdot V[t]\subseteq W[t]$.
\item
\label{kjfdkjfdkj2}
	$O[t]\subseteq \mg$ open with $0\in O[t]$ and $O[t]+O[t]\subseteq U[t]$.
\item
\label{kjfdkjfdkj3}
	$J[t]\subseteq \RR$ open with $t\in J$, such that for $D[t]:=J[t]\cap [r,r']$, we have 
	\begin{align}
	\label{askljasjaljajskasa}
		\mu(D[t])\subseteq \mu(t)\cdot V[t]\subseteq \mu(t)\cdot \subseteq W[t]\qquad\text{and}\qquad \psi(D[t])\subseteq \psi(t)+O[t]\subseteq \psi(t)+ U(t).
	\end{align}
	\vspace{-22pt}
\end{enumerate}}
\endgroup
\noindent
Since $[r,r']$ is compact, there exist $t_0,\dots,t_n\in [r,r']$, such that $[r,r']\subseteq D_0\cup{\dots}\cup D_n$ holds.   
\begingroup
\setlength{\leftmargini}{12pt}
\begin{itemize}
\item
We define $V:= V[t_0]\cap{\dots}\cap V[t_n]$.  

Since $G$ is Mackey k-continuous, there exists some $N_\varepsilon\in \NN$ with
	\begin{align}
	\label{asljaljsasaljljsaas}
		\textstyle\innt_r^\bullet \phi_n\in \innt_r^\bullet \phi\cdot V\qquad\quad\forall\: n\geq N_\varepsilon.
	\end{align}
\item
	We define $O:=O[t_0]\cap{\dots}\cap O[t_n]$.   

Since $\{\psi_\alpha\}_{\alpha\in I}\netarr{0} \psi$ holds, there exists $\alpha_\varepsilon\in I$ with 
\begin{align}
	\label{asljaljsasaljljsaasxyxyxxy}
	(\psi_\alpha(t)-\psi(t))\in O\qquad\quad\forall\: t\in [r,r'],\:\: \alpha\geq \alpha_\varepsilon.	
\end{align} 
\end{itemize}
\endgroup
\noindent 
Then, for $\tau\in D_p$ with $0\leq p\leq n$, as well as $n\geq N_\varepsilon$ and $\alpha\geq \alpha_\varepsilon$, we obtain from  \eqref{asljaljsasaljljsaas}, \eqref{asljaljsasaljljsaasxyxyxxy}, as well as \eqref{askljasjaljajskasa} 
 for $t\equiv t_p$ there that
\begingroup
\setlength{\leftmargini}{12pt}
\begin{itemize}
\item
$\mu(t_p)^{-1} \cdot \innt_r^{\tau} \phi_n  = \big(\mu(t_p)^{-1}\cdot\mu(\tau)\big) \cdot \big([\innt_r^{\tau} \phi]^{-1} [\innt_r^{\tau} \phi_n]\big)  \in V\cdot V\subseteq W[t_p]$,
\vspace{2pt}
\item
$\psi_\alpha(\tau)-\psi(t_p)= (\psi_\alpha(\tau)-\psi(\tau))+ (\psi(\tau)-\psi(t_p))\in O + O\subseteq U[t_p]$.
\end{itemize}
\endgroup
\noindent 
The claim is thus clear from \eqref{pofsofspofs} and \eqref{askljasjaljajskasa}.  
\end{proof}
\begin{proof}[Proof of Lemma \ref{podspodspodsodsp}]
For each $\chi,\chi'\in \DIDE^k_{[r,r']}$, we have, cf.\ \eqref{LGPR} 
\begin{align*}
	\xi(\chi,\chi')= \dd_{(\innt\chi,e)}\:\mult\big(0,\wh{\Gamma}(\chi,\chi')\big)\qquad\quad\text{for}\qquad\quad  \Gamma\equiv \Ad(\inv(\cdot),\cdot);
\end{align*}
so that \eqref{nmdsnmsdkjdsjdsdsdssdds} holds by Lemma \ref{fdpfdpopofd}, because the Lie group multiplication is smooth.
\end{proof}

\subsection{Proof of Equation \eqref{podspodspods}}
\label{podspospodspodspodspodsp}
In this appendix, we show
\begin{align}
\tag{\ref{podspodspods}}
		\textstyle\innt \phi=\exp\big(\int \phi(s)\: \dd s\he\big).
\end{align}
\begin{proof}[Proof of Equation \eqref{podspodspods}]
We fix $I\equiv (\iota,\iota')$ with $\iota<r<r'<\iota'$, define $\psi\in C^0([\iota,\iota'],\mg)$ by 
\begin{align*}
	\psi|_{[\iota,r)}:=\phi(r)\qquad\qquad \psi|_{[r,r']}:=\phi\qquad\qquad\psi|_{(r',\iota']}:=\phi(r');
\end{align*}
and observe that\footnote{Recall the last statement in Sect.\ \ref{kfdlkfdlkfdscvpdfpofdofd} for the fact that $\im[\MX]\subseteq \dom[\exp]$ holds.} 
\begin{align*}
	\textstyle\MX\colon I\rightarrow \dom[\exp],\qquad  x\mapsto - (r-\iota)\cdot \phi(r) +\int_\iota^x \psi(s)\: \dd s
\end{align*}
fulfills the presumptions in Corollary \ref{dspopdspodsposd}, with
\begin{align*}
	\textstyle\exp\big(\int_r^\bullet \phi(s)\:\dd s\he\big)=\alpha|_{[r,r']} \qquad\quad\text{for}\qquad\quad \alpha\equivdef \exp\cp\he\MX.
\end{align*}  
By Corollary \ref{dspopdspodsposd}, we thus have $\alpha\in C^1(I,G)$, with 
\begin{align*}
	\textstyle\Der(\alpha)(x)&\textstyle=\frac{\dd}{\dd h}\big|_{h=0} \he \alpha(x+h)\cdot \alpha(x)^{-1}\\
	&\textstyle=\frac{\dd}{\dd h}\big|_{h=0} \he\alpha(x)^{-1}\cdot \alpha(x+h)\\
	&=\textstyle \dd_{\exp(\MX(x))}\LT_{\exp(-\MX(x))}\big( \frac{\dd}{\dd h}\big|_{h=0}\he\alpha(x+h)\big)\\
	&=\textstyle \big(\dd_{\exp(\MX(x))}\LT_{\exp(-\MX(x))}\cp \dd_e\LT_{\exp(\MX(x))}\big)\big( \int_0^1 \Ad_{\exp(-s\cdot \MX(x))}(\dot\MX(x))\:\dd s\big)\\
	&\textstyle=\int_0^1 \dot\MX(x)\:\dd s=\dot\MX(x)= \psi(x)= \phi(x)
\end{align*}
for each $x\in [r,r']$. Here, we have used in the second-, and the fifth step that $G$ is abelian.  
\end{proof}

\subsection{Proof of Lemma \ref{pofsdisfdodjjxcycxjsdsd}}
\label{podspospodspodsdsdspodspodspsdsss}
In this appendix, we prove 
\begin{customlem}{\ref{pofsdisfdodjjxcycxjsdsd}}
Suppose that $F$ is metrizable; and let $\{X_n\}_{n\in \NN}\subseteq F$ be a sequence with $\{X_n\}_{n\in \NN}\rightarrow X\in F$. Then, we have $\{X_n\}_{n\in \NN} \mackarrr X$.  
\end{customlem}
\begin{proof}
We choose $\SEMMM\equivdef \{\qq[m]\:|\: m\in \NN\}\subseteq \SEMM$  as in Lemma \ref{pofdofdopdf}.\ref{pofdofdopdf3}; and let $\mackeyindex\colon \NN\rightarrow \NN$ be strictly increasing with 
\begin{align}
\label{dfkdfjkdfjkfd}
		\textstyle\qq[m](X-X_n)\leq \frac{1}{m}\qquad\quad\forall\: n\geq \mackeyindex_{\qq[m]}:=\mackeyindex(m),\:\:  m\in \NN.
\end{align}	   
\begingroup
\setlength{\leftmargini}{12pt}
\begin{itemize}
\item
We define $\lambda_n:=\frac{1}{m}$ for each $n\in \NN$ with $\mackeyindex(m)\leq n < \mackeyindex(m+1)$; and observe that $\lim_n \lambda_n=0$ holds. 
\item
For $m,d,n\in \NN$ with $\mackeyindex(m+d)\leq n< \mackeyindex(m+d+1)$, we obtain
\vspace{-5pt}
\begin{align*}
	\textstyle\qq[m](X-X_n)\leq \qq[m+d](X-X_n)\stackrel{\eqref{dfkdfjkdfjkfd}}{\leq} \frac{1}{m+d}= \lambda_n.
\end{align*} 
This shows that $\qq[m](X-X_n)\leq \lambda_n$ holds for each $n\geq \mackeyindex_{\qq[m]}=\mackeyindex(m)$; thus, 
$\{X_n\}_{n\in \NN}\mackarrr X$.\qedhere
\end{itemize}
\endgroup
\end{proof}

\subsection{Proof of Lemma \ref{fdfddfdfdf}}
\label{podspospodsssdds}
In this appendix, we prove 
\begin{customlem}{\ref{fdfddfdfdf}}
If $F$ is metrizable, then $C^k([0,1],F)$ is metrizable for each $k\in \NN\sqcup\{\lip,\infty,\const\}$.  
\end{customlem}
\begin{proof}
Let $\{\qq[m]\:|\: m\in \NN\}\subseteq\SEMM$ be as in Lemma \ref{pofdofdopdf}.\ref{pofdofdopdf3}.  
	For each $m\in \NN$, we define
\begin{align*}	
	\dind[\lip,m]:=\lip\qquad \dind[\const,m]:=0\qquad \dind[\infty,m]:=m\qquad\quad \text{as well as}\qquad\quad \dind[k,m]:=k \qquad
	\forall\: k\in \NN.
\end{align*}	
 Moreover, for each $k\in \NN\sqcup\{\lip,\infty,\const\}$, we let
\begin{align*}
	 \zzs[k,m]:=\qq[m]_\infty^{\dind[k,m]}\qquad\forall\: m\in \NN \qquad\quad\text{as well as}\qquad\quad \SEMG_k:=\{\zzs[k,m]\: |\: m\in \NN\}.
\end{align*}
Let now $k\in \NN\sqcup\{\lip,\infty,\const\}$, $\qq\in \SEMM$, $\dind\llleq k$ be fixed. 
By Lemma \ref{xcxcxccxxcxc}, there exist $c>0$, $\ell\in \NN$ 
%and $m_1,\dots,m_p\in \NN$ 
with $\qq\leq c\cdot\qq[\ell]$. We define 
$$
m:=
\begin{cases}
\:\ell & \text{for }\:\: k\in \NN\sqcup\{\lip,\const\},\\
\:\max(\dind,\ell)& \text{for }\:\: k\equiveq \infty,
\end{cases}
$$
observe that $\qq\leq c\cdot \qq[m]$ as well as $\dind\leq \dind[k,m]$ holds; and obtain
\begin{align*}
	\qq^\dind_\infty \leq c\cdot \qq[m]_\infty^{\dind[k,m]}=c\cdot \zzs[k,m].
\end{align*} 
	Then, Lemma \ref{xcxcxccxxcxc} shows that $\SEMG_k$ is a fundamental system; and, since $\SEMG_k$ is countable, the claim follows from Lemma \ref{pofdofdopdf}.
\end{proof}

\end{document}